\definecolor{refkey}{rgb}{1,0,0}
\definecolor{labelkey}{rgb}{0,0,1}
\definecolor{labelkey}{rgb}{1,1,1}
\theoremstyle{plain}
\newtheorem{theorem}{Theorem}[section]
\newtheorem{lemma}[theorem]{Lemma}
\newtheorem{corollary}[theorem]{Corollary}
\newtheorem{definition}[theorem]{Definition}
\newtheorem{proposition}[theorem]{Proposition}
\newtheorem{example}[theorem]{Example}
\newtheorem{remark}[theorem]{Remark}
\theoremstyle{definition}
\newtheorem*{note*}{Note}
\theoremstyle{remark}
\DeclareMathOperator{\ad}{ad} \DeclareMathOperator{\Ad}{Ad}
\DeclareMathOperator{\Hom}{Hom}
\DeclareMathOperator{\Lie}{Lie}
\newcommand{\ZZ}{\mathbb{Z}}
\newcommand{\XX}{\mathfrak{X}} 
\newcommand{\toto}{\rightrightarrows}
\newcommand{\ba}[2]{[#1,#2]}
\newcommand{\abs}[1]{\left\vert#1\right\vert}
\newcommand{\dA}{d_A }
\newcommand{\CIM}{C^{\infty}(M)}
\newcommand{\inserts}{\iota}
\newcommand{\set}[1]{\left\{#1\right\}}
\newcommand{\Img}{\mathrm{img} }
\newcommand{\multivectorfields}[2]{\mathfrak{X}^{#1}(#2)}
\newcommand{\rhopush}{D_{\rho}}
\newcommand{\minuspower}[1]{(-1)^{#1}}
\newcommand{\jet}{\mathfrak{J}}
\newcommand{\heat}{\mathfrak{H}}
\newcommand{\core}{\mathfrak{h}}
\newcommand{\liftingd}{\jmath^1}
\newcommand{\Gpd}{\mathcal{G}}
\newcommand{\Smap}{\mathfrak{e}}
\newcommand{\SSmap}{\mathfrak{E}}
\newcommand{\invstar}{\mathrm{inv}_*}
\newcommand{\gaugeequiv}{\Bumpeq}
\newcommand{\rank}{\mathrm{rank}}
\newcommand{\Top}{\mathrm{top}}
\newcommand{\CinfM}{C^\infty(M)}
\newcommand{\Der}{\mathrm{Der}}
\newcommand{\Apolysection}[1]{\Gamma( \wedge^{#1} A)}
\newcommand{\littlecirc}{}
\newcommand{\differentials}{\mathfrak{D}\mathrm{iff}}
\newcommand{\Reduced}{{R}}
\newcommand{\ReducedProper}{R_{\mathrm{\rho}}}
\newcommand{\coHg}{\mathrm{H}}
\newcommand{\deltazero}{\delta^{(0)}}
\newcommand{\deltaone}{\delta^{(1)}}
\newcommand{\basicconnection}{\nabla^{\mathrm{bas}}}
\newcommand{\basiccurvature}{R_\nabla^{\mathrm{bas}}}
\newcommand{\Dnabla}{D_{\nabla}}
\newcommand{\adnabla}{\mathrm{ad}_{\nabla}}
\newcommand{\adjUTH}{\mathrm{ad}}
\newcommand{\Defcomplex}{\mathrm{C}_{\mathrm{Def}}}
\newcommand{\OmegaA}{\Omega(A)}
\newcommand{\dCE}{d_{\mathrm{CE}}}
\begin{document}

\title{On the reduced space of multiplicative multivectors}

\author{Zhuo Chen}
\address{Department of Mathematics, Tsinghua University, Beijing 100084, China}
\email{\href{mailto:chenzhuo@tsinghua.edu.cn}{chenzhuo@tsinghua.edu.cn}}

\author{Honglei Lang}
\address{College of Science, China Agricultural University, Beijing 100083, China}
\email{\href{mailto:hllang@cau.edu.cn}{hllang@cau.edu.cn}(corresponding author)}

\author{Zhangju Liu}
\address{Department of  Mathematics, Peking University, Beijing 100871, China}
\email{\href{mailto:liuzj@pku.edu.cn}{liuzj@pku.edu.cn}}



























\begin{center}
	\emph{To the memory of Professor Kirill C. H. Mackenzie (1951-2020)}
\end{center}

\makeatother

\begin{abstract}
	A strict Lie $2$-algebra $\Gamma(\wedge^\bullet A) \stackrel{T}{\rightarrow} \mathfrak{X}_{\mathrm{mult}}^\bullet(\Gpd )$ is associated with any Lie groupoid $\Gpd$. Here, $\Gamma(\wedge^\bullet A)$ is the Schouten algebra of the tangent Lie algebroid $A$ of $\Gpd$ and $\mathfrak{X}_{\mathrm{mult}}^\bullet(\Gpd )$ is the space of multiplicative multivectors on $\Gpd$. The quotient ${\Reduced}_{\mathrm{mult}}^\bullet:=\mathfrak{X}_{\mathrm{mult}}^\bullet(\Gpd )/\Img T$,  a Morita invariant of $\Gpd$, is called the reduced space of multiplicative multivectors. We prove a canonical decomposition formula of elements in $\mathfrak{X}_{\mathrm{mult}}^\bullet(\Gpd )$ and establish a key relation between ${\Reduced}_{\mathrm{mult}}^k$ and the cohomology $\coHg ^1(\jet \Gpd,\wedge^k A)$ where $\jet \Gpd$ is the jet groupoid of $\Gpd$ and $1\leqslant k\leqslant \rank A$.  We also study ${\Reduced}_{\mathrm{diff}}^\bullet $, the reduced space of Lie algebroid differentials on $A$. By taking infinitesimals, $\bar{\delta}: $ ${\Reduced}_{\mathrm{mult}}^\bullet $ $\to $ ${\Reduced}_{\mathrm{diff}}^\bullet $, the two reduced spaces are related. We find that the kernel of $\bar{\delta}$ is isomorphic to the kernel of the Van Est map $\coHg ^1(\Gpd,\wedge^k \ker\rho)\to \coHg ^1(A,\wedge^k \ker\rho)$, where $\rho$ is the anchor of $A$.



  \emph{Keywords}:  {multiplicative multivector, Lie algebroid differential, deformation complex,   jet groupoid, jet Lie algebroid.}\\
  \emph{MSC2020}:~58H05, 53D17.
\end{abstract}

\maketitle

\tableofcontents

\section{Introduction}

The concept of multiplicativity on Lie groupoids, which includes multiplicative multivectors and multiplicative forms\footnote{In this paper, we abbreviate `multivector field' to `multivector' and `$k$-vector field' to `$k$-vector'.}, has been widely studied. Its origins can be traced back to Drinfeld's research on Poisson Lie groups \cite{Drinfeld}, which are Lie groups with a compatible Poisson structure. Subsequent to this initial work, symplectic groupoids \cites{Karasev,W2} and Poisson groupoids \cite{W1} have been defined and studied. Multiplicativity has also been generalized in recent times to include multivectors \cites{ILX,LuW,MX1,MX2}, forms \cites{VE,BC,BCO,C}, and tensors \cite{BD}. Moreover, there are multiplicative structures that are compatible with contact and Jacobi structures \cites{CZ,IM}, holomorphic structures \cites{LSX1,XuCamile}, foliations \cites{DJO,H,JO}, Dirac structures \cites{BCWZ,JL,O}, Manin pairs \cite{LS}, generalized complex structures \cite{JSX}, and other structures \cite{ETV}. A thorough survey of multiplicativity, from its inception to its current generalizations, is provided by \cite{Kosmann}. For a comprehensive discussion of the theory of Lie groupoids and Lie algebroids, the reader is referred to the standard text \cite{Mackenzie}. Basic notions and notations that will be used throughout the paper can be found in Appendix \ref{Sec:LALGPbasic}.

\subsection{Reduced space of multiplicative multivectors}  
In this paper, we will define the \textit{reduced space of multiplicative multivectors} on Lie groupoids. The main reason for studying such a space is due to a certain homotopy invariant that is associated with the Morita equivalence class of Lie groupoids representing a given differentiable stack. A recent paper by Bonechi, Ciccoli, Laurent-Gengoux, and Xu \cite{BCLX} proved that the homotopy class of a certain graded Lie 2-algebra associated with a Lie groupoid is invariant under Morita equivalences between Lie groupoids. This result led the authors to define the reduced space of multiplicative multivectors on a differentiable stack as such a homotopy class modulo a well-known equivalence relation. Partial results in this direction are also achieved by Berwick-Evans-Lerman \cite{BLlie2algebra} and Ortiz-Waldron \cite{OrtizWaldron17}.

    Let $\Gpd $ be  a Lie groupoid over a smooth manifold $M$ whose   source and target maps
   are denoted by $s$ and  $t$, respectively. Its tangent Lie algebroid is denoted by $(A, [~,~], \rho)$. 
      We denote by $\multivectorfields{k}{\Gpd }=\Gamma(\wedge^k T\Gpd )$ the space of multivectors of order $k$, or $k$-vectors for short.
   A multiplicative $0$-vector on $\Gpd$ is a function $F\in C^\infty(\Gpd)$ such that $F(gr)=F(g)+F(r)$ for all $(g,r)\in \Gpd^{(2)}$.  Here $\Gpd ^{(2)}$ denotes
   the set of composable pairs, i.e. pairs $(g,{r})$ with $s(g)=t({r})$.
   For $ k\geqslant  1$, recall that a $k$-vector $\Pi\in
   \multivectorfields{k}{\Gpd }$ is said to be {\bf multiplicative} if the
   graph of  groupoid multiplication
   $$\Lambda=\set{(g,{r},g{r})|t({r})=s(g)}\subset \Gpd \times\Gpd \times\Gpd $$
   is a coisotropic submanifold with respect to
   $\Pi\times\Pi\times\minuspower{k-1}\Pi$ (see   \cite{ILX} for more explanations).

   Denote by $ \mathfrak{X}_{\mathrm{mult}}^k(\Gpd )$   the space   of multiplicative $k$-vectors on $\Gpd $.
   Due to dimension reasons, it suffices to consider those $k$ such that $0\leqslant  k\leqslant  \Top+1$. Here and throughout the paper, $\Top$ denotes the integer $\rank A$.
    We will study multiplicative $k$-vectors  (and $k$-differentials on Lie algebroids) in three situations:
    \begin{itemize}
    	\item[(1)]$0\leqslant  k\leqslant  \Top +1$, which we call the \textit{generic} case;
    	\item[(2)]$1\leqslant  k\leqslant  \Top $, which we call the \textit{ordinary} case;
    	\item[(3)]$k=0$ or $k=\Top +1$, which we call the \textit{exceptional} cases.
    \end{itemize}

 A particular type of multiplicative multivectors, called \textbf{exact} ones, is constructed as follows.
   Let $\tau\in \Gamma(\wedge^k A)$ be given. The $k$-vector on $\Gpd$ $$ T(\tau):=\overrightarrow{\tau }-\overleftarrow{\tau }$$
   is multiplicative. Here $\overrightarrow{\tau}$ (resp. $\overleftarrow{\tau}$) denotes  the right-invariant (resp. left-invariant) $k$-vector on $\Gpd $ corresponding to $\tau$, i.e.,
   $$
   \overrightarrow{\tau}|_{g}= R_{g*} ( \tau|_{t(g)}),\qquad \forall g\in \Gpd
   $$
   and
   $$
   \overleftarrow{\tau}= (-1)^k \invstar  (\overrightarrow{\tau}),
   $$
   where    $R_g$ stands for the right multiplication by $g$ and $\mathrm{inv}$   the groupoid inversion     (see Appendix \ref{Sec:LALGPbasic}).

    We denote by $\mathfrak{T}^k(\Gpd)$ the space of such exact multiplicative $k$-vectors.
    One can prove that the subspace $\mathfrak{T}^\bullet(\Gpd)=\oplus_{k= 0}^\Top \mathfrak{T}^k(\Gpd)$ formed by exact multiplicative multivectors is an ideal in the graded Lie algebra $\mathfrak{X}_{\mathrm{mult}}^\bullet(\Gpd )$ $=$ $\oplus_{k= 0}^{\Top+1}$ $\mathfrak{X}_{\mathrm{mult}}^k(\Gpd )$ (equipped with the Schouten bracket).

Recall a key point of view in the study of differentiable stacks  \cites{BLlie2algebra,BCLX} --- Given a  representative of a differentiable stack $\mathbf{G}$ by a Lie groupoid $\Gpd$, the space of multiplicative multivectors   on $\Gpd$  is part of a (strict and $\ZZ$-graded) Lie $2$-algebra:
\begin{equation}
\label{Eqt:Lie2algebra}
\Gamma(\wedge^\bullet A)  \stackrel{T}{\longrightarrow} \mathfrak{X}_{\mathrm{mult}}^\bullet(\Gpd ).
\end{equation}

Here by saying a (strict and $\ZZ$-graded) Lie $2$-algebra (also known as a crossed module of graded Lie algebras) we mean a pair of  ($\ZZ$-graded) Lie algebras $\mathfrak{A}$ and $\mathfrak{B}$ together with
\begin{itemize}
	\item a morphism of   graded Lie algebras $t: \mathfrak{A}\to \mathfrak{B}$, and
	\item a   graded Lie algebra action of $\mathfrak{B}$ on the graded vector space $\mathfrak{A}$
	\[\mathfrak{B}\times \mathfrak{A}\to \mathfrak{A},\qquad (u,a)\mapsto u\cdot a,\]
\end{itemize} such that the following two identities hold true: for all $a,a_1,a_2\in \mathfrak{A}$ and $u\in \mathfrak{B}$,
\begin{eqnarray*}
	t(u\cdot a) &=&[u,t a ],\\
	(t a_1) \cdot a_2&=&[a_1,a_2].
\end{eqnarray*}

For the particular Lie  $2$-algebra \eqref{Eqt:Lie2algebra}, we take $\mathfrak{A}=\Gamma(\wedge^\bullet A)[1]$, $\mathfrak{B}=\mathfrak{X}_{\mathrm{mult}}^\bullet(\Gpd )[1]$, $t=T$ and $\cdot$ defined in the obvious manner
$$
\Pi\cdot \tau=[\Pi,\overrightarrow{\tau }]|_M,\qquad \forall ~\Pi\in \mathfrak{X}_{\mathrm{mult}}^\bullet (\Gpd ),\tau\in \Gamma(\wedge^\bullet A).
$$

It is  proved in \cites{BLlie2algebra,OrtizWaldron17} for the $\bullet=1$ case, and then extended to the general case in \cite{BCLX}, that the   Lie $2$-algebras associated in this way to Morita equivalent Lie groupoids are necessarily homotopy equivalent. Therefore, in  \cite{BCLX},   the space of multivector fields on a
differentiable stack $\mathbf{G}$ is defined to be the homotopy
equivalence class of the Lie $2$-algebras \eqref{Eqt:Lie2algebra} associated with the Lie groupoids representing
the differentiable stack $\mathbf{G}$.

Two elements $\Pi$ and $\Pi'$ in $\mathfrak{X}_{\mathrm{mult}}^k(\Gpd )$ are said to be  \textbf{equivalent} (notation: $\Pi\gaugeequiv \Pi'$)  if their subtraction is an exact multiplicative $k$-vector. This equivalence relation appears as early as in \cites{LuThesis,Waffine}.   The quotient space
$${\Reduced}_{\mathrm{mult}}^\bullet:=\mathrm{coker}(T)=\mathfrak{X}_{\mathrm{mult}}^\bullet(\Gpd )/\mathfrak{T}^\bullet(\Gpd )=\mathfrak{X}_{\mathrm{mult}}^\bullet(\Gpd )/\Img T , $$which we  call  the \textbf{reduced  space of multiplicative multivectors} on $\Gpd$, classifies multiplicative multivectors up to  equivalence. By results in \cites{BLlie2algebra,OrtizWaldron17,BCLX} mentioned as above,  the space ${\Reduced}_{\mathrm{mult}}^\bullet$ is a Morita invariant: for any two Morita equivalent groupoids $\Gpd$ and $\Gpd'$, their respective ${\Reduced}_{\mathrm{mult}}^\bullet$ are isomorphic  Lie algebras\footnote{We remark that, besides ${\Reduced}_{\mathrm{mult}}^\bullet$, there is another obvious Morita invariant  $\ker T$ which can be  identified with the  Lie groupoid cohomology $  \coHg^0(\Gpd,\Gamma(\wedge^\bullet \ker \rho))$,
	i.e.  $\Gpd$-invariant sections in the bundle $\wedge^\bullet \ker \rho$ (which might be singular).}. One of our purposes   is to investigate ${\Reduced}_{\mathrm{mult}}^\bullet$, eventually providing a 
new description for it.


 Any Lie group can be thought of as a Lie groupoid over a single point.  It is well-known that the associated reduced space is isomorphic to a degree $1$ group cohomology. In fact,  if $\Gpd$ is a Lie group (i.e., $M$ is a single point),  then   each   $\Pi$  $\in$ $\mathfrak{X}_{\mathrm{mult}}^k(\Gpd )$ can be written in the form
\begin{equation}\label{Eqt:PigLieGroupCase}
\Pi_g=R_{g*}c(g ),\qquad \forall~ g\in\Gpd.\end{equation}
Here $c\colon \Gpd\to \wedge^k A$ is a Lie group $1$-cocycle valued in $\wedge^k A$ which admits the standard adjoint action by the Lie group $\Gpd$ \cite{LuThesis}. Moreover, $\Pi$ is exact if and only if $c$ is a coboundary. So  the space     ${\Reduced}_{\mathrm{mult}}^k=\mathfrak{X}_{\mathrm{mult}}^k(\Gpd )/\mathfrak{T}^k(\Gpd)$ is isomorphic to $\coHg^1(\Gpd, \wedge^k A)$.

Thus, it is natural to investigate, for general groupoids, the reduced space ${\Reduced}_{\mathrm{mult}}^\bullet$ and its relation to the cohomologies of specific \textit{Lie groupoid modules}. 
    However, the  approach for a Lie group mentioned above fails for a Lie groupoid $\Gpd$ over a nontrivial base manifold $M$. In fact, the reduced space ${\Reduced}_{\mathrm{mult}}^k$ does \textit{not} admit a characterization $\coHg^1(\Gpd, \wedge^k A)$ as in the group case and not even an adjoint action exists on $\wedge^k A$ by $\Gpd$. Nevertheless, an interesting construction of ${\Reduced}_{\mathrm{mult}}^\bullet$ as a pullback is provided (Theorem \ref{Thm:Rmultaspullback}) by involving cohomologies of both the jet groupoid $\jet \Gpd$ (with respect to its adjoint action on $\wedge^\bullet A$) and the bundle of isotropy jet Lie groups of $\jet \Gpd$ itself.


\subsection{Reduced space of Lie algebroid differentials} The infinitesimal of a multiplicative multivector on $\Gpd$ is {a differential  on $A$.} 
Recall that a {\bf $k$-differential} on a Lie algebroid $A$ is
a derivation of degree $(k-1)$ on the exterior algebra $\Gamma(\wedge^\bullet A)$: $$\delta:~\Gamma(\wedge^\bullet A)\to \Gamma(\wedge^{\bullet+k-1} A),$$
which is compatible with the Schouten bracket:
\begin{equation*}\label{Eqt:kdifferentialcocyclecondition}
\delta [X,Y]=[\delta (X),Y]+(-1)^{(k-1)(\abs{X}-1)}[X,\delta (Y)],\qquad X, Y\in \Gamma(\wedge^\bullet A).\end{equation*}This notion was introduced in
\cite{ILX} (also called a morphic $k$-vector in \cites{BCO,MX2}). Again, it suffices to consider those $0\leqslant  k\leqslant  \Top+1$, where $\Top=\rank A$.
We denote by   $ \differentials^k(A)$  the space  of   $k$-differentials on $A$, and by $\differentials^\bullet(A)=\bigoplus_{k= 0}^{\Top  +1}\differentials^k(A)$ the total space. Equipped with the standard commutator
$$
[\delta,\delta']:=\delta{\littlecirc} \delta'-(-1)^{(k-1)(k'-1)}\delta'{\littlecirc} \delta,\quad \delta\in \differentials^k(A),\delta'\in \differentials^{k'}(A),
$$
the space $\differentials^\bullet(A)$ is a degree $(-1)$ graded Lie algebra. It is clear that $\differentials^\bullet(A)$ coincides with the Lie algebra of derivations $\mathrm{Der}^{\bullet -1}(\Gamma(\wedge^\bullet A)[1])$.

Given $\tau\in \Gamma(\wedge^k A)$, the operation  $[\tau,\cdot~]$ defines a $k$-differential. We call $[\tau,\cdot~]$ an \textbf{exact} $k$-differential and denote by $\mathfrak{A}^k(A) $ the collection of such exact $k$-differentials.
The subspace   $\mathfrak{A}^\bullet(A)= \oplus_{k=0}^{\Top }\mathfrak{A}^k(A)$   is an ideal in the graded Lie algebra $\differentials^\bullet(A)$.

 Two $k$-differentials $\delta $ and $\delta'$  are said to be {\bf  equivalent} (notation: $\delta\gaugeequiv \delta'$)
	if they are differed by an exact $k$-differential.   The  problem of classifying  differentials on $A$ up to equivalence, i.e., to characterize the space
$$
{\Reduced}^k_{\mathrm{diff}}:=\differentials^k(A)/\mathfrak{A}^k(A)\,,
$$
is also one of the purposes of this paper. We call ${\Reduced}^k_{\mathrm{diff}}$ the \textbf{reduced  space of $k$-differentials}, which is the   infinitesimal version of  the reduced  space ${\Reduced}_{\mathrm{mult}}^k$ introduced as earlier. Note that ${\Reduced}^\bullet_{\mathrm{diff}}$ inherits a standard degree $(-1)$ graded Lie algebra structure from   $\differentials^\bullet(A)$.

The result of Arias Abad and Crainic \cite[Proposition 4.6]{AC2} inspired us to further analyze the relationship between ${\Reduced}^k_{\mathrm{diff}}$ and Lie algebroid cohomologies. According to the said proposition, ${\Reduced}^k_{\mathrm{diff}}$ is isomorphic to $\coHg^1 (A,\wedge^k \adjUTH)$, which refers to the degree $1$ cohomology of the $k$-th exterior power of the adjoint representation up to homotopy of $A$.



\subsection{Outline of the paper} Below is a brief account of the main results and structure of this paper.

In the preliminary Section \ref{Sec:pre},  {we  review   a characterization of the reduced space of differentials for the generic case  $0\leqslant  k \leqslant  \Top +1$, i.e. ${\Reduced}_{\mathrm{diff}}^k= \coHg _{\mathrm{Def}}^{1,k-1}(A)$,   the Lie algebroid deformation cohomology due to Arias Abad and Crainic \cite{AC2} }
(see Theorem \ref{Thm:gaugespacekdifferential1}). We also recall a result characterizing ${\Reduced}_{\mathrm{diff}}^k$ in terms of representation up to homotopy on $\ker\rho\oplus  TM/\mathrm{img}\rho$ when the Lie algebroid $A$ is regular (see Corollary \ref{Cor:regularReducedDiff}).

 We then introduce a particular type of tensor fields $\pi\in \Gamma(TM\otimes (\wedge^{k-1} A))$ which we call $\rho$-compatible $k$-tensors (see Definition \ref{Defn:properktensor}) and will be needed to build multiplicative $k$-vectors and $k$-differentials.

In Section \ref{Sec:transtivedifferentials}, we study Lie algebroid differentials and groupoid multiplicative multivectors under the   transitive assumption, i.e. $\rho: A\to TM$ is surjective. In this case, any Lie algebroid differential $\delta\in \differentials^k(A)$ admits a nice expression (see Theorem \ref{Thm:s1-1}):
$$\delta= [\Lambda,~\cdot~]+\Omega$$
where $\Lambda\in \Gamma(\wedge^k A)$, $\Omega\in Z^1(A,\wedge^k \ker  \rho)$. The phenomenon parallel to this on the Lie groupoid side is that any multiplicative $k$-vector $\Pi\in \mathfrak{X}_{\mathrm{mult}}^k(\Gpd )$ can be written in the form (see Theorem \ref{Thm:S1-1}):
$$\Pi=\overrightarrow{\Lambda}-\overleftarrow{\Lambda}+\Pi_{\mathcal{F}}$$
where $\Lambda$ is as before, $\mathcal{F}$ $\in Z^1(\Gpd,\wedge^k \ker  \rho)$, and $\Pi_{\mathcal{F}}$ denotes the corresponding multiplicative multivector arising from $\mathcal{F}$. Although these two decompositions are not canonical, we can use them to   find how the reduced spaces ${\Reduced}_{\mathrm{diff}}^k$ and ${\Reduced}_{\mathrm{mult}}^k$ are related to the usual   cohomology of (transitive) Lie algebroids and groupoids.   Indeed, we can recover that
${\Reduced}_{\mathrm{diff}}^k \cong  \coHg  ^1(A,\wedge^k \ker\rho)$, which first appeared in \cite{AC2} (see Corollary        \ref{Cor:transitivecasetorecover}), and
similarly, we have   ${\Reduced}_{\mathrm{mult}}^k \cong  \coHg ^1(\Gpd,\wedge^k \ker\rho) $
(see     Theorem \ref{Thm:transitiveRmult}).
Note that, for a transitive groupoid $\Gpd$, one has $\coHg ^1(\Gpd,\wedge^k \ker\rho) \cong \coHg ^1(G_x,\wedge^k \mathfrak{g}_x) $, where $G_x$ is the {isotropy} group at $x\in M$ and $\mathfrak{g}_x$ is the tangent Lie algebra of $G_x$ (see \cite{BCLX}). So, if $G_x$ is simply-connected and semi-simple, then we conclude that ${\Reduced}_{\mathrm{mult}}^k=0$.
Moreover, since Crainic \cite{Crainic2003}  proved  that   $\coHg ^1(G_x,\wedge^k \mathfrak{g}_x)=0$ for every proper groupoid, we have   ${\Reduced}_{\mathrm{mult}}^k=0$  for proper transitive groupoids.  

Without the transitive assumption, $\differentials^k(A)$, $\mathfrak{X}_{\mathrm{mult}}^k(\Gpd )$,  ${\Reduced}_{\mathrm{diff}}^k$, and ${\Reduced}_{\mathrm{mult}}^k$ are more complicated, and they are the  subject for investigation in   Sections \ref{Sec:2ndclassofdifferentials} and   \ref{Sec:multiplicativemultivectors}. As the starting point of our approach, we will show that in the ordinary case  $1\leqslant  k \leqslant  \Top $, each $k$-differential $\delta\in \differentials^k(A)$ is uniquely determined by a pair $( \chi , \pi)$, called the \textbf{characteristic pair} of $\delta$ (see Proposition \ref{Prop:kdifferentialinpair}
), where
$\chi:~ \jet A\to \wedge^k A$ is a $1$-cocycle with respect to the adjoint action of $\jet A$  on $\wedge^k A$ and $\pi\in \Gamma(TM\otimes (\wedge^{k-1} A))$  is a $\rho$-compatible $k$-tensor. Here $\jet A$ is the jet Lie algebroid of $A$  (see Appendix \ref{Appendix:jetLiealgebroid}).

The significance of  characteristic pair is that it provides a canonical decomposition of   $\delta\in \differentials^k(A)$ into two components $\chi$ and $\pi$. We then introduce  a map  $\kappa:~\differentials^k(A)  \to \coHg ^1(\jet A,\wedge^k A)$ that sends $
\delta $   to $[\chi ] $.
 Here $\coHg ^1(\jet A,\wedge^k A)$ is the degree $1$ Chevalley-Eilenberg cohomology of the jet Lie algebroid $\jet A$ with respect to its adjoint action on $\wedge^k A$.
We will prove that    $\kappa$ induces an embedding ${\Reduced}_{\mathrm{diff}}^k \stackrel{\kappa}{\hookrightarrow}\coHg ^1(\jet A,\wedge^k A)$, and  there is an embedding $\ReducedProper^k\stackrel{\mu}{\hookrightarrow} \coHg ^1(\core,\wedge^k A)$. Here $\ReducedProper^k$ is the   {reduced space of $\rho$-compatible $k$-tensors} (see Definition  \ref{def:reducedproperspace}) and $\core$ is the bundle of {isotropy} jet Lie algebras of   $\jet A$.
Our main result is that the reduced space of $k$-differentials ${\Reduced}_{\mathrm{diff}}^k$ fits into a pullback diagram
\begin{equation}\label{Digram:pullbackreduceddiffk}
\xymatrix{
	  \Reduced^k_{\mathrm{diff}} \ar@{^{(}->}[d]_{\kappa} \ar[r]^{i^!} & \ReducedProper^k \ar@{^{(}->}[d]^{\mu} \\
	  \coHg ^1(\jet A,\wedge^k A) \ar[r]^{i^*} & \coHg ^1(\core,\wedge^k A)  }
\end{equation}
(see Theorem \ref{Thm:Rdiffaspullback}).  The pullback diagram \eqref{Digram:pullbackreduceddiffk} tells us how $\Reduced^k_{\mathrm{diff}}$ is determined by $\ReducedProper^k$ and $\coHg ^1(\jet A,\wedge^k A)$  (when $1\leqslant  k \leqslant  \Top $).

One of the important contents of this article is to analyze the data of a multiplicative $k$-vector $\Pi\in \mathfrak{X}_{\mathrm{mult}}^k(\Gpd )$.
In the ordinary case $1\leqslant  k \leqslant  \Top $,
 our first observation is that  every $\Pi\in \mathfrak{X}_{\mathrm{mult}}^k(\Gpd )$ is uniquely determined by a pair $(c,\pi)$, where $c:~\jet \Gpd\to \wedge^k A$ is a $1$-cocycle with respect to the adjoint action of $\jet \Gpd$  (the jet Lie groupoid; see Appendix \ref{Appendix:jetLiegroupoid})  on $\wedge^k A$  and $\pi\in\Gamma(TM\otimes (\wedge^{k-1} A))$ is a $\rho$-compatible $k$-tensor.
Indeed, in Theorem \ref{Thm:mainexplicitformula} we show a canonical decomposition      of $\Pi$ in terms of  $(c,\pi)$   by the following formula
\begin{equation}\label{Eqt:PiGroupoidcase}\Pi_g=R_{g*}c([b_g])+L_{[b_g]} \frac{1-e^{-\rhopush}}{\rhopush} (\pi)_{s(g)},
\end{equation}
where $g\in \Gpd $,    $b_g$ is a local bisection through $g$, and $\frac{1-e^{-\rhopush}}{\rhopush} (\pi)$ is a section in $\wedge^k (TM\oplus A)$ determined by $\pi$ (see Equation   \eqref{Bpi}). This formula is due to Iglesias-Ponte,  Laurent-Gengoux, and  Xu because in an earlier version of their work \cite{ILX} (by private communication),  they  discovered an equivalent form of the formula. But it is not shown in
their published   article.
Our contribution is to present it in a neat form as in \eqref{Eqt:PiGroupoidcase} and give a detailed proof.

Equation \eqref{Eqt:PiGroupoidcase} gives the subtle relation between $\Pi$ and its components $c$ and $\pi$, and is the generalized form of Equation  \eqref{Eqt:PigLieGroupCase}. We will refer to $(c,\pi)$ as the \textbf{characteristic pair} of $\Pi$ (see Definition \ref{Defn:groupoidcharpair}).

It is natural to expect  that
the reduced space $\Reduced^k_{\mathrm{mult}}$ of multiplicative multivectors on the Lie groupoid $\Gpd$ has the analogous characterization of   the reduced space $\Reduced^k_{\mathrm{diff}}$  as in the pullback diagram \eqref{Digram:pullbackreduceddiffk}. The approach is similar. By means of   characteristic pairs, we   introduce  a map $K:~\mathfrak{X}_{\mathrm{mult}}^k(\Gpd ) \to \coHg ^1(\jet \Gpd,\wedge^k A)$ that sends   $\Pi$ with the characteristic pair $(c ,\pi)$  to $[c ]$.
Here $\coHg ^1(\jet \Gpd,\wedge^k A)$ is the degree $1$ groupoid cohomology of the jet groupoid $\jet \Gpd$ with respect to its adjoint action on $\wedge^k A$.
We will prove that   $K$ induces an embedding ${\Reduced}_{\mathrm{mult}}^k   \stackrel{K}{\hookrightarrow}\coHg ^1(\jet \Gpd,\wedge^k A)$. In the meantime, we   have an embedding  $\ReducedProper^k\stackrel{U}{\hookrightarrow} \coHg ^1(\heat,\wedge^k A)$  where $\heat$ is the bundle of {isotropy} jet Lie groups of   $\jet \Gpd$.
Our main result to characterize the reduced space  $\Reduced^k_{\mathrm{mult}}$  (for  $1\leqslant  k \leqslant  \Top $)  is analogous to the result of $\Reduced^k_{\mathrm{diff}}$. Indeed,  the space $\Reduced^k_{\mathrm{mult}}$ is determined by $\ReducedProper^k$ and $\coHg ^1(\jet \Gpd,\wedge^k A)$  via a pullback diagram
\begin{equation}\label{Digam:pullback2}
\xymatrix{  \Reduced^k_{\mathrm{mult}} \ar@{^{(}->}[d]_{K} \ar[r]^{I^!} & \ReducedProper^k \ar@{^{(}->}[d]^{U} \\
	  \coHg ^1(\jet \Gpd,\wedge^k A) \ar[r]^{I^*} & \coHg ^1(\heat,\wedge^k A)  }
\end{equation}
(see Theorem \ref{Thm:Rmultaspullback}).

We have Proposition \ref{Prop:3Ddiagram}, which presents the relationship between ${\Reduced}_{\mathrm{mult}}^\bullet$ and ${\Reduced}_{\mathrm{diff}}^\bullet$ as depicted by diagrams \eqref{Digram:pullbackreduceddiffk} and \eqref{Digam:pullback2}. There is a map $\bar{\delta}: $ ${\Reduced}_{\mathrm{mult}}^\bullet $ $\to $ ${\Reduced}_{\mathrm{diff}}^\bullet $, which is compatible with both pullbacks. In other words, ${\Reduced}_{\mathrm{diff}}^\bullet $ is the infinitesimal counterpart of ${\Reduced}_{\mathrm{mult}}^\bullet $ via $\bar{\delta}$. The kernel of this map is isomorphic to the kernel of the Van Est map $\coHg ^1(\Gpd,\wedge^k \ker\rho)\to \coHg ^1(A,\wedge^k \ker\rho)$.

Finally, we  handle the exceptional cases of $k=0$ and $k=\Top+1$ in Section~\ref{Sec:twoexceptionalcases} by giving detailed structures of Lie algebroid differentials and groupoid multiplicative multivectors. However, the above two pullback diagrams \eqref{Digram:pullbackreduceddiffk} and \eqref{Digam:pullback2} do not exist in these cases. Propositions~\ref{Prop:0differentialclass} and \ref{Prop:classifyk=0} present our results about $\Reduced^0_{\mathrm{diff}}$ and $\Reduced^0_{\mathrm{mult}}$. Moreover, we present our Theorems~\ref{Thm:top+1differentialpi} and \ref{Theorem:classifyk=top+1} for $\Reduced^{\Top +1}_{\mathrm{diff}}$ and $\Reduced^{\Top +1}_{\mathrm{mult}}$, respectively.


\section{Preliminaries}
 \label{Sec:pre}

\subsection{The deformation cochain complex}  Let $(A,[~,~],\rho)$ be a Lie algebroid over $M$.
The deformation complex of    $A$  is introduced in \cite{Cdef} by Crainic and Moerdijk. In \cite[Example 4.5]{AC2}, Arias Abad and Crainic  introduced a cochain complex stemming from the exterior power  of the adjoint representation of $A$   up to homotopy. In what follows we  generalize these notions.
\begin{definition}Let $ n\geqslant  -1$ and $p\geqslant  -1$ be integers.
	 A bidegree $(n,p)$ {\bf multiderivation} on the vector bundle $A\to M$   is an operator
	\[D:  \otimes^{n+1}_{\mathbb{R}}{\Apolysection{\bullet} } \to \Apolysection{\bullet} \] subject to the following conditions:
	\begin{itemize}
		\item[(1)] It is of total degree $(p-n)$, i.e., $D$ maps
		$ {\Apolysection{p_0}\otimes_{\mathbb{R}}\cdots \otimes_{\mathbb{R}} \Apolysection{p_n}} $ to $\Apolysection{p_0+p_1+\cdots+p_n +p-n}$;
		\item[(2)] It is
		skew-symmetric  with respect to  the   degree in $\Gamma(\wedge^{\bullet} A)[1]$:
		\begin{eqnarray*}
			D(\cdots,X,Y,\cdots)&=&-(-1)^{(|X|-1)(|Y|-1)}D(\cdots,Y,X,\cdots);
		\end{eqnarray*}
		\item[(3)] It is a
		derivation with respect to the $\wedge$-product in each argument:
		\begin{eqnarray*}D(X_0,\cdots,X_n\wedge X'_{n})&=& D(X_0,\cdots,X_n)\wedge X'_n\\ &&+(-1)^{|X_n|(|X_0|+\cdots+|X_{n-1}|+p-n )}X_n\wedge D(X_0,\cdots,X'_n).
		\end{eqnarray*}
	\end{itemize} 	
\end{definition}

We denote by $\Der^{n,p}(A)$ the space of bidegree $(n,p)$ multiderivations ($n,p\geqslant  -1$).
Note that   $\Der^{-1,p}(A) $ coincides with
$  \Gamma(\wedge^{p+1} A)$.

By the skew-symmetric property (2) and derivation property (3), a
multiderivation  $D\in \Der^{n,p}(A)$ is extended from a bunch of generating operations $D^{(i)}$, the restriction of $D$ on generating elements in $\Gamma(A)$ and $\CinfM$. To be specific, they are the following data:
\begin{itemize}
	\item[(0)] A skew-symmetric   map
	\[D^{(0)}:~\underbrace{\Gamma(A)\otimes_{\mathbb{R}}\cdots \otimes_{\mathbb{R}} \Gamma(A)}_{n+1}\to \Gamma(\wedge^{p+1} A);\]
	\item[(1)] A map (called  the first symbol of $D$, or simply symbol)
	\[ D^{(1)}:~ \underbrace{\Gamma(A)\otimes_{\mathbb{R}}\cdots \otimes_{\mathbb{R}} \Gamma(A)}_{n}\otimes_{\mathbb{R}} {\CinfM}\to \Gamma(\wedge^{p} A) \]
	which is skew-symmetric  in the first $n$ arguments and a derivation in the last one, i.e.,
	\begin{eqnarray}\nonumber
	D^{(1)}(u_0,\cdots,u_{n-1},fg)&=&f D^{(1)} (u_0,\cdots,u_{n-1},g)+g D^{(1)}(u_0,\cdots,u_{n-1},f),
	\end{eqnarray}
	for any $f,g\in {\CinfM}$ and $u_i\in \Gamma(A)$;
	\item[(2)] Analogous higher symbols of $D$, namely $D^{(2)}$, $D^{(3)}$, $\cdots$; The last one is $D^{(\ell)}$ ($\ell=\min{(n+1,p+1)}$):
	\[D^{(\ell)}:~ \underbrace{\Gamma(A)\otimes_{\mathbb{R}}\cdots \otimes_{\mathbb{R}} \Gamma(A)}_{n+1-\ell }\otimes_{\mathbb{R}}
	\underbrace{{\CinfM}\otimes_{\mathbb{R}}\cdots\otimes_{\mathbb{R}}{\CinfM}}_{\ell} \to \Gamma(\wedge^{p+1-\ell} A) \]
	which is skew-symmetric  in the first $(n+1-\ell)$ arguments,
	symmetric in the $\ell$ inputs of functions, and  a derivation in each of them.
\end{itemize}

The  maps $(D^{(0)},D^{(1)},D^{(2)},\cdots,D^{(\ell)})$ are compatible. For example, we have
\begin{eqnarray*}\label{Eqt:D0D1compatible}
D^{(0)}(u_0,\cdots,u_{n-1},fu_{n})&=&fD^{(0)}(u_0,\cdots,u_{n})+D^{(1)}(u_0,\cdots,u_{n-1},f)\wedge u_{n};\end{eqnarray*} and
\begin{eqnarray*}\nonumber
D^{(1)}(u_0,\cdots,g u_{n-1},f )&=&gD^{(1)}(u_0,\cdots,u_{n-1},f)-D^{(2)}(u_0,\cdots,u_{n-2},g,f)\wedge u_{n-1},\end{eqnarray*} where $g\in \CinfM$;
Similar conditions relating $D^{(i)}$ and $D^{(i+1)}$ are omitted. The last one says that $D^{(\ell)}$ is $\CinfM$-linear in the first $(n+1-\ell)$ arguments of $\Gamma(A)$.
We will use $D=(D^{(0)},D^{(1)},\cdots,D^{(\ell)})$ to denote such a multiderivation. Note that in \cite{AC2}, $(D^{(1)},\cdots,D^{(\ell)})$ is called the tail  of $D^{(0)}$.

 \begin{example}\label{Ex:LiestructuremofA}
Arising from the Lie algebroid structure on $A$, one has the standard Schouten bracket $m=[\cdot,\cdot]$ on $\Gamma(\wedge^\bullet A)$.  We can easily check  that $m$ is    a  bidegree $(1,0)$  multiderivation on $A$.  Indeed, it is extended from the Lie bracket  $m^{(0)}= [\cdot,\cdot]:~\Gamma(A)\otimes_{\mathbb{R}} \Gamma(A)\to \Gamma(A)$, and its   symbol, i.e. the anchor map $ m^{(1)}=\rho:~\Gamma(A)\otimes_{\mathbb{R}} {\CinfM}\to {\CinfM}$. \end{example}

If $p=0$, the space $\Der^{n,0}(A)$ we defined coincides with the space $\Der^{n}(A) $ introduced by Crainic and Moerdijk in \cite{Cdef}. They also defined a Lie bracket $[~,~]$ on $\Der^{\bullet}(A)$ which they call the Gerstenhaber bracket. We extend this bracket to the space  $\Der^{\bullet,\diamond}(A)$ of bigraded multiderivations.
For $D\in \Der^{n,p}(A)$ and $E\in \Der^{{n'},{p'}}(A)$, the \textbf{bigraded Gerstenhaber   bracket} of $D$ and $E$, denoted $[D,E]\in \Der^{n+{n'},p+{p'}}(A)$,  is an operation of degree $(p+{p'}-n-{n'})$    defined by the bigraded commutator:
\begin{equation}\label{Eq:[D,E]}
[D,E]=(-1)^{n{n'}}D{\littlecirc} E-(-1)^{p{p'}}E{\littlecirc} D
\end{equation}
{as a map } $\otimes_{\mathbb{R}}^{n+{n'}+1}\Gamma(\wedge^\bullet A)\to \Gamma(\wedge^{\bullet} A)$.
Here   the composition $D{\littlecirc} E:~\otimes_{\mathbb{R}}^{n+{n'}+1}\Gamma(\wedge^\bullet A)\to \Gamma(\wedge^{\bullet} A)$ is given by:
\[(D{\littlecirc} E)(X_0,\cdots,X_{n+{n'}})=\sum_\sigma (-1)^{|\sigma|} K(\sigma) D(E(X_{\sigma(0)},\cdots,X_{\sigma({n'})}),X_{\sigma({n'}+1)},\cdots X_{\sigma(n+{n'})}).\]
(The counterpart $E{\littlecirc} D$ is defined similarly.) The summation is taken over all $({n'}+1,n)$-shuffles, and $K(\sigma)$ denotes the Koszul sign produced by the permutation $\sigma$. The rule is simply that, if $X $ and $X'$ are adjacent and  swapped, then it produces a sign $\minuspower{(\abs{X}-1)(\abs{X'}-1)}$ as coefficient. Or, to find $K(\sigma)$, one can use the formula
$$
X_0\odot\cdots\odot X_{n+{n'}}=K(\sigma)
X_{\sigma(0)}\odot \cdots \odot X_{\sigma({n'})}\odot X_{\sigma({n'}+1)}\odot \cdots \odot X_{\sigma(n+{n'})},
$$
where $\odot$ denotes the product in the symmetric algebra $S(\Gamma(\wedge^\bullet A)[1])$.

\begin{proposition} Equipped with the bigraded Gerstenhaber  bracket, the  space $$\Der^{\bullet,\diamond}(A)=\bigoplus_{n\geqslant  -1, p\geqslant  -1}\Der^{n,p}(A)$$ is a bigraded Lie algebra. Namely, we have $
	[\Der^{n,p}(A),\Der^{{n'},{p'}}(A)] \subset \Der^{n+{n'},p+{p'}}(A) $, and
	\begin{eqnarray*}
		{[D,E]}&=&-(-1)^{n{n'}+p{p'}}[E,D],\\
		{[D,[E,F]]}&=&[[D,E],F]+(-1)^{n{n'}+p{p'}}[E,[D,F]],
	\end{eqnarray*}
	for all  $D\in \Der^{n,p}(A)$, $E\in \Der^{{n'},{p'}}(A)$ and $F\in \Der^{\bullet,\diamond}(A)$.
\end{proposition}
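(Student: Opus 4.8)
The plan is to regard the bigraded Gerstenhaber bracket as the (bigraded) commutator of the composition product $D{\littlecirc}E$ of \eqref{Eq:[D,E]}, and to deduce the three assertions — closure, antisymmetry, and the graded Jacobi identity — from properties of this single product. The antisymmetry is the cheapest and I would dispatch it first: substituting $[E,D]=\minuspower{n'n}E{\littlecirc}D-\minuspower{p'p}D{\littlecirc}E$ into $-\minuspower{nn'+pp'}[E,D]$ and collecting the (squared, hence trivial) Koszul signs returns $\minuspower{nn'}D{\littlecirc}E-\minuspower{pp'}E{\littlecirc}D=[D,E]$, so no real content is involved here.

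For closure I would first note that counting arguments and internal degrees shows $D{\littlecirc}E$, and hence $[D,E]$, maps $\otimes^{n+n'+1}_{\mathbb{R}}\Gamma(\wedge^\bullet A)$ to $\Gamma(\wedge^\bullet A)$ with total degree $(p+p')-(n+n')$, giving the expected bidegree $(n+n',p+p')$, while the shifted skew-symmetry of property (2) is automatic because the shuffle sum with the signs $\minuspower{|\sigma|}K(\sigma)$ already antisymmetrizes the inputs. The only substantive point is the derivation property (3). Here $D{\littlecirc}E$ by itself is \emph{not} a multiderivation: expanding a wedge $X\wedge X'$ inside one argument and using that $E$ and then $D$ are each derivations produces ``second-order'' cross terms in which one of $X,X'$ is routed through $E$ and the other through $D$. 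The key observation is that the collection of these defect terms for $D{\littlecirc}E$ coincides, term by term, with that for $E{\littlecirc}D$, so that with the prescribed signs $\minuspower{nn'}$ and $\minuspower{pp'}$ they cancel in the commutator and what survives is first order in each argument. Verifying this cancellation with the correct Koszul signs (in both the $n$- and the $p$-grading) is the one genuinely computational step in establishing closure.

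The graded Jacobi identity is the main obstacle, and I would establish it through the classical pre-Lie mechanism rather than by a frontal three-term expansion. Concretely, I would prove that $D{\littlecirc}E$ is a \emph{bigraded right-pre-Lie product}, i.e. that for homogeneous $D,E,F$ of bidegrees $(n,p),(n',p'),(n'',p'')$ its associator $(D{\littlecirc}E){\littlecirc}F-D{\littlecirc}(E{\littlecirc}F)$ is symmetric in $E$ and $F$ up to the bigraded Koszul sign $\minuspower{n'n''+p'p''}$. This symmetry is exactly the statement that, after inserting both $E$ and $F$ into $D$, the configurations in which $E$ and $F$ land in disjoint blocks are interchangeable, whereas the ``nested'' configurations are precisely accounted for by the $D{\littlecirc}(E{\littlecirc}F)$ term; it follows from a bookkeeping of the $(n'+1,n)$- and $(n''+1,\cdot)$-shuffles together with the Koszul rule for $K(\sigma)$. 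Once the bigraded pre-Lie identity is in hand, the graded Jacobi identity for the commutator is a formal consequence of the standard lemma that the bigraded commutator of a bigraded pre-Lie product is a bigraded Lie bracket.

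Finally, I would observe that for $p=p'=0$ the product and bracket reduce to the composition and Gerstenhaber bracket on $\Der^\bullet(A)$ of Crainic and Moerdijk \cite{Cdef}, so that the whole statement may be viewed as their graded Lie algebra refined by the extra internal weight grading $p$, the role of the second Koszul family $\minuspower{pp'}$ being precisely to make this weight a genuine grading under which all signs are Koszul. The difficulty throughout is thus not conceptual but the simultaneous bookkeeping of two independent sign systems; an alternative that sidesteps much of it is to model $\Der^{\bullet,\diamond}(A)$ inside the coderivations of the (doubly graded) cofree cocommutative coalgebra on $\Gamma(\wedge^\bullet A)[1]$, where composition is literal and its commutator is Lie on the nose, and then to check that the multiderivation conditions cut out a closed subspace.
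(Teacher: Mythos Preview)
Your proposal is correct and considerably more detailed than what the paper offers: the paper does not actually prove the proposition, but only remarks that one must verify $[D,E]$ satisfies the multiderivation conditions and that this is ``a lengthy but straightforward verification, and thus omitted.'' So you are supplying what the authors chose to leave out.

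Your organization is sound. The antisymmetry is indeed formal, and your analysis of closure --- that $D{\littlecirc}E$ fails to be a multiderivation by second-order defect terms which are symmetric in $D$ and $E$ (with the bigraded Koszul sign) and hence cancel in the commutator --- is exactly the right mechanism. For the Jacobi identity, invoking the pre-Lie property of the composition product is the cleanest route and avoids a brute-force twelve-term expansion; it is worth noting that the paper's bracket $[D,E]=\minuspower{nn'}D{\littlecirc}E-\minuspower{pp'}E{\littlecirc}D$ differs from the ``standard'' bigraded commutator $D{\littlecirc}E-\minuspower{nn'+pp'}E{\littlecirc}D$ only by the overall twist $\minuspower{nn'}$, and one checks directly that this twist preserves the Jacobi identity, so the usual pre-Lie $\Rightarrow$ Lie lemma applies without modification. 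The coderivation model you mention at the end is likewise a legitimate shortcut that would absorb both sign systems into a single super-grading.
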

To prove this proposition, we need  to verify that $[D,E]$ defined in Equation  ~\eqref{Eq:[D,E]} satisfies the conditions of a bidegree $(n+{n'},p+{p'})$ multiderivation. This is a lengthy but straightforward verification, and thus omitted.

The following lemma is due to \cite{Cdef}.
\begin{lemma}
	Let $A\to M$ be a Lie algebroid. For the Schouten bracket structure $m:=[\cdot,\cdot]\in \Der^{1,0}(A)$ defined in Example \ref{Ex:LiestructuremofA}, we have $[m,m]=0$. Consequently, for all $p\geqslant  -1$,  $(\Der^{\bullet,p}(A),\partial:=[m,~\cdot~])$ is a cochain complex.
\end{lemma}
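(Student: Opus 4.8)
The plan is to establish $[m,m]=0$ first and then deduce $\partial^2=0$ formally from the graded Jacobi identity of the preceding Proposition. For the first part, I would specialize the defining formula \eqref{Eq:[D,E]} to $D=E=m\in\Der^{1,0}(A)$, so that $n=n'=1$, $p=p'=0$, and the signs become $(-1)^{nn'}=-1$ and $(-1)^{pp'}=1$. This yields $[m,m]=-m\circ m-m\circ m=-2\,m\circ m$, so that $[m,m]=0$ is equivalent to $m\circ m=0$. Since $[m,m]\in\Der^{2,0}(A)$ is itself a multiderivation (by the Proposition), it is determined by its generating operations on $\Gamma(A)$ and $\CinfM$; hence it suffices to check the vanishing of $m\circ m$ on such generators.

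Evaluated on three arguments, $m\circ m$ is a sum over the three $(2,1)$-shuffles of $\{0,1,2\}$,
\[
(m\circ m)(X_0,X_1,X_2)=\sum_{\sigma}(-1)^{\abs{\sigma}}K(\sigma)\,[[X_{\sigma(0)},X_{\sigma(1)}],X_{\sigma(2)}],
\]
which is exactly the graded Jacobiator of the Schouten bracket in the shifted convention (degree $\abs{X}-1$). Restricting the inputs to $\Gamma(A)$ reduces the vanishing to the classical Jacobi identity for $m^{(0)}=[\cdot,\cdot]$ on $\Gamma(A)$; allowing one or two of the inputs to lie in $\CinfM$ reduces it, via symbol-compatibility relations such as \eqref{Eqt:D0D1compatible}, to the bracket--anchor compatibility encoded by $m^{(1)}=\rho$. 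All of these hold by the Lie algebroid axioms, which is the content attributed to \cite{Cdef}.

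I expect the only real difficulty to be the bookkeeping of signs: one must verify that the shuffle signs $(-1)^{\abs{\sigma}}$ combined with the Koszul signs $K(\sigma)$ reproduce precisely the signs of the shifted graded Jacobi identity, and that the mixed generator cases are correctly governed by the compatibilities between the symbols $m^{(i)}$. There is no conceptual obstruction, only this tedious sign verification, which is why I would organize the argument around the finite generator-level check rather than around the full trilinear operator.

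For the consequence, fix $p\geq-1$. Since $m\in\Der^{1,0}(A)$, the operator $\partial=[m,\cdot\,]$ maps $\Der^{n,p}(A)$ into $\Der^{n+1,p}(A)$, so it is a degree-one endomorphism of $\Der^{\bullet,p}(A)$. Applying the graded Jacobi identity of the Proposition with $D=E=m$ and arbitrary $F=\Phi\in\Der^{n,p}(A)$, and noting that $(-1)^{nn'+pp'}=(-1)^{1\cdot1+0\cdot0}=-1$ for $m$, gives
\[
[m,[m,\Phi]]=[[m,m],\Phi]-[m,[m,\Phi]],
\]
whence $2\,[m,[m,\Phi]]=[[m,m],\Phi]$. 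Since $[m,m]=0$, we obtain $\partial^2\Phi=[m,[m,\Phi]]=0$, so $(\Der^{\bullet,p}(A),\partial)$ is a cochain complex for every $p\geq-1$.
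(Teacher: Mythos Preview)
Your argument is correct. Note, however, that the paper does not supply its own proof of this lemma: it simply attributes the result to \cite{Cdef} (Crainic--Moerdijk) and moves on. What you have written is precisely the standard verification one would expect---reducing $[m,m]=0$ to the (graded) Jacobiator of the Schouten bracket on generators, and then deriving $\partial^2=0$ from the bigraded Jacobi identity of the preceding Proposition---so there is nothing to compare beyond observing that you have filled in the details the paper leaves to the reference.
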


We call $(\Defcomplex^{\bullet, \diamond},\partial):=(\Der^{\bullet-1,\diamond}(A),\partial:=[m,~\cdot~])$ the (bigraded) {\bf deformation   cochain complex}. The  cohomology of this cochain complex is called the {\bf deformation cohomology of $A$} and is denoted $\coHg _{\mathrm{Def}}^{\bullet,\diamond}(A)$, which is a $(-1,0)$-bigraded Lie algebra, i.e. $[\coHg _{\mathrm{Def}}^{n,p}(A),\coHg _{\mathrm{Def}}^{n',p'}(A)]\subset \coHg _{\mathrm{Def}}^{n+n'-1 ,p+p' }(A)$.

\begin{remark}\label{Remk:specialdeformationcomplexes}The  complex  $(\Defcomplex^{\bullet, -1},\partial)$ coincides with the standard Chevalley-Eilenberg complex $(\OmegaA,\dA)$ of the Lie algebroid $A$, while  $(\Defcomplex^{\bullet, 0}, \partial )$ is the deformation complex of Lie algebroids introduced by Crainic and Moerdijk in \cite{Cdef}.  
	For general $k\geqslant  1$, the complex   $(\Defcomplex^{\bullet, k} ,\partial )$ is the same as    $(C^{\bullet}(A;\wedge^{k+1}\adjUTH),d)$ introduced in \cite[Example 4.5]{AC2}. See more explanations in the subsequent Section \ref{Sec:RUTH}.
\end{remark}

Recall that a {  $k$-differential} on a Lie algebroid $A$ is
a derivation    $\delta:~\Gamma(\wedge^\bullet A)\to \Gamma(\wedge^{\bullet+k-1} A) $
which is compatible with the Schouten bracket.
For a $0$-cochain $\tau\in \Defcomplex^{0, p}=\Der^{-1,p}(A) =\Gamma(\wedge^{p+1} A)$, its coboundary $\partial \tau \in \Der^{0,p}(A)$ is of the form of an exact $(p+1)$-differential: $$\partial \tau=-[\tau,~\cdot~],\qquad\mbox{ as a map }~ \Gamma(\wedge^\bullet A)\to \Gamma(\wedge^{\bullet+p} A).$$
A $1$-cochain $D\in \Defcomplex^{1, p}=\Der^{0,p}(A)$ is a (degree $p$) derivation of the graded algebra  $\Gamma(\wedge^\bullet A)$. The coboundary $\partial D\in \Der^{1,p}(A)$ is a (degree $(p-1)$) binary operator
$$\Gamma(\wedge^\bullet A)\otimes_{\mathbb{R}} \Gamma(\wedge^\bullet A)\to \Gamma(\wedge^\bullet A)$$
given by
$$
(\partial D)(X,Y)=[D(X),Y]+(-1)^{p(\abs{X}-1)}[X,D(Y)]-D[X,Y].
$$
So the following  statements  are apparent.

\begin{theorem}\cite{AC2}\label{Thm:gaugespacekdifferential1}~  Let $0\leqslant  k\leqslant  \Top +1$ be an integer.
	\begin{enumerate}
		\item[\rm{(1)}] There is a one-to-one correspondence between
		$k$-differentials $\delta\in \differentials^k(A)$ and $1$-cocycles $D\in \Der^{0,k-1}(A)$. Exact $k$-differentials correspond to coboundaries in $\Der^{0,k-1}(A)$.
		\item[\rm{(2)}] The  reduced  space of   differentials ${\Reduced}^k_{\mathrm{diff}}$ on $A$ and the degree $(1,k-1)$ Lie algebroid
		deformation cohomology
		$ \coHg _{\mathrm{Def}}^{1,k-1}(A)$ 	are one and the same.
	\end{enumerate} 	
\end{theorem}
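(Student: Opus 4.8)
The plan is to recognise that a $k$-differential and a $0$-cocycle of the deformation complex $(\Der^{\bullet,k-1}(A),\partial)$ are two names for the same object, so that the proof amounts to reading the definitions against the two coboundary formulas displayed immediately before the statement.

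First I would use the identification, already recorded in the line preceding the theorem, that a $0$-cochain $D\in\Der^{0,k-1}(A)$ is nothing but a degree $(k-1)$ derivation of the graded algebra $\Gamma(\wedge^\bullet A)$. Indeed, with $n=0$ the defining conditions leave a single operator that raises the exterior degree by $p-n=k-1$ (condition (1)) and is a derivation for the $\wedge$-product (condition (3)); the skew-symmetry condition (2) is vacuous with one bundle-valued argument. This is exactly the ambient datum of a $k$-differential before Schouten-compatibility is imposed, and the total-degree count $p-n=k-1$ matches the degree $(k-1)$ of a $k$-differential.

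Next I would match the cocycle equation to the differential condition. The displayed formula for $\partial D$, specialised to $p=k-1$, reads
$$
(\partial D)(X,Y)=[D(X),Y]+(-1)^{(k-1)(\abs{X}-1)}[X,D(Y)]-D[X,Y],
$$
so $\partial D=0$ is verbatim the compatibility of $D$ with the Schouten bracket defining a $k$-differential. Together with the first step this gives the bijection of part (1), the identity map sending a $k$-differential to a $0$-cocycle and conversely. For the exact clause I would invoke the other displayed formula: for $\tau\in\Der^{-1,k-1}(A)=\Gamma(\wedge^k A)$ one has $\partial\tau=-[\tau,\cdot\,]$; since an exact $k$-differential is by definition $[\tau,\cdot\,]$ and the overall sign does not affect the image of $\partial$, the exact $k$-differentials are precisely the coboundaries $\partial\bigl(\Der^{-1,k-1}(A)\bigr)$ inside $\Der^{0,k-1}(A)$.

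Part (2) is then purely formal. Under the identifications above, $\differentials^k(A)=\ker\bigl(\partial\colon\Der^{0,k-1}(A)\to\Der^{1,k-1}(A)\bigr)$ and $\mathfrak{A}^k(A)=\Img\bigl(\partial\colon\Der^{-1,k-1}(A)\to\Der^{0,k-1}(A)\bigr)$, whence
$$
{\Reduced}^k_{\mathrm{diff}}=\differentials^k(A)/\mathfrak{A}^k(A)=\ker\partial/\Img\partial=\coHg^{0,k-1}_{\mathrm{Def}}(A).
$$
The only care required --- and this is the mildest possible obstacle --- is sign bookkeeping: one must confirm that the Koszul/Schouten signs in $\partial D$ reproduce exactly the factor $(-1)^{(k-1)(\abs{X}-1)}$ appearing in the $k$-differential identity, and that the degree shift of a $(0,k-1)$-multiderivation is $k-1$ rather than $k$. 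Both check out, which is why the statement is announced as apparent; the genuine content was front-loaded into the derivation of the two coboundary formulas.
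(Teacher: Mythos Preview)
Your proposal is correct and follows exactly the paper's own approach: the paper derives the two coboundary formulas for $\partial\tau$ and $\partial D$ immediately before the theorem and then declares the statement ``apparent,'' which is precisely the unpacking you have written out. There is nothing to add.
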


\begin{remark}For multiplicative $k$-vectors on a Lie groupoid, we do not have a      theorem analogous to Theorem \ref{Thm:gaugespacekdifferential1} for Lie algebroid $k$-differentials. If so, there should exist a notion of (bigraded) \emph{deformation cohomology of a Lie groupoid} . In fact, in \cite{CMS},  the deformation complex of Lie groupoids is already introduced, which should be the $k=1$ case.  When $k=1$,      ${\Reduced}_{\mathrm{mult}}^1$ coincides with the deformation cohomology in degree $1$ of   Lie groupoids \cite[Proposition 4.3]{CMS}. 	
	It is also proved that this complex is isomorphic to the standard   complex of Lie groupoids with respect to the adjoint representation up to homotopy  {and the VB-groupoid complex of the tangent groupoid (\cite{GM})}. See  \cite{AC} for more about representations up to homotopy of Lie groupoids. 	
\end{remark}

\subsection{Reduced space of Lie algebroid differentials and representation up to homotopy}\label{Sec:RUTH}
 In this part we recall some   results in \cite{AC2} related to the notion of representation up to homotopy of a Lie algebroid. A  key  example is  the adjoint complex of a Lie algebroid $A$ over $M$,  a cochain complex of vector bundles concentrated in two degrees:
\begin{equation}\label{Seq:ATM}
\adjUTH:~\quad
A \stackrel{\rho}{\longrightarrow} TM[-1].
\end{equation}

In fact,  according to \cite{AC2}, one needs to choose a connection $\nabla$ on the vector bundle $A$, and the representation up to homotopy   structure operator reads $$ {\Dnabla}=\rho+ \basicconnection+\basiccurvature,$$
which is a differential on $\Omega^{\bullet}(A,\adjUTH):=\Gamma(\wedge^\bullet A^*\otimes (A \oplus TM[-1]))$. Indeed, it is a dg $(\OmegaA,\dA)$-module where $(\OmegaA,\dA)$  is the standard Chevalley-Eilenberg complex of the Lie algebroid $A$.

 The adjoint complex \eqref{Seq:ATM} together with $\Dnabla$ is denoted by $\adnabla$. The cohomology of $(\Omega^{\bullet}(A,\adjUTH),\Dnabla)$, denoted by $\coHg^\bullet(A,\adjUTH)$, is independent of $\nabla$. It is also shown in \cite[Proposition 4.6]{AC2} that the cochain complex
 $ (\Omega^{\bullet}(A,\wedge^{k } \adjUTH),\Dnabla) $ is isomorphic to $(\Der^{\bullet-1,k-1}(A),\partial )$ (the notation is  $(C^{\bullet}(A;\wedge^{k }\adjUTH),d)$ in \cite{AC2}). Therefore, according to Theorem \ref{Thm:gaugespacekdifferential1}, we get the following fact.
 \begin{proposition}For any Lie algebroid $A$,   we have $${\Reduced}^k_{\mathrm{diff}} =\coHg _{\mathrm{Def}}^{1,k-1}(A) \cong \coHg^1(A,\wedge^k \adjUTH) .$$
 	
 \end{proposition}

   We also notice a deep insight of representation up to homotopy due to \cite[Remark 2.8, Theorem 4.13, and Example 4.17]{AC2}.
   \begin{proposition}  Let $(A, [~,~], \rho) $ be a regular Lie algebroid (i.e. $\rho:A\to TM$ is a regular bundle map). There exists a deformation retract (a.k.a. homological contraction, see \cite{EML}) of dg $(\OmegaA,\dA)$-complexes
    	 \begin{equation}\label{Eq: contraction in AC2}
   	 	\begin{tikzcd}
   	 		(\Omega^{\bullet}(A,  \adjUTH),\Dnabla) \arrow[loop left, distance=2em, start anchor={[yshift=-1ex]west}, end anchor={[yshift=1ex]west}]{}{H} \arrow[r,yshift = 0.7ex, "\Phi"] & (\Omega^\bullet(A, {\ker \rho} \oplus N[-1]) , \dCE+\omega_2) \arrow[l,yshift = -0.7ex, "\Psi"].
   	 	\end{tikzcd}
   	 \end{equation}
    Here $ {\ker \rho} \subset A$ and $N=TM/\mathrm{img}\rho $ are both $A$-modules in natural manners, $\dCE$ denotes the standard Chevalley-Eilenberg differential on $ {\ker \rho} \oplus N[-1]$, and $\omega_2\in \Omega^2(A;\mathrm{Hom}(N, {\ker \rho}))$ depends on $\nabla$.
   	
   \end{proposition}
   For a thorough proof of this proposition, see \cite{XiangSolo2021}.
   From the deformation retract \eqref{Eq: contraction in AC2}, we get a new one:
   \begin{equation*}\label{Eq: wedge of contraction in AC2}
   	\begin{tikzcd}
   		\big(\Omega^{\bullet}(A,  \wedge^k \adjUTH),\Dnabla\big) \arrow[loop left, distance=2em, start anchor={[yshift=-1ex]west}, end anchor={[yshift=1ex]west}]{}{\tilde{H}} \arrow[r,yshift = 0.7ex, "\tilde{\Phi}"] & \Big(\Omega^\bullet\big(A,\wedge^k( {\ker \rho} \oplus N[-1])\big), \dCE+\omega_2\Big) \arrow[l,yshift = -0.7ex, "\tilde{\Psi}"].
   	\end{tikzcd}
 \end{equation*}
 	For  definitions of $\tilde{H},\tilde{\Phi}$ and $\tilde{\Psi}$, see \cite{Manetti}. 
Therefore, according to Theorem \ref{Thm:gaugespacekdifferential1} $\mathrm{(2)}$, we have two corollaries.
   \begin{corollary}\label{Cor:regularReducedDiff}If $(A, [~,~], \rho) $ is a regular Lie algebroid, then we have $${\Reduced}^k_{\mathrm{diff}} =\coHg _{\mathrm{Def}}^{1,k-1}(A) \cong \coHg^1(A,\wedge^k \adjUTH)\cong \coHg^1(A,\wedge^k ( {\ker \rho} \oplus N[-1])).$$
   	
   \end{corollary}
\begin{corollary}\label{Cor:transitivecasetorecover} If $A$ is transitive, i.e. $N$ is trivial, then we have ${\Reduced}^k_{\mathrm{diff}}\cong \coHg ^1(A,\wedge^k   {\ker \rho}  )$ (the usual Chevalley-Eilenberg cohomology).
	\end{corollary}
However, the isomorphism  in this corollary comes from the   quasi-isomorphism  $\tilde\Phi$, which is complicated to describe. In Section \ref{Sec:transtivedifferentials}, we will give another proof of this fact by decomposition of Lie algebroid differentials.

\subsection{Reduced space of $\rho$-compatible tensors} 

   A $k$-differential $\delta$ on $A$, as a multiderivation, can be written as $\delta=(\deltazero,\deltaone )$  where $\deltazero$ is an operator $\Gamma(A)\to \Gamma(\wedge^k A)$ and
\[\deltaone :{\CinfM}\to \Gamma(\wedge^{k-1} A) \] is the symbol of $\delta$. They are subject to the conditions
\begin{equation}\label{Eqt:delta0delta1derivatives} \deltaone (ff')=f'\deltaone(f)  +f\deltaone  (f') ,\qquad \deltazero (fu)=\deltaone ( f)\wedge u+f\deltazero ( u) ,\end{equation}
and
\begin{eqnarray}\label{Eqt:kdifferential2.5}
0&=& \ba{\deltaone  (f ) }{f' }+\minuspower{k-1}\ba{f
}{\deltaone ( f' )},\\
\label{Eqt:kdifferential3}
\deltaone \ba{u}{f}&=&[\deltazero(u),f]+[u,\deltaone(f)],\\
\label{Eqt:kdifferential4}
\deltazero [u,v]&=&[\deltazero(u),v]+[u,\deltazero (v)],
\end{eqnarray}
for all $f,f'\in {\CinfM}$, $u,v\in \Gamma(A)$.
Note that in the ordinary case ($1\leqslant  k\leqslant  \Top $),  Equations  ~\eqref{Eqt:delta0delta1derivatives} and   \eqref{Eqt:kdifferential4} imply Equations  ~ \eqref{Eqt:kdifferential2.5} and \eqref{Eqt:kdifferential3}.

The above operator $\deltaone$ corresponds to a tensor  $\pi$   $\in \Gamma(TM\otimes (\wedge^{k-1} A))$ such that $\deltaone(f)=\iota_{df}\pi$ for all $f\in \CinfM$. The condition \eqref{Eqt:kdifferential2.5} transforms to
 \begin{equation}\label{Eqt:fine-condition}
 \inserts_{\rho^*\xi}\iota_{\eta}\pi
 = -\inserts_{\rho^*\eta}\iota_{\xi}\pi,
 \qquad\forall~\xi,\eta\in\Omega^1(M).
 \end{equation}

\begin{definition}\label{Defn:properktensor}
	A \textbf{$\rho$-compatible $k$-tensor}  is a section $\pi$   $\in \Gamma(TM\otimes (\wedge^{k-1} A))$ that satisfies Equation   \eqref{Eqt:fine-condition}.
 	\end{definition}

When $k=1$, Equation  ~\eqref{Eqt:fine-condition} becomes  trivial and every $\pi\in \XX(M)$ is $\rho$-compatible. When $k=2$, Equation  ~\eqref{Eqt:fine-condition} simply  means
that the map $T^*M\to TM$ defined by $  \pi^\sharp{\littlecirc} \rho^*$ is skew-symmetric  where $\pi^\sharp: ~A^*\to TM$ is the contraction to the second factor of $\pi\in \Gamma(TM\otimes A )$.

Throughout this paper, we will constantly consider the direct sum $TM\oplus A$, which is identically the tangent space of the vector bundle $A$ along its base $M$. It is clear that
$ \wedge^n (TM\oplus A)  $ is the direct sum of  vector bundles $(\wedge^p TM)\otimes (\wedge^{n-p} A)$, for $p=0,1,\cdots,n$.

We need a particular operator $D_\rho$ which was   first studied in
\cite{CSX}. It is  a  degree $0$ derivation on $ \wedge^\bullet (TM\oplus A)$
determined by its $TM\oplus A\to TM\oplus A$ part:
\begin{eqnarray*}\label{D}
	D_\rho(X,u)=(\rho(u),0),\qquad \forall~ X\in TM, u\in A.
\end{eqnarray*}
Hence $D_\rho $ maps $(\wedge^pTM)\otimes(\wedge^q A)$ to $(\wedge^{p+1}TM)\otimes(\wedge^{q-1} A)$.
For
$\pi$   $\in   TM\otimes (\wedge^{k-1} A) $,
we introduce
\begin{equation}\label{Bpi}
B\pi= \frac{1-e^{-\rhopush}}{\rhopush} (\pi)
=
\pi-\tfrac{1}{2!}\rhopush\pi+\tfrac{1}{3!}\rhopush^2
\pi+\cdots+\tfrac{\minuspower{k-1}}{k!}\rhopush^{k-1}\pi\in \wedge^k (TM\oplus A) .
\end{equation}
Note that the term $\rhopush^{j}\pi\in  (\wedge^{j+1} TM)\otimes (\wedge^{k-1-j} A) $.

Let $\Gpd \toto M$ be a Lie groupoid and $A$   the associated tangent Lie algebroid. Along the base manifold $M$,  every $s$-fibre of $\Gpd$ is transversal to the identity section $M$, and thus  we have a canonical decomposition \begin{equation*}\label{Eqt:TG=TM+A} T \Gpd |_M = TM\oplus A. \end{equation*}
We adopt this identification  throughout the paper.
From this point of view, the tangent map of groupoid inverse
$\invstar : ~T_xM\oplus A_x \to T_xM\oplus A_x$  ($x\in M$)
is given by
\begin{equation}
\label{Eqt:firstinversestar}
(X,u)\mapsto (X+\rho(u),-u).
\end{equation} In the sequel,  for a general Lie algebroid $A$ with anchor map $\rho$, we use the same notation
$\invstar$ to denote the automorphism $  TM\oplus A\to TM\oplus A$ defined as above, though $A$ may not be integrable.

\begin{lemma}
	Extend the inverse map $\invstar$ defined in Equation   \eqref{Eqt:firstinversestar} to an automorphism of the exterior algebra $\wedge^\bullet (TM\oplus A)$. Then for all $W\in   (\wedge^pTM)\otimes(\wedge^q A) $, we have
	\begin{equation}\label{Eqt:inverstarinDrho}
	\invstar  W
	=\minuspower{q}(
	W- \rhopush W+\tfrac{1}{2 }\rhopush^2
	W+\cdots+\tfrac{\minuspower{q}}{q!}\rhopush^{q} W).
	\end{equation}
\end{lemma}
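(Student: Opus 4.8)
The plan is to recognize both sides of \eqref{Eqt:inverstarinDrho} as algebra automorphisms of $\wedge^\bullet(\TM\oplus A)$ that agree on the degree-one generators $\TM\oplus A$, and then to invoke the universal property of the exterior algebra to conclude they are equal. Denote by $\Phi$ the operator defined by the right-hand side, so that $\Phi(W)=\minuspower{q}\sum_{j=0}^{q}\tfrac{\minuspower{j}}{j!}\rhopush^{j}W$ for $W\in(\wedge^p\TM)\otimes(\wedge^q A)$; note the sum is finite because $\rhopush$ lowers the $A$-degree by one and $\wedge^{-1}A=0$, so $\rhopush^{q+1}W=0$. Since $\invstar$ is by construction the exterior-algebra automorphism extending $(X,u)\mapsto(X+\rho(u),-u)$, it will suffice to prove that $\Phi$ is multiplicative and that $\Phi$ and $\invstar$ agree on $\TM\oplus A$.

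The first and main step is multiplicativity of $\Phi$. Here I would use that $\rhopush$ is an even (total-degree preserving) derivation of $\wedge^\bullet(\TM\oplus A)$, so that the unsigned Leibniz rule iterates to the binomial identity
\[
\rhopush^{j}(W_1\wedge W_2)=\sum_{a+b=j}\binom{j}{a}\,\rhopush^{a}W_1\wedge \rhopush^{b}W_2 .
\]
Substituting this into the definition of $\Phi$ for $W_1,W_2$ of $A$-degrees $q_1,q_2$, the prefactor factors as $\minuspower{q_1+q_2}=\minuspower{q_1}\minuspower{q_2}$ and the resulting double sum separates, giving $\Phi(W_1\wedge W_2)=\Phi(W_1)\wedge\Phi(W_2)$. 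Conceptually this is the statement that $\Phi=e^{-\rhopush}\circ\sigma$, where $\sigma$ acts by $\minuspower{q}$ on the $A$-degree $q$ summand; the parity operator $\sigma$ is visibly an automorphism and $e^{-\rhopush}$ is one as the exponential of an even derivation, so their composite is an automorphism.

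It then remains to compare the two maps on generators, which is immediate: for $X\in\TM$ one has $q=0$ and $\rhopush X=0$, so $\Phi(X)=X=\invstar(X)$, while for $u\in A$ one has $q=1$ and $\Phi(u)=\minuspower{1}(u-\rhopush u)=\rho(u)-u=\invstar(u)$. As $\wedge^\bullet(\TM\oplus A)$ is generated in degree one and both $\Phi$ and $\invstar$ are algebra homomorphisms, agreement on generators forces $\Phi=\invstar$, which is precisely \eqref{Eqt:inverstarinDrho}. I expect no genuine difficulty here; the only point demanding care is the sign bookkeeping in the multiplicativity step, namely checking that the degree-dependent prefactor $\minuspower{q}$ is compatible with the binomial splitting, which works exactly because the $A$-degree is additive under $\wedge$.
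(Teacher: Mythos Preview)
Your proof is correct. The paper itself omits the argument entirely, saying only that the equality ``can be proved using the definition of $\invstar$,'' so your route---identifying the right-hand side as $e^{-\rhopush}\circ\sigma$, checking multiplicativity via the binomial identity for the even derivation $\rhopush$, and then matching with $\invstar$ on degree-one generators---is a clean and complete way to fill in exactly what the authors left out.
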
This equality can be proved using the definition of $\invstar $. We omit the details.


\begin{example}Let
	$\tau\in\Gamma(\wedge^kA)$ be given. Then $\rhopush\tau$ is a $\rho$-compatible $k$-tensor. In fact, we have
	$$
	\inserts_{\rho^*\xi}\iota_{\xi}(\rhopush\tau)
	=\inserts_{\rho^*\xi}\inserts_{\rho^*\xi}\tau=0,\qquad \forall~   \xi \in\Omega^1(M).
	$$
\end{example}

\begin{definition}\label{def:reducedproperspace}
	We call a $\rho$-compatible $k$-tensor \textbf{exact} if it is of the form $\rhopush\tau$, for some $\tau\in\Gamma(\wedge^kA)$.  The space of $\rho$-compatible $k$-tensors modulo the exact ones, denoted by $\ReducedProper^k$,  is called the \textbf{reduced space of $\rho$-compatible  $k$-tensors}.
\end{definition}

If $k=1$, then we have $\ReducedProper^1=\mathfrak{X}(M)/\Gamma(\Img \rho)$.  For regular Lie algebroids,   $\ReducedProper^k$ can be identified with the section space of $(TM/\Img\rho)\otimes (\wedge^{k-1}A)$ (see Proposition \ref{Prop:ReducedProperkregular}).
The following lemma lists more  properties of   $\rho$-compatible tensors.

\begin{lemma}\label{Lemma:ixitoBpi}A $\rho$-compatible $k$-tensor
	$\pi$   $\in \Gamma(TM\otimes (\wedge^{k-1} A))$ satisfies the following equality
	\begin{equation}\label{Eqn:ixi}
	\iota_{\rho^* \xi} D_{\rho}^{j-1} \pi=D_{\rho}^j (\iota_\xi \pi)=\frac{1}{j+1}\iota_\xi D_{\rho}^j \pi,\qquad \forall~ \xi\in \Omega^1(M),~j=1,\cdots, {k-1}.
	\end{equation}
	Moreover, the tensor field $B\pi$ defined in Equation  ~\eqref{Bpi} satisfies the following relations:
	\begin{eqnarray}
	\label{Eqn:specialpropertypi2}
	\iota_{\xi}(B\pi) &=& (-1)^{k-1} \invstar  (\iota_\xi \pi),
	\\\label{Eqn:specialpropertypi1}
	\iota_{\xi}(B\pi)+\iota_{\rho^*\xi}(B\pi)&=&\iota_{\xi}\pi.
	\end{eqnarray}
\end{lemma}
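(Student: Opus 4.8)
The plan is to reduce everything to two commutation relations between the contraction operators $\iota_\xi,\iota_{\rho^*\xi}$ (for $\xi\in\Omega^1(M)$) and the derivation $\rhopush$, all viewed as operators on $\wedge^\bullet(TM\oplus A)$. First I would record, by checking on the generators $TM\oplus A$ and using that both sides are antiderivations of degree $-1$, the identities
\[
\iota_\xi \rhopush - \rhopush\,\iota_\xi = \iota_{\rho^*\xi},
\qquad
\iota_{\rho^*\xi}\,\rhopush = \rhopush\,\iota_{\rho^*\xi}.
\]
Here $\iota_\xi$ only sees the $TM$-factors, $\iota_{\rho^*\xi}$ only the $A$-factors, and $\rhopush$ sends an $A$-generator $u$ to $\rho(u)\in TM$ while killing $TM$. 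An immediate consequence, by induction on $j$, is the operator identity $\iota_\xi\rhopush^{j} = \rhopush^{j}\iota_\xi + j\,\rhopush^{j-1}\iota_{\rho^*\xi}$, valid on all of $\wedge^\bullet(TM\oplus A)$.

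The crux --- and the only place where $\rho$-compatibility enters --- is the base identity $\iota_{\rho^*\xi}\pi = \rhopush(\iota_\xi\pi)$. To prove it I would contract the compatibility relation \eqref{Eqt:fine-condition} by an arbitrary $\eta\in\Omega^1(M)$: using that two contractions anticommute ($\iota_{\rho^*\xi}\iota_\eta = -\iota_\eta\iota_{\rho^*\xi}$), that $\iota_{\rho^*\eta} = \iota_\eta\rhopush - \rhopush\,\iota_\eta$, and that $\iota_\eta\iota_\xi\pi = 0$ because $\pi$ carries a single $TM$-leg, the condition \eqref{Eqt:fine-condition} collapses to $\iota_\eta\bigl(\rhopush\,\iota_\xi\pi - \iota_{\rho^*\xi}\pi\bigr) = 0$ for all $\eta$. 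Since $\rhopush\,\iota_\xi\pi - \iota_{\rho^*\xi}\pi$ lies in $TM\otimes\wedge^{k-2}A$ and has just one $TM$-factor, the vanishing of all its $\iota_\eta$-contractions forces it to be zero, giving the base identity. Applying $\rhopush^{j-1}$ and commuting $\iota_{\rho^*\xi}$ past $\rhopush$ then yields, for all $j$, the identity $(\dagger)$: $\iota_{\rho^*\xi}\rhopush^{j-1}\pi = \rhopush^{j}\iota_\xi\pi$. Feeding $(\dagger)$ into the operator identity above gives $\iota_\xi\rhopush^{j}\pi = (j+1)\rhopush^{j}\iota_\xi\pi$, so the three expressions in \eqref{Eqn:ixi} coincide.

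For the two $B\pi$-identities I would substitute \eqref{Eqn:ixi} term by term into the series $B\pi=\sum_{j=0}^{k-1}\frac{(-1)^j}{(j+1)!}\rhopush^{j}\pi$. For \eqref{Eqn:specialpropertypi2}, the factor $(j+1)$ produced by $\iota_\xi\rhopush^{j}\pi=(j+1)\rhopush^{j}\iota_\xi\pi$ cancels the $(j+1)!$, leaving $\iota_\xi(B\pi)=\sum_{j=0}^{k-1}\frac{(-1)^j}{j!}\rhopush^{j}(\iota_\xi\pi)=e^{-\rhopush}(\iota_\xi\pi)$; since $\iota_\xi\pi\in\wedge^{k-1}A$ has $A$-degree $k-1$, the closed form \eqref{Eqt:inverstarinDrho} identifies this with $(-1)^{k-1}\invstar(\iota_\xi\pi)$. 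For \eqref{Eqn:specialpropertypi1}, I would use the shifted form $\iota_{\rho^*\xi}\rhopush^{j}\pi=\rhopush^{j+1}(\iota_\xi\pi)$ of $(\dagger)$ to get $\iota_{\rho^*\xi}(B\pi)=-\sum_{i=1}^{k-1}\frac{(-1)^i}{i!}\rhopush^{i}(\iota_\xi\pi)$, which telescopes against $\iota_\xi(B\pi)$ so that only the $i=0$ term $\iota_\xi\pi$ survives.

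I expect the main obstacle to be the base identity $\iota_{\rho^*\xi}\pi=\rhopush(\iota_\xi\pi)$: recognizing that $\rho$-compatibility is precisely its ``$\iota_\eta$-contracted'' form, and that the vanishing of all contractions of a one-$TM$-leg tensor forces the tensor itself to vanish. Once this is in place, the higher-$j$ statements and both $B\pi$-identities follow formally from the two commutation relations and the closed form \eqref{Eqt:inverstarinDrho} for $\invstar$.
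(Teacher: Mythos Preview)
Your proposal is correct and, for the two $B\pi$-identities \eqref{Eqn:specialpropertypi2} and \eqref{Eqn:specialpropertypi1}, follows exactly the paper's argument: term-by-term substitution of \eqref{Eqn:ixi} into the series for $B\pi$, followed by comparison with the closed form \eqref{Eqt:inverstarinDrho} for $\invstar$ and a telescoping sum. The one difference is that the paper simply cites \cite{CSX} for \eqref{Eqn:ixi}, whereas you supply a self-contained proof via the commutator identities $[\iota_\xi,\rhopush]=\iota_{\rho^*\xi}$ and $[\iota_{\rho^*\xi},\rhopush]=0$ together with the base case $\iota_{\rho^*\xi}\pi=\rhopush(\iota_\xi\pi)$ extracted from $\rho$-compatibility; this is a clean and correct way to recover that lemma.
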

\begin{proof}Relation \eqref{Eqn:ixi}  is proved in \cite[Lemma 2.5]{CSX}. By  Equation  ~\eqref{Eqn:ixi}  and the definition of $B\pi$, one easily gets Equation  ~\eqref{Eqn:specialpropertypi2}:
	\begin{eqnarray*}
		\iota_{\xi}(B\pi)&=&\iota_{\xi}\sum_{j=0}^{k-1}\frac{\minuspower{j}}{{(j+1)}!}\rhopush^{j}\pi
		=\sum_{j=0}^{k-1}\frac{\minuspower{j}}{{(j+1)}!} (\iota_{\xi}\rhopush^{j}\pi)
		\\&=& \sum_{j=0}^{k-1}\frac{\minuspower{j}}{{j }!} \rhopush^{j}(\iota_{\xi}\pi)=(-1)^{k-1} \invstar  (\iota_\xi \pi).
	\end{eqnarray*}
	The last step is due to Relation \eqref{Eqt:inverstarinDrho}.
	We also have
	\begin{eqnarray*}
		\iota_{\rho^*\xi}(B\pi)&=&\iota_{\rho^*\xi}\sum_{j=0}^{k-1}\frac{\minuspower{j}}{{(j+1)}!}\rhopush^{j}\pi
		=\sum_{j=0}^{k-2}\frac{\minuspower{j}}{{(j+1)}!} (\iota_{\rho^*\xi}\rhopush^{j}\pi)
		\\&=& \sum_{j=0}^{k-2}\frac{\minuspower{j}}{{(j+1) }!} \rhopush^{j+1}(\iota_{\xi}\pi)= \iota_{\xi}\pi+(-1)^{k} \invstar  (\iota_\xi \pi).
	\end{eqnarray*}
	One gets Equation  ~\eqref{Eqn:specialpropertypi1} immediately.  \end{proof}

\subsection{Expression of multiplicative  multivectors along $M$}\label{ULT} We need some basic facts about   multiplicative multivectors on  Lie groupoids. Let  $\Gpd $ be  a Lie groupoid over   $M$.
\begin{definition}\label{Defn:affine3conditions}\rm{(\cite{ILX})}
	A $k$-vector $\Pi$ on $\Gpd $ is said to be {\bf affine}
	if any of the following equivalent conditions holds:
	\begin{itemize}
		\item[1)] For all $u\in\Gamma(A)$, $\ba{\Pi}{\overrightarrow{u}}$
		is right-invariant;
		\item[2)] For all $u\in\Gamma(A)$,  $\ba{\Pi}{\overleftarrow{u}}$
		is left-invariant;
		\item[3)] For any $(g,{r})$ composable, $x=s(g)=t({r})$, one has
		\begin{equation}\label{aff}
		\Pi_{g{r}}=L_{[b_g ]}\Pi_r+R_{[b'_{r}]}\Pi_g-L_{[b_g ]}R_{[b'_{r}]}\Pi_{x},
		\end{equation}
		where $b_g$ and $b'_{r}$ are    bisections through, respectively, $g$ and ${r}$.
	\end{itemize}
\end{definition}

The following lemma gives a  useful characterization of  multiplicative $k$-vectors.

\begin{lemma}\rm{(\cite{ILX}*{Theorem~2.19})}
	\label{multi vector}
	A $k$-vector $\Pi$ is multiplicative if and only if the following three conditions hold:
	\begin{enumerate}
		\item \label{cun} The $k$-vector $\Pi$ is affine;
		\item \label{cdeux} With respect to $\Pi$, $M$ is a coisotropic submanifold of $\Gpd $;
		\item \label{ctrois} For any $\xi\in\Omega^1(M)$, $\inserts_{s^*(\xi)}\Pi$ is left-invariant (or $\inserts_{t^*(\xi)}\Pi$ is right-invariant).
	\end{enumerate}
\end{lemma}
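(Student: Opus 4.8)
The plan is to unpack the defining coisotropy condition directly, reducing it to pointwise identities on $\Pi$. Recall that a submanifold $C$ of a manifold carrying a $k$-vector $\mathcal{P}$ is coisotropic exactly when $\inserts_{\eta_1\wedge\cdots\wedge\eta_{k-1}}\mathcal{P}\in TC$ for all conormal covectors $\eta_1,\dots,\eta_{k-1}\in N^*C$. I would apply this with $C=\Lambda$ and $\mathcal{P}=\Pi\times\Pi\times\minuspower{k-1}\Pi$, after recording two elementary ingredients. First, writing $\mu$ for the multiplication, one has
\[
T_{(g,r,gr)}\Lambda=\set{(X,Y,d\mu(X,Y)) : X\in T_g\Gpd,\ Y\in T_r\Gpd,\ ds(X)=dt(Y)}.
\]
Second, since in the external product $\mathcal{P}$ each summand is supported in a single factor, a wedge of conormal covectors $\eta_i=(\eta_i^1,\eta_i^2,\eta_i^3)$ contracts componentwise,
\[
\inserts_{\eta_1\wedge\cdots\wedge\eta_{k-1}}\mathcal{P}
=\big(\inserts_{\eta_1^1\wedge\cdots}\Pi,\ \inserts_{\eta_1^2\wedge\cdots}\Pi,\ \minuspower{k-1}\inserts_{\eta_1^3\wedge\cdots}\Pi\big).
\]
Hence coisotropy of $\Lambda$ asserts that the resulting triple $(X,Y,Z)$ always satisfies $ds(X)=dt(Y)$ and $Z=d\mu(X,Y)$.

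The decisive structural input is the identification of $N^*\Lambda$ with the graph of the cotangent groupoid $T^*\Gpd\rightrightarrows A^*$ (Coste--Dazord--Weinstein): a triple $(\alpha_g,\beta_r,\gamma_{gr})$ lies in $N^*\Lambda$ if and only if $-\gamma=\alpha\cdot\beta$ in $T^*\Gpd$. From this I would read off three distinguished families spanning $N^*\Lambda$ fibrewise: (i) the covectors $(0,-s^*\zeta,s^*\zeta)$ and, symmetrically, $(t^*\omega,0,-t^*\omega)$, coming from $s(gr)=s(r)$ and $t(gr)=t(g)$; (ii) the covectors $(s^*\xi,-t^*\xi,0)$ coming from $s(g)=t(r)$; and (iii) the remaining ``interior'' directions, which at a unit $x\in M$ restrict to $N^*_xM\cong A^*_x$ under the splitting $T\Gpd|_M=TM\oplus A$.

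For the forward implication I would feed these families into the contraction formula. On family (i) the triple is $(0,\,\minuspower{k-1}\inserts_{s^*\zeta\wedge\cdots}\Pi_r,\,\minuspower{k-1}\inserts_{s^*\zeta\wedge\cdots}\Pi_{gr})$, and membership in $T\Lambda$ forces $\inserts_{s^*\zeta\wedge\cdots}\Pi_{gr}=L_{g*}\inserts_{s^*\zeta\wedge\cdots}\Pi_r$, i.e. the left-invariance of $\inserts_{s^*\zeta}\Pi$ --- which is precisely condition~\ref{ctrois}; the target family gives the right-invariant variant. Evaluating the condition at a unit $(x,x,x)$, the legs of type (iii), which there lie in $A^*_x=N^*_xM$, force $\inserts_{\alpha_1\wedge\cdots\wedge\alpha_{k-1}}\Pi_x\in T_xM$ for $\alpha_i\in A^*_x$, i.e. the coisotropy of $M$, condition~\ref{cdeux}. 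Finally, running over the generic (cotangent-groupoid) directions, the identity $Z=d\mu(X,Y)$ becomes, upon decoding $d\mu$ into left and right translations, exactly the pointwise affine relation \eqref{aff}; equivalently, differentiating along $\overrightarrow{u}$ shows $[\Pi,\overrightarrow{u}]$ is right-invariant, which is condition~\ref{cun}.

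For the converse I would reverse the computation: decompose an arbitrary wedge $\eta_1\wedge\cdots\wedge\eta_{k-1}$ of conormal covectors into the three types above, and verify that the contraction lands in $T\Lambda$, using affineness \eqref{aff} to control the cross terms in $d\mu(X,Y)$, condition~\ref{cdeux} for the summands that survive over the units, and condition~\ref{ctrois} for the source- and target-pullback legs. The main obstacle is exactly this converse bookkeeping in the genuinely multivector case $k\ge 2$: one must organize the $(k-1)$-fold contractions, together with their Koszul signs and the derivation property of $\inserts$, so that the three conditions --- each governing a different type of conormal leg --- combine to give membership in $T\Lambda$ for every wedge of conormal covectors. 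The bivector case $k=2$ is transparent and serves as the template, with the higher-degree legs handled inductively.
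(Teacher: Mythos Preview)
The paper does not prove this lemma; it is quoted verbatim from \cite{ILX}*{Theorem~2.19} and used as a black box. So there is no in-paper argument to compare against, and your proposal should be judged on its own.

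Your strategy---identify $N^*\Lambda$ with the graph of the cotangent groupoid $T^*\Gpd\rightrightarrows A^*$ and test coisotropy against distinguished conormal directions---is exactly the route taken in \cite{ILX}. The extraction of condition~\eqref{ctrois} from your family~(i) and of condition~\eqref{cdeux} from the unit-fibre covectors is correct (note that family~(i) also forces $\inserts_{s^*\zeta\wedge\cdots}\Pi_r\in\ker dt_*$, since $ds(X)=dt(Y)$ with $X=0$; you use this implicitly when invoking $L_{g*}$).

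There are, however, two genuine gaps. First, in the forward direction your derivation of condition~\eqref{cun} is only an assertion: coisotropy yields relations among the \emph{contractions} $\inserts_{\eta_1^j\wedge\cdots}\Pi$ at $g$, $r$, $gr$, constrained to conormal $\eta_i$, not a direct identity among the full multivectors $\Pi_g$, $\Pi_r$, $\Pi_{gr}$ as in \eqref{aff}. The passage from one to the other is not automatic, because the first components $\eta_i^1$ of conormal covectors do not sweep out all of $T^*_g\Gpd$. In \cite{ILX} this step goes through the equivalent reformulation of multiplicativity as a groupoid morphism $\Pi:\Gpd\to\oplus^kT\Gpd$ into the tangent prolongation groupoid; differentiating that morphism condition along $\overrightarrow{u}$ is what gives the right-invariance of $[\Pi,\overrightarrow{u}]$. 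Your phrase ``equivalently, differentiating along $\overrightarrow{u}$'' gestures at this, but the link from your coisotropy test to that computation is missing.

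Second, for the converse your three families do not span $N^*_{(g,r,gr)}\Lambda$ at a generic point: families~(i) and~(ii) together only produce first components in $s^*T^*M+t^*T^*M\subsetneq T^*_g\Gpd$, while family~(iii) is only described over the units. So ``decompose an arbitrary wedge into the three types'' is not possible as stated, and the converse needs a genuine argument covering general conormal covectors (again, in \cite{ILX}, via the groupoid-morphism reformulation rather than a direct coisotropy check).
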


We need the relation between multiplicative multivectors on   $\Gpd$ and differentials  on $A$.
A multiplicative $k$-vector $\Pi$ on the Lie groupoid $\Gpd $
corresponds to a $k$-differential $\delta_{\Pi}:~\Gamma(\wedge^\bullet A)\to \Gamma(\wedge^{\bullet+k-1} A)$,  called  the \textbf{infinitesimal} of $\Pi$.
For any $u\in \Gamma(\wedge^\bullet A)$,   $\delta_{\Pi}(u)$ is defined by the relation
$$\overrightarrow{\delta_{\Pi}(u)}=[\Pi,\overrightarrow{u}], \qquad u \in \Gamma(\wedge^\bullet A).
$$
So we obtain a map
\begin{equation}\label{Eqt:deltamap}\delta_{(\cdot)}\colon \mathfrak{X}_{\mathrm{mult}}^\bullet(\Gpd )\to \differentials^\bullet(A),\qquad \Pi\mapsto \delta_{\Pi}\end{equation}
which is morphism of graded Lie algebras.
Moreover, if $\Gpd $ is an $s$-connected and simply connected Lie groupoid, then  $\delta_{(\cdot)}$
is an isomorphism of graded Lie algebras (known as the {\bf universal lifting theorem}), see \cite{ILX}.

For a multiplicative $k$-vector $\Pi$ on $\Gpd $, since $M$ is coisotropic with respect to $\Pi$, $\Pi|_M$ has no component in $\wedge^k A$. So we have \[\Pi|_M\in \Gamma(TM\otimes (\wedge^{k-1} A))\oplus \Gamma((\wedge^2TM) \otimes (\wedge^{k-2} A))\oplus \cdots \oplus \Gamma(\wedge^k TM).\]

As shown in \cite{CSX}, $\Pi|_M$ is actually determined by its $\Gamma(TM\otimes (\wedge^{k-1} A))$-component.

\begin{proposition}\label{CSXproppi}\rm{(\cite{CSX})}
	Given  $\Pi\in\mathfrak{X}_{\mathrm{mult}}^k(\Gpd ) $,
	denote by $\pi$ the $\Gamma(TM\otimes (\wedge^{k-1} A))$-component of $\Pi$ restricting on $M$. Then $\pi$ is a $\rho$-compatible $k$-tensor, and we have
	$
	\Pi|_M = B\pi$ (defined in Equation  ~\eqref{Bpi}).
	Moreover, $\pi$ satisfies the following property:
	\begin{gather}\label{Eqt:Pitstarfbypi}
 {\delta_{\Pi}(f)}= 	\ba{\Pi}{t^*f}|_M=\minuspower{k-1}\inserts_{df}\pi,
	\quad\forall~ f\in\CIM .
	\end{gather}
	
\end{proposition}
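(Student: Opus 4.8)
The plan is to read off all three assertions from the structural characterization of multiplicative $k$-vectors in Lemma \ref{multi vector}, working entirely along $M$ under the splitting $T\Gpd|_M = TM\oplus A$. The computation hinges on two restriction formulas: for $\xi\in\Omega^1(M)$ one has, in $T^*M\oplus A^* = T^*\Gpd|_M$, the identities $s^*\xi|_M = (\xi,0)$ and $t^*\xi|_M = (\xi,\rho^*\xi)$ (the latter because $dt|_A=\rho$). Hence $\iota_{s^*\xi}\Pi|_M = \iota_\xi(\Pi|_M)$, while $\iota_{t^*\xi}\Pi|_M = \iota_\xi(\Pi|_M) + \iota_{\rho^*\xi}(\Pi|_M)$, where $\iota_\xi$ contracts a $TM$-factor and $\iota_{\rho^*\xi}$ an $A$-factor. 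By coisotropy (condition (2) of Lemma \ref{multi vector}) the pure $\wedge^k A$-component of $\Pi|_M$ vanishes, so $\Pi|_M = \sum_{p=1}^{k} W_p$ with $W_p\in\Gamma(\wedge^p TM\otimes\wedge^{k-p}A)$, and I set $\pi := W_1$.

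The core of the argument is the identity $\Pi|_M = B\pi$. I would invoke condition (3) in the form that $\iota_{t^*\xi}\Pi$ is right-invariant; since a right-invariant multivector field takes values in $\wedge^{k-1}A$ along $M$, its restriction $\iota_{t^*\xi}\Pi|_M$ carries no $TM$-leg. Expanding $\iota_{t^*\xi}\Pi|_M = \iota_\xi(\Pi|_M) + \iota_{\rho^*\xi}(\Pi|_M)$ and sorting by $TM$-degree, the vanishing of every positive-degree part gives the recursion
\[
\iota_\xi W_{d+1} = -\,\iota_{\rho^*\xi}W_d,\qquad d\geq 1,\ \xi\in\Omega^1(M).
\]
Eq. \eqref{Eqn:ixi} of Lemma \ref{Lemma:ixitoBpi} shows that the $TM$-degree $(d+1)$ component $\tfrac{(-1)^{d}}{(d+1)!}\rhopush^{d}\pi$ of $B\pi$ satisfies exactly this recursion with the same initial term $W_1=\pi$. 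Since a multivector of $TM$-degree $\geq 1$ is determined by its contractions $\iota_\xi(\cdot)$ over all $\xi\in\Omega^1(M)$, an induction on $d$ forces $W_{d+1} = \tfrac{(-1)^{d}}{(d+1)!}\rhopush^{d}\pi$, whence $\Pi|_M = B\pi$.

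Granting this, the remaining claims are short. For $\rho$-compatibility (Definition \ref{Defn:properktensor}) the case $k=1$ is vacuous; for $k\geq 2$ I take $d=1$ in the recursion, $\iota_\xi W_2 = -\iota_{\rho^*\xi}\pi$, and contract once more. Using that $W_2$ is skew in its two $TM$-legs, $\iota_\eta\iota_\xi W_2 = -\iota_\xi\iota_\eta W_2$, while $\iota_\eta\iota_\xi W_2 = -\iota_{\rho^*\xi}\iota_\eta\pi$ and $\iota_\xi\iota_\eta W_2 = -\iota_{\rho^*\eta}\iota_\xi\pi$; combining gives $\iota_{\rho^*\xi}\iota_\eta\pi = -\iota_{\rho^*\eta}\iota_\xi\pi$, which is Eq. \eqref{Eqt:fine-condition}. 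For the symbol formula I start from the defining relation $\overrightarrow{\delta_\Pi(f)} = [\Pi,t^*f]$ and restrict to $M$: the Schouten bracket of a $k$-vector with a function gives $[\Pi,t^*f] = (-1)^{k-1}\iota_{t^*(df)}\Pi$, so that $\delta_\Pi(f) = (-1)^{k-1}\bigl(\iota_{df}(B\pi) + \iota_{\rho^*df}(B\pi)\bigr) = (-1)^{k-1}\iota_{df}\pi$ by relation \eqref{Eqn:specialpropertypi1}, which is Eq. \eqref{Eqt:Pitstarfbypi}.

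I expect the main obstacle to be the middle step: correctly identifying the value along $M$ of the invariant multivector $\iota_{t^*\xi}\Pi$, extracting the recursion in a sign-coherent fashion, and verifying that $B\pi$ is its unique solution with prescribed leading term $\pi$ (where Lemma \ref{Lemma:ixitoBpi} carries the combinatorial bookkeeping). Once $\Pi|_M = B\pi$ is in hand, the $\rho$-compatibility of $\pi$ and the symbol formula are purely formal consequences of the contraction identities already recorded in Lemma \ref{Lemma:ixitoBpi}.
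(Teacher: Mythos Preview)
The paper does not prove this proposition; it is quoted from \cite{CSX}. Your argument is a correct self-contained proof along the lines of the original in \cite{CSX}: use coisotropy to kill the $\wedge^k A$-component of $\Pi|_M$, use the invariance in condition~(3) of Lemma~\ref{multi vector} to obtain the contraction recursion among the homogeneous pieces $W_p$, and then identify the solution with $B\pi$.

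There is, however, one ordering issue you should fix. You invoke Eq.~\eqref{Eqn:ixi} of Lemma~\ref{Lemma:ixitoBpi} to show that $B\pi$ satisfies the recursion, and only afterwards derive the $\rho$-compatibility of $\pi$. But Lemma~\ref{Lemma:ixitoBpi} is stated for $\rho$-compatible tensors, so as written the argument is circular. The repair is immediate: the $d=1$ instance of your recursion, $\iota_\xi W_2 = -\iota_{\rho^*\xi}\pi$, already yields $\rho$-compatibility by the double-contraction step you give (no appeal to Lemma~\ref{Lemma:ixitoBpi} is needed there). Establish that first; then Eq.~\eqref{Eqn:ixi} applies and the induction showing $W_{d+1}=\tfrac{(-1)^d}{(d+1)!}D_\rho^d\pi$ goes through, after which $\Pi|_M=B\pi$ and Eq.~\eqref{Eqt:Pitstarfbypi} follow exactly as you wrote.
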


For exact multiplicative multivectors, the $\rho$-compatible tensor $\pi$ is obtained by the following lemma.

\begin{lemma}\label{lexactmultiplicativepi}
	Given $\tau \in \Gamma(\wedge^k A)$,   let $\Pi=\overrightarrow{\tau }-\overleftarrow{\tau }$ be
	the associated exact multiplicative
	$k$-vector. Then 	
	the $\Gamma(TM\otimes (\wedge^{k-1} A))$-component of $\Pi$ restricting on $M$    is
	$ D_\rho \tau $.
\end{lemma}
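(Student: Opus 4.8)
The plan is to compute the restriction $\Pi|_M=(\overrightarrow{\tau}-\overleftarrow{\tau})|_M$ explicitly under the identification $T\Gpd|_M=TM\oplus A$ and then read off its $\Gamma(TM\otimes(\wedge^{k-1}A))$-component. The starting point is the relation between left- and right-invariant multivectors through the groupoid inversion $\mathrm{inv}\colon\Gpd\to\Gpd$. For a single section $u\in\Gamma(A)$ one has $\overleftarrow{u}=-\invstar\overrightarrow{u}$; since $\overrightarrow{(\cdot)}$, $\overleftarrow{(\cdot)}$ and $\invstar$ are all homomorphisms for the $\wedge$-product, writing $\tau$ as a sum of decomposables $u_1\wedge\cdots\wedge u_k$ immediately yields
\[
\overleftarrow{\tau}=(-1)^k\,\invstar\overrightarrow{\tau},\qquad \tau\in\Gamma(\wedge^k A).
\]

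Next I would evaluate both terms along the unit section. Because right translation by a unit is the identity, $\overrightarrow{\tau}_x=\tau_x$ for $x\in M$, where $\tau_x\in\wedge^k A_x$ is viewed inside $\wedge^k(T_xM\oplus A_x)$. Since $\mathrm{inv}$ fixes $M$ pointwise, $\overleftarrow{\tau}_x=(-1)^k\invstar\tau_x$, where $\invstar$ is exactly the automorphism of $\wedge^\bullet(TM\oplus A)$ coming from Eq.~\eqref{Eqt:firstinversestar}. Applying Eq.~\eqref{Eqt:inverstarinDrho} with $(p,q)=(0,k)$ and $W=\tau_x$ gives $\invstar\tau_x=(-1)^k\sum_{j=0}^{k}\tfrac{(-1)^j}{j!}\rhopush^j\tau_x$, hence $\overleftarrow{\tau}_x=\sum_{j=0}^{k}\tfrac{(-1)^j}{j!}\rhopush^j\tau_x$.

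Subtracting, the $j=0$ terms cancel and one is left with
\[
\Pi|_x=\tau_x-\sum_{j=0}^{k}\tfrac{(-1)^j}{j!}\rhopush^j\tau_x=\sum_{j=1}^{k}\tfrac{(-1)^{j-1}}{j!}\rhopush^j\tau_x.
\]
Since $\rhopush^j\tau\in(\wedge^j TM)\otimes(\wedge^{k-j}A)$, the summands for distinct $j$ lie in distinct bidegree components, and the unique term landing in $TM\otimes(\wedge^{k-1}A)$ is the $j=1$ term, namely $\rhopush\tau=D_\rho\tau$. This is the claimed component. As a consistency check, the full sum equals $(1-e^{-\rhopush})\tau=\tfrac{1-e^{-\rhopush}}{\rhopush}(\rhopush\tau)=B(D_\rho\tau)$, matching $\Pi|_M=B\pi$ from Proposition~\ref{CSXproppi} with $\pi=D_\rho\tau$.

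The only delicate point is fixing the sign in $\overleftarrow{u}=-\invstar\overrightarrow{u}$ and confirming that $\invstar$ restricted along $M$ coincides with the operator in Eq.~\eqref{Eqt:firstinversestar}; once these conventions are pinned down the computation is purely formal. One can sanity-check the whole argument in the case $k=1$, where it reduces to $(\overrightarrow{u}-\overleftarrow{u})_x=\rho(u_x)=D_\rho u$, in agreement with the direct evaluation $(0,u_x)-(-\rho(u_x),u_x)=(\rho(u_x),0)$ via Eq.~\eqref{Eqt:firstinversestar}.
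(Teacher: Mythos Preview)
Your proof is correct and follows essentially the same route as the paper: both compute $\Pi|_M=\tau-(-1)^k\invstar\tau$ using $\overleftarrow{\tau}=(-1)^k\invstar\overrightarrow{\tau}$, then expand $\invstar$ via Eq.~\eqref{Eqt:inverstarinDrho} and read off the $TM\otimes(\wedge^{k-1}A)$-component as $D_\rho\tau$. Your version is simply more explicit about the intermediate steps and adds the consistency check against $B(D_\rho\tau)$.
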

\begin{proof}By Proposition \ref{CSXproppi}, we have \[B\pi=(\overrightarrow{\tau }-\overleftarrow{\tau })|_M=\tau -(-1)^k \invstar \tau.\]
	We then use the formula of $\invstar$ in Equation  ~\eqref{Eqt:inverstarinDrho}. It is clear that the $\Gamma(TM\otimes (\wedge^{k-1} A))$-component of $(\overrightarrow{\tau }-\overleftarrow{\tau })|_M$ is exactly $D_\rho \tau $.
\end{proof}

\section{The transitive case}\label{Sec:transtivedifferentials}

In this section, we   will show that differentials on transitive Lie algebroids and multiplicative multivectors on transitive Lie groupoids both
admit nice   decompositions. The idea  initiates from  an earlier work \cite{CL}.

A Lie algebroid $A$ is called \textbf{transitive} if its anchor map $\rho: A\to TM$ is surjective. In this case, there exists a bundle map $\lambda: TM\to A$ such that $\rho{\littlecirc} \lambda=\mathrm{id}$. The map $\lambda$ is called a {\bf connection} on $A$ (see \cite{Mackenzie}).
Via the connection $\lambda$, we are able to introduce a morphism of vector bundles $\Upsilon:~\wedge^\bullet(TM\oplus A)\to \wedge^\bullet A$ such that
\begin{eqnarray}\label{Ups}
\nonumber\Upsilon: && (\wedge^p TM)\otimes (\wedge^q A) ~\to~ \wedge^{p+q} A ,\\
&&X_1\wedge \cdots\wedge X_p\otimes Z\mapsto\lambda(X_1)\wedge\cdots \wedge\lambda(X_p)\wedge Z,
\end{eqnarray}
for $X_i\in TM $ and $Z\in   \wedge^q A $.  One can check the following relation:
\begin{equation}\label{Eqt:UpsilonEasyProperty}
\iota_{\rho^*\xi}\Upsilon(W)=\Upsilon(\iota_{\xi}W+\iota_{\rho^*\xi}W),\quad\forall~ \xi\in T^*M, W\in \wedge^\bullet(TM\oplus A).
\end{equation}


As the Lie algebroid $A$ is transitive,  the subbundle $\ker\rho\subset A$  is regular and a bundle of Lie algebras. Moreover, $A$ has adjoint actions on $\ker\rho$ and $\wedge^k \ker\rho$.

\subsection{Primary pairs of transitive Lie algebroids}
We introduce the notation  $$\mathcal{P}^k:=Z^1(A,\wedge^k \ker  \rho)\times \Gamma(\wedge^k A).$$
Two elements $(\Omega,\Lambda)$ and $(\Omega',\Lambda')$ in $\mathcal{P}^k$
are said to be {\bf equivalent}, written as $(\Omega,\Lambda)\sim(\Omega',\Lambda')$, if there exists some $\nu\in \Gamma(\wedge^k \ker  \rho)$ such that
\[\Omega'=\Omega-[\nu,~\cdot~],\qquad \Lambda'=\Lambda+\nu.\]
It is apparent that $\sim$ is an equivalence relation on $\mathcal{P}^k$.
\begin{definition}
	The equivalence class of   $(\Omega,\Lambda)\in \mathcal{P}^k$,  denoted by $[(\Omega,\Lambda)]$,
	is called a \textbf{$k$-primary pair} of the transitive Lie algebroid $A$.
	
\end{definition}

We will establish a one-to-one correspondence between $k$-differentials   on the transitive Lie algebroid $A$ and primary pairs in $\mathcal{P}^k/_\sim$.
Below is the detail of this construction. Given $(\Omega,\Lambda)\in \mathcal{P}^k$, define
\[\deltazero:\Gamma(A)\to \Gamma(\wedge^k A),\qquad u\mapsto  [\Lambda, u]+\Omega(u)\]
and
\[\deltaone: {\CinfM}\to \Gamma(\wedge^{k-1} A), \qquad f\mapsto [\Lambda,f].\]
Clearly,  $\deltazero$ together with $\deltaone$ extends to a $k$-differential on $A$, which is denoted by\[\delta= [\Lambda,~\cdot~]+\Omega\in \differentials^k(A).\]

It is easily seen that, if $(\Omega,\Lambda)\sim(\Omega',\Lambda')\in \mathcal{P}^k$, then they correspond  to the same differential
$$
\delta=[\Lambda,~\cdot~]+\Omega=[\Lambda',~\cdot~]+\Omega'.
$$
In summary, we obtain a   map from primary pairs to differentials on $A$:
\begin{align*}\Smap:~& ~~~\mathcal{P}^k/_{\sim} ~ ~~ \to \differentials^k(A),\\
&[(\Omega,\Lambda)]~~\mapsto ~~[\Lambda,~\cdot~]+\Omega.
\end{align*}

Our main result is the following theorem.

\begin{theorem}\label{Thm:s1-1}
	For a transitive Lie algebroid $A$, the map $\Smap$   defined above is a one-to-one  correspondence.
\end{theorem}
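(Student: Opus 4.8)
The plan is to verify that $\Smap$ is both injective and surjective, the fact that it descends to $\sim$-classes having already been checked above. Transitivity enters in exactly two places: through the existence of the splitting $\lambda$, hence of the map $\Upsilon$ from \eqref{Ups}; and through the surjectivity of $\rho$, which guarantees that a section $\omega\in\Gamma(\wedge^k A)$ lies in $\Gamma(\wedge^k\ker\rho)$ as soon as $\iota_{\rho^*\xi}\omega=0$ for every $\xi\in\Omega^1(M)$. Indeed, $\rho^*$ is then injective with image exactly the annihilator $(\ker\rho)^0$, and contraction against this annihilator detects precisely the $\lambda(TM)$-components relative to the splitting $A=\ker\rho\oplus\lambda(TM)$.

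For surjectivity I would start from an arbitrary $\delta=(\deltazero,\deltaone)\in\differentials^k(A)$ and let $\pi\in\Gamma(TM\otimes(\wedge^{k-1}A))$ be the $\rho$-compatible $k$-tensor representing its symbol, so that $\deltaone(f)=\iota_{df}\pi$. The crucial step is to produce a $\Lambda\in\Gamma(\wedge^k A)$ whose bracket reproduces this symbol, and the natural candidate is $\Lambda=\Upsilon(B\pi)$ with $B\pi$ as in \eqref{Bpi}. Applying \eqref{Eqt:UpsilonEasyProperty} with $W=B\pi$ and then the identity \eqref{Eqn:specialpropertypi1} of Lemma~\ref{Lemma:ixitoBpi} gives $\iota_{\rho^*df}\Upsilon(B\pi)=\Upsilon(\iota_{df}(B\pi)+\iota_{\rho^*df}(B\pi))=\Upsilon(\iota_{df}\pi)=\iota_{df}\pi$, where the last equality holds because $\iota_{df}\pi$ has no $TM$-factor and $\Upsilon$ restricts to the identity on $\wedge^\bullet A$. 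Since $[\Lambda,f]=\pm\iota_{\rho^*df}\Lambda$, this shows, after fixing the sign in the definition of $\Lambda$, that $[\Lambda,\cdot]$ and $\delta$ share the same symbol, so $\Omega:=\delta-[\Lambda,\cdot]$ is a $k$-differential with vanishing symbol.

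It then remains to identify $\Omega$ as an element of $Z^1(A,\wedge^k\ker\rho)$. By \eqref{Eqt:delta0delta1derivatives} a differential with zero symbol is $\CinfM$-linear, so $\Omega$ is a bundle map $\Gamma(A)\to\Gamma(\wedge^k A)$; feeding the vanishing symbol into \eqref{Eqt:kdifferential3} yields $\pm\iota_{\rho^*df}\,\Omega(u)=[\Omega(u),f]=0$ for all $f$, whence $\Omega(u)\in\Gamma(\wedge^k\ker\rho)$ by the transitivity remark above; finally \eqref{Eqt:kdifferential4} is exactly the $1$-cocycle condition $\Omega[u,v]=[\Omega(u),v]+[u,\Omega(v)]$ for the adjoint action on $\wedge^k\ker\rho$. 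Thus $(\Omega,\Lambda)\in\mathcal{P}^k$ and $\Smap([(\Omega,\Lambda)])=[\Lambda,\cdot]+\Omega=\delta$.

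Injectivity is the easy direction: if $[\Lambda,\cdot]+\Omega=[\Lambda',\cdot]+\Omega'$, comparing symbols gives $\iota_{\rho^*df}(\Lambda-\Lambda')=0$ for all $f$, so $\nu:=\Lambda'-\Lambda\in\Gamma(\wedge^k\ker\rho)$ by transitivity, while comparing the $\Gamma(A)$-parts forces $\Omega'=\Omega-[\nu,\cdot]$; together these are precisely $(\Omega,\Lambda)\sim(\Omega',\Lambda')$. The main obstacle is the surjectivity argument, and within it the assertion that the symbol of $\delta$ lifts to an honest $\Lambda$ with the correct bracket; this is exactly what the combination of \eqref{Eqt:UpsilonEasyProperty} with Lemma~\ref{Lemma:ixitoBpi} is designed to supply, and it is the point at which the connection $\lambda$ is indispensable.
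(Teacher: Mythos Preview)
Your proposal is correct and follows essentially the same strategy as the paper: construct $\Lambda=\Upsilon(B\pi)$ from the symbol via the connection, verify $[\Lambda,f]=\deltaone(f)$ using \eqref{Eqt:UpsilonEasyProperty} and \eqref{Eqn:specialpropertypi1} (this is the content of Lemma~\ref{lUpsilonBpi}), and then check that the remainder $\Omega=\delta-[\Lambda,\cdot]$ lands in $Z^1(A,\wedge^k\ker\rho)$. The one presentational difference is that you argue $\Omega(u)\in\Gamma(\wedge^k\ker\rho)$ by noting $\Omega$ is itself a $k$-differential with zero symbol and invoking \eqref{Eqt:kdifferential3} directly, whereas the paper expands $[\deltazero(u)-[\Lambda,u],f]$ and appeals to the Jacobi identity; your route is slightly cleaner but amounts to the same thing. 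Your sign bookkeeping around $\deltaone(f)=(-1)^{k-1}\iota_{df}\pi$ is a bit loose, but since you flag it and the correct sign drops out of Lemma~\ref{lUpsilonBpi} without any adjustment to $\Lambda$, there is no actual gap.
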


We need a lemma.
\begin{lemma}\label{lUpsilonBpi}Let $\lambda$ be a connection of $A$ and $\Upsilon$ be as defined in Equation  ~\eqref{Ups}. Let $\pi\in \Gamma(TM\otimes (\wedge^{k-1} A))$ be a $\rho$-compatible $k$-tensor.  Define \begin{eqnarray}\label{Ll}
	\Lambda:=\Upsilon(B\pi)\in \Gamma(\wedge^k A),
	\end{eqnarray}
	where $B\pi\in \Gamma(\wedge^k (TM\oplus A))$ is given by Equation  ~\eqref{Bpi}. Then $\Lambda$ is subject to the following property:
	\begin{equation}\label{Eqt:Lambdaf=delta0f} [\Lambda, f]=(-1)^{k-1} \iota_{df} \pi, \qquad \forall~ f\in {\CinfM}.
	\end{equation}
	\end{lemma}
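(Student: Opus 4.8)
The plan is to reduce the Schouten bracket $[\Lambda,f]$ to an interior contraction on $\wedge^\bullet A$ and then transport that contraction through $\Upsilon$ using the two structural identities already at our disposal. First I would invoke the standard Schouten-bracket identity on a Lie algebroid: for any $\Lambda\in\Gamma(\wedge^k A)$ and $f\in C^\infty(M)$,
\[
[\Lambda,f]=(-1)^{k-1}\,\iota_{\rho^*(df)}\Lambda,
\]
where $\rho^*\colon T^*M\to A^*$ is the dual of the anchor. This follows from $[u,f]=\rho(u)(f)=\langle\rho^*(df),u\rangle$ for $u\in\Gamma(A)$ together with the graded Leibniz rule, and a one-line check on a decomposable $u_1\wedge\cdots\wedge u_k$ pins down the sign $(-1)^{k-1}$. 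Thus the whole problem becomes the evaluation of $\iota_{\rho^*(df)}\Lambda=\iota_{\rho^*(df)}\Upsilon(B\pi)$.

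Next I would substitute $W=B\pi$ and $\xi=df$ into the defining property \eqref{Eqt:UpsilonEasyProperty} of $\Upsilon$, which yields
\[
\iota_{\rho^*(df)}\Upsilon(B\pi)=\Upsilon\bigl(\iota_{df}(B\pi)+\iota_{\rho^*(df)}(B\pi)\bigr).
\]
The combination inside $\Upsilon$ is precisely the left-hand side of Eq. \eqref{Eqn:specialpropertypi1} in Lemma \ref{Lemma:ixitoBpi}, so it collapses to $\iota_{df}\pi$. Since $\pi\in\Gamma(TM\otimes(\wedge^{k-1}A))$, the contraction $\iota_{df}\pi$ lies in $\Gamma(\wedge^{k-1}A)$, i.e.\ in the $p=0$ summand $\wedge^0 TM\otimes\wedge^{k-1}A$ of $\wedge^\bullet(TM\oplus A)$; on that summand $\Upsilon$ is by definition \eqref{Ups} the identity, so $\Upsilon(\iota_{df}\pi)=\iota_{df}\pi$.

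Assembling the three steps gives
\[
[\Lambda,f]=(-1)^{k-1}\iota_{\rho^*(df)}\Upsilon(B\pi)=(-1)^{k-1}\Upsilon(\iota_{df}\pi)=(-1)^{k-1}\iota_{df}\pi,
\]
which is exactly \eqref{Eqt:Lambdaf=delta0f}. I do not expect a genuine obstacle: the argument is a short chaining of two given identities, and the only points demanding care are the bookkeeping of the contraction symbols — $\iota_{df}$ contracts the $TM$-leg while $\iota_{\rho^*(df)}$ contracts the $A$-leg, so that $\iota_{df}(B\pi)+\iota_{\rho^*(df)}(B\pi)$ is the mixed combination covered by \eqref{Eqn:specialpropertypi1} — together with the sign $(-1)^{k-1}$ in the Schouten-bracket identity, which must be fixed consistently with the paper's conventions for the Gerstenhaber bracket on $\Gamma(\wedge^\bullet A)$. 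Note that this computation does not use any special property of the chosen connection $\lambda$ beyond its appearance through $\Upsilon$, since $\Upsilon$ acts trivially on the relevant $p=0$ component.
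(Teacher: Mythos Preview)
Your proof is correct and follows essentially the same route as the paper's: rewrite $[\Lambda,f]$ as $(-1)^{k-1}\iota_{\rho^*df}\Upsilon(B\pi)$, apply \eqref{Eqt:UpsilonEasyProperty}, then \eqref{Eqn:specialpropertypi1}, and finally note that $\Upsilon$ is the identity on $\Gamma(\wedge^{k-1}A)$. Your writeup is in fact slightly more explicit than the paper's about the Schouten-bracket sign and about why $\Upsilon$ acts trivially at the last step.
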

\begin{proof}
	In fact, we have
	\begin{eqnarray*}
		[\Lambda, f]&=&(-1)^{k-1} \iota_{\rho^* df} \Upsilon(B\pi)\\ &=&(-1)^{k-1} \Upsilon(\iota_{df} B\pi+\iota_{\rho^*df} B\pi),
		\qquad\mbox{  by Equation  ~\eqref{Eqt:UpsilonEasyProperty};}\\
		&=&(-1)^{k-1} \Upsilon(\iota_{df} \pi),\qquad \mbox{by Equation  ~\eqref{Eqn:specialpropertypi1} ;}\\ &=&(-1)^{k-1} \iota_{df} \pi.
	\end{eqnarray*}
\end{proof}
One can equivalently rewrite Equation  \eqref{Eqt:Lambdaf=delta0f} as
\begin{equation}\label{Eqt:rhoLambdaispi}
\rhopush \Lambda=\pi.
\end{equation}
In other words, every $\rho$-compatible $k$-tensor is exact  and hence $\ReducedProper^k=0$ holds for transitive Lie algebroids.

We have another   consequence of this lemma.
\begin{proposition}\label{Prop:ReducedProperkregular} Let $A$ be a regular Lie algebroid over $M$.
	Let $N:=TM/\Img \rho$ be the normal bundle of $\Img\rho$.
	Then we have an isomorphism of vector spaces $\ReducedProper^k\cong \Gamma(N\otimes (\wedge^{k-1}A))$.
\end{proposition}
\begin{proof}As $\Img \rho$ is regular, one can find a vector bundle decomposition $TM\cong \Img \rho\oplus N$. Let $\pi\in \Gamma(TM\otimes (\wedge^{k-1}A))$ be a $\rho$-compatible $k$-tensor and suppose that it is decomposed as
	$$
	\pi=\pi_1+\pi_2,
	$$
	where $\pi_1\in \Gamma(\Img \rho\otimes (\wedge^{k-1}A))$, $\pi_2\in \Gamma(N \otimes (\wedge^{k-1}A))$. By construction, $\pi_2$ is  $\rho$-compatible  and    so is    $\pi_1$.  In spirit   of 	
	Lemma \ref{lUpsilonBpi}, one sees that $\pi_1$ is indeed exact. Hence $\pi\equiv \pi_2$ in $\ReducedProper^k$ and this proves the claim. We remark that the isomorphism $\ReducedProper^k\cong \Gamma(N\otimes (\wedge^{k-1}A))$ is not canonical as it depends on the choice of the decomposition $TM\cong \Img \rho\oplus N$.
\end{proof}
We are now ready to finish the proof of our main theorem in this section.
\begin{proof}[Proof of Theorem \ref{Thm:s1-1}]We first show that $\Smap$ is injective. It suffices to show that, if $$\Smap[(\Omega,\Lambda)]=\Smap[(\Omega',\Lambda')]=\delta,$$
	then $(\Omega,\Lambda)$ and $(\Omega',\Lambda')$ are equivalent. In fact, by
	$$
	\deltaone(f)=[\Lambda,f]=[\Lambda',f],\quad\forall~ f\in {\CinfM},
	$$
	we see that
	$\iota_{\rho^*df} (\Lambda'-\Lambda)=0$ and thus $\nu:=\Lambda'-\Lambda\in \Gamma(\wedge^k \ker  \rho)$. Moreover, by
	$$
	\deltazero(u)=[\Lambda,u]+\Omega(u)=[\Lambda',u]+\Omega'(u), \quad\forall~ u\in \Gamma(A),
	$$
	we get $\Omega'(u)=\Omega(u)-[\nu,u]$. This proves that $(\Omega,\Lambda)\sim (\Omega',\Lambda')$.
	
	We then show that $\Smap$ is surjective. Given $\delta=(\deltazero,\deltaone)\in\differentials^k(A)$, we find the corresponding   $\pi\in \Gamma(TM\otimes (\wedge^{k-1} A))$, a $\rho$-compatible $k$-tensor determined by the relation
	$$\deltaone f  =  (-1)^{k-1}\iota_{ df}\pi,\quad f\in  {\CinfM}.
	$$
Take a connection of $A$ and define $\Lambda =\Upsilon(B\pi)\in \Gamma(\wedge^k A)$ by Equation  ~\eqref{Ll}. Then by Equation  ~\eqref{Eqt:Lambdaf=delta0f}, we get
	\begin{equation}\label{Eqt:temp3}
	[\Lambda, f]=(-1)^{k-1} \iota_{df} \pi=\deltaone(f), \qquad \forall~ f\in {\CinfM}.
	\end{equation}
Now we define a map $\Omega: \Gamma(A)\mapsto \Gamma(\wedge^k A)$ by \[\Omega(u)=\deltazero(u)-[\Lambda,u],\qquad\forall~ u\in\Gamma(A).\] From the property \eqref{Eqt:temp3}, we easily verify that $\Omega$ is ${\CinfM}$-linear. So, $\Omega$ is a morphism of vector bundles. Moreover, we claim that $\Omega$ takes values in the subbundle $\wedge^k \ker  \rho$, which  amounts to show that $$\iota_{\rho^*df} (\Omega u)=0,\qquad\forall~ f\in {\CinfM}.$$ This is examined as follows:
	\begin{eqnarray*}
		(-1)^{k-1} \iota_{\rho^*df} \Omega u&=&[\deltazero(u),f]-[[\Lambda,u],f]\\ &=&\deltaone[u,f]-[u,\deltaone(f)]-[[\Lambda,u],f],\quad \mbox{by Equation  ~\eqref{Eqt:kdifferential3};}\\ &=&[\Lambda,[u,f]]-[u,[\Lambda,f]]-[[\Lambda,u],f],\qquad\mbox{by Equation  ~\eqref{Eqt:temp3};}\\ &=&0,\qquad\mbox{by Jocabi identity.}
	\end{eqnarray*}
	So we indeed  get a bundle map $\Omega:A\to \wedge^k\ker\rho$. Due to Equation  ~\eqref{Eqt:kdifferential4}, one easily verifies that $\Omega$ is a Lie algebroid $1$-cocycle. 	
	Clearly,  the primary pair $[(\Omega,\Lambda)]$ that we constructed is mapped to $\delta=(\deltazero,\deltaone)$ via $\Smap$.
\end{proof}
\subsection{Reduced space of differentials on transitive Lie algebroids}
The following lemma  describes  equivalence relations of differentials on $A$ in terms of  primary pairs.
\begin{lemma}\label{Thm:classbyH1transitivealgebroid}
	Let $\delta $ and $\delta'$ be in $\differentials^k(A)$. Let
	$[(\Omega,\Lambda)]$ and $[(\Omega',\Lambda')]$  be, respectively, the corresponding primary pairs in $\mathcal{P}^k/_{\sim}$. Then $\delta\gaugeequiv \delta'$ holds
	if and only if  $[\Omega]=[\Omega']$ holds in $\coHg ^1(A,\wedge^k\ker\rho)$.
\end{lemma}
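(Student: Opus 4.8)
The plan is to transport the equivalence relation $\gaugeequiv$ through the correspondence $\Smap$ of Theorem~\ref{Thm:s1-1} and reduce the statement to a question about the cocycle parts alone. Writing $\delta=[\Lambda,~\cdot~]+\Omega$ and $\delta'=[\Lambda',~\cdot~]+\Omega'$ as in the construction of $\Smap$ and subtracting, one gets
\[
\delta-\delta'=[\Lambda-\Lambda',~\cdot~]+(\Omega-\Omega'),
\]
in which $[\Lambda-\Lambda',~\cdot~]$ is an exact $k$-differential because $\Lambda-\Lambda'\in\Gamma(\wedge^k A)$. Since the exact $k$-differentials $\mathfrak{A}^k(A)$ form a linear subspace of $\differentials^k(A)$, I would first record that $\delta\gaugeequiv\delta'$, i.e. $\delta-\delta'\in\mathfrak{A}^k(A)$, holds if and only if $\Omega-\Omega'\in\mathfrak{A}^k(A)$. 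This reduces the lemma to deciding precisely when the $1$-cocycle $\Omega-\Omega'$ is exact as a $k$-differential.

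Next I would make explicit that $\Omega$, and hence $\Omega-\Omega'$, is itself a $k$-differential whose symbol vanishes: it is $\CinfM$-linear, takes values in $\wedge^k\ker\rho$, and satisfies the cocycle identity \eqref{Eqt:kdifferential4}, which together with \eqref{Eqt:kdifferential3} (for vanishing symbol) is exactly the requirement for a zero-symbol $k$-differential valued in $\wedge^k\ker\rho$. The key step is then to analyse exactness of such an object. Suppose $\Omega-\Omega'=[\tau,~\cdot~]$ for some $\tau\in\Gamma(\wedge^k A)$. Comparing symbols forces $[\tau,f]=0$, i.e. $\iota_{\rho^*df}\tau=0$, for all $f\in\CinfM$; since $A$ is transitive and $\ker\rho$ is the annihilator of $\Img\rho^*$, this is equivalent to $\tau\in\Gamma(\wedge^k\ker\rho)$. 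Conversely every $\tau\in\Gamma(\wedge^k\ker\rho)$ has vanishing symbol, so $[\tau,~\cdot~]$ is a zero-symbol exact differential. Hence $\Omega-\Omega'$ is exact if and only if it equals $[\tau,~\cdot~]$ for some $\tau\in\Gamma(\wedge^k\ker\rho)$.

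Finally I would identify $[\tau,~\cdot~]$ with a Chevalley--Eilenberg coboundary. For $\tau\in\Gamma(\wedge^k\ker\rho)$ and $u\in\Gamma(A)$, the graded skew-symmetry of the Schouten bracket gives $[\tau,u]=-[u,\tau]=-\nabla_u\tau=-(d\tau)(u)$, where $\nabla$ is the adjoint action of $A$ on $\wedge^k\ker\rho$ (it preserves $\wedge^k\ker\rho$ since $\ker\rho$ is a bundle of ideals), so $[\tau,~\cdot~]=-d\tau$ as a $1$-cochain. Chaining the three reductions yields
\[
\delta\gaugeequiv\delta'\iff\Omega-\Omega'\in\mathfrak{A}^k(A)\iff[\Omega]=[\Omega']\ \text{in}\ \coHg^1(A,\wedge^k\ker\rho),
\]
which is the claim. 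The only genuinely delicate point is the symbol bookkeeping in the middle paragraph: one must check that matching the symbols of $\Omega-\Omega'$ and $[\tau,~\cdot~]$ both forces $\tau$ into $\Gamma(\wedge^k\ker\rho)$ and guarantees nothing is lost in passing from equality of full $k$-differentials to equality on $\Gamma(A)$, and this is exactly where transitivity (through $\ker\rho=(\Img\rho^*)^{0}$) is used.
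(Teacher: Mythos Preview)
Your proof is correct and follows essentially the same route as the paper's: both exploit the decomposition $\delta=[\Lambda,\cdot]+\Omega$ and reduce the question to whether $\Omega-\Omega'$ is a Chevalley--Eilenberg coboundary. The paper presents only the implication $[\Omega]=[\Omega']\Rightarrow\delta\gaugeequiv\delta'$ explicitly (constructing $\tau=\Lambda'-\Lambda-\nu$ directly) and dismisses the converse as ``clear''; your version is more systematic in that you first isolate the equivalence $\delta\gaugeequiv\delta'\iff(\Omega-\Omega')\in\mathfrak{A}^k(A)$ and then carry out the symbol comparison to force $\tau\in\Gamma(\wedge^k\ker\rho)$, which is exactly the content the paper leaves implicit.
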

\begin{proof}We show the implication  ``$[\Omega]=[\Omega']$''
	$\Rightarrow$   ``$\delta\gaugeequiv \delta'$''. Suppose that $\Omega'=\Omega+d_{A}\nu$, for some $\nu\in\Gamma(\wedge^k\ker \rho)$. Then we have
	$$
	\delta'=[\Lambda',\cdot]+\Omega'=[\Lambda',\cdot]+\Omega-[\nu,\cdot]
	=[\Lambda'-\nu,\cdot]+\Omega=\delta+[\tau,\cdot],
	$$
	where $\tau=\Lambda'-\Lambda-\nu\in\Gamma(\wedge^k A)$. 	
	The converse is also clear.
\end{proof}


Using the above facts, we can recover Corollary \ref{Cor:transitivecasetorecover} directly:
 \begin{proof}[Proof of Corollary   \ref{Cor:transitivecasetorecover}] Via Theorem \ref{Thm:s1-1}, we are able to built  a map $C\colon  \differentials^k(A) \to \coHg ^1(A,\wedge^k\ker\rho)$: for each $\delta\in \differentials^k(A)$ which corresponds to the primary pair $[(\Omega,\Lambda)]$, define $C(\delta)=[\Omega]$. By   Lemma \ref{Thm:classbyH1transitivealgebroid}, there is a   linear map $C\colon {\Reduced}^k_{\mathrm{diff}} \to   \coHg ^1(A,\wedge^k\ker\rho)$. It is a routine  work to verify that this  map is indeed an isomorphism.
 \end{proof}

\subsection{Multiplicative multivectors  on transitive Lie groupoids}\label{Sec:subsecTransitivegroupoids}

Now we consider a  \textbf{transitive} Lie groupoid $\Gpd$ over $M$. In other words, any two points in the base manifold are connected by some element in $\Gpd$.
The tangent Lie algebroid $A$ of $\Gpd$ is also transitive\footnote{This       follows from the fact that for   a transitive Lie groupoid $\Gpd$, the map $(s,t)$$:~ \Gpd\rightarrow M\times M$ is a surjective submersion. See Mackenzie's book \cite{Mackenzie}.}.
The groupoid $\Gpd$ acts naturally on the subbundle $\ker\rho\subset A$ via adjoint action:
$$\Ad_g v :=(R_{g^{-1}*}{\littlecirc} L_{g*}) v,\qquad \forall~ v\in \ker\rho|_{s(g)}.
$$

In what follows, $Z^1(\Gpd,\wedge^k \ker  \rho)$ denotes $1$-cocycles $\mathcal{F}:~\Gpd\to \wedge^k \ker  \rho$ with respect to the induced adjoint action of $\Gpd$ on $\wedge^k \ker\rho$. Note that $\mathcal{F}$ gives rise to a $k$-vector $\Pi_{\mathcal{F}}\in \mathfrak{X}^k(\Gpd )$  defined by $$\Pi_{\mathcal{F}}(g):=R_{g*} {\mathcal{F}(g)},\qquad\forall~ g\in \Gpd.$$
The fact that $\mathcal{F}$ is a $1$-cocycle can be rephrased in terms of $\Pi_{\mathcal{F}}$:
\begin{eqnarray*}\label{pif}
	\Pi_{\mathcal{F}}(gr)=L_{g*} \Pi_{\mathcal{F}}(r)+R_{r*}\Pi_{\mathcal{F}}(g),\qquad\forall~ (g,r)\in \Gpd ^{(2)}.
\end{eqnarray*}
By Lemma \ref{multi vector}, one easily verifies that $\Pi_{\mathcal{F}} \in \mathfrak{X}_{\mathrm{mult}}^k(\Gpd )$.

In analogous to the space $\mathcal{P}^k$, equivalence relation in $\mathcal{P}^k$ and the quotient space $\mathcal{P}^k/_\sim$ of primary pairs associated to transitive Lie algebroids, we  introduce the following notations.
\begin{itemize} \item A space $\mathcal{Q}^k$ defined by \[\mathcal{Q}^k:= Z^1(\Gpd ,\wedge^k \ker  \rho)\times \Gamma(\wedge^k A).\]
	\item An equivalence relation in $\mathcal{Q}^k$: two pairs $(\mathcal{F}, \Lambda)$ and $(\mathcal{F}', \Lambda')$ in $\mathcal{Q}^k$ are said to be {\bf equivalent}, denoted by $(\mathcal{F}, \Lambda)\sim (\mathcal{F}', \Lambda')$, if there exists some   $\nu\in \Gamma(\wedge^k \ker  \rho)$ such that
	\begin{eqnarray}
	\label{eqre3}\mathcal{F'}&=&\mathcal{F}+d_\Gpd\nu,\\ 
	\nonumber \Lambda'&=&\Lambda+\nu.
	\end{eqnarray}
\end{itemize}
It is direct to see that $\sim$ defines an equivalence relation in  $\mathcal{Q}^k$.
\begin{definition}
	An equivalence class  $[(\mathcal{F}, \Lambda)]\in \mathcal{Q}^k/_\sim$
	is called a {\bf $k$-primary pair} of the transitive Lie groupoid $\Gpd$.\end{definition}

Note that Equation  ~\eqref{eqre3} is saying that
\begin{equation}\label{eqre2}
\mathcal{F}'(g) = \mathcal{F}(g)+\Ad_g \nu _{s(g)}-\nu _{ t(g)},\quad\forall~ g\in \Gpd.
\end{equation}

Moreover, a Lie groupoid $1$-cocycle $\mathcal{F}\in Z^1(\Gpd ,\wedge^k\ker  \rho)$ differentiates to a Lie algebroid $1$-cocycle $\hat{\mathcal{F}}\in Z^1(A,\wedge^k\ker  \rho)$. Clearly, $(\mathcal{F}, \Lambda)\sim (\mathcal{F}', \Lambda')\in \mathcal{Q}^k$ implies that
$(\hat{\mathcal{F}},\Lambda)\sim (\hat{\mathcal{F}}',\Lambda')\in \mathcal{P}^k$.

Our goal is to establish a one-to-one correspondence between multiplicative $k$-vectors $\Pi\in \mathfrak{X}_{\mathrm{mult}}^k(\Gpd )$ and   $k$-primary pairs $[(\mathcal{F}, \Lambda)]\in\mathcal{Q}^k/_\sim$.
To show this fact, we  define a map
\begin{eqnarray*}\SSmap:~\mathcal{Q}^k&\to& \mathfrak{X}_{\mathrm{mult}} ^k(\Gpd ),\\
	(\mathcal{F}, \Lambda)&\mapsto& \overrightarrow{\Lambda}-\overleftarrow{\Lambda}+\Pi_{\mathcal{F}}.
\end{eqnarray*}
Here $\overrightarrow{\Lambda}-\overleftarrow{\Lambda}$ and $\Pi_{\mathcal{F}}$ are both multiplicative, and hence   $\SSmap$ takes values in $\mathfrak{X}_{\mathrm{mult}} ^k(\Gpd )$.
It is easy to show that,  if $(\mathcal{F}, \Lambda)\sim (\mathcal{F}', \Lambda')$, then we have $\SSmap(\mathcal{F}, \Lambda)= \SSmap(\mathcal{F}', \Lambda')$. So we actually obtain a map
$$\SSmap:~\mathcal{Q}^k/_\sim ~\to~ \mathfrak{X}_{\mathrm{mult}} ^k(\Gpd ).
$$
Here is our main result.
\begin{theorem}\label{Thm:S1-1}
	For a transitive Lie groupoid $\Gpd$, the map $\SSmap $   defined above is a one-to-one  correspondence.
\end{theorem}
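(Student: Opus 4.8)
The plan is to mirror the proof of the infinitesimal counterpart, Theorem \ref{Thm:s1-1}, treating injectivity and surjectivity of $\SSmap$ separately and using the restriction-to-$M$ data as the bridge. One preliminary computation I would record is that, for a section $\nu\in\Gamma(\wedge^k\ker\rho)$, the associated coboundary cocycle satisfies $\Pi_{d_\Gpd\nu}=\overleftarrow{\nu}-\overrightarrow{\nu}$; indeed $(d_\Gpd\nu)(g)=\Adjoint{g}\nu_{s(g)}-\nu_{t(g)}$ by Eq.~\eqref{eqre2}, and since $R_{g*}\Adjoint{g}=L_{g*}$ on $\ker\rho|_{s(g)}$ one gets $\Pi_{d_\Gpd\nu}(g)=R_{g*}(d_\Gpd\nu)(g)=L_{g*}\nu_{s(g)}-R_{g*}\nu_{t(g)}$. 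This identity both confirms that $\SSmap$ is well defined on $\sim$-classes and is exactly the relation needed to match the equivalence \eqref{eqre3}. I would also note at the outset that a cocycle $\mathcal{F}\in Z^1(\Gpd,\wedge^k\ker\rho)$ vanishes on units (cocycle condition evaluated at identities), so $\Pi_{\mathcal{F}}|_M=0$.

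For injectivity, suppose $\SSmap(\mathcal{F},\Lambda)=\SSmap(\mathcal{F}',\Lambda')$. I would restrict both sides to $M$ and extract the $\Gamma(TM\otimes(\wedge^{k-1}A))$-component: by Lemma \ref{lexactmultiplicativepi} the exact part $\overrightarrow{\Lambda}-\overleftarrow{\Lambda}$ contributes $\rhopush\Lambda$, while $\Pi_{\mathcal{F}}|_M=0$. Hence $\rhopush\Lambda=\rhopush\Lambda'$, and since $\rho$ is surjective, $\Ker(\rhopush|_{\wedge^kA})=\wedge^k\ker\rho$, so $\nu:=\Lambda'-\Lambda\in\Gamma(\wedge^k\ker\rho)$. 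Subtracting the two expressions then gives $\Pi_{\mathcal{F}}-\Pi_{\mathcal{F}'}=\overrightarrow{\nu}-\overleftarrow{\nu}=-\Pi_{d_\Gpd\nu}$, and since $\mathcal{F}\mapsto\Pi_{\mathcal{F}}$ is injective (one recovers $\mathcal{F}(g)=R_{g^{-1}*}\Pi_{\mathcal{F}}(g)$) this yields $\mathcal{F}'=\mathcal{F}+d_\Gpd\nu$. Thus $(\mathcal{F},\Lambda)\sim(\mathcal{F}',\Lambda')$.

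For surjectivity, start from an arbitrary $\Pi\in\mathfrak{X}_{\mathrm{mult}}^k(\Gpd )$ with $\rho$-compatible $k$-tensor $\pi$ (Proposition \ref{CSXproppi}). Fix a connection $\lambda$ and set $\Lambda:=\Upsilon(B\pi)$, so $\rhopush\Lambda=\pi$ by Eq.~\eqref{Eqt:rhoLambdaispi}. Then $\Pi':=\Pi-(\overrightarrow{\Lambda}-\overleftarrow{\Lambda})$ is multiplicative with $\rho$-compatible tensor $\pi-\rhopush\Lambda=0$, hence $\Pi'|_M=B(0)=0$ by Proposition \ref{CSXproppi}. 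I would next show $\Pi'=\Pi_{\mathcal{F}}$ for a cocycle $\mathcal{F}(g):=R_{g^{-1}*}\Pi'_g$ valued in $\wedge^k\ker\rho$. Condition \eqref{ctrois} of Lemma \ref{multi vector} says $\inserts_{t^*\xi}\Pi'$ is right-invariant and $\inserts_{s^*\xi}\Pi'$ left-invariant; each vanishes at units because $\Pi'|_M=0$, and an (right/left) invariant multivector vanishing at units vanishes everywhere, so $\Pi'_g\in\wedge^k(\Ker s_*\cap\Ker t_*)_g$. Since $R_{g^{-1}*}$ preserves both $t$-fibres and $s$-fibres, it carries this into $\wedge^k\ker\rho_{t(g)}$, giving $\mathcal{F}(g)\in\wedge^k\ker\rho$. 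The cocycle property is the affine relation \eqref{aff} for $\Pi'$: with $\Pi'_x=0$ it reads $\Pi'_{gr}=L_{[b_g ]}\Pi'_r+R_{[b'_{r}]}\Pi'_g$, and because $\Pi'_r\in\wedge^k\Ker t_*$ and $\Pi'_g\in\wedge^k\Ker s_*$ the bisection translations collapse to honest ones, yielding $\Pi'_{gr}=L_{g*}\Pi'_r+R_{r*}\Pi'_g$, i.e. $\mathcal{F}$ is a $1$-cocycle. Hence $\SSmap(\mathcal{F},\Lambda)=\overrightarrow{\Lambda}-\overleftarrow{\Lambda}+\Pi'=\Pi$.

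The step I expect to be the main obstacle is the collapse of the bisection translations in the surjectivity argument, namely that $L_{[b_g ]}$ and $R_{[b'_{r}]}$ applied to $\Pi'_r$ and $\Pi'_g$ do not depend on the chosen bisections. The justification is that differentiating $h\mapsto b_g(t(h))\,h$ along a vector $v\in\Ker t_*$ kills the variation of the first factor (its derivative is $b_{g*}(t_*v)=0$), so $L_{[b_g ]*}v=L_{g*}v$ there, and symmetrically $R_{[b'_{r}]*}w=R_{r*}w$ for $w\in\Ker s_*$; this is precisely where the fibrewise tangency $\Pi'_g\in\wedge^k(\Ker s_*\cap\Ker t_*)$ established above is indispensable.
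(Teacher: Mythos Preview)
Your proof is correct and follows essentially the same strategy as the paper: for injectivity you extract $\nu=\Lambda'-\Lambda\in\Gamma(\wedge^k\ker\rho)$ (you via $\rhopush$ on $M$, the paper via $\iota_{t^*df}$ directly), and for surjectivity you subtract the exact part $\overrightarrow{\Lambda}-\overleftarrow{\Lambda}$ and show the remainder $\Pi'$ is of the form $\Pi_{\mathcal{F}}$. Your argument that $\Pi'$ is tangent to both $s$- and $t$-fibres (invariance of $\iota_{s^*\xi}\Pi'$, $\iota_{t^*\xi}\Pi'$ plus $\Pi'|_M=0$) is a clean variant of the paper's direct computation using Eqs.~\eqref{Eqt:Pitstarfbypi} and \eqref{Eqt:Lambdaf=delta0f}, and your explicit justification of why $L_{[b_g]}$, $R_{[b'_r]}$ collapse to $L_{g*}$, $R_{r*}$ on $\Pi'$ fills in what the paper leaves as ``easy to deduce.''
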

\begin{proof}We first show that $\SSmap$ is injective.  If
	$$\SSmap(\mathcal{F}, \Lambda)= \SSmap(\mathcal{F}', \Lambda')=\Pi,$$
	then we have
	$$
	\iota_{t^*df}\Pi=\iota_{t^*df}\overrightarrow{\Lambda}=\iota_{t^*df}\overrightarrow{\Lambda'},\qquad\forall~ f\in {\CinfM}.
	$$
	It follows that $\nu:=\Lambda'-\Lambda\in \Gamma(\wedge^k \ker\rho)$, and then Relation \eqref{eqre2} is immediate. Thus $(\mathcal{F}, \Lambda)\sim (\mathcal{F}', \Lambda')$ in  $\mathcal{Q}^k$.
	
	We now prove that $\SSmap$ is surjective.  Given $\Pi\in \mathfrak{X}_{\mathrm{mult}} ^k(\Gpd )$,   one  has a $\rho$-compatible $k$-tensor $\pi\in \Gamma(TM\otimes (\wedge^{k-1} A))$ given by Proposition \ref{CSXproppi} such that $
	\Pi|_M = B\pi$. 	
	Choose a connection $\gamma$ on the transitive Lie algebroid $A$. Consider $\Lambda:=\Upsilon(B\pi)\in \Gamma(\wedge^k A)$ defined in Equation  \eqref{Ll}.
	We first show the following relation:
	\begin{equation}\label{Eqt:HowBpicomes}
	{\Lambda}-(-1)^k\invstar {\Lambda}=B\pi.
	\end{equation}
	In fact, by Equation  ~\eqref{Eqt:inverstarinDrho}, the left hand side of Equation  ~\eqref{Eqt:HowBpicomes} is
	\begin{eqnarray*}
		\Lambda-\sum_{j=0}^{k}\frac{\minuspower{j}}{j!}\rhopush^{j}\Lambda&=&
		\sum_{j=1}^{k}\frac{\minuspower{j-1}}{j!}\rhopush^{j}\Lambda
		=\sum_{j=1}^{k}\frac{\minuspower{j-1}}{j!}\rhopush^{j-1}\pi=\mbox{RHS of \eqref{Eqt:HowBpicomes}}.
	\end{eqnarray*}
	Here we  have used the fact $\rhopush{\Lambda}=\pi$ (see Equation  ~\eqref{Eqt:rhoLambdaispi}).
	
	Now we define
	$$
	\Xi:=\Pi-\overrightarrow{\Lambda}+\overleftarrow{\Lambda}.
	$$
	As $\Pi$ and $\overrightarrow{\Lambda}-\overleftarrow{\Lambda} $ are both multiplicative, so must be $\Xi$.
	We claim that $\Xi$
	is a $k$-vector on $\Gpd$ tangent to   $t$-fibres.
	In fact, for any $f\in {\CinfM}$, we have
	\begin{eqnarray*}
		\minuspower{k-1}\iota_{t^*f}\Xi&=&[\Pi,t^*f]-[\overrightarrow{\Lambda},t^*f]\\
		&=&\minuspower{k-1}\overrightarrow{\inserts_{df}\pi}-\overrightarrow{[\Lambda,f]}, \quad\mbox{by Equation  ~\eqref{Eqt:Pitstarfbypi};}\\
		&=&0,\quad \mbox{by Equation  ~\eqref{Eqt:Lambdaf=delta0f}.}
	\end{eqnarray*}
	
	As $\Xi$ is multiplicative, one has $ \Xi=(-1)^{k-1}\invstar\Xi$. From this relation and the above, we easily derive that
	$$\iota_{s^*f}\Xi=0,\qquad\forall~ f\in {\CinfM},$$
	and hence $\Xi$ is also  tangent to $s$-fibres.
		Moreover, by Equation  ~\eqref{Eqt:HowBpicomes}, we have $\Xi|_M=0$.
	By the affine property of $\Pi$, Equation  ~\eqref{aff}, it is easy to deduce that $\Xi$  satisfies the condition:
	$$
	\Xi(gr)=L_{g*} \Xi(r)+R_{r*} \Xi(g),\qquad\forall~ (g,r)\in \Gpd ^{(2)}.
	$$
	So $\Xi$ must be of the form $\Pi_{\mathcal{F}}$, for some $\mathcal{F}\in Z^1(\Gpd, \wedge^k\ker\rho)$.  At this point, the fact that $\Pi=\SSmap(\mathcal{F},\Lambda)$ is clear.
\end{proof}

\subsection{Reduced space of multiplicative multivectors on transitive Lie groupoids}

The following fact  describes equivalence relations of multiplicative multivectors on the transitive Lie groupoid $\Gpd$ in terms of the first cohomology   of $\Gpd$ with respect to its  adjoint action on $\ker\rho$.

\begin{lemma}\label{Thm:classbyH1transitivegroupoid}Let $\Pi $ and $\Pi'$ be in $\mathfrak{X}_{\mathrm{mult}}^k(\Gpd )$. Let
	$[(\mathcal{F},\Lambda)]$ and $[(\mathcal{F}',\Lambda')]$  be, respectively, the corresponding primary pairs in $\mathcal{Q}^k/_{\sim}$. Then $\Pi\gaugeequiv \Pi'$ holds
	if and only if  $[\mathcal{F}]=[\mathcal{F}']$ holds in $\coHg ^1(\Gpd,\wedge^k\ker\rho)$.
	
\end{lemma}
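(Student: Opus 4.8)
The plan is to derive the statement as a formal consequence of the one-to-one correspondence $\SSmap\colon \mathcal{Q}^k/_\sim \to \mathfrak{X}_{\mathrm{mult}}^k(\Gpd)$ established in Theorem \ref{Thm:S1-1}, rather than redoing the hands-on computation of Lemma \ref{Thm:classbyH1transitivealgebroid}. The starting observation is that exact multiplicative $k$-vectors are exactly the image under $\SSmap$ of pairs with vanishing cocycle part: since $\Pi_0=0$, one has $\SSmap(0,\tau)=\overrightarrow{\tau}-\overleftarrow{\tau}=T(\tau)$ for every $\tau\in\Gamma(\wedge^k A)$, so that $\mathfrak{T}^k(\Gpd)=\SSmap\big(\{0\}\times\Gamma(\wedge^k A)\big)$.

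Next I would record that $\SSmap\colon\mathcal{Q}^k\to\mathfrak{X}_{\mathrm{mult}}^k(\Gpd)$ is linear, because both $\Lambda\mapsto\overrightarrow{\Lambda}-\overleftarrow{\Lambda}$ and $\mathcal{F}\mapsto\Pi_{\mathcal{F}}$ are linear, and that the relation $\sim$ is the quotient by the linear subspace $\{(d_\Gpd\nu,\nu):\nu\in\Gamma(\wedge^k\ker\rho)\}$ described by Eq.~\eqref{eqre3}. Hence $\SSmap$ descends to a \emph{linear} isomorphism $\mathcal{Q}^k/_\sim\cong\mathfrak{X}_{\mathrm{mult}}^k(\Gpd)$. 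Writing $\Pi=\SSmap[(\mathcal{F},\Lambda)]$ and $\Pi'=\SSmap[(\mathcal{F}',\Lambda')]$, linearity gives $\Pi-\Pi'=\SSmap[(\mathcal{F}-\mathcal{F}',\Lambda-\Lambda')]$. Now $\Pi\gaugeequiv\Pi'$ means precisely $\Pi-\Pi'\in\mathfrak{T}^k(\Gpd)=\SSmap(\{0\}\times\Gamma(\wedge^k A))$; since $\SSmap$ is injective on classes, this is equivalent to $(\mathcal{F}-\mathcal{F}',\Lambda-\Lambda')\sim(0,\tau)$ for some $\tau\in\Gamma(\wedge^k A)$. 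Unwinding $\sim$, such a $\tau$ exists if and only if there is $\nu\in\Gamma(\wedge^k\ker\rho)$ with $\mathcal{F}-\mathcal{F}'=d_\Gpd\nu$ (the remaining condition $\Lambda-\Lambda'-\tau=\nu$ merely defines $\tau$ and imposes nothing further). This last condition says exactly that $[\mathcal{F}]=[\mathcal{F}']$ in $\coHg^1(\Gpd,\wedge^k\ker\rho)$, settling both implications simultaneously.

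The only delicate points — and the closest thing to an obstacle — are bookkeeping rather than analytic: one must check that $\sim$ really quotients by a linear subspace, so that $\SSmap$ is a linear isomorphism and not merely a set bijection, and that the identity $\SSmap(d_\Gpd\nu,\nu)=0$ underlying well-definedness of $\SSmap$ is the groupoid coboundary relation $\Pi_{d_\Gpd\nu}=-(\overrightarrow{\nu}-\overleftarrow{\nu})$, which is exactly the computation $R_{g*}\Ad_g=L_{g*}$ already used in proving Theorem \ref{Thm:S1-1}. Alternatively, I could mirror Lemma \ref{Thm:classbyH1transitivealgebroid} directly: for the forward implication substitute $\mathcal{F}'=\mathcal{F}+d_\Gpd\nu$ into $\Pi'=\overrightarrow{\Lambda'}-\overleftarrow{\Lambda'}+\Pi_{\mathcal{F}'}$ and use $\Pi_{d_\Gpd\nu}=-(\overrightarrow{\nu}-\overleftarrow{\nu})$ to obtain $\Pi'-\Pi=T(\Lambda'-\Lambda-\nu)$; for the converse, from $\Pi-\Pi'=T(\tau)$ restrict to $M$ and invoke Lemma \ref{lexactmultiplicativepi}, forcing the residual section $\sigma=\tau-\Lambda'+\Lambda$ to satisfy $\rhopush\sigma=0$, hence $\sigma\in\Gamma(\wedge^k\ker\rho)$, whereupon injectivity of $\mathcal{F}\mapsto\Pi_{\mathcal{F}}$ yields $\mathcal{F}'-\mathcal{F}=d_\Gpd(-\sigma)$ and thus $[\mathcal{F}]=[\mathcal{F}']$.
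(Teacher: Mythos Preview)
Your proposal is correct. The paper omits the proof entirely, noting only that it is easy; by analogy with its proof of Lemma~\ref{Thm:classbyH1transitivealgebroid}, the intended argument is precisely your second, direct, alternative. Your main approach---packaging everything through the linearity of $\SSmap$ and the bijection of Theorem~\ref{Thm:S1-1}---is a clean formal repackaging of the same content: the key identity $\SSmap(d_\Gpd\nu,\nu)=0$ you invoke is exactly the well-definedness of $\SSmap$ on $\mathcal{Q}^k/_\sim$ verified just before Theorem~\ref{Thm:S1-1}, and injectivity on classes does the rest. One small remark on your alternative converse: the step ``$\rhopush\sigma=0$ implies $\sigma\in\Gamma(\wedge^k\ker\rho)$'' is indeed valid (it follows from $\iota_\xi(D_\rho\sigma)=\iota_{\rho^*\xi}\sigma$ for all $\xi\in\Omega^1(M)$, so $D_\rho\sigma=0$ forces $\iota_\zeta\sigma=0$ for all $\zeta\in\Img\rho^*=(\ker\rho)^\perp$), but it might be worth stating this explicitly since the paper does not.
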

The proof of this lemma is easy and omitted. Together with Theorem \ref{Thm:S1-1}, we draw a direct conclusion:
\begin{theorem}
	\label{Thm:transitiveRmult}If a Lie groupoid $\Gpd$ is transitive, then we have ${\Reduced}^k_{\mathrm{mult}} \cong   \coHg ^1(\Gpd,\wedge^k\ker\rho)$.
	\end{theorem}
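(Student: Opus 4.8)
The plan is to replicate at the groupoid level the argument that established Theorem~\ref{Thm:transitiveRdiff}, substituting Theorem~\ref{Thm:S1-1} for Theorem~\ref{Thm:s1-1} and Lemma~\ref{Thm:classbyH1transitivegroupoid} for Lemma~\ref{Thm:classbyH1transitivealgebroid}. By Theorem~\ref{Thm:S1-1}, the map $\SSmap$ is a bijection between $\mathfrak{X}_{\mathrm{mult}}^k(\Gpd)$ and the set $\mathcal{Q}^k/_\sim$ of $k$-primary pairs. First I would use this to define a map $\mathfrak{X}_{\mathrm{mult}}^k(\Gpd)\to \coHg^1(\Gpd,\wedge^k\ker\rho)$ sending a multiplicative $k$-vector $\Pi$, with corresponding primary pair $[(\mathcal{F},\Lambda)]$, to the cohomology class $[\mathcal{F}]$. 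This assignment is well defined: if $(\mathcal{F},\Lambda)\sim(\mathcal{F}',\Lambda')$, then $\mathcal{F}'=\mathcal{F}+d_\Gpd\nu$ by Eq.~\eqref{eqre3}, so $[\mathcal{F}]=[\mathcal{F}']$ in $\coHg^1(\Gpd,\wedge^k\ker\rho)$.

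Next I would invoke Lemma~\ref{Thm:classbyH1transitivegroupoid}, which states precisely that $\Pi\gaugeequiv\Pi'$ holds if and only if $[\mathcal{F}]=[\mathcal{F}']$. This has two consequences at once: the map descends to a \emph{well-defined} linear map on the quotient ${\Reduced}^k_{\mathrm{mult}}=\mathfrak{X}_{\mathrm{mult}}^k(\Gpd)/\mathfrak{T}^k(\Gpd)$, and this induced map is \emph{injective}. For surjectivity I would observe that any class $[\mathcal{F}]\in\coHg^1(\Gpd,\wedge^k\ker\rho)$ is realized by the $k$-vector $\Pi_{\mathcal{F}}$ defined through $\Pi_{\mathcal{F}}(g)=R_{g*}\mathcal{F}(g)$, which was already shown to be multiplicative when $Z^1(\Gpd,\wedge^k\ker\rho)$ was introduced; its primary pair is $[(\mathcal{F},0)]$, so it maps to $[\mathcal{F}]$. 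Hence the induced map ${\Reduced}^k_{\mathrm{mult}}\to\coHg^1(\Gpd,\wedge^k\ker\rho)$ is a linear isomorphism.

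I do not expect any genuine obstacle here, since the substance of the argument has already been deposited into Theorem~\ref{Thm:S1-1} and Lemma~\ref{Thm:classbyH1transitivegroupoid}; the proof is a short assembly, exactly parallel to the one-line ``routine verification'' that closed Theorem~\ref{Thm:transitiveRdiff}. The only point meriting a moment's attention is the compatibility with the graded Lie algebra structures promised in the introduction, namely that the bracket on ${\Reduced}^\bullet_{\mathrm{mult}}$ induced by the Schouten bracket corresponds to the bracket on $\coHg^\bullet(\Gpd,\wedge^\bullet\ker\rho)$ coming from the adjoint action. This would follow from tracking how $\SSmap$ intertwines the two brackets on primary pairs, but for the stated isomorphism of the theorem it is enough to produce the linear bijection above.
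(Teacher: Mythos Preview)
Your proposal is correct and follows exactly the approach the paper intends: it is the groupoid analogue of the proof of Theorem~\ref{Thm:transitiveRdiff}, assembling Theorem~\ref{Thm:S1-1} and Lemma~\ref{Thm:classbyH1transitivegroupoid} into a bijection ${\Reduced}^k_{\mathrm{mult}}\to\coHg^1(\Gpd,\wedge^k\ker\rho)$, with surjectivity witnessed by $\Pi_{\mathcal{F}}=\SSmap(\mathcal{F},0)$. The paper itself gives no details beyond ``together with Theorem~\ref{Thm:S1-1}, we draw a direct conclusion,'' so your write-up is, if anything, more explicit than the original.
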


The following proposition is also easy to prove.
\begin{proposition}Let $\Gpd$ be transitive.
	Let $\Pi  $ be a multiplicative $k$-vector on $\Gpd$ and suppose that it corresponds to a primary pair $[(\mathcal{F},\Lambda)]$, i.e., $\Pi=\overrightarrow{\Lambda}-\overleftarrow{\Lambda}+\Pi_{\mathcal{F}}$. Then the infinitesimal of $\Pi$, $\delta_\Pi\in \differentials^k(A)$, corresponds to a primary pair $[(\hat{\mathcal{F}},\Lambda)]$, i.e.,
	$
	\delta_\Pi=[\Lambda,~\cdot~]+\hat{\mathcal{F}}
	$.
\end{proposition}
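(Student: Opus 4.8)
The plan is to use the $\reals$-linearity of the infinitesimal assignment $\Pi\mapsto\delta_\Pi$, which is characterized by $\overrightarrow{\delta_\Pi(u)}=[\Pi,\overrightarrow{u}]$ for $u\in\Gamma(\wedge^\bullet A)$. Since $\Pi=(\overrightarrow{\Lambda}-\overleftarrow{\Lambda})+\Pi_{\mathcal{F}}$, I would compute the infinitesimal of each summand separately and add them. The goal is to show that the exact part contributes the exact $k$-differential $[\Lambda,\cdot]$ and that the cocycle part contributes exactly $\hat{\mathcal{F}}$; summing would then give $\delta_\Pi=[\Lambda,\cdot]+\hat{\mathcal{F}}$, which is by definition $\Smap[(\hat{\mathcal{F}},\Lambda)]$, so that $[(\hat{\mathcal{F}},\Lambda)]$ is the primary pair of $\delta_\Pi$ by Theorem \ref{Thm:s1-1}.

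First, for the exact part: given $u\in\Gamma(A)$, the field $\overleftarrow{\Lambda}$ is left-invariant while $\overrightarrow{u}$ is right-invariant, so $[\overleftarrow{\Lambda},\overrightarrow{u}]=0$; and since $P\mapsto\overrightarrow{P}$ is a morphism of graded Lie algebras from $\Gamma(\wedge^\bullet A)$ to $\mathfrak{X}^\bullet(\Gpd)$, one has $[\overrightarrow{\Lambda},\overrightarrow{u}]=\overrightarrow{[\Lambda,u]}$. Hence $\overrightarrow{\delta_{\overrightarrow{\Lambda}-\overleftarrow{\Lambda}}(u)}=\overrightarrow{[\Lambda,u]}$, i.e. the infinitesimal of $\overrightarrow{\Lambda}-\overleftarrow{\Lambda}$ is $[\Lambda,\cdot]$, exactly as expected for an exact multiplicative multivector.

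The substance of the argument is the cocycle part, $\delta_{\Pi_{\mathcal{F}}}=\hat{\mathcal{F}}$. Here I would realize the flow $\Phi_t$ of $\overrightarrow{u}$ by left translations $L_{b_t}$, where $b_t$ is the one-parameter family of local bisections with $b_0=\mathrm{id}$ and $\dot{b}_0=u$ (obtained by flowing the unit section along $\overrightarrow{u}$). Writing $c_t:=b_t(t(g))$ and transporting $\Pi_{\mathcal{F}}(g)=R_{g*}\mathcal{F}(g)$ along $\Phi_{-t}$ to compute $\mathcal{L}_{\overrightarrow{u}}\Pi_{\mathcal{F}}$, I would use the cocycle identity $\mathcal{F}(c_tg)=\mathcal{F}(c_t)+\Ad_{c_t}\mathcal{F}(g)$, the vanishing $\mathcal{F}(\mathrm{id})=0$ of a cocycle on units, the commutativity of left and right translations, and the identity $L_{c_t^{-1}*}R_{c_t*}=\Ad_{c_t}^{-1}$ (which follows from $\Ad_g=R_{g^{-1}*}L_{g*}$). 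Together these give $\Phi_{-t*}\bigl(\Pi_{\mathcal{F}}|_{c_tg}\bigr)=R_{g*}\bigl(\Ad_{c_t}^{-1}\mathcal{F}(c_t)+\mathcal{F}(g)\bigr)$; differentiating at $t=0$ kills the $t$-independent term $\mathcal{F}(g)$ and yields $\mathcal{L}_{\overrightarrow{u}}\Pi_{\mathcal{F}}=\overrightarrow{\hat{\mathcal{F}}(u)}$, where $\hat{\mathcal{F}}(u)(x)=\tfrac{d}{dt}\big|_{t=0}\Ad_{b_t(x)^{-1}}\mathcal{F}(b_t(x))$ is precisely the differentiation of $\mathcal{F}$ to a Lie algebroid $1$-cocycle. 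So $\delta_{\Pi_{\mathcal{F}}}(u)=\hat{\mathcal{F}}(u)$ for all $u\in\Gamma(A)$ (with the sign conventions reconciled as discussed below); and since $\hat{\mathcal{F}}$ is $\CinfM$-linear its symbol vanishes, in agreement with $\Pi_{\mathcal{F}}|_M\in\Gamma(\wedge^k\ker\rho)$ carrying trivial $\rho$-compatible tensor through \eqref{Eqt:Pitstarfbypi}. Adding the two contributions gives $\delta_\Pi=[\Lambda,\cdot]+\hat{\mathcal{F}}$.

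I expect the flow computation to be the only real obstacle, for two reasons. First, one must correctly identify the flow of $\overrightarrow{u}$ with left translations $L_{b_t}$ and keep track of which fibre each value $\mathcal{F}(c_t)$ lives in, so that the $\Ad_{c_t}^{-1}$ factor is what makes $\tfrac{d}{dt}\big|_{t=0}\Ad_{c_t}^{-1}\mathcal{F}(c_t)$ an intrinsic element of the fixed fibre $\wedge^k\ker\rho|_{t(g)}$. Second, the overall sign hinges on using mutually consistent conventions for the Schouten bracket in $[\Pi_{\mathcal{F}},\overrightarrow{u}]=\pm\mathcal{L}_{\overrightarrow{u}}\Pi_{\mathcal{F}}$ and for the differentiation $\mathcal{F}\mapsto\hat{\mathcal{F}}$; these are the very conventions under which the exact part already produced $[\Lambda,\cdot]$ with the correct sign, so the two computations must be carried out in lockstep. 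Once $\delta_\Pi=[\Lambda,\cdot]+\hat{\mathcal{F}}$ is in hand, the identification of the primary pair is immediate from the definition of $\Smap$ and the one-to-one correspondence of Theorem \ref{Thm:s1-1}; one may also note that the very section $\Lambda=\Upsilon(B\pi)$ appears in both the groupoid pair and the algebroid pair, since $\Pi$ and $\delta_\Pi$ share the same $\rho$-compatible tensor $\pi$ (compare $\delta_\Pi(f)=[\Pi,t^*f]|_M$ via Proposition \ref{CSXproppi} with the symbol of $\delta_\Pi$).
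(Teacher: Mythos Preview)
Your proposal is correct. The paper does not supply a proof of this proposition at all---it merely says ``The following proposition is also easy to prove''---so there is nothing to compare against, but your argument is precisely the natural direct computation: split $\Pi$ by linearity, use $[\overrightarrow{\Lambda},\overrightarrow{u}]=\overrightarrow{[\Lambda,u]}$ and $[\overleftarrow{\Lambda},\overrightarrow{u}]=0$ for the exact part, and for $\Pi_{\mathcal F}$ realize the flow of $\overrightarrow{u}$ by left translations and invoke the cocycle identity to reduce the Lie derivative to the differentiation formula \eqref{Eqt:hatcdefinition} for $\hat{\mathcal F}$.

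One small caution on the sign you flag: with the paper's convention \eqref{Eqt:hatcdefinition}, your flow computation actually gives $\mathcal{L}_{\overrightarrow{u}}\Pi_{\mathcal F}=-\overrightarrow{\hat{\mathcal F}(u)}$, and then $\overrightarrow{\delta_{\Pi_{\mathcal F}}(u)}=[\Pi_{\mathcal F},\overrightarrow{u}]=-\mathcal{L}_{\overrightarrow{u}}\Pi_{\mathcal F}=\overrightarrow{\hat{\mathcal F}(u)}$; so the final identity $\delta_{\Pi_{\mathcal F}}=\hat{\mathcal F}$ comes out with the correct sign once both conventions are applied consistently, exactly as you anticipated.
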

In summary, if the groupoid $\Gpd$ is transitive, we have the following relations:
\begin{equation*}
\xymatrix{
	\mathfrak{X}_{\mathrm{mult}}^k(\Gpd ) \ni& \Pi \ar[d]_{ } \ar[r]^{1:1\quad} & [(\mathcal{F},\Lambda)] \ar[d]_{ } \ar[r]^{ } & [\mathcal{F}]\ar[d]^{ }&\in \coHg ^1(\Gpd,\wedge^k \ker\rho) \cong {\Reduced}^k_{\mathrm{mult}}  \\
	\differentials^k(A)\ni& \delta_\Pi \ar[r]^{1:1\quad } & [(\hat{\mathcal{F}},\Lambda)] \ar[r]^{ } & [\hat{\mathcal{F}}]&\in  \coHg ^1(  A,\wedge^k \ker\rho)\cong {\Reduced}^k_{\mathrm{diff}}.  }
\end{equation*}
Here all vertical arrows refer to the method of taking  infinitesimal. The rightmost one coincides with the Van Est map $\mathrm{VE}_1$ in Diagram \eqref{Diag:fullpicturejetH1vanEast}.

\section{Reduced space of Lie algebroid differentials (ordinary case)} \label{Sec:2ndclassofdifferentials}

\textbf{Convention.} In this section, we only consider   ordinary cases of $k$, i.e.,
$1\leqslant  k\leqslant   \Top $.

\subsection{Embedding of $\ReducedProper^k$ into $\coHg ^1(\core,\wedge^k A)$}
We start by introducing a special type of $1$-cocycles associated to the  bundle of {isotropy} jet Lie algebras $\core=T^*M\otimes A=\mathrm{Hom}(TM,A)$ which admits a canonical adjoint action  on $\wedge^k A$.   In fact,   the Lie bracket in $\core$ reads
$$[df_1\otimes u_1,df_2\otimes u_2] = \rho(u_1)(f_2) df_1\otimes u_2-\rho(u_2)(f_1) df_2\otimes u_1\,
$$  (see  Equation  \eqref{Eqt:jetbracket3}). So, the action of $\core$   on   $ \wedge^k A $ is given by
$$\ad_{df\otimes u}w = -[w,f]\wedge u
$$  (see Equation   \eqref{Eqn:jetadjoint2}).
More details can be found in Appendix \ref{Appendix:jetLiegroupoid}.

\begin{lemma}\label{lspecailsubspaceinZ1core}~
	\begin{itemize}
		\item[(1)]
	Let $\pi\in \Gamma(TM\otimes(\wedge^{k-1}A))$ be a $\rho$-compatible $k$-tensor. Then the map $\mu_\pi:~\core\to \wedge^k A$ defined by
	\begin{equation}\label{Eqn:special1cocyclebypi0}
	\mu_\pi (H)= (H\otimes (\mathrm{id}_A)^{\otimes {k-1}})\pi,
	\end{equation}
where $H\in \core =\mathrm{Hom}(TM,A)$,  is a $1$-cocycle, i.e., $\mu_\pi\in Z^1(\core, \wedge^kA)$.
\item[(2)] Every coboundary $d_\core \tau\in B^1(\core,\wedge^kA)$, for some $\tau\in \Gamma(\wedge^kA)$, can be written in the form $d_\core\tau=-\mu_\pi$, where $\pi=\rhopush\tau$.
\end{itemize}
\end{lemma}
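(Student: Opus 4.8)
The plan is to prove both parts by a direct fiberwise computation on decomposable elements of $\core$. Since the bracket of $\core$ and its action on $\wedge^k A$ are $\CinfM$-bilinear and $\mu_\pi$ is tensorial in its argument, it suffices to test everything on elements of the form $df\otimes u$ with $f\in\CinfM$, $u\in\Gamma(A)$. The first step is to record the simplified expression
\[
\mu_\pi(df\otimes u)=u\wedge\iota_{df}\pi,
\]
which is immediate from \eqref{Eqn:special1cocyclebypi0}: writing $\pi=\sum_a X_a\otimes Z_a$ one has $(df\otimes u)(X_a)=\langle df,X_a\rangle\,u$, where $\iota_{df}\pi:=\sum_a\langle df,X_a\rangle Z_a\in\Gamma(\wedge^{k-1}A)$ denotes contraction in the $TM$-slot.

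For part (1) I would expand the Chevalley--Eilenberg differential
\[
(d_{\core}\mu_\pi)(H_1,H_2)=\ad_{H_1}\mu_\pi(H_2)-\ad_{H_2}\mu_\pi(H_1)-\mu_\pi([H_1,H_2])
\]
on $H_i=df_i\otimes u_i$, using the action $\ad_{df\otimes u}w=-[w,f]\wedge u$, the bracket formula for $\core$, and the fact that $[\,\cdot\,,f]$ is a degree $-1$ derivation of the Schouten algebra with $[P,f]=(-1)^{|P|-1}\iota_{\rho^*df}P$ (the convention already used in the proof of Lemma \ref{lUpsilonBpi}). Each adjoint term splits, via this derivation rule applied to $u\wedge\iota_{df}\pi$, into a \emph{scalar} summand carrying a coefficient $\rho(u_i)(f_j)=\langle\rho^*df_j,u_i\rangle$ and a \emph{contraction} summand proportional to $u\wedge(\iota_{\rho^*df_i}\iota_{df_j}\pi)\wedge u'$. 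The scalar summands are exactly cancelled by $\mu_\pi([H_1,H_2])$ --- this cancellation is precisely what the bracket formula of $\core$ encodes --- while $\mu_\pi([H_1,H_2])$ produces no contraction summand of its own. The two surviving contraction summands then cancel each other because $\rho$-compatibility \eqref{Eqt:fine-condition} gives $\iota_{\rho^*df_1}\iota_{df_2}\pi=-\iota_{\rho^*df_2}\iota_{df_1}\pi$. This last step is the conceptual heart of the argument: it is the only place where the defining property of a $\rho$-compatible tensor is used.

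For part (2) I would evaluate the coboundary of the $0$-cochain $\tau\in\Gamma(\wedge^kA)$ directly. By definition of the Chevalley--Eilenberg differential and the action,
\[
(d_{\core}\tau)(df\otimes u)=\ad_{df\otimes u}\tau=-[\tau,f]\wedge u=-(-1)^{k-1}(\iota_{\rho^*df}\tau)\wedge u.
\]
On the other hand, for $\pi=\rhopush\tau$ the derivation property of $\rhopush$ (the derivation determined by $\rhopush u=\rho(u)$ on $A$) yields $\iota_{df}(\rhopush\tau)=\iota_{\rho^*df}\tau$, so that $\mu_\pi(df\otimes u)=u\wedge\iota_{\rho^*df}\tau=(-1)^{k-1}(\iota_{\rho^*df}\tau)\wedge u$. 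Comparing the two displays gives $d_{\core}\tau=-\mu_\pi$, as claimed; note that part (1), applied to the $\rho$-compatible tensor $\rhopush\tau$, guarantees a posteriori that this coboundary is indeed a cocycle.

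I expect the only real difficulty to be disciplined sign bookkeeping: the Koszul signs incurred when reordering the degree-one factors $u_i$ past the $\wedge^\bullet A$-parts, together with the sign conventions in $[P,f]$ and in the differential, must be tracked consistently. Conceptually there is no obstruction --- once the scalar/contraction split in part (1) is in place, the whole proof reduces to the single observation that the contraction part is the antisymmetrization ruled out by \eqref{Eqt:fine-condition}, and part (2) to the one-line identity $\iota_{df}(\rhopush\tau)=\iota_{\rho^*df}\tau$.
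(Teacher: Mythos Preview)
Your proposal is correct and follows essentially the same approach as the paper's proof, which is merely sketched there as a ``straightforward verification'' using the formula \eqref{Eqn:special1cocyclebypi}, the bracket \eqref{Eqt:jetbracket3}, the action \eqref{Eqn:jetadjoint2}, and the $\rho$-compatibility condition \eqref{Eqt:fine-condition}. You have supplied the structural details the paper omits --- in particular the scalar/contraction splitting in part~(1) and the identity $\iota_{df}(\rhopush\tau)=\iota_{\rho^*df}\tau$ in part~(2) --- and correctly identified the $\rho$-compatibility condition as the sole nontrivial input.
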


\begin{remark}Equation  ~\eqref{Eqn:special1cocyclebypi0} can be reformulated as
	\begin{eqnarray}\label{Eqn:special1cocyclebypi}
	\mu_\pi (\xi\otimes u)&=& \minuspower{k-1} (\iota_{ \xi}\pi) \wedge u,
	\end{eqnarray}
	where  $\xi \in \Omega^1(M)$ and $u\in\Gamma(A)$.
\end{remark}
\begin{proof}[Proof of Lemma \ref{lspecailsubspaceinZ1core}.] For (1), it amounts to prove the following identity:
	\begin{equation*}
	\mu_\pi([df_1\otimes u_1,df_2\otimes u_2])
=\ad_{ df_1\otimes u_1}\mu_\pi(df_2\otimes u_2)-
\ad_{ df_2\otimes u_2}\mu_\pi(df_1\otimes u_1).
	\end{equation*}
	The proof is a straightforward verification; one uses the definition of $\mu_\pi$ in Equation  ~\eqref{Eqn:special1cocyclebypi}, the explicit expressions of Lie bracket   in $\core$ and its adjoint action, and the $\rho$-compatible condition \eqref{Eqt:fine-condition} of $\pi$.   	
	The second statement can be verified by   direct computations as well.
\end{proof}

Recall the
reduced space of $\rho$-compatible  $k$-tensors $\ReducedProper^k$ defined in Definition \ref{def:reducedproperspace}. Indeed, it is part of a larger space:
\begin{lemma}\label{lembeddingReducedPropertoHjet}
	The map $\mu:~\pi\mapsto \mu_\pi$ induces an embedding of vector spaces
	$$\mu:~\ReducedProper^k\hookrightarrow \coHg ^1(\core,\wedge^k A).$$
\end{lemma}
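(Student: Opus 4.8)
The plan is to check three things in turn: that the cohomology-valued map $\bar\mu\colon \ReducedProper^k\to\coHg^1(\core,\wedge^kA)$ induced by $\pi\mapsto[\mu_\pi]$ is well defined, that the underlying cocycle-valued map $\pi\mapsto\mu_\pi$ is injective on $\rho$-compatible $k$-tensors, and that these two facts combine to give injectivity of $\bar\mu$. Since $\mu_\pi$ depends linearly on $\pi$ and lands in $Z^1(\core,\wedge^kA)$ by Lemma~\ref{lspecailsubspaceinZ1core}(1), composing with the projection $Z^1\to\coHg^1$ already yields a well-defined linear map on the space of $\rho$-compatible $k$-tensors; the only point to settle for well-definedness is whether it factors through the quotient by exact tensors.

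For the descent, I would invoke Lemma~\ref{lspecailsubspaceinZ1core}(2): for any $\tau\in\Gamma(\wedge^kA)$ the exact tensor $\rhopush\tau$ satisfies $\mu_{\rhopush\tau}=-d_\core\tau\in B^1(\core,\wedge^kA)$. By linearity, replacing $\pi$ by $\pi+\rhopush\tau$ alters $\mu_\pi$ by the coboundary $-d_\core\tau$, so $[\mu_\pi]$ is unchanged in $\coHg^1(\core,\wedge^kA)$. Hence $\pi\mapsto[\mu_\pi]$ descends to a linear map $\bar\mu\colon\ReducedProper^k\to\coHg^1(\core,\wedge^kA)$.

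The crux is injectivity of $\mu$ at the level of tensors, and this is where the ordinary-case hypothesis $k\le\Top$ enters. Using the explicit formula~\eqref{Eqn:special1cocyclebypi}, namely $\mu_\pi(\xi\otimes u)=(-1)^{k-1}(\iota_\xi\pi)\wedge u$, suppose $\mu_\pi=0$. Then for every $x\in M$ and every $\xi\in T_x^*M$ the element $\alpha:=\iota_\xi\pi\in\wedge^{k-1}A_x$ obeys $\alpha\wedge u=0$ for all $u\in A_x$. Because $k-1<k\le\Top=\rank A$, a pointwise linear-algebra fact applies: in the exterior algebra of an $r$-dimensional space, a nonzero homogeneous element of degree $<r$ cannot be annihilated by exterior multiplication against every vector (choosing a basis, $\alpha$ has a nonzero coefficient on some basis $(k-1)$-vector $e_I$, and picking any $j\notin I$ produces a nonzero coefficient of $e_I\wedge e_j$ in $\alpha\wedge e_j$). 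Thus $\iota_\xi\pi=0$ for all $\xi$, so $\pi_x=0$; as $x$ is arbitrary, $\pi=0$. This shows $\pi\mapsto\mu_\pi$ is injective on $\rho$-compatible tensors, and I expect the careful bookkeeping of this degree bound to be the main, though not deep, obstacle.

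Finally I would combine the two facts. If $\bar\mu([\pi])=0$, then $\mu_\pi=d_\core\tau$ for some $\tau\in\Gamma(\wedge^kA)$; by Lemma~\ref{lspecailsubspaceinZ1core}(2) this equals $\mu_{-\rhopush\tau}$, so linearity gives $\mu_{\pi+\rhopush\tau}=0$, and injectivity at the tensor level forces $\pi=-\rhopush\tau$. Hence $\pi$ is exact and $[\pi]=0$ in $\ReducedProper^k$, proving $\bar\mu$ injective and therefore an embedding.
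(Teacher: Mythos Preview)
Your proof is correct and is precisely the argument the paper has in mind: the authors omit the proof entirely (``The proof is easy and omitted''), relying on the two parts of Lemma~\ref{lspecailsubspaceinZ1core} together with the elementary exterior-algebra fact that a nonzero element of degree $k-1<\Top$ cannot be annihilated by wedging with every vector. Your careful tracking of the degree bound $k\le\Top$ (the ordinary-case assumption in force throughout this section) is exactly the point that makes the injectivity step work.
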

The proof is easy and omitted.


\subsection{Characteristic pairs of  Lie algebroid differentials}
We need to consider the jet Lie algebroid $\jet A$ stemming from the given Lie algebroid $A$,  and the adjoint action of $\jet A$ on $A$ and $\wedge^k A$.
The basic knowledge related to these objects can be found in   Appendix
\ref{Appendix:jetLiealgebroid}. Let us denote by $d_{\jet A}\colon C^n(\jet A,\wedge^k A)\to C^{n+1}(\jet A,\wedge^k A) $ the standard differential associated to the $\jet A$-module   $\wedge^k A$.

\begin{definition}\label{k-cp}Let $A$ be a Lie algebroid over $M$. A \textbf{$k$-characteristic pair} on $A$
is a pair $(\chi , \pi)$, where \begin{enumerate}
	\item[(1)]  $\chi\in Z^1(\jet A,\wedge^k A)$ is  a Lie algebroid $1$-cocycle   with respect to the adjoint action of $\jet A$ on $\wedge^k A$,   \item[(2)] $\pi\in \Gamma(TM\otimes (\wedge^{k-1} A))$  is a $\rho$-compatible $k$-tensor, and
	\item[(3)] the restriction of $\chi$ on $\core\subset \jet A$ coincides with $ \mu_\pi$, i.e.,
	\begin{eqnarray}\label{Eqn:kappapicompatible}
	\chi (\xi\otimes u)=(-1)^{k-1}(\iota_{ \xi}\pi) \wedge u,
	\end{eqnarray}
	for all  $\xi \in \Omega^1(M)$, $u\in\Gamma(A)$.
\end{enumerate}

\end{definition}

 As we have assumed that $1\leqslant  k\leqslant  \Top $, the following fact can be easily derived from Equation  ~\eqref{Eqn:kappapicompatible}.
\begin{lemma}\label{luniquenessofpi}  If $(\chi ,\pi_1)$ and $(\chi ,\pi_2)$ are both $k$-characteristic pairs, then we have $\pi_1=\pi_2$.
\end{lemma}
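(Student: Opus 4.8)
The plan is to recover $\pi$ from $\chi$ using only the compatibility condition (3), so that equality of the two cocycles forces equality of the two tensors. Since $(\chi,\pi_1)$ and $(\chi,\pi_2)$ have the same first component $\chi$, both restrict to $\chi|_{\core}$, so $\mu_{\pi_1}=\mu_{\pi_2}$; unwinding this through Eq.~\eqref{Eqn:kappapicompatible} (the sign $\minuspower{k-1}$ cancels) yields
\[
(\iota_\xi \pi_1)\wedge u = (\iota_\xi \pi_2)\wedge u,\qquad \forall \xi\in\Omega^1(M),\ u\in\Gamma(A).
\]
Writing $W_\xi:=\iota_\xi(\pi_1-\pi_2)\in\Gamma(\wedge^{k-1}A)$, this says precisely that $W_\xi\wedge u=0$ for every $u\in\Gamma(A)$.

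The key step is a pointwise exterior-algebra cancellation. Fix $x\in M$; then $W_\xi(x)\in\wedge^{k-1}A_x$ satisfies $W_\xi(x)\wedge v=0$ for all $v\in A_x$, and I claim this forces $W_\xi(x)=0$ because $k-1\leq\Top-1<\rank A_x$. This is exactly where the ordinary-case hypothesis $k\leq\Top$ enters. Concretely, I would choose a basis $e_1,\dots,e_{\Top}$ of $A_x$ and expand $W_\xi(x)=\sum_I a_I e_I$ over increasing multi-indices $I$ of length $k-1$; if some $a_{I_0}\neq 0$, then since $k-1<\Top$ there is an index $j\notin I_0$, and the coefficient of $e_{I_0\cup\{j\}}$ in $W_\xi(x)\wedge e_j$ equals $\pm a_{I_0}\neq 0$, contradicting $W_\xi(x)\wedge e_j=0$. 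Hence $W_\xi(x)=0$ for every $x$, i.e.\ $W_\xi=0$ for all $\xi$.

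Finally I would conclude that $\iota_\xi\pi_1=\iota_\xi\pi_2$ for all $\xi\in\Omega^1(M)$, and since $\pi_1-\pi_2$ is a section of $TM\otimes(\wedge^{k-1}A)$ on which $\iota_\xi$ acts only in the $TM$-factor, knowing all such contractions (e.g.\ against a local coframe) recovers the full tensor; therefore $\pi_1=\pi_2$. The only genuinely delicate point is the wedge cancellation, and the bound $k\leq\Top$ is indispensable there: for $k=\Top+1$ one would have $W_\xi\in\Gamma(\wedge^{\Top}A)$, where $W_\xi\wedge v=0$ holds automatically and no vanishing can be deduced, which is precisely why the lemma is asserted only in the ordinary range.
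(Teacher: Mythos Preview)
Your proof is correct and is exactly the argument the paper has in mind: the paper does not spell out a proof but simply remarks that the lemma ``can be easily derived from Eq.~\eqref{Eqn:kappapicompatible}'' under the standing assumption $1\leq k\leq\Top$, and your write-up makes precisely that derivation explicit, including the crucial observation that the bound $k\leq\Top$ is what makes the wedge-cancellation step go through.
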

So    the second data $\pi$ in a characteristic pair is actually   determined by the first data $\chi $.

 \begin{proposition}\label{Prop:kdifferentialinpair}~There is a one-to-one correspondence between
$k$-differentials $\delta=(\deltazero,\deltaone)\in \differentials^k(A)$ and $k$-characteristic  pairs $ (\chi , \pi) $ on $A$ such that
\begin{eqnarray}
\label{Eqn:delta1ukappa}
\deltazero u &=&  \chi (\liftingd u),\quad u\in \Gamma(A),\\
\label{Eqn:delta0fpi}
\deltaone f &=& (-1)^{k-1}\iota_{ df}\pi,\quad f\in  {\CinfM}.
\end{eqnarray}
Here   $\liftingd: \Gamma(A)\to \Gamma(\jet A)$ is the standard lifting map in Equation  ~\eqref{Eqt:liftingd}.

Moreover,  an exact $k$-differential $\delta=[\tau,\cdot]$, where $\tau\in\Gamma(\wedge^kA)$,  corresponds to the characteristic pair $(\chi ,\pi)$ where
$$ \chi = -d_{\jet A}\tau,\qquad
\pi=\rhopush \tau.
$$
\end{proposition}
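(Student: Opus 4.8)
The plan is to set up two mutually inverse maps and verify each is well defined. Throughout I would use the short exact sequence $0\to\core\to\jet A\to A\to 0$, with projection $p$, together with two structural facts about the lifting map $\liftingd\colon\Gamma(A)\to\Gamma(\jet A)$: it splits $p$ over $\mathbb{R}$, so every $\sigma\in\Gamma(\jet A)$ equals $\liftingd u+c$ with $u=p(\sigma)$ and $c\in\Gamma(\core)$; and it obeys the Leibniz rule $\liftingd(fu)=f\liftingd u+df\otimes u$. Consequently $\liftingd(\Gamma(A))$ together with $\Gamma(\core)$ spans $\Gamma(\jet A)$ as a $\CinfM$-module, so any tensorial identity on cochains of $\jet A$ may be tested on these generators.

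From $\delta$ to $(\chi,\pi)$: first I would read off $\pi$ from the symbol by $\deltaone f=(-1)^{k-1}\iota_{df}\pi$; equation \eqref{Eqt:kdifferential2.5}, recast as \eqref{Eqt:fine-condition}, says exactly that $\pi$ is $\rho$-compatible. I then define $\chi$ on generators by $\chi(\liftingd u)=\deltazero u$ and $\chi|_\core=\mu_\pi$. Well-definedness as a bundle map $\jet A\to\wedge^k A$ amounts to compatibility with the Leibniz rule, i.e. $\deltazero(fu)=f\deltazero u+\mu_\pi(df\otimes u)$; this is precisely the second identity in \eqref{Eqt:delta0delta1derivatives} once one substitutes $\mu_\pi(df\otimes u)=(-1)^{k-1}(\iota_{df}\pi)\wedge u=(\deltaone f)\wedge u$ from \eqref{Eqn:special1cocyclebypi}. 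Condition (3) of a characteristic pair then holds by construction, matching \eqref{Eqn:kappapicompatible}.

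Checking $\chi\in Z^1$: since $d_{\jet A}\chi$ is a genuine (tensorial) $2$-cochain, it suffices to verify $(d_{\jet A}\chi)(\mathfrak a,\mathfrak b)=0$ on pairs of generators, which splits into three cases. For $\mathfrak a=\liftingd u$, $\mathfrak b=\liftingd v$, I use $[\liftingd u,\liftingd v]_{\jet A}=\liftingd[u,v]$ and the fact that $\liftingd u$ acts on $\wedge^k A$ by the Schouten bracket $[u,\cdot]$; the cocycle equation then becomes $\deltazero[u,v]=[\deltazero u,v]+[u,\deltazero v]$, i.e. \eqref{Eqt:kdifferential4}. For $\mathfrak a=\liftingd u$, $\mathfrak b=df\otimes v$, expanding with the explicit jet bracket of Eq. \eqref{Eqt:jetbracket3} and the adjoint formula \eqref{Eqn:jetadjoint2} reduces the identity to \eqref{Eqt:kdifferential3}, the relation among $\deltaone[u,f]$, $[\deltazero u,f]$ and $[u,\deltaone f]$. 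For two core generators it is exactly the statement that $\mu_\pi$ is a cocycle, Lemma \ref{lspecailsubspaceinZ1core}(1). The converse construction sends $(\chi,\pi)$ to $\delta$ via \eqref{Eqn:delta1ukappa}--\eqref{Eqn:delta0fpi}; that this is a $k$-differential follows because in the ordinary range \eqref{Eqt:delta0delta1derivatives} and \eqref{Eqt:kdifferential4} already imply the remaining axioms, and both are the generator-wise content of $d_{\jet A}\chi=0$ together with \eqref{Eqn:kappapicompatible}. The two assignments are mutually inverse, using Lemma \ref{luniquenessofpi} to see that $\pi$ is recovered from $\chi$.

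For the exact case I would compute directly with $\delta=[\tau,\cdot]$. The Schouten identity $[\tau,f]=(-1)^{k-1}\iota_{df}(\rhopush\tau)$ gives $\deltaone f=(-1)^{k-1}\iota_{df}(\rhopush\tau)$, whence $\pi=\rhopush\tau$. On lifts, $\chi(\liftingd u)=\deltazero u=[\tau,u]=-[u,\tau]=-\nabla_{\liftingd u}\tau=-(d_{\jet A}\tau)(\liftingd u)$, while on the core $\chi|_\core=\mu_{\rhopush\tau}=-d_\core\tau$ by Lemma \ref{lspecailsubspaceinZ1core}(2), which is the restriction of $-d_{\jet A}\tau$; hence $\chi=-d_{\jet A}\tau$. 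The main obstacle is the mixed case of the cocycle check: it is the only place needing the explicit bracket between a jet lift $\liftingd u$ and a core section $df\otimes v$ inside $\jet A$, and matching the resulting terms against \eqref{Eqt:kdifferential3} requires careful bookkeeping of signs and of the adjoint action \eqref{Eqn:jetadjoint2}.
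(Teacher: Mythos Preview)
Your proposal is correct and follows the same approach as the paper: the paper's proof is a one-paragraph sketch stating that the cocycle condition on $\chi$ and the $\rho$-compatibility of $\pi$ translate to Eqs.~\eqref{Eqt:kdifferential2.5}--\eqref{Eqt:kdifferential4}, and you have carried out precisely this translation in detail by testing on the generators $\liftingd u$ and $df\otimes v$. One small slip: in the mixed case you cite Eq.~\eqref{Eqt:jetbracket3}, but the relevant bracket $[\liftingd u,df\otimes v]$ is Eq.~\eqref{Eqt:jetbracket2}.
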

\begin{proof} Given a characteristic pair  $(\chi ,\pi)$, one defines $\delta=(\deltazero,\deltaone)$ which is a derivation in $\Der^{0,k-1}(A)$. The $1$-cocycle condition of $\chi$ and the $\rho$-compatible condition of $\pi$ translate to Equations  \eqref{Eqt:kdifferential2.5} $\sim $ \eqref{Eqt:kdifferential4}, and we thus get a $k$-differential $\delta \in \differentials^k(A)$. The converse is  also natural.
\end{proof}
	This proposition is of fundamental importance in this section, and accordingly, we will say that $(\chi ,\pi)$ is the  {characteristic pair} of the differential $\delta$ if they are related by Equations  ~\eqref{Eqn:delta1ukappa} and \eqref{Eqn:delta0fpi}, and simply write $\delta=(\chi ,\pi)$.	


	\subsection{Embedding of ${\Reduced}^k_{\mathrm{diff}}$ into $\coHg ^1(\jet A,\wedge^k A) $}

 Proposition \ref{Prop:kdifferentialinpair} implies the following fact  which characterizes  equivalence relations of differentials on the Lie algebroid $A$ in terms of characteristic pairs on $A$.

\begin{proposition}\label{Prop:algebroidgaugeiff}Let $\delta=(\chi ,\pi) $ and $\delta'=(\chi ',\pi')$ be in $\differentials^k(A) $.  Then $\delta\gaugeequiv \delta'$ holds
if and only if $[\chi]=[\chi']$ holds in $\coHg ^1(\jet A,\wedge^k A)$.
\end{proposition}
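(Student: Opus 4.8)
The plan is to exploit the bijective correspondence $\delta\leftrightarrow(\chi,\pi)$ of Proposition~\ref{Prop:kdifferentialinpair} to convert the equivalence relation on differentials into a statement about characteristic pairs. Since the defining relations \eqref{Eqn:delta1ukappa}--\eqref{Eqn:delta0fpi} are linear in $\delta$, this correspondence is an isomorphism of vector spaces; in particular, the characteristic pair of $\delta-\delta'$ is $(\chi-\chi',\,\pi-\pi')$. The one fact I would keep at hand throughout is that, again by Proposition~\ref{Prop:kdifferentialinpair}, an exact $k$-differential $[\tau,\cdot\,]$ with $\tau\in\Gamma(\wedge^k A)$ has characteristic pair $(-d_{\jet A}\tau,\,\rhopush\tau)$.

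For the forward implication, suppose $\delta\gaugeequiv\delta'$, so that $\delta-\delta'=[\tau,\cdot\,]$ for some $\tau\in\Gamma(\wedge^k A)$. Applying the single-valued characteristic-pair assignment to this identity of differentials, the pair $(\chi-\chi',\pi-\pi')$ of $\delta-\delta'$ must equal the pair $(-d_{\jet A}\tau,\rhopush\tau)$ of $[\tau,\cdot\,]$; in particular $\chi-\chi'=-d_{\jet A}\tau$, whence $[\chi]=[\chi']$ in $\coHg^1(\jet A,\wedge^k A)$. This direction is essentially bookkeeping.

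The interesting direction is the converse. Assume $[\chi]=[\chi']$, i.e.\ $\chi-\chi'=-d_{\jet A}\tau$ for some $\tau\in\Gamma(\wedge^k A)$; the candidate exact differential is then $[\tau,\cdot\,]$. Its characteristic pair $(-d_{\jet A}\tau,\rhopush\tau)$ and the characteristic pair $(\chi-\chi',\pi-\pi')$ of $\delta-\delta'$ now share the same first entry $-d_{\jet A}\tau$. Here is the crucial point: since $1\le k\le\Top$, Lemma~\ref{luniquenessofpi} guarantees that the $\rho$-compatible tensor in a characteristic pair is determined by its cocycle. Hence $\pi-\pi'=\rhopush\tau$, the two characteristic pairs coincide, and the bijection of Proposition~\ref{Prop:kdifferentialinpair} forces $\delta-\delta'=[\tau,\cdot\,]$. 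Thus $\delta-\delta'$ is exact and $\delta\gaugeequiv\delta'$.

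The main obstacle --- and the only place the ordinary-case hypothesis $1\le k\le\Top$ enters --- is precisely the step where matching the cocycle components is upgraded to matching the full pairs. This rests on Lemma~\ref{luniquenessofpi}, which in turn depends on the compatibility \eqref{Eqn:kappapicompatible}: the relation $\chi(\xi\otimes u)=(-1)^{k-1}(\iota_{\xi}\pi)\wedge u$ recovers $\iota_{\xi}\pi$, hence $\pi$, only because wedging a $(k-1)$-vector with arbitrary $u\in\Gamma(A)$ is injective when $k-1<\Top$. I would therefore flag that this uniqueness, and with it the whole equivalence, is expected to fail in the exceptional cases $k=0$ and $k=\Top+1$, which are to be treated separately.
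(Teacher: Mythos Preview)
Your proof is correct and follows essentially the same route as the paper's: both directions rest on Proposition~\ref{Prop:kdifferentialinpair} (to identify the characteristic pair of an exact differential as $(-d_{\jet A}\tau,\rhopush\tau)$) and on Lemma~\ref{luniquenessofpi} (to recover $\pi-\pi'$ from $\chi-\chi'$ in the converse). The only cosmetic difference is that you phrase things in terms of the pair of $\delta-\delta'$ via linearity, whereas the paper compares $(\chi',\pi')$ directly with $(\chi-d_{\jet A}\tau,\pi+\rhopush\tau)$.
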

\begin{proof}We first show the implication that ``$\delta\gaugeequiv \delta'$''
	$\Rightarrow$ ``$[\chi]=[\chi']$''. In fact,  if $\delta'= \delta+[\tau,\cdot~]$, for some $\tau\in \Gamma(\wedge^k A)$, then by Proposition \ref{Prop:kdifferentialinpair}, the corresponding characteristic pairs are related by
$		\chi ' = \chi -d_{\jet A}\tau$ and
		$\pi' = \pi+D_{\rho}\tau$.
Hence $[\chi]=[\chi']$.

Conversely, if $\chi'=\chi-d_{\jet A}\tau$ for some $\tau\in \Gamma(\wedge^k A)$, then the $k$-characteristic pairs $(\chi',\pi')$ and $(\chi-d_{\jet A}\tau,\pi+\rhopush\tau)$ share the same first entry. By Lemma \ref{luniquenessofpi}, we have $\pi'=\pi+\rhopush\tau$. It follows that $\delta'=\delta+[\tau,\cdot]$, as required.
\end{proof}

Before stating the main theorem of this section, we need to recall or introduce some  maps:
\begin{enumerate}
	\item Define a map $\kappa$: $\differentials^k(A)\to \coHg ^1(\jet A,\wedge^k A)$  by sending
	$$
	\delta = (\chi ,\pi) \quad\mapsto\quad [\chi ] .
	$$
	 	By Proposition \ref{Prop:algebroidgaugeiff}, the above $\kappa$ induces an embedding of vector spaces
	$$
	\kappa:~{\Reduced}^k_{\mathrm{diff}} \quad\hookrightarrow\quad \coHg ^1(\jet A,\wedge^k A).
	$$
	\item Define a map $i^!$: $\differentials^k(A)\to \ReducedProper^k$  by sending
	$$
	\delta= (\chi ,\pi) \quad\mapsto\quad [\pi ] .
	$$
 	 It is apparent that ``$\delta\gaugeequiv \delta'$'' implies    ``$[\pi]=[\pi']$''. Therefore, it induces a map  $i^!$: $\Reduced^k_{\mathrm{diff}}\to \ReducedProper^k$.
	\item The embedding $i: ~\core \to \jet A$ (see Sequence \eqref{Eqt:jetalgebroidexactseq}) of Lie algebroids induces a natural  map
 	$$i^*: ~\coHg ^1(\jet A,\wedge^k A)\to \coHg ^1(\core,\wedge^k A).$$

	\item
	Recall the embedding  map $\mu:~{\ReducedProper^k}\hookrightarrow \coHg ^1(\core,\wedge^k A)$ introduced in Lemma \ref{lembeddingReducedPropertoHjet}.   

\end{enumerate}Combining the four maps $\kappa$, $i^!$, $\mu$, and $i^*$  as above,   we are able to  state our   result that describes the reduced  space ${\Reduced}^k_{\mathrm{diff}}$.
\begin{theorem}\label{Thm:Rdiffaspullback}
	Let  $1\leqslant  k\leqslant  \Top $ be an integer.  The reduced space of Lie algebroid $k$-differentials $\Reduced^k_{\mathrm{diff}}$ is the pullback      of maps $i^*$ and $\mu$. In other words, the diagram
	\begin{equation*}
	\xymatrix{
		\Reduced^k_{\mathrm{diff}} \ar@{^{(}->}[d]_{\kappa} \ar[r]^{i^!} & \ReducedProper^k \ar@{^{(}->}[d]^{\mu} \\
		\coHg ^1(\jet A,\wedge^k A) \ar[r]^{i^*} & \coHg ^1(\core,\wedge^k A)  }
	\end{equation*}
	is commutative and the map
	\begin{eqnarray*}
	 {\beta}:~	\Reduced^k_{\mathrm{diff}} &  {\longrightarrow} &{{\coHg ^1(\jet A,\wedge^k A)}{ \ _{i^*}\times_\mu }\ReducedProper^k=\set{(x,y)\in \coHg ^1(\jet A,\wedge^k A)  \times   \ReducedProper^k; ~ i^* (x)=\mu (y)~ }} \\
	 	{[\delta=(\chi,\pi)]}  & \mapsto & (\kappa[\delta],i^![\delta])=([\chi],[\pi])
	\end{eqnarray*}
	is an isomorphism.
\end{theorem}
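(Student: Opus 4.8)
The plan is to first check that the square commutes, and then to show that $\beta$ is a well-defined linear bijection onto the fiber product, treating well-definedness, injectivity, and surjectivity in turn. Commutativity is essentially a restatement of condition (3) in the definition of a $k$-characteristic pair. Starting from a class $[\delta=(\chi,\pi)]$, one route around the square gives $i^*(\kappa[\delta])=i^*[\chi]=[\chi|_\core]$ (since $i^*$ is pullback along $i\colon\core\hookrightarrow\jet A$, i.e. restriction of the cocycle), while the other gives $\mu(i^![\delta])=\mu[\pi]=[\mu_\pi]$. These agree because Eq.~\eqref{Eqn:kappapicompatible}, read through Eq.~\eqref{Eqn:special1cocyclebypi}, forces $\chi|_\core=\mu_\pi$ on the level of cochains, not merely up to coboundary. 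In particular $\beta$ genuinely lands in the fiber product.

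That $\beta$ is well-defined on equivalence classes and injective both follow from the machinery already assembled. Since the assignment $\delta\mapsto(\chi,\pi)$ of Proposition~\ref{Prop:kdifferentialinpair} is linear, $\beta$ is linear, so it suffices to argue at the level of classes. Well-definedness in the first slot is the forward implication of Proposition~\ref{Prop:algebroidgaugeiff}; in the second slot it follows because $\delta'=\delta+[\tau,\cdot]$ forces $\pi'=\pi+\rhopush\tau$ by Proposition~\ref{Prop:kdifferentialinpair}, whence $[\pi]=[\pi']$ in $\ReducedProper^k$. For injectivity, if $\beta[\delta]=\beta[\delta']$ then in particular $[\chi]=[\chi']$, so $\delta\gaugeequiv\delta'$ again by Proposition~\ref{Prop:algebroidgaugeiff}; thus the first component $\kappa$ alone already detects the class.

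The substantive step is surjectivity. Given $(x,y)$ in the fiber product, I would choose a cocycle $\chi_0\in Z^1(\jet A,\wedge^k A)$ representing $x$ and a $\rho$-compatible $k$-tensor $\pi$ representing $y$. The constraint $i^*(x)=\mu(y)$ says exactly that $\chi_0|_\core-\mu_\pi=d_\core\sigma$ for some $\sigma\in\Gamma(\wedge^k A)$. The key idea is to absorb this coboundary into the tensor rather than the cocycle: by Lemma~\ref{lspecailsubspaceinZ1core}(2) one has $d_\core\sigma=-\mu_{\rhopush\sigma}$, and since $\pi\mapsto\mu_\pi$ is linear this rewrites as $\chi_0|_\core=\mu_{\pi-\rhopush\sigma}$. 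Setting $\pi':=\pi-\rhopush\sigma$, the tensor $\pi'$ is again $\rho$-compatible and differs from $\pi$ by the exact tensor $\rhopush\sigma$, so $[\pi']=y$ in $\ReducedProper^k$; and now $(\chi_0,\pi')$ satisfies the exact compatibility $\chi_0|_\core=\mu_{\pi'}$, i.e.\ it is a genuine $k$-characteristic pair. By Proposition~\ref{Prop:kdifferentialinpair} it is the characteristic pair of a unique $\delta\in\differentials^k(A)$, and by construction $\beta[\delta]=([\chi_0],[\pi'])=(x,y)$.

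The main obstacle is precisely this surjectivity argument, and it is where the fiber product structure does real work: the constraint $i^*(x)=\mu(y)$ only records that $\chi|_\core$ and $\mu_\pi$ agree \emph{cohomologically}, whereas being a characteristic pair demands they agree as \emph{cochains}. What rescues the argument is that the discrepancy is exactly a coboundary of the special form $-\mu_{\rhopush\sigma}$ produced by Lemma~\ref{lspecailsubspaceinZ1core}(2); this can be shifted from $\chi$ onto $\pi$ without leaving the class $y\in\ReducedProper^k$, because $\rhopush\sigma$ is an exact $\rho$-compatible tensor. Everything else is bookkeeping with the two correspondences of Proposition~\ref{Prop:kdifferentialinpair} and Proposition~\ref{Prop:algebroidgaugeiff}, together with the linearity of $\mu$ and of the characteristic-pair assignment.
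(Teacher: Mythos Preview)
Your proof is correct and follows essentially the same route as the paper's. You are in fact more careful at the key point: the paper asserts without comment that a representative $\chi$ with $i^*[\chi]\in\Img\mu$ can be paired with some $\pi$ satisfying $\chi|_\core=\mu_\pi$ \emph{on the nose}, whereas you explicitly invoke Lemma~\ref{lspecailsubspaceinZ1core}(2) to absorb the coboundary discrepancy $d_\core\sigma=-\mu_{\rhopush\sigma}$ into $\pi$, which is precisely the step the paper suppresses.
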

\begin{proof} The fact that the diagram is commutative is due to condition (3) in Definition \ref{k-cp} of characteristic pairs. The map $\beta$ is an injection because $\kappa$ is injective. It remains to show that $\beta$ is surjective, and it amounts to prove
	 the relation of inclusion $(i^*)^{-1}{\Img \mu}\subset \Img \kappa$. We suppose that $[\chi]\in (i^*)^{-1}{\Img \mu} $, where $\chi\in Z^1(\jet A,\wedge^k A)$. So, there exists a  $\rho$-compatible $k$-tensor $\pi$ such that $i^*(\chi)=\chi|_\core=\mu_\pi$. The pair $(\chi,\pi)$ becomes a $k$-characteristic pair as in Definition \ref{k-cp}, and it gives rise to a $k$-differential $\delta\in \differentials^k(A)$ by Proposition \ref{Prop:kdifferentialinpair}. We then have the desired relation $\kappa([\delta])=[\chi]\in \Img \kappa$.
\end{proof}

 By this theorem, we  see that   $\Reduced^k_{\mathrm{diff}} \cong \Img \kappa=(i^*)^{-1}\Img \mu$.
Also note   Lemma \ref{lleftHjet} in the appendix which gives the kernel of $i^*$, namely the space $\coHg ^1(  A,\wedge^k \ker\rho )$. Therefore, one is able to draw the following   commutative diagram:
\begin{equation*}\label{Diag:fullpictureRdiff}
\xymatrix{
	0   \ar[r]^{ } & \coHg ^1(  A,\wedge^k \ker\rho ) \ar[d]_{\mathrm{id}} \ar[r]^{p^!} & \Reduced^k_{\mathrm{diff}} \ar@{^{(}->}[d]_{\kappa} \ar[r]^{i^!} & \ReducedProper^k \ar@{^{(}->}[d]^{\mu} \\
	0 \ar[r]^{} & \coHg ^1(  A,\wedge^k \ker\rho ) \ar[r]^{p^*} & \coHg ^1(\jet A,\wedge^k A) \ar[r]^{i^*} & \coHg ^1(\core,\wedge^k A),  }
\end{equation*}
where the two horizontal sequences are exact,  and the vertical arrows are injective.  From this fact we find a decomposition $\Reduced^k_{\mathrm{diff}}\cong \coHg ^1(  A,\wedge^k \ker\rho )\oplus (\Img \mu\cap \Img i^*)$ although it  is not canonical.




\section{Reduced space of multiplicative  multivectors on Lie groupoids (ordinary case)}\label{Sec:multiplicativemultivectors}

Our method of finding the reduced space ${\Reduced}^k_{\mathrm{mult}}$ is parallel to the previous section. However, more efforts are needed before we reach our conclusion. In particular, we need to study in depth the precise composition of  multiplicative  multivectors on Lie groupoids.

\textbf{Convention.} In this section,  we again assume that
$1\leqslant  k\leqslant   \Top $.

\subsection{Embedding of $\ReducedProper^k$ into  $\coHg ^1(\heat,\wedge^k A)$}
In this part, we introduce a special type of $1$-cocycles associated to the  bundle of {isotropy} jet groups $\heat\cong\underline{\mathrm{Hom}}(TM,A)$ and its adjoint action  on $\wedge^k A$. The reader is referred to Appendix \ref{Appendix:jetLiegroupoid} for
an explicit description of the group structure of $\heat$ and its action on $\wedge^k A$.

The following lemma is parallel to Lemma \ref{lspecailsubspaceinZ1core}.
\begin{lemma}\label{lspecailsubspaceinZ1heat}~
	\begin{itemize}
		\item[(1)]
	Let $\pi\in \Gamma(TM\otimes(\wedge^{k-1}A))$ be a $\rho$-compatible $k$-tensor. Then the map ${U_\pi}:~\heat\to \wedge^k A$ defined by
	\begin{equation}\label{Eqn:special1cocyclebypi2}
	{U_\pi}([h_x])=(B\pi)_x-L_{[h_x]} (B\pi)_x,\qquad \forall~ [h_x]\in  \heat_x,~x\in M,
	\end{equation}
   is a $1$-cocycle, i.e., ${U_\pi}\in Z^1(\heat, \wedge^kA)$. (
Here $B\pi$ is defined in Equation  ~\eqref{Bpi}.)
\item[(2)] Every coboundary $d_\heat \tau\in B^1(\heat,\wedge^kA)$, for some $\tau\in \Gamma(\wedge^kA)$, can be written in the form $d_\heat\tau=-{U_\pi}$, where $\pi=\rhopush\tau$.
\end{itemize}
\end{lemma}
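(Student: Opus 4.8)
The plan is to read the defining formula \eqref{Eqn:special1cocyclebypi2} as exhibiting $U_\pi$ as minus a coboundary, taken however in the \emph{enlarged} coefficient module $\wedge^\bullet(TM\oplus A)$ rather than in $\wedge^\bullet A$. Indeed, the coboundary of the $0$-cochain $B\pi$ for $\heat$ acting on $\wedge^k(TM\oplus A)$ is $[h_x]\mapsto L_{[h_x]}(B\pi)_x-(B\pi)_x$, so $U_\pi$ is minus this coboundary and the $1$-cocycle identity
\[
U_\pi([h_x][h'_x]) = U_\pi([h_x]) + L_{[h_x]}U_\pi([h'_x])
\]
becomes automatic (equivalently, it follows from $L_{[h_x][h'_x]} = L_{[h_x]}L_{[h'_x]}$ by a one-line cancellation). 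The entire content of part (1) is therefore the \emph{well-definedness} assertion that $U_\pi$ takes values in the submodule $\wedge^k A \subset \wedge^k(TM\oplus A)$; this is the group-level counterpart of the direct verification underlying Lemma \ref{lspecailsubspaceinZ1core}.

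For well-definedness I would use the criterion that $W\in\wedge^k(TM\oplus A)$ lies in $\wedge^k A$ precisely when $\iota_\xi W=0$ for every $\xi\in\Omega^1(M)$ (contraction killing the $A$-directions). Applied to $U_\pi([h_x])$, this reduces the claim to
\[
\iota_\xi\bigl(L_{[h_x]}(B\pi)_x\bigr) = \iota_\xi (B\pi)_x,\qquad \forall \xi\in\Omega^1(M),
\]
i.e. to the statement that the left $\heat$-action does not disturb the transverse ($TM$-)part of $B\pi$. Here I would feed in the explicit description of the $\heat$-action on $\wedge^\bullet(TM\oplus A)$ from Appendix \ref{Appendix:jetLiegroupoid}, its compatibility with contraction by covectors from $\Omega^1(M)$, and the two contraction identities \eqref{Eqn:specialpropertypi2}--\eqref{Eqn:specialpropertypi1} for $B\pi$ together with the $\rho$-compatibility \eqref{Eqt:fine-condition}. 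This computation is where the real work sits and is the main obstacle. A convenient alternative, should the direct contraction become unwieldy, is to exploit that each isotropy jet group $\heat_x$ is diffeomorphic to the vector space $\mathrm{Hom}(T_xM,A_x)$ and hence connected and simply connected, so that differentiating $U_\pi$ at the unit recovers the Lie algebroid cocycle $\mu_\pi\in Z^1(\core,\wedge^k A)$ of Lemma \ref{lspecailsubspaceinZ1core}, and the $\wedge^k A$-valuedness can then be propagated from the infinitesimal level by integrating along paths in $\heat_x$.

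For part (2) I would compute the coboundary of the $0$-cochain $\tau\in\Gamma(\wedge^k A)$ directly, $(d_\heat\tau)([h_x]) = L_{[h_x]}\tau_x - \tau_x$, and compare it with $-U_{\rhopush\tau}$. The bridge is the identity
\[
B(\rhopush\tau) = \tau - (-1)^k\,\invstar\tau,
\]
which follows from Lemma \ref{lexactmultiplicativepi} (or directly from the expansion \eqref{Bpi} and the inverse-map formula \eqref{Eqt:inverstarinDrho}). Substituting it into $-U_{\rhopush\tau}([h_x]) = L_{[h_x]}B(\rhopush\tau)_x - B(\rhopush\tau)_x$ splits the result into a term $L_{[h_x]}\tau_x-\tau_x$ and a term proportional to $L_{[h_x]}\invstar\tau - \invstar\tau$; using that the left action intertwines with the inversion $\invstar$ (so that $\invstar\tau$ is $L_{[h_x]}$-fixed and its contribution cancels) leaves exactly $(d_\heat\tau)([h_x])$, giving $d_\heat\tau = -U_{\rhopush\tau}$ with $\pi=\rhopush\tau$ as claimed. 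The one point demanding care is precisely the interplay between $L_{[h_x]}$ and $\invstar$, which is the groupoid shadow of the sign bookkeeping already visible in the transitive computation \eqref{Eqt:HowBpicomes}.
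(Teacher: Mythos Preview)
Your main argument is essentially the same as the paper's. For part (1) you correctly identify that the cocycle identity is automatic from $L_{[h_1][h_2]}=L_{[h_1]}L_{[h_2]}$ (together with $\Ad_{[h_x]}=L_{[h_x]}$ on $\wedge^k A$, cf.~\eqref{Eqt:adjointofheatonwedgekA}), and that the only substance is showing $U_\pi$ lands in $\wedge^k A$. The paper does exactly your contraction check: since $s_*{\littlecirc} L_{[h_x]}=s_*$, the operator $\iota_{s^*\xi}$ commutes with $L_{[h_x]}$; by \eqref{Eqn:specialpropertypi2} the element $\iota_{s^*\xi}(B\pi)_x=(-1)^{k-1}\invstar(\iota_\xi\pi)_x$ lies in $\wedge^{k-1}\ker t_*$, on which $L_{[h_x]}$ is the identity, so $\iota_{s^*\xi}L_{[h_x]}(B\pi)_x=\iota_{s^*\xi}(B\pi)_x$. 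For part (2) your computation via $B(\rhopush\tau)=\tau-(-1)^k\invstar\tau$ and the observation that $\invstar\tau_x\in\wedge^k\ker t_*$ is $L_{[h_x]}$-fixed is precisely the paper's argument (phrased there via $(\overrightarrow{\tau}-\overleftarrow{\tau})_x$).

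One caveat on your proposed alternative: $\heat_x$ is only the open subset $\underline{\mathrm{Hom}}(T_xM,A_x)=\{H:\,1+\rho{\littlecirc} H\in\mathrm{GL}(T_xM)\}$, not the full vector space, and it need not be connected (e.g.\ when $\dim M=1$ and $\rho_x\neq 0$ it is $\mathbb{R}\setminus\{\text{point}\}$). So the integration-from-infinitesimal shortcut would require extra care; the direct contraction argument is the safe route and is what the paper does.
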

\begin{proof}
 A priori,  the right hand side of Equation  ~\eqref{Eqn:special1cocyclebypi2} belongs to $\Gamma(\wedge^k (TM\oplus A))$. So we need to explain why it only has the $\Gamma(\wedge^k   A) $-component, i.e.,  tangent to the $s$-fibres, or
\begin{equation}\label{Eqt:temp2}\iota_{s^* \xi}((B\pi)_x-L_{[h_x]} (B\pi)_x)
=0,\quad \forall~ \xi\in T^*_xM.\end{equation}
In fact, by Equation  ~\eqref{Eqn:specialpropertypi2} of Lemma \ref{Lemma:ixitoBpi},  \begin{equation}\label{Eqt:temp6}\iota_{s^* \xi} (B\pi) =\iota_{\xi} (B\pi) =(-1)^{k-1} \invstar  (\iota_\xi \pi) \end{equation}
is tangent to the $t$-fibres (because $\iota_\xi \pi$ is tangent to the $s$-fibres). Hence
\begin{eqnarray*}
 \iota_{s^* \xi}{\littlecirc} L_{[h_x]} (B\pi)_x =L_{[h_x]}{\littlecirc} \iota_{s^* \xi} (B\pi)_x= \iota_{s^* \xi} (B\pi)_x\,.
 \end{eqnarray*}
 This proves Equation  ~\eqref{Eqt:temp2}.

 Now we wish to show the equality
 \begin{equation}\label{Eqt:temp7}
 {U_\pi}([h_1][h_2])={U_\pi}([h_1])+\Ad_{[h_1]}{U_\pi}([h_2]), \quad \forall~ [h_1], [h_2]\in \heat.
 \end{equation}
 In fact, the right hand side of the above equation is ${U_\pi}([h_1])+L_{[h_1]}{U_\pi}([h_2])$ (by Equation  ~\eqref{Eqt:adjointofheatonwedgekA}). Then by substituting the definition of ${U_\pi}$, one easily gets the left hand side of Equation  ~\eqref{Eqt:temp7}.  So, ${U_\pi}$ is indeed a $1$-cocycle.

 The second claim is verified as follows. If $\pi=\rhopush \tau$, then one has
 \begin{eqnarray*}
 U_{\pi}([h_x])&=&(B\pi)_x-L_{[h_x]} (B\pi)_x\\
 &=& (\overrightarrow{\tau }-\overleftarrow{\tau })_x-L_{[h_x]}(\overrightarrow{\tau }-\overleftarrow{\tau })_x,\qquad\mbox{ by Lemma \ref{lexactmultiplicativepi};}\\
 &=&\tau_x-L_{[h_x]}\tau_x=\tau_x-\Ad_{[h_x]}\tau_x\\
 &=&-d_\heat \tau ([h_x]).
 \end{eqnarray*}
\end{proof}
Moreover, we have a basic fact:
\begin{proposition}\label{Prop:Mpitomupi}The infinitesimal of the group $1$-cocycle ${U_\pi}$ defined in Lemma \ref{lspecailsubspaceinZ1heat}, is the Lie algebra $1$-cocycle $\mu_\pi$ defined in Lemma \ref{lspecailsubspaceinZ1core}.
\end{proposition}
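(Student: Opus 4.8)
The plan is to obtain $\mu_\pi$ by differentiating the group cocycle $U_\pi$ along the bundle of isotropy jet Lie algebras $\core=\mathrm{Lie}(\heat)$. Fix $x\in M$ and $H\in\core_x$, and choose a curve $t\mapsto[h_t]$ in $\heat_x$ with $[h_0]$ the unit and $\tfrac{d}{dt}\big|_{0}[h_t]=H$. Since the term $(B\pi)_x$ in \eqref{Eqn:special1cocyclebypi2} does not depend on $[h_t]$, its derivative drops out and we are left with
\begin{equation*}
\widehat{U_\pi}(H)=-\frac{d}{dt}\Big|_{0}L_{[h_t]}(B\pi)_x .
\end{equation*}
Thus everything reduces to identifying the infinitesimal generator of the left-translation action of $\heat$ on the fibre $\wedge^k(TM\oplus A)_x$ and evaluating it on $B\pi$.

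Next I would spell out this infinitesimal action. By \eqref{Eqt:adjointofheatonwedgekA} the left translation $L_{[h_x]}$ restricts on $\wedge^kA$ to the adjoint action $\Ad_{[h_x]}$, whose derivative at the unit is $\ad_H$; since each $L_{[h_t]}$ is the exterior-algebra automorphism induced by a linear endomorphism of $TM\oplus A$, the operator $\tfrac{d}{dt}|_0 L_{[h_t]}$ is the degree-zero derivation of $\wedge^\bullet(TM\oplus A)$ induced by $\tfrac{d}{dt}|_0 L_{[h_t]}$ on $TM\oplus A$. For $H=\xi\otimes u$ with $\xi=df$, the appendix shows this endomorphism is $\zeta\mapsto -\langle\xi,\bar\rho\,\zeta\rangle\,u$, where $\bar\rho\colon(X,v)\mapsto X+\rho(v)$ is the target anchor of $TM\oplus A$; as a derivation it therefore reads, on homogeneous $W\in\wedge^k(TM\oplus A)$,
\begin{equation*}
\ad_{\xi\otimes u}W=(-1)^k\big(\iota_{\xi}W+\iota_{\rho^*\xi}W\big)\wedge u ,
\end{equation*}
the contraction $\iota_\xi$ accounting for the $TM$-directions. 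I would cross-check this against the appendix: on $\wedge^kA$ the term $\iota_\xi$ vanishes, so the right-hand side becomes $(-1)^k(\iota_{\rho^*\xi}w)\wedge u=-[w,f]\wedge u$, recovering the adjoint action \eqref{Eqn:jetadjoint2} used to define $\mu_\pi$; this consistency is what fixes the overall sign.

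Finally I would substitute $W=B\pi$. The two contractions collapse by Relation \eqref{Eqn:specialpropertypi1} of Lemma \ref{Lemma:ixitoBpi}, namely $\iota_\xi(B\pi)+\iota_{\rho^*\xi}(B\pi)=\iota_\xi\pi$, whence
\begin{equation*}
\widehat{U_\pi}(\xi\otimes u)=-\ad_{\xi\otimes u}(B\pi)=(-1)^{k-1}(\iota_\xi\pi)\wedge u ,
\end{equation*}
which is exactly $\mu_\pi(\xi\otimes u)$ by \eqref{Eqn:special1cocyclebypi}. Because both sides are $C^\infty(M)$-linear in the arguments and $\core$ is generated by elements $\xi\otimes u$, this yields $\widehat{U_\pi}=\mu_\pi$.

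The main obstacle is the second step: one must pin down the left-translation action of the isotropy jet group on the \emph{full} mixed algebra $\wedge^\bullet(TM\oplus A)$, not merely on $\wedge^kA$, and present its generator through the combined contraction $\iota_\xi+\iota_{\rho^*\xi}$ against the target anchor. Once this is correctly set up, the identity \eqref{Eqn:specialpropertypi1} for $B\pi$ makes the remaining computation immediate; note in particular that it also explains, infinitesimally, why $\widehat{U_\pi}(H)$ lands in $\wedge^kA$, mirroring the vanishing \eqref{Eqt:temp2} established at the group level.
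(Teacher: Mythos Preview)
Your approach is correct and reaches the right conclusion, but it follows a genuinely different route from the paper. The paper works directly with the convention $\hat{c}(H)=\frac{d}{d\epsilon}\big|_{0}\, c(\gamma(\epsilon)^{-1})$ from \eqref{Eqt:hatcdefinition}, takes the explicit curve $\gamma(\epsilon)=1+\epsilon H$, and then projects $B\pi - L_{1-\epsilon H}(B\pi)$ to $\wedge^k A$ using the left-translation formula \eqref{Eqn:Lhx}; the point is that under this projection only the leading term $\pi\in TM\otimes\wedge^{k-1}A$ of $B\pi$ survives, yielding $(H\otimes\mathrm{id}^{\otimes(k-1)})\pi$ directly. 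You instead compute the full infinitesimal left translation as a derivation of $\wedge^\bullet(TM\oplus A)$, express it via the combined contraction $\iota_\xi+\iota_{\rho^*\xi}$, and then invoke the identity \eqref{Eqn:specialpropertypi1} to collapse $\iota_\xi(B\pi)+\iota_{\rho^*\xi}(B\pi)$ to $\iota_\xi\pi$. This is more conceptual: it shows \emph{why} $B\pi$ is the correct object here, since \eqref{Eqn:specialpropertypi1} is exactly the property that makes the computation close up, and it avoids the implicit ``only the top piece survives'' step.

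One caution on sign bookkeeping: your first displayed formula effectively computes $\frac{d}{dt}\big|_{0}\,U_\pi([h_t])$ rather than the paper's convention $\frac{d}{dt}\big|_{0}\,U_\pi([h_t]^{-1})$ from \eqref{Eqt:hatcdefinition}, and the endomorphism you extract from \eqref{Eqn:Lhx} (with $[h_t]=1+tH$) should read $\zeta\mapsto +\langle\xi,\bar\rho\,\zeta\rangle\,u$, not with a minus sign. These two slips cancel, and your cross-check against the known adjoint action \eqref{Eqn:jetadjoint2} correctly pins down the final sign, so the argument stands; but in a clean write-up you should align the first step with \eqref{Eqt:hatcdefinition} and correct the sign of the generator accordingly.
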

\begin{proof}It suffices to show the following identity:
\begin{equation}\label{Eqt:chatproperty}
\hat{U}_{\pi} (H)=(H\otimes (\mathrm{id}_A)^{\otimes {k-1}})\pi,
\end{equation}where $H \in \core=\mathrm{Hom}(TM,A)$.
 According to the definition of $\hat{c}$ in Equation  ~\eqref{Eqt:hatcdefinition}, we need to find a curve $\gamma(\epsilon)$ in $\heat$ with $\gamma'(0)=H$. We take $\gamma(\epsilon)=1+\epsilon H$, and then we have
 $$(\gamma(\epsilon))^{-1}=1-\epsilon H{\littlecirc} (1+\epsilon \rho{\littlecirc} H)^{-1}=1-\epsilon H + o(\epsilon).$$
 Hence, we are able to compute
 \begin{eqnarray*}
 \hat{U}_{\pi}(H)&=& \frac{d}{d\epsilon }|_{\epsilon =0}~ {{U_\pi}}(\gamma(\epsilon)^{-1})\\
 &=& \frac{d}{d\epsilon }|_{\epsilon =0}~( B\pi -L_{\gamma(\epsilon)^{-1}} (B\pi) ) \\
 &=& \frac{d}{d\epsilon }|_{\epsilon =0}~( B\pi -L_{1-\epsilon H} (B\pi) )\\
 &=& \frac{d}{d\epsilon }|_{\epsilon =0}~\mathrm{pr}_{\wedge^k A}( B\pi -L_{1-\epsilon H} (B\pi) ). \end{eqnarray*}
 Now using the formula of $L_{[h_x]}$ in Equation  ~\eqref{Eqn:Lhx}, the last step yields exactly the right hand side of Equation  ~\eqref{Eqt:chatproperty}.
\end{proof}

Also recall Lemma \ref{lembeddingReducedPropertoHjet} where we  find an embedding   $\mu:~{\ReducedProper^k}\hookrightarrow \coHg ^1(\core,\wedge^k A)$.   We have a similar fact.
\begin{lemma}\label{lUembedding}
		The map $U:~\pi\mapsto U_\pi$ induces an embedding of vector spaces
	$$U:~\ReducedProper^k\hookrightarrow \coHg ^1(\heat,\wedge^k A).$$
	
\end{lemma}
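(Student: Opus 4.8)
The plan is to mirror the proof of Lemma~\ref{lembeddingReducedPropertoHjet}, with the jet Lie algebra $\core$ replaced by the jet Lie group bundle $\heat$. First I would record that the assignment is well defined on cohomology: by Lemma~\ref{lspecailsubspaceinZ1heat}(1) each $U_\pi$ is a $1$-cocycle, so $\pi\mapsto[U_\pi]\in\coHg^1(\heat,\wedge^k A)$ is defined, and it is linear since $\pi\mapsto B\pi$ is linear (Eq.~\eqref{Bpi}) and the remaining operations in Eq.~\eqref{Eqn:special1cocyclebypi2} are linear in $B\pi$. To see that it descends to $\ReducedProper^k$, I would use Lemma~\ref{lspecailsubspaceinZ1heat}(2): an exact tensor $\pi=\rhopush\tau$ gives $U_\pi=-d_\heat\tau\in B^1(\heat,\wedge^k A)$, hence $[U_\pi]=0$. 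This produces the induced linear map $U:\ReducedProper^k\to\coHg^1(\heat,\wedge^k A)$.

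The substance is injectivity. Suppose $[\pi]\in\ReducedProper^k$ has $U([\pi])=0$, that is, $U_\pi=d_\heat\tau$ for some $\tau\in\Gamma(\wedge^k A)$. Applying Lemma~\ref{lspecailsubspaceinZ1heat}(2) to the right-hand side gives $d_\heat\tau=-U_{\rhopush\tau}$, so by linearity $U_{\pi+\rhopush\tau}=0$ as a cochain. Hence injectivity reduces to the cochain-level statement $U_{\pi'}=0\Rightarrow\pi'=0$: once this is known, $\pi=-\rhopush\tau=\rhopush(-\tau)$ is exact, so $[\pi]=0$ and $U$ is an embedding.

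For the cochain-level statement I would differentiate. By Proposition~\ref{Prop:Mpitomupi} the infinitesimal of $U_{\pi'}$ is $\mu_{\pi'}$, and since the infinitesimal of the zero cocycle vanishes, $U_{\pi'}=0$ forces $\mu_{\pi'}=0$. By Eq.~\eqref{Eqn:special1cocyclebypi} this means $(\iota_\xi\pi')\wedge u=0$ for all $\xi\in\Omega^1(M)$ and $u\in\Gamma(A)$. Here the convention $1\leq k\leq\Top$ enters decisively: working pointwise, $\iota_\xi\pi'\in\wedge^{k-1}A_x$ lies in degree $k-1<\rank A$, so the wedge-annihilator of a nonzero element of $\wedge^{k-1}A_x$ is a proper subspace of $A_x$, and therefore $(\iota_\xi\pi')\wedge u=0$ for every $u$ forces $\iota_\xi\pi'=0$; letting $\xi$ sweep out $\Omega^1(M)$ yields $\pi'=0$.

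I expect the only non-formal point to be this final differentiation argument, where one borrows Proposition~\ref{Prop:Mpitomupi} to reduce the group statement to the Lie-algebra one and where the rank bound $k\leq\Top$ is genuinely needed; everything else follows formally from Lemma~\ref{lspecailsubspaceinZ1heat}, exactly as in the companion Lemma~\ref{lembeddingReducedPropertoHjet}.
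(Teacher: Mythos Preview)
Your proof is correct. The paper omits the proof entirely (``The proof is easy and omitted''), as it also does for the companion Lemma~\ref{lembeddingReducedPropertoHjet}; your argument supplies exactly the details one expects the authors had in mind, reducing to the Lie-algebra level via Proposition~\ref{Prop:Mpitomupi} and then using the rank constraint $k\leq\Top$ to conclude $\mu_{\pi'}=0\Rightarrow\pi'=0$.
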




\subsection{Characteristic pairs of multiplicative multivectors}

Recall that $A$, the tangent Lie algebroid of $\Gpd$, is a module of the jet groupoid $\jet\Gpd$ via adjoint actions (see Definition \ref{Def:jetGpdadjoint}).  Therefore, $\jet\Gpd$ also acts on $\wedge^k A$.
Hence we have a coboundary operator
$d_{\jet \Gpd}: C^{n}(\jet\Gpd , \wedge^k A)\to C^{n+1}(\jet \Gpd ,\wedge^k A)$ associated to the    $\jet\Gpd $-module structure on $\wedge^k A$.
We denote by $Z^1(\jet\Gpd , \wedge^k A)$ the set of $1$-cocycles $c:\jet\Gpd\to \wedge^kA$ (see Equation  ~\eqref{Eqt:1cocyclecondition}).
\begin{definition}\label{Defn:groupoidcharpair}A  \textbf{$k$-characteristic pair} on $\Gpd$ is a pair
  $(c,\pi)  \in Z^1(\jet \Gpd , \wedge^k A)\times \Gamma(TM\otimes (\wedge^{k-1} A))$, where $\pi$ is a $\rho$-compatible $k$-tensor  and when $c$ is restricted to $\heat$, it coincides with ${U_\pi}$ defined in Lemma \ref{lspecailsubspaceinZ1heat}, i.e.,   \begin{eqnarray}
\label{formula1}c([h_x])&=&(B\pi)_x-L_{[h_x]} (B\pi)_x,\qquad \forall~ [h_x]\in  \heat_x,~x\in M.
\end{eqnarray}
\end{definition}

We need the following key fact.
\begin{theorem}\label{Thm:mainexplicitformula} There is a one-to-one correspondence between multiplicative $k$-vectors $\Pi$ on a Lie groupoid $\Gpd \rightrightarrows M$, and   $k$-characteristic pairs $(c,\pi)$ on $\Gpd$
such that
\begin{equation}\label{formula3}
\Pi_g=R_{g*}c([b_g])+L_{[b_g]}(B\pi)_{s(g)}
\end{equation}
holds for all $g\in \Gpd $ and   bisections $b_g$ through $g$, where $B\pi$ is given by Equation  ~\eqref{Bpi}.
\end{theorem}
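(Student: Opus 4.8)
The plan is to realise the bijection by constructing maps in both directions with \eqref{formula3} as the bridge, and to check multiplicativity throughout against the three clauses of Lemma \ref{multi vector} (affine, $M$ coisotropic, $\iota_{s^*\xi}\Pi$ left-invariant). Note first that, given a multiplicative $\Pi$, Proposition \ref{CSXproppi} already supplies the $\rho$-compatible $k$-tensor $\pi$ with $\Pi|_M=B\pi$, and this is exactly the $\pi$ entering the characteristic pair; so the real work is to manufacture the cocycle $c$ from $\Pi$ and, conversely, to reconstruct $\Pi$ from $(c,\pi)$ and verify it is multiplicative.

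First I would treat the direction $(c,\pi)\mapsto\Pi$. Define $\Pi_g$ by the right-hand side of \eqref{formula3} using a local bisection $b_g$ through $g$; since a pushforward at the unit depends only on the $1$-jet, the expression is a function of $[b_g]\in\jet\Gpd$, and the crux is that it is independent of the choice of bisection. If $b'_g$ is another bisection through $g$ then $[b'_g]=[b_g][h]$ for a unique $[h]\in\heat_{s(g)}$, and expanding by the cocycle rule $c([b_g][h])=c([b_g])+\Ad_{[b_g]}c([h])$ together with the compatibility $c([h])={U_\pi}([h])=(B\pi)_{s(g)}-L_{[h]}(B\pi)_{s(g)}$ of \eqref{formula1}, the two values of $\Pi_g$ differ precisely by $R_{g*}\Ad_{[b_g]}c([h])-L_{[b_g]}c([h])$, which vanishes because the $\jet\Gpd$-adjoint action on $\wedge^kA$ is defined (Appendix) so that $R_{g*}\Ad_{[b_g]}=L_{[b_g]}$ on $\wedge^kA_{s(g)}$. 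Smoothness follows from choosing smooth local families of bisections. For multiplicativity: $M$ is coisotropic because $\Pi|_M=B\pi$ has no $\wedge^kA$-component, every term of \eqref{Bpi} carrying at least one $TM$-factor; the affine identity \eqref{aff} is obtained by substituting $b_{gr}=b_g\cdot b'_r$ into \eqref{formula3}, expanding $c([b_g][b'_r])$ by the cocycle rule, and collapsing the result with $R_{(gr)*}=R_{r*}R_{g*}$, the relation $R_{g*}\Ad_{[b_g]}=L_{[b_g]}$, and the commutation of left and right translations; finally $\iota_{s^*\xi}\Pi$ is left-invariant since the first summand of \eqref{formula3} is tangent to $s$-fibres (hence annihilated by $\iota_{s^*\xi}$), while $\iota_{s^*\xi}L_{[b_g]}(B\pi)_{s(g)}=L_{g*}\iota_\xi(B\pi)_{s(g)}$, the reduction $L_{[b_g]}=L_{g*}$ being legitimate because $\iota_\xi(B\pi)=(-1)^{k-1}\invstar(\iota_\xi\pi)$ is tangent to $t$-fibres by \eqref{Eqn:specialpropertypi2}.

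Conversely, given $\Pi\in\mathfrak{X}_{\mathrm{mult}}^k(\Gpd)$ I would set $c([b_g]):=R_{g^{-1}*}\bigl(\Pi_g-L_{[b_g]}(B\pi)_{s(g)}\bigr)$. Clause (3) of Lemma \ref{multi vector} shows $\Pi_g-L_{[b_g]}(B\pi)_{s(g)}$ is tangent to the $s$-fibre at $g$, so $c$ is $\wedge^kA$-valued, and as above it depends only on $[b_g]$. Evaluating at $g=1_x$, where bisections through $1_x$ have jets filling $\heat_x$ and $\Pi_{1_x}=(B\pi)_x$, yields $c([h_x])=(B\pi)_x-L_{[h_x]}(B\pi)_x={U_\pi}([h_x])$, which is the compatibility \eqref{formula1}; and the cocycle identity for $c$ is exactly the affine identity \eqref{aff} read back through \eqref{formula3}. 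These two assignments are manifestly inverse, establishing the correspondence.

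The hard part is the well-definedness of \eqref{formula3} with respect to the bisection, i.e. the cancellation over the isotropy jet group $\heat$: this is the unique place where all three data of a characteristic pair must interlock — the $1$-cocycle condition on $c$, the compatibility $c|_{\heat}={U_\pi}$, and the defining relation $R_{g*}\Ad_{[b_g]}=L_{[b_g]}$ of the $\jet\Gpd$-adjoint action — and it is the groupoid-level analogue of the uniqueness Lemma \ref{luniquenessofpi} and clause (3) of Definition \ref{k-cp}. Once this is in place, the remaining verifications (coisotropy, the affine identity, the invariance condition, and smoothness) are bookkeeping resting on Lemmas \ref{multi vector}, \ref{CSXproppi} and \ref{Lemma:ixitoBpi}.
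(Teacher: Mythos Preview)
Your proposal is correct and follows essentially the same route as the paper's proof, which is split into two lemmas corresponding precisely to your two directions: Lemma~\ref{1} builds $\Pi$ from $(c,\pi)$ via \eqref{formula3} and checks well-definedness and the three clauses of Lemma~\ref{multi vector} exactly as you outline, while the companion lemma defines $c([b_g])=R_{g^{-1}*}(\Pi_g-L_{[b_g]}\Pi_{s(g)})$ (identical to your formula since $\Pi_{s(g)}=(B\pi)_{s(g)}$) and verifies the cocycle condition from affineness and the compatibility \eqref{formula1} by evaluating at units. Your identification of the well-definedness over $\heat$ as the place where all three ingredients of the characteristic pair interlock is apt, though the paper treats this step on equal footing with the others.
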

By this theorem, we will say that $(c,\pi)$ is the characteristic pair of the multiplicative multivector $\Pi$ if they are related as in Equation  ~\eqref{formula3}, and simply write $\Pi=(c,\pi)$. As we stated in the introduction, this indeed gives a standard decomposition of multiplicative multivectors on a Lie groupoid.


\begin{remark}
	In a  recent work \cite{CMS1}     explicit formulas for multiplicative forms are established. 	
	Therefore, it is natural to ask whether multiplicative forms  and more generally, multiplicative  tensors  on Lie groupoids (see \cite{BD})   admit decompositions into jet groupoid cocycles and $\rho$-compatible tensors  ---   analogues of Equation \eqref{formula3} of
	multiplicative multivectors.    So far, we have  answers for multiplicative forms (see   \cite{CLL2022}). 

	

\end{remark}

We divide the proof of Theorem \ref{Thm:mainexplicitformula} into two lemmas.

\begin{lemma}\label{1}
Let $(c,\pi)$ be a  $k$-characteristic pair on $\Gpd$. Then $\Pi$ defined in  Equation  ~\eqref{formula3} is a
multiplicative $k$-vector.\end{lemma}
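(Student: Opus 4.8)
The plan is to verify the three conditions of Lemma \ref{multi vector}, but before that I must check that the formula \eqref{formula3} does not depend on the bisection $b_g$ chosen through $g$. Two bisections $b_g,b'_g$ through the same $g$ have jets differing by an isotropy element, $[b'_g]=[b_g]\star[h_x]$ with $[h_x]\in\heat_x$ and $x=s(g)$. Recomputing $\Pi_g$ from $b'_g$ and expanding by the cocycle identity for $c$ (so $c([b'_g])=c([b_g])+\Ad_{[b_g]}c([h_x])$), by $L_{[b'_g]}=L_{[b_g]}L_{[h_x]}$, and by the relation $R_{g*}\Ad_{[b_g]}=L_{[b_g]}$ on $\wedge^kA$ (the defining property of the jet adjoint action, see Definition \ref{Def:jetGpdadjoint}), the isotropy constraint \eqref{formula1} that $c([h_x])=(B\pi)_x-L_{[h_x]}(B\pi)_x$ makes the $[h_x]$-dependent contributions cancel in pairs, returning the value computed from $b_g$. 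This establishes well-definedness.

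For condition (2) I would evaluate \eqref{formula3} at a unit $1_x$ with the trivial bisection, for which $[b_{1_x}]$ is the unit of $\jet\Gpd$ and both $R_{1_x*}$ and $L_{[b_{1_x}]}$ are the identity; this yields $\Pi|_M=B\pi$. By \eqref{Bpi}, the lowest-order term of $B\pi$ is $\pi\in TM\otimes\wedge^{k-1}A$ and every remaining term carries at least one $TM$ factor, so $B\pi$ has no pure $\wedge^kA$ component; this is precisely the statement that $M$ is coisotropic with respect to $\Pi$. For condition (3) I would contract with $s^*\xi$, $\xi\in\Omega^1(M)$. The summand $R_{g*}c([b_g])$ is the right translation of an element of $\wedge^kA$, hence tangent to the $s$-fibres, so $\iota_{s^*\xi}$ kills it. Since $s\circ L_{b_g}=s$, one has $\iota_{s^*\xi}\bigl(L_{[b_g]}(B\pi)_x\bigr)=L_{[b_g]}\,\iota_\xi(B\pi)_x$, which by \eqref{Eqn:specialpropertypi2} of Lemma \ref{Lemma:ixitoBpi} equals $(-1)^{k-1}L_{[b_g]}\,\invstar(\iota_\xi\pi)$. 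As $\invstar(\iota_\xi\pi)$ is tangent to the $t$-fibres and $L_{[b_g]}$ restricted to $\ker Tt$ agrees with ordinary left translation $L_{g*}$, the outcome is a left-invariant $(k-1)$-vector, so $\iota_{s^*\xi}\Pi$ is left-invariant.

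The main work, and the step I expect to be the real obstacle, is condition (1), affineness, which I would check directly through the identity \eqref{aff}. Choosing the composite bisection $b_g\cdot b'_r$ through $gr$ so that $[b_{gr}]=[b_g]\star[b'_r]$, I would expand $\Pi_{gr}$ by the cocycle identity $c([b_g]\star[b'_r])=c([b_g])+\Ad_{[b_g]}c([b'_r])$ together with the factorizations of the induced translations, namely $R_{(gr)*}=R_{[b'_r]}R_{g*}$ and $L_{[b_g]\star[b'_r]}=L_{[b_g]}L_{[b'_r]}$. Applying $R_{g*}\Ad_{[b_g]}=L_{[b_g]}$ to the mixed term turns it into $L_{[b_g]}R_{r*}c([b'_r])$, so that the three resulting summands of $\Pi_{gr}$ match exactly the $c([b_g])$-part of $R_{[b'_r]}\Pi_g$, the $c([b'_r])$-part of $L_{[b_g]}\Pi_r$, and the common $B\pi$-term $L_{[b_g]}L_{[b'_r]}(B\pi)_{s(r)}$.

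Finally, the two leftover correction terms on the right-hand side of \eqref{aff}, namely $R_{[b'_r]}L_{[b_g]}(B\pi)_x$ coming from $R_{[b'_r]}\Pi_g$ and $-L_{[b_g]}R_{[b'_r]}(B\pi)_x$ coming from the subtracted $-L_{[b_g]}R_{[b'_r]}\Pi_x$, cancel because left and right translations commute. The delicate part throughout is the bookkeeping of the domains and orders of the translations $R_{g*},R_{[b'_r]},L_{[b_g]},L_{[b'_r]}$ and making sure the single relation $R_{g*}\Ad_{[b_g]}=L_{[b_g]}$ is applied on the correct fibre of $\wedge^kA$; once this is organized, \eqref{aff} holds, and together with (2) and (3) Lemma \ref{multi vector} gives that $\Pi$ is multiplicative.
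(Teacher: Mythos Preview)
Your proposal is correct and follows essentially the same route as the paper: first verify that \eqref{formula3} is independent of the bisection via the cocycle identity and the isotropy constraint \eqref{formula1}, then check the three conditions of Lemma~\ref{multi vector} (coisotropy from $\Pi|_M=B\pi$, left-invariance of $\iota_{s^*\xi}\Pi$ via \eqref{Eqn:specialpropertypi2}, and affineness by expanding $\Pi_{gr}$ along the product bisection and using $R_{g*}\Ad_{[b_g]}=L_{[b_g]}$ together with the commutation of left and right translations). The only difference is the order in which you treat the three conditions and the slightly more explicit bookkeeping you give for the affineness step; the arguments themselves coincide.
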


\begin{proof}
	We analyze  $\Pi$ step by step to prove     that it is multiplicative:

$\bullet$~ The $k$-vector $\Pi$ defined in Equation  ~\eqref{formula3} is well-defined, i.e.,  $\Pi_g$ is independent of the choice of $b_g$.

Let $b'_g$ be another   bisection through $g$.
 Then it must be of the form $b'_g=b_g\cdot h_x$, for some   bisection $h_x$ through $x=s(g)$. We have,
  \begin{eqnarray*}
R_{g*} c([b'_g])+L_{[b'_g]}(B\pi)_x&=&R_{g*}  c([b_g] \cdot [h_x])+L_{[b_g]} L_{[h_x]}  (B\pi)_x\\
&=&R_{g*}(c({[b_g]}) +  \Ad_{[b_g]}c([h_x]))+ L_{[b_g]} L_{[h_x]}  (B\pi)_x\\ &=&R_{g*}c([b_g])+L_{[b_g]}(c([h_x])+L_{[h_x]}  (B\pi)_x)\\
&=& R_{g*}c([b_g])+L_{[b_g]}(B\pi)_x\,.\end{eqnarray*}
 Here we have used the fact that $c$ is a $1$-cocycle,   and Relation \eqref{formula1}.

$\bullet$~ We claim that $\Pi$ satisfies the affine condition
\begin{eqnarray}\label{aff2}
\Pi_{gr}=L_{[b_g ]}\Pi_r+R_{[b'_{r}]}\Pi_g-L_{[b_g ]}R_{[b'_{r}]}\Pi_{x},\end{eqnarray}
for any $g,r\in \Gpd $ composable, $x=s(g)=t(r)$, and   bisections $b_g$ and $b'_r$.

In fact,  the left hand side  of Equation  ~\eqref{aff2} equals to
\begin{eqnarray*}
R_{r*}R_{g*} c([b_g][b'_r ])+L_{[b_g]} L_{[b'_r] }(B\pi)_{s(r)}.
\end{eqnarray*}Substituting the $1$-cocycle condition of $c$:
\[c([b_g][b'_r ])=c([b_g])+\Ad_{[b_g]} c([ b'_r ]), \]
into the above, and using the fact that $c([1+0_x])=0$ and $\Pi_x=(B\pi)_x$, one easily gets the right hand side of  Equation  ~\eqref{aff2}, where $1+0_x\in \heat_x$ is the identity element.

$\bullet$~ We show that $\iota_{s^* \xi} \Pi$ is left-invariant for any $\xi\in \Omega^1(M)$.

 In fact, it is obvious that $\iota_{s^* \xi} (R_{g*}c([b_g]))=0$. Moreover, as $s_*{\littlecirc} L_{[b_g]}=s_*$, we have
 $$\iota_{s^* \xi} \Pi_g=
 \iota_{s^* \xi}{\littlecirc} L_{[b_g]}(B\pi)_{s(g)}
=L_{[b_g]}{\littlecirc}\iota_{s^* \xi}(B\pi)_{s(g)}.
 $$
 By the identity in Equation  ~\eqref{Eqt:temp6}, the right hand side of  the above equation is in fact the left translation of $ \iota_{s^* \xi}(B\pi)$, a section of $\wedge^{k-1} \ker t_*$.

$\bullet$~Finally, $M\subset \Gpd $ is   coisotropic relative to $\Pi$. In fact, by definition, $
\Pi|_M= B\pi
$ has no component in $\wedge^k A$.

Combining the above facts and by Lemma \ref{multi vector}, we see that $\Pi$ defined in Equation  ~\eqref{formula3} is multiplicative.
\end{proof}

\begin{lemma}
 Given $\Pi\in \mathfrak{X}_{\mathrm{mult}}^k(\Gpd )$, let \[\pi:=\mathrm{pr}_{T M\otimes (\wedge^{k-1} A)} (\Pi|_M),\] i.e., the $\Gamma(T M\otimes (\wedge^{k-1} A))$-component of $\Pi$ when it is restricted to the base manifold $M$. Define a map $
c:~\jet \Gpd \to \wedge^k A$ by
\begin{eqnarray}\label{formulac}\qquad c([b_g]):=R_{g^{-1}*}(\Pi_g-L_{[b_g]} \Pi_{s(g)}).
\end{eqnarray}
Then $(c,\pi)$ is a   $k$-characteristic pair satisfying \eqref{formula3}.
\end{lemma}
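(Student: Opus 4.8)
The statement bundles four assertions: that $c$ is a well-defined map $\jet\Gpd\to\wedge^k A$, that $c$ is a $1$-cocycle, that $c|_{\heat}={U_\pi}$, and that $(c,\pi)$ recovers $\Pi$ through Eq.~\eqref{formula3}. Several of these are nearly automatic, and isolating them first clarifies where the work lies. By Proposition~\ref{CSXproppi}, the tensor $\pi$ is already $\rho$-compatible and $\Pi|_M=B\pi$; in particular $\Pi_{s(g)}=(B\pi)_{s(g)}$, so Eq.~\eqref{formula3} is merely Eq.~\eqref{formulac} rearranged once we know $c$ lands in $\wedge^kA$. Likewise, specializing Eq.~\eqref{formulac} to a unit $g=x\in M$, where $R_{x*}=\id$ and $\Pi_x=(B\pi)_x$, gives $c([h_x])=(B\pi)_x-L_{[h_x]}(B\pi)_x={U_\pi}([h_x])$, which is precisely the compatibility \eqref{formula1} and shows $c|_{\heat}={U_\pi}$. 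The genuine content is therefore (a) that the right-hand side of Eq.~\eqref{formulac} lies in $\wedge^kA_{t(g)}$ and depends only on the jet $[b_g]$, and (b) that $c$ satisfies the cocycle identity.

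For (a), the dependence on $[b_g]$ alone is inherited from the fact that the left translation $L_{[b_g]}$ of a multivector in $\wedge^\bullet(TM\oplus A)$ along $M$ depends only on the $1$-jet $[b_g]\in\jet\Gpd$ (Appendix~\ref{Appendix:jetLiegroupoid}). To see that $c([b_g])\in\wedge^kA_{t(g)}$, I would test against $\iota_{s^*\xi}$ for $\xi\in\Omega^1(M)$. Since $\Pi$ is multiplicative, $\iota_{s^*\xi}\Pi$ is left-invariant by Lemma~\ref{multi vector}, so $(\iota_{s^*\xi}\Pi)_g=L_{[b_g]}(\iota_{s^*\xi}\Pi)_{s(g)}=L_{[b_g]}\iota_{s^*\xi}(B\pi)_{s(g)}$; as $s\circ L_{[b_g]}=s$ makes $\iota_{s^*\xi}$ commute with $L_{[b_g]}$, this yields
\begin{equation*}
\iota_{s^*\xi}\big(\Pi_g-L_{[b_g]}(B\pi)_{s(g)}\big)=0.
\end{equation*}
Hence $\Pi_g-L_{[b_g]}(B\pi)_{s(g)}\in\wedge^k\ker s_*|_g$, and since $R_{g^{-1}*}$ carries $\ker s_*|_g$ isomorphically onto $A_{t(g)}$, we conclude $c([b_g])\in\wedge^kA_{t(g)}$.

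For (b), I would substitute $\Pi_g=R_{g*}c([b_g])+L_{[b_g]}(B\pi)_x$, $\Pi_r=R_{r*}c([b'_r])+L_{[b'_r]}(B\pi)_{s(r)}$ and $\Pi_x=(B\pi)_x$ (with $x=s(g)=t(r)$, these being Eq.~\eqref{formulac} rearranged) into the affine identity \eqref{aff} for $\Pi_{gr}$, and then apply $R_{(gr)^{-1}*}$ to both sides. The computation rests on two bookkeeping facts: left and right translations commute, and right translations compose, so that $R_{(gr)^{-1}*}=R_{g^{-1}*}R_{r^{-1}*}$ while $R_{(gr)^{-1}*}R_{[b'_r]}=R_{g^{-1}*}$ on the fibre over $g$. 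Carrying these out, the three terms on the right of \eqref{aff} collapse exactly to
\begin{equation*}
c([b_g][b'_r])=c([b_g])+\Ad_{[b_g]}c([b'_r]),
\end{equation*}
where $\Ad_{[b_g]}=R_{g^{-1}*}L_{[b_g]}$ is the adjoint action of $\jet\Gpd$ on $\wedge^kA$ (Definition~\ref{Def:jetGpdadjoint}); this is the $1$-cocycle condition. I expect this last step to be the main obstacle — not conceptually, but in its careful handling of the interplay between left and right translations and the various base points — and the key input that tames it is precisely the affine property \eqref{aff} of multiplicative multivectors. Assembling (a), (b), the unit computation, and Proposition~\ref{CSXproppi} then shows that $(c,\pi)$ is a $k$-characteristic pair satisfying \eqref{formula3}.
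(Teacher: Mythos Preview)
Your proposal is correct and follows essentially the same route as the paper: you invoke Proposition~\ref{CSXproppi} for the $\rho$-compatibility of $\pi$ and $\Pi|_M=B\pi$, use the left-invariance of $\iota_{s^*\xi}\Pi$ from Lemma~\ref{multi vector} to show $c$ lands in $\wedge^kA$, deduce the cocycle identity from the affine property \eqref{aff}, and obtain \eqref{formula1} by specializing to units. The paper's proof is organized in the same four steps and uses the same ingredients; your additional remark that the jet-dependence of $L_{[b_g]}$ ensures $c$ depends only on $[b_g]$ is a point the paper leaves implicit.
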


\begin{proof}
The proof is divided into the following  steps:

$\bullet$~ By Proposition \ref{CSXproppi}, the tensor field $\pi$ derived from $\Pi$ is a $\rho$-compatible $k$-tensor.

$\bullet$~ We explain why $c$ is well-defined, i.e., the term $(\Pi_g-L_{[b_g]} \Pi_{s(g)})
$ in Equation  ~\eqref{formulac} is tangent to the $s$-fibre at $g$.
It suffices to show that
$$\iota_{s^* \xi}(\Pi_g-L_{[b_g]} \Pi_{s(g)})
=0.$$
In fact, by (3) of Lemma \ref{multi vector},  we have
\begin{eqnarray*}
\iota_{s^* \xi}(\Pi_g)&=& L_{g*} {\littlecirc} \iota_{s^* \xi} (\Pi_{s(g)})
 =
 L_{[b_g]} {\littlecirc} \iota_{s^* \xi} (\Pi_{s(g)})
=
 \iota_{s^* \xi} {\littlecirc} L_{[b_g]}   (\Pi_{s(g)}) .
\end{eqnarray*}

$\bullet$~ The map $c$ is a $1$-cocycle, i.e., for composable bisections $b_g$ and $b'_r$, we have
$$
c([b_g][b'_r])=c([b_g])+\Ad_{[b_g]} c([b'_r]).
$$
 In fact, using the affine property  \eqref{aff} and definition of $c$, one gets this relation. We omit the details.

$\bullet$~  We finally show Equation  ~ \eqref{formula1}. In fact, for  $[h_x]\in  \heat_x$,~$x\in M$, we have
\begin{eqnarray*}
 c([h_x])&=&\Pi_{x}-L_{[h_x]} \Pi_x=(B\pi)_x-L_{[h_x]} (B\pi)_x.
\end{eqnarray*}
Here we used the fact that $\Pi|_M=B\pi$, by Proposition \ref{CSXproppi}.
 \end{proof}

\begin{example}\label{Prop:exactmultiplicativecharpairs}
	Given $\tau \in \Gamma(\wedge^k A)$, the characteristic pair of the exact multiplicative
	$k$-vector   $(   \overrightarrow{\tau }-\overleftarrow{\tau })$   is
	$(c=-d_{\jet \Gpd}\tau ,\pi=D_\rho \tau)$.
	In fact, the relation $\pi=\rhopush \tau$ is due to Lemma \ref{lexactmultiplicativepi}. Moreover, by definition of $c$ in Equation  ~\eqref{formulac}, we have
	\begin{eqnarray*}
		c([b_g])&=&R_{g^{-1}*}((\overrightarrow{\tau }-\overleftarrow{\tau })_g -L_{[b_g]} (\overrightarrow{\tau }-\overleftarrow{\tau })_x ),\quad \mbox{ where }x=s(g), y=t(g)\\ &=&
		R_{g^{-1}*}(R_{g*}\tau_{y} - (-1)^k L_{g*}{\littlecirc} \invstar  (\tau_{x})
		- L_{[b_g]}(\tau_x-(-1)^k  {\littlecirc} \invstar  (\tau_{x})) )
		\\
		&=&
		\tau_y -\Ad_{[b_g]}\tau_x =-d_{\jet \Gpd}\tau ([b_g]).
\end{eqnarray*}\end{example}

\begin{example}
A  $1$-vector  $\Pi\in \mathfrak{X}_{\mathrm{mult}}(\Gpd )$ corresponds to a characteristic pair $(c,\pi)$ where $c\in Z^1(\jet \Gpd , A)$ and $\pi\in \mathfrak{X}(M)$. As $B\pi=\pi$, the compatibility condition is quite simple:
$$c([h_x])=\pi_x-L_{[h_x]}(\pi)_x=-H(\pi) , \quad~ \forall~ [h_x]=1+H \in \heat_x\,.$$
Moreover, we have the expression
$$
\Pi_g=R_{g*}{\littlecirc} c([b_g])+ b_{g*} (\pi_{s(g)}),
$$
for all $g\in \Gpd $ and   bisection $b_g$ passing through $g$.
 \end{example}

\begin{example}
A   $2$-vector $\Pi\in \mathfrak{X}_{\mathrm{mult}}^2(\Gpd )$ corresponds to a characteristic pair  $(c,\pi)$ where $c\in Z^1(\jet\Gpd , \wedge^2 A)$ and $\pi\in \Gamma(TM\otimes A)$ is a $\rho$-compatible $2$-tensor.  Moreover,
$$
B\pi=\pi-\frac{1}{2}D_\rho \pi.
$$
Therefore, we have
\[\Pi_g=R_{g*}{\littlecirc} c([b_g])+   L_{[b_g ]}(\pi-\frac{1}{2}D_\rho \pi
)_{s(g)}\,.\]

\end{example}

We turn now to  transitive Lie algebroids and groupoids, and give the characteristic pairs of Lie algebroid differentials  and   multiplicative multivectors  in spirit of Theorems \ref{Thm:s1-1} and \ref{Thm:S1-1}.

\begin{proposition}\label{prop:transitivealgebroidprimarypairtocharpair}Let $A$ be a transitive Lie algebroid and    $(\Omega,\Lambda)\in Z^1(A,\wedge^k \ker  \rho)\times \Gamma(\wedge^k A) $. Then we have the following two facts:
	\begin{itemize} \item[1)] The  characteristic pair of the   $k$-differential   $\delta=[\Lambda,~\cdot~]+\Omega$ is given by
		$$ \chi = -d_{\jet A}\Lambda+p^*\Omega,\qquad
		\pi=\rhopush \Lambda;
		$$Here $p^*:~Z^1(A,\wedge^k \ker\rho)\to Z^1(\jet A,\wedge^k A)$ is the pullback of the projection  $p: ~\jet A\to A$   (see Sequence
		\eqref{Eqt:jetalgebroidexactseq});
		\item[2)] The   map $\kappa:~\differentials^k(A)\to \coHg ^1(\jet A,\wedge^k A)$ sends   $\delta=[\Lambda,~\cdot~]+\Omega$ to $p^*[\Omega]$.  Here $p^*: \coHg ^1(A,\wedge^k \ker \rho)\to \coHg ^1(\jet A,
		\wedge^k A)$ is the embedding map in Lemma \ref{lleftHjet}.
	\end{itemize}
\end{proposition}
\begin{proposition}\label{prop:transitivegroupoidprimarypairtocharpair}Let $\Gpd$ be a transitive Lie groupoid and  $(\mathcal{F}, \Lambda)\in $ $Z^1(\Gpd ,\wedge^k \ker  \rho)\times \Gamma(\wedge^k A)$.
	Then the following statements are true:
	\begin{itemize}
		\item[1)] The  characteristic pair  of the multiplicative $k$-vector   $\Pi= \overrightarrow{\Lambda}-\overleftarrow{\Lambda}+\Pi_{\mathcal{F}}$ is given by
		$$  c = -d_{\jet\Gpd}\Lambda+P^*\mathcal{F},\qquad
		\pi=\rhopush \Lambda;
		$$Here $P^*:~Z^1(\Gpd,\wedge^k \ker\rho)\to Z^1(\jet \Gpd,\wedge^k A)$ is the pullback of the projection  $P: ~\jet \Gpd\to \Gpd$   (see Sequence
		\eqref{Eqt:jetgroupoidexactseq});
		
		\item[2)] The   map $K:~ \mathfrak{X}_{\mathrm{mult}}^k(\Gpd )\to \coHg ^1(\jet \Gpd,\wedge^k A)$ sends   $\Pi= \overrightarrow{\Lambda}-\overleftarrow{\Lambda}+\Pi_{\mathcal{F}}$ to $P^*[\mathcal{F}]$.  Here $P^*: \coHg ^1(\Gpd,\wedge^k \ker \rho)\to \coHg ^1(\jet \Gpd,
		\wedge^k A)$ is the embedding map in Lemma \ref{lleftHjet2}.
	\end{itemize}
\end{proposition}


The proof  of  these statements   is quite simple and omitted.


\subsection{Embedding of ${\Reduced}^k_{\mathrm{mult}}$ into $\coHg ^1(\jet \Gpd,\wedge^k A) $}

 Let $(c,\pi)$ be a  $k$-characteristic pair on $\Gpd$. By Proposition \ref{Prop:Mpitomupi}, we can examine that    $(\hat{c},\pi)$ is a $k$-characteristic pair on $A$, the tangent Lie algebroid of $\Gpd$. Here $\hat{c}\in Z^1(\jet A,\wedge^k A)$ is the infinitesimal of $c\in Z^1(\jet\Gpd,\wedge^k A)$.
By this fact, the assumption that $1\leqslant  k\leqslant  \Top $, and Lemma \ref{luniquenessofpi}, we immediately get
\begin{lemma}\label{Cor:uniquenessofpi2}If $(c,\pi)$ and $(c,\pi')$ are both $k$-characteristic pairs on $\Gpd$. Then we have $\pi=\pi'$.\end{lemma}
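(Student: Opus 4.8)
The plan is to reduce the assertion to its infinitesimal counterpart, Lemma \ref{luniquenessofpi}, which already settles the analogous uniqueness for Lie algebroids. The bridge is differentiation: a group $1$-cocycle $c\in Z^1(\jet\Gpd,\wedge^k A)$ has a well-defined infinitesimal $\hat c\in Z^1(\jet A,\wedge^k A)$, and Proposition \ref{Prop:Mpitomupi} records exactly how the restriction of $c$ to the isotropy groups $\heat$ and the restriction of $\hat c$ to the isotropy Lie algebras $\core$ are related, namely $\hat{U_\pi}=\mu_\pi$.

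First I would unpack Definition \ref{Defn:groupoidcharpair}. Since both $(c,\pi)$ and $(c,\pi')$ are $k$-characteristic pairs on $\Gpd$ sharing the \emph{same} first entry $c$, the restriction condition \eqref{formula1} forces
\[
U_\pi=c|_{\heat}=U_{\pi'}.
\]
Passing to infinitesimals and invoking Proposition \ref{Prop:Mpitomupi}, which identifies $\hat{U_\pi}$ with $\mu_\pi$, I would deduce $\mu_\pi=\hat c|_{\core}=\mu_{\pi'}$. Consequently $(\hat c,\pi)$ and $(\hat c,\pi')$ are both $k$-characteristic pairs on the tangent Lie algebroid $A$, now with a common first component $\hat c$.

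With this reduction in hand the conclusion is immediate: Lemma \ref{luniquenessofpi} applies verbatim to $(\hat c,\pi)$ and $(\hat c,\pi')$ and yields $\pi=\pi'$.

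The only genuinely delicate point --- and the place where the standing hypothesis $1\le k\le\Top$ enters --- lies inside Lemma \ref{luniquenessofpi} rather than in the reduction itself: from $\mu_\pi=\mu_{\pi'}$ one reads off, via \eqref{Eqn:special1cocyclebypi}, that $(\iota_\xi\pi)\wedge u=(\iota_\xi\pi')\wedge u$ for all $\xi\in\Omega^1(M)$ and $u\in\Gamma(A)$, and cancelling the free factor $u$ needs $k-1<\Top$, so that no nonzero element of $\wedge^{k-1}A$ is annihilated by wedging with every section of $A$. I would therefore take care only to verify that Proposition \ref{Prop:Mpitomupi} is genuinely applicable here, i.e. that $\hat c$ is the correct infinitesimal of $c$ and that differentiation is compatible with restriction from $\heat\subset\jet\Gpd$ to $\core\subset\jet A$; that compatibility is the single substantive input, while everything else is a direct transcription from the groupoid to the algebroid setting.
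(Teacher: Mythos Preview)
Your proposal is correct and follows essentially the same route as the paper: pass from the groupoid characteristic pair $(c,\pi)$ to the algebroid characteristic pair $(\hat c,\pi)$ via Proposition~\ref{Prop:Mpitomupi}, and then invoke Lemma~\ref{luniquenessofpi}. The paper states this reduction in a single sentence just before the lemma, while you have (correctly) spelled out the intermediate step $U_\pi=c|_\heat=U_{\pi'}\Rightarrow \mu_\pi=\hat c|_\core=\mu_{\pi'}$ and identified where the hypothesis $1\le k\le\Top$ is used.
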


The following fact describes  equivalence relations of multiplicative multivectors of a groupoid in terms of   characteristic pairs.

\begin{proposition}\label{Prop:groupoidgaugeiff} Let $\Pi=(c,\pi) $ and $\Pi'=(c',\pi')$ be in $\mathfrak{X}_{\mathrm{mult}}^k(\Gpd )$.   Then $\Pi\gaugeequiv \Pi'$ holds if and only if $[c]=[c']$ holds in $\coHg ^1(\jet \Gpd,\wedge^k A)$.
\end{proposition}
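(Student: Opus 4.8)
The plan is to mirror the proof of the Lie algebroid version, Proposition \ref{Prop:algebroidgaugeiff}, using the groupoid tools established above. The cornerstone observation is that the assignment $\Pi\mapsto (c,\pi)$ sending a multiplicative $k$-vector to its characteristic pair is additive. Indeed, formula \eqref{formula3} expresses $\Pi_g$ as the sum $R_{g*}c([b_g])+L_{[b_g]}(B\pi)_{s(g)}$, which is linear in $c$ and in $B\pi$, while the operator $B$ of Eq.~\eqref{Bpi} is linear in $\pi$. Hence if $\Pi_1$ and $\Pi_2$ have characteristic pairs $(c_1,\pi_1)$ and $(c_2,\pi_2)$, then plugging $(c_1+c_2,\pi_1+\pi_2)$ into \eqref{formula3} reproduces $\Pi_1+\Pi_2$; since this is again a valid characteristic pair and the correspondence of Theorem \ref{Thm:mainexplicitformula} is one-to-one, $(c_1+c_2,\pi_1+\pi_2)$ must be \emph{the} characteristic pair of $\Pi_1+\Pi_2$.

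For the forward implication, I would suppose $\Pi\gaugeequiv\Pi'$, so that $\Pi'-\Pi=\overrightarrow{\tau}-\overleftarrow{\tau}$ for some $\tau\in\Gamma(\wedge^k A)$. By Example \ref{Prop:exactmultiplicativecharpairs}, the exact multivector $\overrightarrow{\tau}-\overleftarrow{\tau}$ has characteristic pair $(-d_{\jet\Gpd}\tau,\,D_\rho\tau)$. Combining this with additivity gives $c'=c-d_{\jet\Gpd}\tau$ and $\pi'=\pi+D_\rho\tau$. In particular $c$ and $c'$ differ by the coboundary $d_{\jet\Gpd}\tau$, so $[c]=[c']$ in $\coHg^1(\jet\Gpd,\wedge^k A)$.

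For the converse, I would assume $[c]=[c']$, i.e.\ $c'=c-d_{\jet\Gpd}\tau$ for some $\tau\in\Gamma(\wedge^k A)$, and form the auxiliary multivector $\Pi'':=\Pi+(\overrightarrow{\tau}-\overleftarrow{\tau})$, which is again multiplicative. By additivity and Example \ref{Prop:exactmultiplicativecharpairs}, its characteristic pair is $(c-d_{\jet\Gpd}\tau,\,\pi+D_\rho\tau)=(c',\,\pi+D_\rho\tau)$. Now $\Pi''$ and $\Pi'$ are two multiplicative $k$-vectors whose characteristic pairs share the same first entry $c'$; since $1\leq k\leq\Top$, the uniqueness Lemma \ref{Cor:uniquenessofpi2} forces their second entries to coincide, namely $\pi'=\pi+D_\rho\tau$. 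Thus $\Pi''$ and $\Pi'$ have identical characteristic pairs, and the one-to-one correspondence of Theorem \ref{Thm:mainexplicitformula} yields $\Pi''=\Pi'$. Therefore $\Pi'-\Pi=\overrightarrow{\tau}-\overleftarrow{\tau}$ is exact, i.e.\ $\Pi\gaugeequiv\Pi'$.

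I expect the main subtlety to lie in the converse direction, specifically in passing from the equality of cohomology classes $[c]=[c']$ to the equality of the full multivectors. The naive hope that $c'=c-d_{\jet\Gpd}\tau$ already pins down $\Pi'$ requires controlling the second entries $\pi,\pi'$ as well, and this is exactly where the groupoid uniqueness Lemma \ref{Cor:uniquenessofpi2} (valid only in the ordinary range $1\leq k\leq\Top$) becomes indispensable, together with the bijection of Theorem \ref{Thm:mainexplicitformula}. The forward direction and the additivity observation are routine once formula \eqref{formula3} and Example \ref{Prop:exactmultiplicativecharpairs} are in hand.
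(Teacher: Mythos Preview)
Your proof is correct and follows essentially the same route as the paper's: the forward direction via Example~\ref{Prop:exactmultiplicativecharpairs}, and the converse via the uniqueness Lemma~\ref{Cor:uniquenessofpi2} applied to the two characteristic pairs $(c',\pi')$ and $(c-d_{\jet\Gpd}\tau,\pi+D_\rho\tau)$, after which Theorem~\ref{Thm:mainexplicitformula} yields $\Pi'=\Pi+\overrightarrow{\tau}-\overleftarrow{\tau}$. Your explicit introduction of the auxiliary $\Pi''$ and the additivity remark make the argument slightly more transparent than the paper's terse version, but the logic is identical.
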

\begin{proof}
The implication   ``$\Pi\gaugeequiv \Pi'$''
	$\Rightarrow$ ``$[c]=[c']$'' is easily derived from Example  \ref{Prop:exactmultiplicativecharpairs}.

We show the converse implication ``$[c]=[c']$'' $\Rightarrow$ ``$\Pi\gaugeequiv \Pi'$''. In fact, if $c'=c-d_{\jet \Gpd}\tau$ for some $\tau\in \Gamma(\wedge^k A)$, then the $k$-characteristic pairs $(c',\pi')$ and $(c-d_{\jet \Gpd}\tau,\pi+\rhopush\tau)$ share the same first entry. By Lemma  \ref{Cor:uniquenessofpi2}, we have $\pi'=\pi+\rhopush\tau$. It follows that $\Pi'=\Pi+  \overrightarrow{\tau }-\overleftarrow{\tau }$, as required.
\end{proof}

We now state the main result about the reduced space $ {\Reduced}^k_{\mathrm{mult}}$ of multiplicative multivectors on $\Gpd$, which is   parallel to the statements in Theorem  \ref{Thm:Rdiffaspullback} about
the reduced space $ {\Reduced}^k_{\mathrm{diff}}$ of Lie algebroid differentials.
Let us first introduce or recall some maps:
\begin{enumerate}
	\item Define:  $$K:~\mathfrak{X}_{\mathrm{mult}}^k(\Gpd ) \to \coHg ^1(\jet \Gpd,\wedge^k A),\qquad   \Pi = (c ,\pi) \mapsto [c ] .
	$$
 	By Proposition \ref{Prop:groupoidgaugeiff}, the map $K$ induces an embedding
	$$
	K:~ {\Reduced}^k_{\mathrm{mult}} \quad\hookrightarrow\quad \coHg ^1(\jet \Gpd,\wedge^k A).
	$$
	\item Define ${I^!}:~\mathfrak{X}_{\mathrm{mult}}^k(\Gpd )  \to   \ReducedProper^k$ by sending $\Pi = (c ,\pi) \mapsto [\pi ]$. It naturally induces a map ${I^!}:~\Reduced^k_{\mathrm{mult}}  \to   \ReducedProper^k$.
	\item The inclusion $I\colon \heat   {\longrightarrow}\jet\Gpd$ (see Sequence \eqref{Eqt:jetgroupoidexactseq}) naturally induces a map \[{I^*}\colon\coHg ^1(\jet \Gpd,\wedge^k A)   {\longrightarrow}  \coHg ^1(\heat,\wedge^k A).\]
	\item Recall Lemma \ref{lUembedding} where we defined an embedding   $U:~\ReducedProper^k\hookrightarrow \coHg ^1(\heat,\wedge^k A)$.
	
\end{enumerate}

\begin{theorem}\label{Thm:Rmultaspullback}Let  $1\leqslant  k\leqslant  \Top $ be given. 	
		  The reduced space of multiplicative  $k$-vectors $\Reduced^k_{\mathrm{mult}}$ on the Lie groupoid $\Gpd$ is the pullback      of maps $I^*$ and $U$. In other words, the diagram
	\begin{equation*}
\xymatrix{  \Reduced^k_{\mathrm{mult}} \ar@{^{(}->}[d]_{K} \ar[r]^{I^!} & \ReducedProper^k \ar@{^{(}->}[d]^{U} \\
	\coHg ^1(\jet \Gpd,\wedge^k A) \ar[r]^{I^*} & \coHg ^1(\heat,\wedge^k A)  }
	\end{equation*}
	is commutative and the map
	\begin{eqnarray*}
		{\Theta}:~	\Reduced^k_{\mathrm{mult}} &  {\longrightarrow} &{{\coHg^1(\jet \Gpd,\wedge^k A)}{ \ _{I^*}\times_U } \ReducedProper^k =  \set{(X,Y)\in \coHg ^1(\jet \Gpd,\wedge^k A)  \times   \ReducedProper^k; ~ I^* (X)=U (Y)~ }} \\
		{[\Pi=(c,\pi)]}  & \mapsto & (K[\Pi],I^![\Pi])=([c],[\pi])
	\end{eqnarray*}
	is an isomorphism.
\end{theorem}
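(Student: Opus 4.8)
The plan is to follow, step by step, the argument given for Theorem \ref{Thm:Rdiffaspullback}, replacing every infinitesimal datum by its global counterpart: the jet Lie algebroid $\jet A$ by the jet groupoid $\jet\Gpd$, the bundle of isotropy jet Lie algebras $\core$ by the bundle of isotropy jet Lie groups $\heat$, the cocycle $\mu_\pi$ of Lemma \ref{lspecailsubspaceinZ1core} by the cocycle $U_\pi$ of Lemma \ref{lspecailsubspaceinZ1heat}, and the correspondence of Proposition \ref{Prop:kdifferentialinpair} by that of Theorem \ref{Thm:mainexplicitformula}. First I would check commutativity of the square together with the fact that $\Theta$ lands in the fibre product. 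For a multiplicative $k$-vector $\Pi$ with characteristic pair $(c,\pi)$, condition \eqref{formula1} in Definition \ref{Defn:groupoidcharpair} says precisely that $I^*(c)=c|_\heat=U_\pi$ as $1$-cocycles; passing to cohomology gives $I^*(K[\Pi])=[c|_\heat]=[U_\pi]=U(I^![\Pi])$, i.e.\ $I^*\circ K=U\circ I^!$. Hence $\Theta[\Pi]=([c],[\pi])$ lies in $\coHg^1(\jet\Gpd,\wedge^k A)\ _{I^*}\times_U\ReducedProper^k$ and $\Theta$ is well defined. Injectivity is then immediate, since the first component of $\Theta$ is the embedding $K$ (Proposition \ref{Prop:groupoidgaugeiff}): if $\Theta[\Pi]=\Theta[\Pi']$ then $[c]=[c']$, whence $[\Pi]=[\Pi']$.

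The substance of the proof is surjectivity, which amounts to the inclusion $(I^*)^{-1}\Img U\subset\Img K$, and this is where the single genuinely new point lies. Given a pair $(X,Y)=([c],[\pi])$ in the fibre product, the defining relation $I^*(X)=U(Y)$ only guarantees the cohomological equality $[c|_\heat]=[U_\pi]$ in $\coHg^1(\heat,\wedge^k A)$, whereas to invoke Theorem \ref{Thm:mainexplicitformula} I need the on-the-nose equality \eqref{formula1}, namely $c|_\heat=U_{\tilde\pi}$ for a genuine $\rho$-compatible tensor $\tilde\pi$ representing $Y$. To bridge the gap I would write $c|_\heat-U_\pi=d_\heat\sigma$ for some $\sigma\in\Gamma(\wedge^k A)$, and apply Lemma \ref{lspecailsubspaceinZ1heat}(2) to rewrite $d_\heat\sigma=-U_{\rhopush\sigma}$; since $\pi\mapsto U_\pi$ is linear (because $B\pi$ is linear in $\pi$ by Eq.~\eqref{Bpi}), this yields $c|_\heat=U_{\pi-\rhopush\sigma}$. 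Setting $\tilde\pi:=\pi-\rhopush\sigma$ produces a $\rho$-compatible $k$-tensor with $c|_\heat=U_{\tilde\pi}$ exactly, and with $[\tilde\pi]=[\pi]=Y$ in $\ReducedProper^k$ since $\rhopush\sigma$ is exact. Thus $(c,\tilde\pi)$ is a bona fide $k$-characteristic pair, Theorem \ref{Thm:mainexplicitformula} supplies a multiplicative $k$-vector $\Pi=(c,\tilde\pi)$, and $\Theta[\Pi]=([c],[\tilde\pi])=(X,Y)$, establishing surjectivity.

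The main obstacle is precisely this last adjustment: the fibre-product condition is a \emph{cohomological} equality of restricted cocycles on $\heat$, whereas the notion of characteristic pair demands a \emph{strict} equality, and the correction by the exact tensor $\rhopush\sigma$ — legitimate exactly because it is invisible in $\ReducedProper^k$ — is what reconciles the two. Everything else is routine and parallels Section \ref{Sec:2ndclassofdifferentials}: that $I^!$ descends to $\Reduced^k_{\mathrm{mult}}$ has already been recorded in the definition of the maps, and the compatibility $\hat U_\pi=\mu_\pi$ of Proposition \ref{Prop:Mpitomupi} certifies that the global picture differentiates to the infinitesimal one, so no independent verification of the relevant cochain identities is needed.
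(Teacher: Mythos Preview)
Your proof is correct and follows exactly the route the paper intends: it explicitly says ``We omit the proof as it is an easy adaptation of the arguments of Theorem~\ref{Thm:Rdiffaspullback},'' and your substitution of $\jet\Gpd$, $\heat$, $U_\pi$, and Theorem~\ref{Thm:mainexplicitformula} for their infinitesimal counterparts is precisely that adaptation. In fact you are more careful than the paper's own proof of Theorem~\ref{Thm:Rdiffaspullback}, which passes silently from the cohomological condition $[\chi|_\core]\in\Img\mu$ to the strict equality $\chi|_\core=\mu_\pi$; your explicit correction $\tilde\pi=\pi-\rhopush\sigma$ via Lemma~\ref{lspecailsubspaceinZ1heat}(2) makes that step transparent in the groupoid setting.
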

This theorem  tells us that $\Reduced^k_{\mathrm{mult}}\cong \Img K=(I^*)^{-1}\Img U$.
We omit the proof as it is an easy adaptation of the arguments of Theorem  \ref{Thm:Rdiffaspullback}. Of course, one is able to draw a  bigger commutative diagram:
 \begin{equation*}\label{Diag:fullpictureRmult}
 \xymatrix{
 	0   \ar[r]^{ } & \coHg ^1(  \Gpd,\wedge^k \ker\rho ) \ar[d]_{\mathrm{id}} \ar[r]^{P^!} & \Reduced^k_{\mathrm{mult}} \ar@{^{(}->}[d]_{K} \ar[r]^{I^!} & \ReducedProper^k \ar@{^{(}->}[d]^{U} \\
 	0 \ar[r]^{} & \coHg ^1(  \Gpd,\wedge^k \ker\rho ) \ar[r]^{P^*} & \coHg ^1(\jet \Gpd,\wedge^k A) \ar[r]^{I^*} & \coHg ^1(\heat,\wedge^k A)  ,}
 \end{equation*}
 where the two horizontal sequences are exact, and the vertical arrows are injective.  Here the lower horizontal sequence is the one described by Lemma \ref{lleftHjet2} in the appendix. So, we can simply treat $\Reduced^k_{\mathrm{mult}}\cong \coHg ^1(  \Gpd,\wedge^k \ker\rho )\oplus (\Img U\cap \Img I^*)$ although it is not canonical.

\subsection{Connection between $\Reduced^\bullet_{\mathrm{mult}}$ and $\Reduced^\bullet_{\mathrm{diff}}$ }
From a very natural   point of view,   
  the space $\Reduced^\bullet_{\mathrm{diff}}$ should be the infinitesimal counterpart of $\Reduced^\bullet_{\mathrm{mult}}$.  Indeed, the map $\delta_{(\cdot)}\colon \mathfrak{X}_{\mathrm{mult}}^\bullet(\Gpd )\to \differentials^\bullet(A)$ defined by Equation \eqref{Eqt:deltamap} sends exact multiplicative multivectors to exact differentials, and thus induces a map,  denoted by $$\bar{\delta} :  \quad \Reduced^\bullet_{\mathrm{mult}} \to \Reduced^\bullet_{\mathrm{diff}} ,$$ which is also a morphism of graded Lie algebras.

 In this part, we make this point more explicit by
discussing the relation  between    characteristic pairs of Lie algebroid differentials on $A$ and characteristic pairs of multiplicative multivectors on   $\Gpd$. We also characterize the kernel of $\bar{\delta}$. 
Our  first observation is the   following proposition.
\begin{proposition}\label{Thm:PitodeltaPi}If $(c,\pi)$ is  the   $k$-characteristic pair of a multiplicative $k$-vector $\Pi\in \mathfrak{X}_{\mathrm{mult}}^k(\Gpd )$,  then the
	$k$-characteristic pair of the
	differential $\delta_\Pi\in \differentials^k(A)$  is given by
	$(\hat{c},\pi)$.
	Here $\hat{c}\in Z^1(\jet A , \wedge^k A) $ is the infinitesimal of the groupoid $1$-cocycle $c\in Z^1(\jet \Gpd , \wedge^k A) $.
\end{proposition}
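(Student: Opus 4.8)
The plan is to lean on the bijection of Proposition \ref{Prop:kdifferentialinpair}: a $k$-differential is determined by its characteristic pair, and every legitimate pair determines one. The pair $(\hat c,\pi)$ is already known to be a valid $k$-characteristic pair on $A$ (this is recorded in the discussion preceding Lemma \ref{Cor:uniquenessofpi2}, resting on Proposition \ref{Prop:Mpitomupi} to guarantee $\hat c|_{\core}=\mu_\pi$). Let $\delta'\in\differentials^k(A)$ be the $k$-differential it determines; by Eqs.~\eqref{Eqn:delta1ukappa}--\eqref{Eqn:delta0fpi}, $\delta'(u)=\hat c(\liftingd u)$ for $u\in\Gamma(A)$ and $\delta'(f)=(-1)^{k-1}\iota_{df}\pi$ for $f\in\CinfM$. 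Since a $k$-differential is a derivation of $\Gamma(\wedge^\bullet A)$, it is determined by its action on the algebra generators $\CinfM$ and $\Gamma(A)$; it therefore suffices to check $\delta'=\delta_\Pi$ on these two, after which the proposition follows by uniqueness of the characteristic pair.

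On functions the two agree at once: Eq.~\eqref{Eqt:Pitstarfbypi} of Proposition \ref{CSXproppi} gives $\delta_\Pi(f)=[\Pi,t^*f]|_M=(-1)^{k-1}\iota_{df}\pi=\delta'(f)$, with $\pi$ the common $\Gamma(TM\otimes(\wedge^{k-1}A))$-component of $\Pi|_M$. The entire statement thus collapses to the single identity
\begin{equation*}
\delta_\Pi(u)=\hat c(\liftingd u),\qquad u\in\Gamma(A),
\end{equation*}
expressing the compatibility of the two infinitesimal passages $\Pi\mapsto\delta_\Pi$ and $c\mapsto\hat c$.

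To prove this I would differentiate the explicit formula \eqref{formula3}. Integrate $u$ to a curve $b_\epsilon$ of local bisections with $b_0=\mathrm{id}$ and $\frac{d}{d\epsilon}\big|_{0}b_\epsilon=u$; by construction of $\liftingd$ the curve of $1$-jets $[b_\epsilon]$ in $\jet\Gpd$ has velocity $\liftingd u$, so that $\hat c(\liftingd u)=\frac{d}{d\epsilon}\big|_{0}c([b_\epsilon]^{-1})$ by the definition of the infinitesimal cocycle, Eq.~\eqref{Eqt:hatcdefinition}. On the groupoid side $\overrightarrow{\delta_\Pi(u)}=[\Pi,\overrightarrow u]$ is the Lie derivative of $\Pi$ along $\overrightarrow u$, whose flow is obtained by translating along the $b_\epsilon$. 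Substituting \eqref{formula3} into this Lie derivative and extracting the $\wedge^k A$-component along $M$, I would use the $1$-cocycle property of $c$ together with the compatibility relation \eqref{formula1} --- which ties the $B\pi$-terms to the restriction of $c$ to the isotropy $\heat$ --- to reduce the expression to $\frac{d}{d\epsilon}\big|_{0}c([b_\epsilon]^{-1})$, namely $\hat c(\liftingd u)$.

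The main obstacle is exactly this last computation. One must track how the flow of $\overrightarrow u$ moves a bisection $b_g$ through $g$, confirm that the first-order variation of the jet $[b_g]$ reproduces the prolongation $\liftingd u$ rather than merely $u$, and verify that the left/right translations and the $\invstar$-terms concealed in $B\pi$ cancel correctly upon restriction to $M$; fixing the translation side and the jet conventions is the delicate point. Everything else --- the validity of $(\hat c,\pi)$ as a characteristic pair and the uniqueness afforded by Lemma \ref{luniquenessofpi} and Proposition \ref{Prop:kdifferentialinpair} --- is already available, so the proof turns entirely on differentiating \eqref{formula3} and recognizing the jet-level derivative of $c$.
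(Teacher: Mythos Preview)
Your proposal is correct and follows essentially the same route as the paper: reduce to the two generating identities via Proposition~\ref{Prop:kdifferentialinpair}, dispatch the function case by Eq.~\eqref{Eqt:Pitstarfbypi}, and for $u\in\Gamma(A)$ differentiate the explicit formula~\eqref{formula3} along the curve of bisections $\exp(\epsilon u)$, recognizing the result as $\hat c(\liftingd u)$ via Eq.~\eqref{Eqt:hatcdefinition}. The only remark is that the computation you flag as delicate is in fact cleaner than you anticipate: after pulling back by $L_{[\gamma(\epsilon)]^{-1}}$, the $B\pi$-term becomes $L_{[\gamma(\epsilon)]^{-1}}L_{[\gamma(\epsilon)]}(B\pi)_x=(B\pi)_x$, a constant in $\epsilon$, so it contributes nothing to the derivative and neither the cocycle identity nor Eq.~\eqref{formula1} is actually needed at this step --- the remaining term is directly $-\frac{d}{d\epsilon}\big|_{0}\Ad_{[\gamma(\epsilon)]^{-1}}c([\gamma(\epsilon)])=\hat c(\liftingd u)$.
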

\begin{proof}  By Proposition \ref{Prop:kdifferentialinpair}, it amounts to prove the following two identities:
	\begin{eqnarray*}\nonumber
	\deltazero_\Pi(u)&=&  \hat{c}(\liftingd {u}),\quad u\in \Gamma(A),
\\	\deltaone_\Pi(f)&=& (-1)^{k-1}\iota_{ df}\pi,\quad f\in  {\CinfM}.\end{eqnarray*}
		The first one is verified as follows.  Fix a point $x\in M$. Consider a curve
	$$
	\gamma(\epsilon)=[(\exp \epsilon u)_{(\exp \epsilon u)(x)}]
	$$
	in $\jet \Gpd$. It lands  in the $s$-fibre of $\jet \Gpd$ that passes through $x\in M$. Clearly, one has $\gamma'(0)=(\liftingd u)_x\in (\jet A)_x$. We then compute that
	\begin{eqnarray*}
		\deltazero_\Pi(u)_x&=&\overrightarrow{\deltazero_{\Pi}(u)}_x=-[\overrightarrow{u},\Pi]_x\\
		&=&-\frac{d}{d\epsilon}|_{\epsilon=0} L_{\exp (-\epsilon u)} \Pi|_{(\exp \epsilon u)(x)}\\
		&=&-\frac{d}{d\epsilon}|_{\epsilon=0} L_{(\gamma(\epsilon))^{-1}} (R_{(\exp \epsilon u)(x)*}c(\gamma(\epsilon))+L_{\gamma(\epsilon)} (B\pi)_x), \quad\mbox{by Equation  ~\eqref{formula3};}\\
		&=& -\frac{d}{d\epsilon}|_{\epsilon=0}\Ad_{(\gamma(\epsilon))^{-1}} c(\gamma(\epsilon))\\
		&=&\hat{c}(\gamma'(0))= \hat{c}(\liftingd {u})_x,
	\end{eqnarray*}
	as desired. The second one is easy, as
	$$\deltaone_\Pi(f)=\overrightarrow{\deltaone_{\Pi}(f)}|_M=[\Pi,t^*f]|_M =(-1)^{k-1}\iota_{ df}\pi.
	$$
	The last step is due to Equation  ~\eqref{Eqt:Pitstarfbypi}.
\end{proof}

Combining our previous results, we obtain the following diagram:
\begin{equation}\label{Diagram}
\xymatrix{
	\mathfrak{X}_{\mathrm{mult}}^k(\Gpd ) \ni & \Pi= (c,\pi) \ar[d]_{ }     \ar[r]^{\qquad K } & [c]\ar[d]^{\mathrm{VE}_2}&\in \coHg ^1(\jet\Gpd,\wedge^k A)  \\
	\differentials^k(A)\ni &\delta_\Pi  = (\hat{c},\pi) \ar[r]^{\qquad \kappa} & [\hat{c}]&\in  \coHg ^1(\jet A,\wedge^k A) .}
\end{equation}
Here the left vertical arrow  refers to the method of taking  infinitesimal, and the right one refers to the Van Est map (see Diagram \eqref{Diag:fullpicturejetH1vanEast}).


We finally state a summary diagram which connects our  results in Theorems \ref{Thm:Rdiffaspullback} and \ref{Thm:Rmultaspullback}. The proof is skipped as it is a straightforward verification.
\begin{proposition}\label{Prop:3Ddiagram} The following diagram is commutative:
	\begin{equation*}
		\xymatrixcolsep{5pc}\xymatrix@!0{
			&\coHg ^1(A,\wedge^k \ker\rho) \ar@{=}[dd]|\hole  \ar@{^{(}->}[rr]^{p^!}&  &     R_{\mathrm{diff}}^k \ar[rr]^{i^!} \ar@{^{(}->}[dd]|\hole ^<<<{\kappa}  &  & R_\rho^k \ar@{^{(}->}[dd]^<<<{\mu}      \\
		\coHg ^1(\Gpd,\wedge^k \ker\rho) \ar[ur]^<{\mathrm{VE}_1}\ar@{^{(}->}[rr]^{\qquad\quad P^!}\ar@{=}[dd] &  & R_{\mathrm{mult}}^k \ar[ur]^{\bar{\delta}}\ar[rr]^{\quad \quad \quad  \quad I^!}\ar@{^{(}->}[dd]^<<<{K}  &  & R_\rho^k \ar@{=}[ur] \ar@{^{(}->}[dd]^<<<{U} \\
			&  \coHg ^1(A,\wedge^k \ker\rho) \ar@{^{(}->}[rr]|\hole^{p^*\qquad} &  & \coHg^1(\jet A,\wedge^k A) \ar'[r][rr]^{i^*\quad\quad}&  & \coHg^1(\mathfrak{h}, \wedge^k A)  \\
			\coHg ^1(\Gpd,\wedge^k \ker\rho)  \ar@{^{(}->}[rr]^{P^*}  \ar[ur]^<{\mathrm{VE}_1} &   &	\coHg^1(\jet \Gpd,\wedge^k A) \ar[ur]_{}   \ar[rr]^{I^*}   \ar[ur]^<{\mathrm{VE}_2} &   &	\coHg^1(\mathcal{H}, \wedge^k A) \ar[ur]_{\cong}      }
	\end{equation*}
	  Consequently, we have 
	 $$
	 \ker\bar{\delta}\cong \ker\mathrm{VE}_2  \cong \ker\mathrm{VE}_1\,.
	 $$
	Here $\mathrm{VE}_1 $ and $\mathrm{VE}_2$ are the Van Est maps (cf. Lemma \ref{lleftHjet2}).
\end{proposition}

If $\mathcal{G}$ is  $s$-connected and simply connected, then $\mathrm{VE}_1$ is an isomorphism (see \cite{WX, Crainic2003}), and  so  is $\bar{\delta}$ (due to    the universal lifting theorem \cite{ILX} which we recalled in Section \ref{ULT} and the map $\delta_{(\cdot)}: \mathfrak{X}^\bullet_{\mathrm{mult}}(\Gpd)\to \differentials^\bullet(A)$ is an isomorphism of graded Lie algebras).  


\section{Exceptional cases}\label{Sec:twoexceptionalcases}
\subsection{The exceptional case   $k=0$}\label{Sec:exceptionalscases0}
 By Theorem \ref{Thm:gaugespacekdifferential1}, the  reduced  space of   $0$-differentials ${\Reduced}^0_{\mathrm{diff}}$ on $A$ coincides with the degree $(1,-1)$ Lie algebroid
deformation cohomology
$ \coHg _{\mathrm{Def}}^{1, -1}(A)$.
Also recall  that $(\Der^{\bullet,-1}(A),\partial)$ coincides with the standard Chevalley-Eilenberg complex $(\OmegaA,\dA)$, as already mentioned in Remark \ref{Remk:specialdeformationcomplexes}. Therefore, we have the following fact.
\begin{proposition}\label{Prop:0differentialclass}
	The reduced  space ${\Reduced}^0_{\mathrm{diff}}$ of $0$-differentials  on   $A$ is isomorphic to $\coHg ^1(A,\mathbb{R})$, the degree $1$ Chevalley-Eilenberg cohomology of the Lie algebroid $A$.
\end{proposition}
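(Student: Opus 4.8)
The plan is to obtain the statement by chaining two results already established in Section~\ref{Sec:pre}, so that essentially no new work is required. By part~(2) of Theorem~\ref{Thm:gaugespacekdifferential1} specialized to $k=0$, the reduced space $\Reduced^0_{\mathrm{diff}}$ coincides with the degree $0$ deformation cohomology $\coHg_{\mathrm{Def}}^{0,-1}(A)$, i.e.\ with the cohomology at position $n=0$ of the cochain complex $(\Der^{\bullet,-1}(A),\partial)$. Thus the only thing left to do is to identify this particular cohomology group with $\coHg^1(A,\mathbb{R})$.

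For that identification I would invoke Remark~\ref{Remk:specialdeformationcomplexes}, which states that the complex $(\Der^{\bullet,-1}(A),\partial)$ is the standard Chevalley--Eilenberg complex $(C^\bullet(A,\mathbb{R}),d)$. The one point that genuinely needs attention is the degree bookkeeping: since $\Der^{-1,-1}(A)=\Gamma(\wedge^0 A)=\CinfM$ matches the degree $0$ Chevalley--Eilenberg cochains, the identification carries a shift by one, so that $\Der^{n,-1}(A)$ corresponds to $C^{n+1}(A,\mathbb{R})$. Consequently the position $n=0$ computes precisely the degree $1$ Chevalley--Eilenberg cohomology, giving $\coHg_{\mathrm{Def}}^{0,-1}(A)\cong\coHg^1(A,\mathbb{R})$; composing with the previous step yields the proposition.

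As an optional cross-check, and to make the isomorphism explicit, I would unwind the correspondence directly rather than grind through the general machinery. A $0$-differential $\delta$ has symbol $\deltaone\colon\CinfM\to\Gamma(\wedge^{-1}A)=0$, hence vanishing symbol, so it reduces to the $\CinfM$-linear map $\xi:=\deltazero\colon\Gamma(A)\to\CinfM$, i.e.\ a section of $A^*$; the Schouten-compatibility then reads $\xi([u,v])=\rho(u)\xi(v)-\rho(v)\xi(u)$, which is exactly the closedness condition $d\xi=0$ in $C^1(A,\mathbb{R})$, while an exact $0$-differential $[\tau,\cdot\,]$ with $\tau\in\Gamma(\wedge^0 A)=\CinfM$ restricts on $\Gamma(A)$ to $-d\tau$. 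Passing to the quotient recovers $\coHg^1(A,\mathbb{R})$ independently. I do not anticipate any serious obstacle here: the result is a corollary of the two earlier statements, and the only place to be careful is the degree shift in Remark~\ref{Remk:specialdeformationcomplexes}, which could otherwise mislead one into writing $\coHg^0$ where $\coHg^1$ is correct.
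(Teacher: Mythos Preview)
Your proposal is correct and follows essentially the same approach as the paper: invoke Theorem~\ref{Thm:gaugespacekdifferential1}(2) at $k=0$ to get $\Reduced^0_{\mathrm{diff}}=\coHg_{\mathrm{Def}}^{0,-1}(A)$, then Remark~\ref{Remk:specialdeformationcomplexes} to identify this with $\coHg^1(A,\mathbb{R})$. Your explicit attention to the degree shift and the optional direct unwinding are more careful than the paper's brief sentence, but add no new ingredients.
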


On the Lie groupoid level, by definition, a {\bf multiplicative $0$-vector} on   $\Gpd $ is a multiplicative function $F\in C^\infty(\Gpd )$, i.e.,
\[F(gr)=F(g)+F(r),\qquad \forall~ (g,r)\in \Gpd ^{(2)}.\]
 In other words, the map $F:~\Gpd \to M\times \mathbb{R},~~ g\mapsto (t(g),F(g)) $
is a groupoid $1$-cocycle (with respect to the trivial action of $\Gpd $ on $M\times \mathbb{R}$).
An exact multiplicative $0$-vector is a multiplicative function of the form $$F=\overrightarrow{f }-
\overleftarrow{f }=t^*f-s^*f=-d_\Gpd f,$$ for some $f\in {\CinfM}$. So the following fact is clear.
\begin{proposition}\label{Prop:classifyk=0}The reduced  space ${\Reduced}^0_{\mathrm{mult}}$ of multiplicative $0$-vectors on a Lie groupoid $\Gpd $ is isomorphic to $\coHg ^1(\Gpd ,  \mathbb{R})$, the degree $1$ cohomology of $\Gpd $ with respect to the trivial action of $\Gpd$ on $M\times \mathbb{R}$.
\end{proposition}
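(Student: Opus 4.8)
The plan is to match the two defining conditions of multiplicative (resp.\ exact) $0$-vectors directly against the cocycle (resp.\ coboundary) conditions of the complex computing $\coHg^\bullet(\Gpd,\mathbb{R})$, so that the reduced space drops out as a quotient by inspection. This is genuinely a bookkeeping identification, which is why the paper declares it clear; the work consists entirely in writing down the trivial-action differential and recognizing the two spaces in it.

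First I would recall the cochain complex computing the cohomology of $\Gpd$ with values in the trivial module $M\times\mathbb{R}$: it has $C^0=\CinfM$, $C^1=C^\infty(\Gpd)$, $C^2=C^\infty(\Gpd^{(2)})$, and, since the action is trivial so that no adjoint term contributes, the relevant differentials are
\begin{equation*}
(d_\Gpd f)(g)=f(s(g))-f(t(g)),\qquad (d_\Gpd F)(g,r)=F(g)-F(gr)+F(r).
\end{equation*}
Reading off degree $1$, a function $F\in C^\infty(\Gpd)$ is a $1$-cocycle precisely when $F(gr)=F(g)+F(r)$ for all $(g,r)\in\Gpd^{(2)}$, which is exactly the definition of a multiplicative $0$-vector. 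Hence $Z^1(\Gpd,\mathbb{R})=\mathfrak{X}_{\mathrm{mult}}^0(\Gpd)$, and this identification is plainly linear.

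Next I would identify the coboundaries. The image of $d_\Gpd$ on $C^0$ consists of the functions $t^*f-s^*f$ with $f\in\CinfM$; on the other hand $\mathfrak{T}^0(\Gpd)$, the space of exact multiplicative $0$-vectors, is by the general definition $T(f)=\overrightarrow{f}-\overleftarrow{f}=t^*f-s^*f$ (using $\wedge^0A=M\times\mathbb{R}$, so that $\tau$ ranges over $\CinfM$). Thus $B^1(\Gpd,\mathbb{R})=\mathfrak{T}^0(\Gpd)$, and taking the quotient yields ${\Reduced}^0_{\mathrm{mult}}=\mathfrak{X}_{\mathrm{mult}}^0(\Gpd)/\mathfrak{T}^0(\Gpd)=Z^1(\Gpd,\mathbb{R})/B^1(\Gpd,\mathbb{R})=\coHg^1(\Gpd,\mathbb{R})$. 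There is no real obstacle; the only minor points to keep in mind are that every multiplicative function is smooth by hypothesis, so no regularity issue arises when passing between the two descriptions, and that the sign convention chosen for $d_\Gpd$ on $C^0$ is immaterial, since only its image---the space of coboundaries---enters the argument.
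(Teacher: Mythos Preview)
Your proposal is correct and matches the paper's reasoning exactly: the paper simply observes in the text preceding the proposition that a multiplicative function is precisely a groupoid $1$-cocycle for the trivial module $M\times\mathbb{R}$, and that an exact multiplicative $0$-vector $t^*f-s^*f$ equals $-d_\Gpd f$, then declares the result clear. You have written out the details the paper omits, with the same identifications and the same attention to the (irrelevant) sign convention on $d_\Gpd$ at degree $0$.
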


 \subsection{The exceptional case  $k=\Top +1$}\label{Sec:exceptionalscasestop}

 \subsubsection{$(\Top  +1)$-differentials on $A$}
First,
 we  define two subsets in $M$:
\begin{equation}\label{Eqt:B}
B=\{x\in M |\rho_x= 0\}\qquad\mbox{and}\quad B^c= \{x\in M |\rho_x\neq  0\},
\end{equation}
and we observe the following basic fact.
\begin{lemma}\label{pi=0}
	Let  $\pi$ be in $\Gamma(TM\otimes (\wedge^{\Top} A))$. Then it is a $\rho$-compatible $(\Top+1)$-tensor if and only if $\pi$ vanishes at those $x\in B^c$.\end{lemma}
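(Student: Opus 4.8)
The plan is to analyze the $\rho$-compatibility condition \eqref{Eqt:fine-condition} pointwise, exploiting the fact that in the exceptional degree $k=\Top+1$ the bundle $\wedge^\Top A$ has one-dimensional fibres. Fix $x\in M$ and choose a generator $\omega_x$ of the line $\wedge^\Top A_x$; since $\wedge^\Top A_x\cong\reals$, the value $\pi_x\in T_xM\otimes\wedge^\Top A_x$ can be written uniquely as $\pi_x=v_x\otimes\omega_x$ for some $v_x\in T_xM$, and the task reduces to deciding when $v_x=0$. Rewriting \eqref{Eqt:fine-condition} at $x$ with this factorization gives, for all $\xi,\eta\in T^*_xM$,
\begin{equation*}
\eta(v_x)\,\iota_{\rho^*\xi}\omega_x=-\xi(v_x)\,\iota_{\rho^*\eta}\omega_x\qquad\text{in }\wedge^{\Top-1}A_x.
\end{equation*}

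First I would dispatch the easy implication. If $\pi$ vanishes on $B^c$, then at every $x$ either $x\in B$, where $\rho_x=0$ forces $\rho^*_x=0$ so that both sides of the displayed identity vanish, or $x\in B^c$, where $\pi_x=0$ makes $v_x=0$ and again both sides vanish; hence $\pi$ is $\rho$-compatible.

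The substance is the converse. Fix $x\in B^c$, so that $\rho_x\neq0$ and equivalently the dual map $\rho^*_x\colon T^*_xM\to A^*_x$ is nonzero; pick $\xi_0\in T^*_xM$ with $\rho^*\xi_0\neq0$. The key linear-algebra input is that contracting a nonzero element of a top exterior power by a nonzero covector is nonzero: since $\omega_x$ generates $\wedge^\Top A_x$ and $\rho^*\xi_0\neq0$, we have $\iota_{\rho^*\xi_0}\omega_x\neq0$ in $\wedge^{\Top-1}A_x$. Setting $\xi=\eta=\xi_0$ in the displayed identity yields $2\,\xi_0(v_x)\,\iota_{\rho^*\xi_0}\omega_x=0$, whence $\xi_0(v_x)=0$; feeding this back with $\eta=\xi_0$ and arbitrary $\xi$ gives $\xi(v_x)\,\iota_{\rho^*\xi_0}\omega_x=0$, so $\xi(v_x)=0$ for every $\xi$, i.e. $v_x=0$ and $\pi_x=0$.

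I expect the only point requiring care --- and the reason this characterization is special to the exceptional degree --- is the reduction via the one-dimensionality of $\wedge^\Top A_x$ together with the nonvanishing of $\iota_{\rho^*\xi_0}\omega_x$; in a generic degree the contraction $\iota_{\rho^*\xi}$ on $\wedge^{k-1}A$ has a large kernel, so the same argument would not force $\pi_x$ to vanish. Since the conclusion $\pi_x=0$ on $B^c$ is established pointwise and the choice of $\omega_x$ is only auxiliary, no further globalization is needed.
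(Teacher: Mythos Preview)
Your proof is correct and follows essentially the same strategy as the paper's: exploit the one-dimensionality of $\wedge^{\Top}A_x$ to reduce to scalar equations, then set $\xi=\eta$ in the $\rho$-compatibility condition to kill the relevant coefficients. The paper carries this out by choosing an adapted basis of $A_x$ and $T_xM$ and tracking the coordinate coefficients $a^i$, $b^j$ one at a time, whereas your coordinate-free version isolates a single $\xi_0$ with $\rho^*\xi_0\neq 0$ and uses the nonvanishing of $\iota_{\rho^*\xi_0}\omega_x$ to conclude directly; this is cleaner but not a different idea.
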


\begin{proof} It suffices to prove that         $\pi_x=0$ for all $x\in B^c$ if $\pi$ is $\rho$-compatible. In fact, as  $ \rho_x\neq 0$, we have 	  $p:=\mathrm{dim} (\Img \rho_x)\geqslant  1$. Then we are able to find  a  basis $\{u_1,\cdots,u_{\Top}\}$ of $ A_x$ such that $\{\rho (u_1),\cdots \rho (u_p)\}$ spans $\Img \rho_x$ and $\rho (u_{p+1})=\cdots=\rho (u_{\Top})=0$.
	Suppose further
	that
	$T_x M$ is spanned by a basis $\{ \rho (u_1),\cdots,\rho (u_p),N_1,\cdots,N_q\}$, where  $p+q=\mathrm{dim} M$. Let $\{ \xi_1,\cdots,\xi_p,\eta_1,\cdots,\eta_q\}$ be the associated dual basis of $T_x^*M$.
	Then we can write $\pi_x$ in the following form:
	\[\pi_x=(\sum_{i=1}^p a^i \rho (u_i)+\sum_{j=1}^qb^j N_j)\otimes (u_1\wedge\cdots\wedge u_{\Top}),\qquad \mbox{for some}\quad a^i,b^j\in\mathbb{R}.\]
	
	Now we examine the $\rho$-compatible condition of  $\pi_x$ at $x$:
	\begin{equation}
	\label{Eqt:atxproper}\iota_{\rho^*\eta}\iota_\xi \pi_x=-\iota_{\rho^*\xi}\iota_\eta \pi_x,\qquad \forall~ \xi,\eta\in T^*_x M .
	\end{equation}
	Substituting  $\xi=\eta=\xi_i\in T_x^*M$ ($1\leqslant  i\leqslant  p$) in Equation   \eqref{Eqt:atxproper}, we have
	\[0=\iota_{\rho^*\xi_i}\iota_{\xi_i}\pi=(-1)^{i-1}a^i u_1\wedge \cdots\wedge\hat{u_i}\wedge\cdots \wedge u_p\wedge\cdots\wedge u_{\Top}.\]
	We thus get $a^i=0$, for all  $i=1$, $\cdots$, $p$. Consequently, we have
	\[\iota_{\eta_j} \pi_x=b^j u_1\wedge\cdots\wedge u_{\Top},\]
	and
	\[\iota_{\rho^*\xi_i}\iota_{\eta_j} \pi_x=(-1)^{i-1}b^j u_1\wedge \cdots\hat{u_i}\wedge\cdots \wedge u_p\wedge\cdots\wedge u_{\Top}=-\iota_{\rho^*\eta_j}\iota_{\xi_i} \pi_x=0.\]
	Thus $b^j =0$, for all  $j=1$, $\cdots$, $q$. This proves that $\pi_x=0$.
\end{proof}

Now let us study a $(\Top+1)$-differential $\delta=(\deltazero,\deltaone)$ on $A$. Certainly $  \deltazero=0$ and $$\deltaone(f)=(-1)^{\Top}\iota_{df} \pi,$$ for some $\rho$-compatible tensor $\pi\in \Gamma(TM\otimes (\wedge^{\Top} A))$. Moreover, Conditions  ~\eqref{Eqt:kdifferential2.5} $\sim$ \eqref{Eqt:kdifferential4} become a single one equation
\begin{equation}\label{Eqt:delta0single}
\deltaone[u,f]=[u,\deltaone(f)],\qquad \forall~ u\in \Gamma(A ),f\in {\CinfM}.
\end{equation}

 By Lemma \ref{pi=0}, if $\deltaone_x$ is   nontrivial, then  $x$ must be in $B\subset M$.
Note that $A|_B$ is a bundle of Lie algebras (not necessarily of the same   type of Lie algebras). So,   Equation   \eqref{Eqt:delta0single} can be characterized by
\begin{equation}\label{Eqt:delta0single2}
0=[u,\deltaone(f)],\qquad  \forall~  u\in  A_x, ~x\in B,~f\in {\CinfM}.
\end{equation}

\begin{lemma}\label{Lemma:adLu} If $\deltaone_x\neq 0$ on some $x\in B$, then Equation  ~\eqref{Eqt:delta0single2} holds if and only if $\mathrm{tr} \ad_u=0$, for all $u\in A_x$.
	 \end{lemma}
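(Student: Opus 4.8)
The plan is to reduce the whole statement to a single point $x\in B$ and exploit the fact that, because $\rho_x=0$, the fibre $A_x$ is a genuine Lie algebra of dimension $\Top$. Its top exterior power $\wedge^{\Top}A_x$ is then one-dimensional, and the operation $[u,\,\cdot\,]$ for $u\in A_x$ descends to the adjoint action of the Lie algebra $A_x$ on $\wedge^\bullet A_x$. The first thing I would check is precisely this pointwise descent: for sections $u,w$ one has $[fu,w]=f[u,w]-\rho(w)(f)\,u$, and at $x\in B$ the term $\rho(w)(f)$ vanishes, so the bracket evaluated at $x$ is $\CinfM$-linear in both entries and therefore depends only on $u_x\in A_x$ and $w_x$. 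Consequently $[u,\deltaone(f)]_x$ makes sense as a function of $u\in A_x$ and of $\deltaone(f)_x\in\wedge^{\Top}A_x$.

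The crux is the classical linear-algebra identity that the adjoint action on the top exterior power is multiplication by the trace. Concretely, choosing a basis $v_1,\dots,v_{\Top}$ of $A_x$ and using that $[u,\,\cdot\,]$ is a derivation of the wedge product, I would compute
\[
[u,\,v_1\wedge\cdots\wedge v_{\Top}]=\sum_{i}v_1\wedge\cdots\wedge[u,v_i]\wedge\cdots\wedge v_{\Top}=(\mathrm{tr}\,\ad_u)\,v_1\wedge\cdots\wedge v_{\Top},
\]
since only the diagonal contributions of $\ad_u$ survive the wedge. Hence, for every $u\in A_x$ and $f\in\CinfM$,
\[
[u,\deltaone(f)]_x=(\mathrm{tr}\,\ad_u)\,\deltaone(f)_x .
\]

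With this identity both implications of the lemma are immediate. If $\mathrm{tr}\,\ad_u=0$ for all $u\in A_x$, the displayed formula gives $[u,\deltaone(f)]_x=0$ for all $u,f$, which is exactly Eq.~\eqref{Eqt:delta0single2}. Conversely, the hypothesis $\deltaone_x\neq0$ furnishes some $f$ with $\deltaone(f)_x\neq0$; since $\wedge^{\Top}A_x$ is one-dimensional, the vanishing $(\mathrm{tr}\,\ad_u)\,\deltaone(f)_x=0$ forces $\mathrm{tr}\,\ad_u=0$ for every $u\in A_x$. I do not anticipate a genuine obstacle here: the statement is essentially the determinant/trace property of the adjoint representation on the top power, and the only point requiring mild care is the well-definedness of $[u,\,\cdot\,]$ as a pointwise operation on $A_x$, which is guaranteed by $\rho_x=0$ for $x\in B$.
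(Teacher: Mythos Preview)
Your proposal is correct and follows essentially the same route as the paper: both arguments reduce to the identity $[u,\,v_1\wedge\cdots\wedge v_{\Top}]=(\mathrm{tr}\,\ad_u)\,v_1\wedge\cdots\wedge v_{\Top}$ in the Lie algebra $A_x$, from which the equivalence is immediate. Your added remarks on the pointwise well-definedness of the bracket (using $\rho_x=0$) and on why $\deltaone_x\neq0$ furnishes a nonzero top-form are sound and in fact make explicit what the paper leaves implicit.
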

 \begin{proof} Choose a basis $\{u_1,\cdots,u_{\Top}\}$ of $ A_x$  and assume that
 	$\pi_x=X\otimes (u_1\wedge\cdots\wedge u_{\Top})$, {for some nonzero } $X\in T_xM$.
 	The statement of this lemma is justified by calculating  the right hand side of Equation  ~\eqref{Eqt:delta0single2}:
 	\begin{eqnarray*}
 		[u,\deltaone(f)]&=&(-1)^{\Top}[u,\iota_{df} \pi_x]\\ &=&(-1)^{\Top}X(f)[u,u_1\wedge  \cdots\wedge u_{\Top} ]\\ &=&
 		(-1)^{\Top}X(f)\sum_{i=1}^{\Top} u_1\wedge\cdots\wedge [u,u_i]\wedge \cdots \wedge u_{\Top}\\ &=& (-1)^{\Top}X(f)
 		\mathrm{tr} (\ad_u)u_1\wedge\cdots \wedge u_{\Top}.
 	\end{eqnarray*}
 \end{proof}

\begin{remark}
	For the Lie algebra $A_x$ ($x\in B$), the map $A_x\to \mathbb{R}$, $u\mapsto \mathrm{tr} (\ad_u)$ is called the {\bf adjoint character} of $A_x$ (\cite{EL}).
\end{remark}

Combining Lemmas \ref{pi=0} and \ref{Lemma:adLu}, we have a conclusion:
\begin{theorem}\label{Thm:top+1differentialpi}   Every $  (\Top+1)$-differential     $\delta=(\deltazero=0,\deltaone)$ on $A$ corresponds to  a section $\pi\in \Gamma(TM\otimes (\wedge^{\Top} A))$ which vanishes on  the sets  $B^c$ and
	$$  \{x\in B|  \mbox{ the adjoint character of } A_x \mbox{ is nontrivial}\}, $$
	such that $\deltaone(f)=(-1)^{\Top}\iota_{df} \pi$, for all $f\in\CinfM$. Conversely, any  such $\pi$ determines a  $(\Top+1)$-differential $\delta$ in this way.
\end{theorem}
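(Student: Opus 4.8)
The plan is to reduce the statement to the two preparatory lemmas, Lemma \ref{pi=0} and Lemma \ref{Lemma:adLu}, once the algebraic conditions defining a $(\Top+1)$-differential have been stripped down. First I would record that, because $\wedge^{\Top+1}A$ is the zero bundle, the operator $\deltazero\colon\Gamma(A)\to\Gamma(\wedge^{\Top+1}A)$ is forced to vanish; hence $\delta$ is completely encoded by its symbol $\deltaone\colon\CinfM\to\Gamma(\wedge^{\Top}A)$, which corresponds to a unique tensor $\pi\in\Gamma(TM\otimes(\wedge^{\Top}A))$ via $\deltaone(f)=(-1)^{\Top}\iota_{df}\pi$. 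With $\deltazero=0$, the structural identities collapse: the derivation rule for the symbol in \eqref{Eqt:delta0delta1derivatives} holds automatically, while its companion rule and all of \eqref{Eqt:kdifferential4} are vacuous (their targets lie in $\wedge^{\Top+1}A=0$); condition \eqref{Eqt:kdifferential2.5} is exactly the $\rho$-compatibility \eqref{Eqt:fine-condition} of $\pi$, and \eqref{Eqt:kdifferential3} becomes the single relation \eqref{Eqt:delta0single}. Thus a $(\Top+1)$-differential is the same datum as a $\rho$-compatible $(\Top+1)$-tensor $\pi$ subject to \eqref{Eqt:delta0single}.

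The next step translates these two surviving conditions into the two vanishing loci. Lemma \ref{pi=0} identifies $\rho$-compatibility of $\pi$ with its vanishing on $B^c$. Since $\pi$ is smooth and $B=\rho^{-1}(0)$ is closed, vanishing on the open set $B^c$ forces $\pi$ to vanish on $\overline{B^c}$, so that $\pi$ is in fact supported in $\mathrm{int}(B)$. This is the observation that makes \eqref{Eqt:delta0single} tractable: on $\mathrm{int}(B)$ the anchor vanishes identically, so for every $u\in\Gamma(A)$ and $f\in\CinfM$ the function $[u,f]=\rho(u)(f)$ is identically zero there, whence its differential vanishes at each interior point and $\deltaone([u,f])=(-1)^{\Top}\iota_{d[u,f]}\pi=0$ on the support of $\pi$. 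The left-hand side of \eqref{Eqt:delta0single} therefore drops out, and \eqref{Eqt:delta0single} becomes the pointwise condition \eqref{Eqt:delta0single2}; on $B^c$ both sides vanish by locality, and the remaining boundary points $\partial B$ are handled by continuity, both sides being smooth and agreeing on the dense open set $\mathrm{int}(B)\cup B^c$.

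Finally I would invoke Lemma \ref{Lemma:adLu}: at a point $x\in B$ with $\deltaone_x\neq0$ (equivalently $\pi_x\neq0$) the condition \eqref{Eqt:delta0single2} holds precisely when $\mathrm{tr}\,\ad_u=0$ for all $u\in A_x$, i.e. when the adjoint character of the isotropy Lie algebra $A_x$ is trivial. Read contrapositively, this yields the theorem: $\pi$ must vanish at every $x\in B$ whose adjoint character is nontrivial, while it already vanishes on $B^c$. The converse runs the same equivalences backwards — given any $\pi$ vanishing on $B^c$ and on $\{x\in B:\text{adjoint character nontrivial}\}$, Lemma \ref{pi=0} supplies $\rho$-compatibility, the support argument kills the left-hand side of \eqref{Eqt:delta0single}, and Lemma \ref{Lemma:adLu} guarantees \eqref{Eqt:delta0single2}, so $\deltaone(f)=(-1)^{\Top}\iota_{df}\pi$ together with $\deltazero=0$ is a genuine $(\Top+1)$-differential.

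I expect the one delicate point to be exactly the reduction of \eqref{Eqt:delta0single} to the pointwise equation \eqref{Eqt:delta0single2}: naively $\deltaone([u,f])$ need not vanish, and it is only the combination of Lemma \ref{pi=0} with smoothness — forcing $\pi$ to be supported where $\rho$ vanishes on an entire neighborhood — that makes $[u,f]$ locally zero on that support and removes the offending term. Everything else is a matter of tracking which of the defining identities survive once $\deltazero=0$, together with the continuity extension across $\partial B$.
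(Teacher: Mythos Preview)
Your proposal is correct and follows essentially the same route as the paper, which simply asserts that the theorem follows by combining Lemmas~\ref{pi=0} and~\ref{Lemma:adLu} after reducing the $(\Top+1)$-differential conditions to \eqref{Eqt:delta0single}. You supply exactly the detail the paper omits --- namely why \eqref{Eqt:delta0single} collapses to the pointwise condition \eqref{Eqt:delta0single2} --- and your observation that $\pi$ is forced to vanish on $\overline{B^c}\supset\partial B$, so that the boundary points cause no trouble, is the one genuinely nontrivial step in that reduction.
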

	Clearly, exact $(\Top+1)$-differentials  on $A$ are trivial. Hence, the reduced  space of $(\Top+1)$-differentials can be identified with the set of $\pi$ described in the above theorem.

\subsubsection{Multiplicative $(\Top +1)$-vectors on Lie groupoids}

Let $\Pi$ be a multiplicative $(\Top+1)$-vector  on a Lie groupoid $\Gpd $.
Denote by $\pi $ the $\Gamma(TM\otimes (\wedge^\Top A))$-component of $\Pi|_M$. Then $\pi$ is a $\rho$-compatible tensor by Proposition \ref{CSXproppi}, and determines the $(\Top+1)$-differential  $$\delta_{\Pi}=(\delta_{\Pi}^{(0)}=0,\delta_{\Pi}^{(1)}).$$   Moreover, $\pi$ is subject to the condition in Theorem  \ref{Thm:top+1differentialpi}.

\begin{lemma}\label{lPig=01}
	 If  $g\in \Gpd$ satisfies $t(g)\in B^c$, where $B^c$ is defined by Equation \eqref{Eqt:B}, then we have $\Pi_g=0$. Similarly, if  $g\in \Gpd$ satisfies $s(g)\in B^c$, then we have $\Pi_g=0$.
\end{lemma}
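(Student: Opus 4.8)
The plan is to prove the two (mutually symmetric) vanishing statements by exploiting the single structural fact that a source or target fibre of $\Gpd$ has dimension exactly $\Top=\rank A$, so that any $(\Top+1)$-vector tangent to such a fibre is forced to vanish. No special feature of the exceptional degree beyond this dimension count is needed; in particular the argument applies verbatim to an arbitrary multiplicative $(\Top+1)$-vector, and does not even use $\delta^{(0)}_\Pi=0$.

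First I would record that $\Pi$ vanishes identically along $B^c$. By Proposition \ref{CSXproppi} one has $\Pi|_M=B\pi$, where $\pi$ is the $\rho$-compatible $(\Top+1)$-tensor attached to $\Pi$, and by Lemma \ref{pi=0} this $\pi$ vanishes at every point of $B^c$. Since $B\pi$ is obtained from $\pi$ by the fibrewise operator $\tfrac{1-e^{-\rhopush}}{\rhopush}$ of Eq.~\eqref{Bpi}, whose value at a point depends only on $\pi$ and $\rho$ there, we get $(B\pi)_x=0$, i.e. $\Pi_x=0$, for every $x\in B^c$.

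Next, for the case $s(g)\in B^c$, I would choose a local bisection $b_g$ through $g$ with $b_g(s(g))=g$ and set
\[
\Xi_g:=\Pi_g-L_{[b_g]}\Pi_{s(g)}\in\wedge^{\Top+1}T_g\Gpd .
\]
The claim is that $\Xi_g$ is tangent to the source fibre $s^{-1}(s(g))$. Indeed, by condition (3) of Lemma \ref{multi vector} the multivector $\iota_{s^*\xi}\Pi$ is left-invariant for every $\xi\in\Omega^1(M)$; since $s\circ L_{[b_g]}=s$, the pushforward $L_{[b_g]*}$ commutes with $\iota_{s^*\xi}$, whence $\iota_{s^*\xi}\Pi_g=\iota_{s^*\xi}\bigl(L_{[b_g]}\Pi_{s(g)}\bigr)$ and therefore $\iota_{s^*\xi}\Xi_g=0$ for all $\xi$. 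This forces $\Xi_g\in\wedge^{\Top+1}\bigl(\ker d_g s\bigr)$. But $\ker d_g s=T_g\,s^{-1}(s(g))$ has dimension $\Top$, so $\wedge^{\Top+1}\ker d_g s=0$ and hence $\Xi_g=0$, that is $\Pi_g=L_{[b_g]}(B\pi)_{s(g)}$. When $s(g)\in B^c$ the right-hand side is $0$ by the first step, giving $\Pi_g=0$.

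Finally, the case $t(g)\in B^c$ I would handle by the mirror argument: using the equivalent form of Lemma \ref{multi vector}(3) stating that $\iota_{t^*\xi}\Pi$ is right-invariant, and a bisection $b_g$ with $b_g(t(g))=g$, the same reasoning shows $\Pi_g-R_{[b_g]}\Pi_{t(g)}$ is tangent to the target fibre $t^{-1}(t(g))$ (again of dimension $\Top$), whence $\Pi_g=R_{[b_g]}(B\pi)_{t(g)}=0$. (Alternatively this case follows from the previous one by applying the groupoid inverse, using $\invstar\Pi=(-1)^{\Top}\Pi$ for multiplicative $\Pi$ together with $s(g^{-1})=t(g)$.) The only point requiring genuine care is the commutation of the translation pushforward with the contraction $\iota_{s^*\xi}$ (resp. $\iota_{t^*\xi}$), which rests on the invariance of $s$ under left translation (resp. of $t$ under right translation); everything else is the elementary observation that a $(\Top+1)$-vector tangent to a $\Top$-dimensional fibre vanishes.
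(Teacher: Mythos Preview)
Your argument is correct, but it takes a longer path than the paper's. The paper proves the $t(g)\in B^c$ case in one line: from Proposition~\ref{CSXproppi} one has $\iota_{t^*(df)}\Pi_g=\overrightarrow{\iota_{df}\pi}|_g=R_{g*}\bigl(\iota_{df}\pi|_{t(g)}\bigr)$, which vanishes because $\pi_{t(g)}=0$; hence $\Pi_g\in\wedge^{\Top+1}\ker t_{*g}$, and the dimension count finishes. No bisection or translate is needed --- the contraction $\iota_{t^*df}\Pi$ is already right-invariant and explicitly computed in terms of $\pi$, so tangency to the $t$-fibre is immediate. Your route instead subtracts $L_{[b_g]}\Pi_{s(g)}$ and argues the remainder lies in $\wedge^{\Top+1}\ker s_*$; this is essentially rederiving the $s$-fibre-tangency part of the characteristic-pair decomposition (Theorem~\ref{Thm:mainexplicitformula}), which is more machinery than the lemma requires. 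Both approaches rest on the same dimension count, so neither is conceptually deeper, but the paper's is shorter and avoids invoking bisections altogether. One small slip: in your mirror case you write ``a bisection $b_g$ with $b_g(t(g))=g$'', but bisections are sections of $s$, so it should read $b_g(s(g))=g$; the right translation $R_{[b_g]}$ then carries $T_{t(g)}\Gpd$ to $T_g\Gpd$ as you intend.
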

\begin{proof}	
	By Proposition \ref{CSXproppi}, for any $f\in \CinfM$, we have
	\begin{eqnarray*}
	\iota_{t^*(df)} \Pi_g &=& \overrightarrow{\iota_{df} \pi}|_g
	 =  R_{g*}(\iota_{df} \pi|_{t(g)}).
	\end{eqnarray*}
	If $t(g)\in B^c$, then one has $\iota_{t^*(df)} \Pi_g=0$, and hence $\Pi_g\in \wedge^{\Top+1} \ker t_{*g}$. By dimensional reasons, we get $\Pi_g=0$.
\end{proof}

In what follows, we assume that the Lie groupoid $\Gpd$ is $s$-connected. It is easy to prove that, for $g\in\Gpd$, the following three conditions are equivalent:
	
\qquad	 (1)~\quad    $s(g)= t(g)$; \qquad (2)~\quad   $s(g)\in B$; \qquad   (3)~\quad    $t(g)\in B$.

Let us denote by $\Gpd|_B$ the collection of $g\in \Gpd$ subject to any of the three properties described above. Clearly, $\Gpd|_B$ is a bundle of Lie groups over $B$. Denote by $$\mathrm{pr}:~\Gpd|_B\to B$$ the projection, which coincides with $s$ and $t$. We also denote by $G_x$ the fibre of $\Gpd|_B$ at $x\in B$.

The Lie algebroid of $\Gpd|_B$ is identically $A|_B$, a bundle of Lie algebras.  By Theorem \ref{Thm:top+1differentialpi} and   Lemma \ref{lPig=01}, one has
\begin{lemma}\label{lPig=02}
	If $x\in B$ and the adjoint character of   $A_x$  { is nontrivial}, then we have $\Pi_g=0$ for all $g\in G_x$.
\end{lemma}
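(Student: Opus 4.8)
The plan is to prove the slightly stronger statement that $\Pi$ vanishes on the entire source fibre $s^{-1}(x)$, which contains $G_x$; the three inputs are that $\Pi$ vanishes at the unit $x$, that $\Pi$ is invariant under the flows of right-invariant vector fields, and that $s^{-1}(x)$ is connected. First I would check that $\Pi_x=0$. Since $x\in B$ and the adjoint character of $A_x$ is nontrivial, Theorem \ref{Thm:top+1differentialpi} forces the associated $\rho$-compatible tensor $\pi=\mathrm{pr}_{TM\otimes(\wedge^{\Top}A)}(\Pi|_M)$ to vanish at $x$. As $\Pi|_M=B\pi$ by Proposition \ref{CSXproppi} and the operator $D_\rho$ entering $B\pi$ in Eq. \eqref{Bpi} acts fibrewise (it is $\CinfM$-linear on each $\wedge^\bullet(TM\oplus A)$), the value $(B\pi)_x$ is determined by $\pi_x$ alone; hence $\Pi_x=(B\pi)_x=0$.

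Next I would record the invariance. Because $k=\Top+1$ and $\wedge^{\Top+1}A=0$, the degree-zero part of the infinitesimal vanishes, $\delta_\Pi^{(0)}=0$, as already observed in this subsection; equivalently $\delta_\Pi(u)\in\Gamma(\wedge^{\Top+1}A)=0$ for every $u\in\Gamma(A)$. Since $\overrightarrow{\delta_\Pi(u)}=[\Pi,\overrightarrow{u}]$, this gives $[\Pi,\overrightarrow{u}]=0$, equivalently $\mathcal{L}_{\overrightarrow{u}}\Pi=0$, so that $\Pi$ is preserved by the local flow of every right-invariant vector field $\overrightarrow{u}$, $u\in\Gamma(A)$.

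Finally I would run an open–closed argument on $s^{-1}(x)$. The right-invariant vector fields are tangent to the source fibres and, through $R_{g*}\colon A_{t(g)}\xrightarrow{\ \sim\ }T_g\, s^{-1}(x)$, they span $T_g\, s^{-1}(x)$ at every $g\in s^{-1}(x)$. Put $Z=\{g\in s^{-1}(x)\mid \Pi_g=0\}$. Then $Z$ is closed, being the zero locus of the smooth multivector $\Pi$ restricted to the fibre; and $Z$ is open, because if $\Pi_g=0$ then the invariance $\mathcal{L}_{\overrightarrow{u}}\Pi=0$ propagates the vanishing along the flows of right-invariant vector fields, and since these span $T_g\, s^{-1}(x)$ the composition of their flows sweeps out a whole fibre-neighborhood of $g$. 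As $\Gpd$ is $s$-connected, the fibre $s^{-1}(x)$ is connected, and $x\in Z$ by the first step, whence $Z=s^{-1}(x)$; in particular $\Pi_g=0$ for all $g\in G_x\subset s^{-1}(x)$. The main point to handle with care is the openness of $Z$: one must use that finitely many right-invariant vector fields coming from a local frame of $A$ near $t(g)$ give, via the composition of their (merely local) flows, a local diffeomorphism onto a neighborhood of $g$ in $s^{-1}(x)$, so that completeness of the flows is never needed.
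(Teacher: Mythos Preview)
Your argument is correct, but it takes a more elaborate route than the paper. The paper's proof is essentially a one-line reduction: since $x\in B$ and the adjoint character of $A_x$ is nontrivial, Theorem~\ref{Thm:top+1differentialpi} gives $\pi_x=0$; then the \emph{same} computation as in the proof of Lemma~\ref{lPig=01} applies verbatim, because that argument only used $\pi_{t(g)}=0$ (not the specific reason $t(g)\in B^c$). Concretely, for $g\in G_x$ one has $t(g)=x$, hence
\[
\iota_{t^*(df)}\Pi_g=\overrightarrow{\iota_{df}\pi}\big|_g=R_{g*}\bigl(\iota_{df}\pi|_x\bigr)=0,
\]
so $\Pi_g\in\wedge^{\Top+1}\ker t_{*g}=0$ by dimension.

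Your approach instead exploits $[\Pi,\overrightarrow{u}]=0$ and propagates the vanishing $\Pi_x=0$ along right-invariant flows via an open--closed argument on $s^{-1}(x)$. This is valid, and it is a pleasant illustration of how the infinitesimal triviality $\delta_\Pi^{(0)}=0$ controls $\Pi$ globally. Two small remarks: first, under the standing $s$-connectedness assumption one has $s^{-1}(x)=G_x$ when $x\in B$ (as the paper notes just above), so your ``slightly stronger'' statement is in fact the same statement; second, your method genuinely uses $s$-connectedness, whereas the paper's direct argument via $\iota_{t^*(df)}\Pi$ does not need connectedness at this step. The paper's route is shorter and avoids the flow argument entirely; yours gives a different, dynamical explanation of the same vanishing.
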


 Of course, we concern where $\Pi$ could be nontrivial. This is answered by the following assertion.
 \begin{proposition}
    If $\Pi_g\neq 0$,  then $g$ must be in $\Gpd|_B$ and $\mathrm{det}(\Ad_g)=1$, where $\Ad_g$ is considered as an automorphism on $A_x$, $x=\mathrm{pr}(g)  \in B$;
 	Moreover, if $\pi_x=X\otimes W$ for $X\in T_xM$, $W\in \wedge^\Top A_x$, then one has
 		\begin{equation}
 		\label{Eqt:Pigandpix}
 		\Pi_g= \tilde{X}\wedge \overrightarrow{ W}_g=\tilde{X}\wedge \overleftarrow{ W}_g\,,
 		\end{equation}
 		where $\tilde{X}\in T_g\Gpd$ satisfies $\mathrm{pr}_*(\tilde{X})=X$, and $\overrightarrow{W}$ is the right-invariant $\Top$-vector (on $G_x$), which coincides with the left-invariant $\Top$-vector  $\overleftarrow{W}$. 	
\end{proposition}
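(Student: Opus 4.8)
The plan is to first specialise the structural formula \eqref{formula3} to the present degree and then to read off both claims from the geometry of the anchor-vanishing locus $B$. Since $\rank A=\Top$ we have $\wedge^{\Top+1}A=0$. The same computation that, in the proof of Theorem \ref{Thm:mainexplicitformula}, shows $\Pi_g-L_{[b_g]}\Pi_{s(g)}$ to be tangent to the $s$-fibre through $g$ never used $k\le\Top$; and a $(\Top+1)$-vector tangent to the $\Top$-dimensional $s$-fibre must vanish. Hence $\Pi_g=L_{[b_g]}\Pi_{s(g)}=L_{[b_g]}(B\pi)_{s(g)}$, and the argument symmetric in $(s,t)$ and $(L,R)$, based on the right-invariant alternative in Lemma \ref{multi vector}(3), gives likewise $\Pi_g=R_{[b_g]}(B\pi)_{t(g)}$. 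Together with Lemma \ref{lPig=01}, if $\Pi_g\ne0$ then $s(g),t(g)\in B$, so by $s$-connectedness $s(g)=t(g)=:x$ and $g\in\Gpd|_B$; this is the first assertion.

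Since $\rho$ vanishes on $B$, the derivation $D_\rho$ vanishes pointwise there, so $(B\pi)_x=\pi_x=X\wedge W$; in particular $\Pi_g\ne0$ forces $\pi_x\ne0$, hence $X\ne0$ and $W\ne0$. The observation I would emphasise is that for $x\in B$ the orbit $t(s^{-1}(x))$ through $x$ is connected (being the image of the connected $s$-fibre) and has tangent space $\Img\rho_x=0$, hence equals $\{x\}$; consequently $s^{-1}(x)=G_x$, and by $s$-connectedness of $\Gpd$ the isotropy group $G_x$ is a \emph{connected} Lie group with Lie algebra $A_x$. As $\pi_x\ne0$, Theorem \ref{Thm:top+1differentialpi} says that the adjoint character $u\mapsto\mathrm{tr}(\ad_u)$ of $A_x$ is trivial; since $\det(\Ad_{\exp u})=e^{\mathrm{tr}(\ad_u)}$ and $G_x$ is connected, this upgrades to $\det(\Ad_h)=1$ for every $h\in G_x$, in particular $\det(\Ad_g)=1$.

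Finally I would extract \eqref{Eqt:Pigandpix} from $\Pi_g=R_{[b_g]}\pi_x$. On $G_x=s^{-1}(x)$ the translation $R_{[b_g]}$ is genuine right multiplication by $g$, and because $\rho_x=0$ it maps $A_x$ into $T_gG_x=\ker s_{*g}=\ker t_{*g}$; hence $R_{[b_g]*}W=\overrightarrow W_g$, the right-invariant $\Top$-vector on $G_x$. Setting $\tilde X:=R_{[b_g]*}X$ and using $t\circ R_{[b_g]}=t$ gives $\mathrm{pr}_*\tilde X=t_*X=X$, so $\Pi_g=\tilde X\wedge\overrightarrow W_g$ with $\mathrm{pr}_*\tilde X=X$; the ambiguity of $\tilde X$ modulo $T_gG_x$ is harmless since $\overrightarrow W_g$ spans the line $\wedge^\Top T_gG_x$. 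From $L_h=R_h\circ C_h$ (with $C_h$ conjugation) and the fact that $\Ad_h$ acts on $\wedge^\Top A_x$ by $\det(\Ad_h)$, one has $\overleftarrow W_g=\det(\Ad_g)\overrightarrow W_g$ on $G_x$; with $\det(\Ad_g)=1$ this yields $\overrightarrow W_g=\overleftarrow W_g$ and hence the second equality in \eqref{Eqt:Pigandpix}.

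The main obstacle is the integration step: the infinitesimal condition supplied by Theorem \ref{Thm:top+1differentialpi} a priori only forces $\det\Ad=1$ on the identity component of $G_x$, and it is precisely the identity $s^{-1}(x)=G_x$ at anchor-vanishing points — which makes $G_x$ connected under $s$-connectedness — that allows one to conclude $\det(\Ad_g)=1$ on all of $G_x$. Establishing the two-sided invariance $\Pi_g=L_{[b_g]}\pi_x=R_{[b_g]}\pi_x$ and keeping track of the source/target conventions is the other point requiring care, although it is a routine adaptation of the ordinary-case arguments.
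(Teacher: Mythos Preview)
Your proof is correct and reaches the same conclusions, but the route to \eqref{Eqt:Pigandpix} differs from the paper's. You recycle the machinery of Section~\ref{Sec:multiplicativemultivectors}: observing that the putative cocycle $c([b_g])=R_{g^{-1}*}(\Pi_g-L_{[b_g]}\Pi_{s(g)})$ lands in $\wedge^{\Top+1}A=0$, you obtain the closed formula $\Pi_g=L_{[b_g]}\pi_{s(g)}=R_{[b_g]}\pi_{t(g)}$ and then read off $\tilde X=R_{[b_g]*}X$ and $R_{g*}W=\overrightarrow W_g$ directly. The paper instead argues intrinsically: from the multiplicativity condition that $\iota_{\mathrm{pr}^*\xi}\Pi$ is left-invariant (and, by $\det\Ad_g=1$, right-invariant), it deduces abstractly that $\Pi_g$ must have the form $\tilde X\wedge\overrightarrow W_g$ for some $\tilde X$, and then pins down $\mathrm{pr}_*\tilde X=X$ by contracting with $t^*df$ and invoking Proposition~\ref{CSXproppi}. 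Your approach is more explicit and gives a concrete $\tilde X$; the paper's is more self-contained (it does not invoke the characteristic-pair formalism, which in Section~\ref{Sec:multiplicativemultivectors} is stated only for $1\le k\le\Top$). Both rely on the same integration step---connectedness of $G_x=s^{-1}(x)$ for $x\in B$ under the $s$-connected hypothesis---to pass from the vanishing adjoint character to $\det\Ad_g=1$, and you rightly flag this as the crux.
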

\begin{proof} By Lemmas \ref{lPig=01} and \ref{lPig=02}, the $\Pi_g\neq 0$ situation only happens at those $g\in G_x$ with $\mathrm{tr} \ad_u=0$, for all $u\in A_x$. As the fibre $G_x$ of $G|_B$ is connected, we have $\mathrm{det}(\Ad_g)=1$, for all $g\in G_x$.
	This implies that $\overrightarrow{W}=\overleftarrow{W}$, for any $W\in  \wedge^\Top A_x$.
	
	Also, $\iota_{\mathrm{pr}^*(\xi)} \Pi   $ should be a left-invariant (and right-invariant as well) $\Top$-vector on $G_x$, for $\xi\in T_xM$, and hence one is able to write $$
	\Pi_g= \tilde{X}\wedge \overrightarrow{ W}_g=\tilde{X}\wedge \overleftarrow{ W}_g\,,
	$$
	for some $\tilde{X}\in T_g\Gpd$. Then, by
	\begin{eqnarray*}
	(\mathrm{pr}_*\tilde{X})(f)\overrightarrow{ W}_g	=\iota_{t^*(df)} \Pi_g &=& \overrightarrow{\iota_{df} \pi}|_g \\
		&=&  R_{g*}(\iota_{df} \pi|_{t(g)})=X(f)\overrightarrow{ W}_g\,,
	\end{eqnarray*}
we see that $\mathrm{pr}_*(\tilde{X})=X$.	
\end{proof}

We have a converse proposition:
\begin{proposition}If   $\pi\in \Gamma(TM\otimes (\wedge^{\Top} A))$ satisfies the same condition as in Theorem  \ref{Thm:top+1differentialpi}, then via Equation  ~\eqref{Eqt:Pigandpix}, it determines a 	
	multiplicative $(\Top+1)$-vector $\Pi$ on $\Gpd $ which vanishes on any $g\in (\Gpd|_B)^c$ and $g\in \Gpd|_B$ with $\mathrm{det}(\Ad_g)\neq 1$.
	
\end{proposition}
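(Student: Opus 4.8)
The plan is to construct $\Pi$ explicitly from $\pi$ by formula \eqref{Eqt:Pigandpix}, declaring $\Pi_g=0$ whenever $g\in(\Gpd|_B)^c$ or $\det(\Ad_g)\neq 1$, and then to verify the three conditions of Lemma \ref{multi vector}. The first task is to check that \eqref{Eqt:Pigandpix} is unambiguous on the support of $\pi$. By hypothesis $\pi$ vanishes off $B$ and off the locus where the adjoint character of $A_x$ is trivial, so a nonzero $\Pi_g$ can occur only for $g$ in an isotropy group $G_x$ with $x\in B$ unimodular. Since $\Gpd$ is $s$-connected the fibre $G_x$ is connected, so the vanishing $\mathrm{tr}(\ad_u)=0$ for all $u\in A_x$ integrates to $\det(\Ad_g)=1$; combined with the identity $\overleftarrow{W}_g=\det(\Ad_g)\overrightarrow{W}_g$ this gives $\overrightarrow{W}_g=\overleftarrow{W}_g$, so the two expressions in \eqref{Eqt:Pigandpix} agree. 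Moreover $\overrightarrow{W}_g=\overleftarrow{W}_g$ is a top-degree multivector tangent to $G_x$, so adding to $\tilde X$ any vector in $\ker\mathrm{pr}_*=T_gG_x$ contributes a term in $\wedge^{\Top+1}(T_gG_x)=0$; independence of the splitting $\pi_x=X\otimes W$ is immediate by bilinearity.

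For smoothness I would rewrite \eqref{Eqt:Pigandpix} globally as $\Pi_g=L_{[b_g]}\pi_{s(g)}$, that is, as formula \eqref{formula3} in which the cocycle $c$ is forced to be $0$ since $\wedge^{\Top+1}A=0$. This is legitimate because $B\pi=\pi$: indeed $\rhopush\pi=0$ everywhere, as $\pi$ vanishes on the open set $B^c$ while $\rho$ vanishes on $B$, so every higher term of \eqref{Bpi} drops out. As $\pi$ is a smooth section and local bisections through $g$ may be chosen smoothly, this exhibits $\Pi$ as a smooth $(\Top+1)$-vector; independence of $b_g$ is the well-definedness above, using $L_{[h_x]}\pi_x=\pi_x$ for $[h_x]\in\heat_x$ (the correction $H(X)\wedge W$ lies in $\wedge^{\Top+1}A=0$).

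It then remains to check multiplicativity via Lemma \ref{multi vector}. The affine condition \eqref{aff} follows directly from the form $\Pi_g=L_{[b_g]}(B\pi)_{s(g)}$, since left and right translations by bisections commute and the two cross terms cancel. Coisotropicity is transparent at the level of components, as $\Pi|_M=B\pi=\pi\in\Gamma(TM\otimes\wedge^{\Top}A)$ carries no $\wedge^{\Top+1}A$-part. The delicate point, which I expect to be the main obstacle, is the invariance condition (3) together with the \emph{genuine} coisotropicity of the multiplication graph: one must reconcile the left-invariant contraction $\iota_{s^*\xi}\Pi$ with the right-invariant contraction $\iota_{t^*\xi}\Pi$. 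This is exactly where the adjoint-character hypothesis is indispensable — on the support the equality $\overrightarrow{W}=\overleftarrow{W}$ obtained from $\det(\Ad_g)=1$ makes the $s$- and $t$-invariant descriptions of $\Pi$ coincide, closing the argument. Without unimodularity these two pictures diverge, and the putative $\Pi$ is not multiplicative: its would-be infinitesimal violates the $(\Top+1)$-differential compatibility \eqref{Eqt:delta0single}, the trace term $\mathrm{tr}(\ad_u)$ obstructing it.

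Finally I would confirm that the infinitesimal $\delta_\Pi$ recovers the $(\Top+1)$-differential of Theorem \ref{Thm:top+1differentialpi}, via $\delta_\Pi(f)=[\Pi,t^*f]|_M=(-1)^{\Top}\iota_{df}\pi$ from Proposition \ref{CSXproppi}, and read off from the construction that $\Pi_g=0$ precisely when $g\in(\Gpd|_B)^c$ (there $\pi_{s(g)}=0$) or $\det(\Ad_g)\neq 1$ (there $x$ has nontrivial adjoint character, so $\pi_x=0$ by hypothesis). This produces the asserted multiplicative $(\Top+1)$-vector with exactly the stated support.
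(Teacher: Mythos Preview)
Your approach matches the paper's: both verify multiplicativity via the three conditions of Lemma~\ref{multi vector}, with the paper giving only a one-line sketch (declaring (2) and (3) easy and (1) directly verifiable via Definition~\ref{Defn:affine3conditions}) while you supply the details, including the nice recasting of \eqref{Eqt:Pigandpix} as the $c=0$ instance of \eqref{formula3} via the observation $B\pi=\pi$. One small recalibration: your emphasis is inverted relative to the paper --- you flag condition (3) as the delicate step, but in fact $\iota_{s^*\xi}\Pi_g=L_{[b_g]}\iota_{s^*\xi}\pi_{s(g)}=L_{g*}(\iota_\xi\pi)_{s(g)}$ is manifestly left-invariant without any appeal to unimodularity; the adjoint-character hypothesis is consumed entirely by the consistency $\overrightarrow{W}=\overleftarrow{W}$ in \eqref{Eqt:Pigandpix}, which you already established at the outset.
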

\begin{proof}
	One should examine the conditions declared by Lemma \ref{multi vector}. In fact, conditions (2) and (3) are easy, while the affine condition (1) can be directly verified by any of the three equivalent conditions in Definition \ref{Defn:affine3conditions}.
\end{proof}

In summary, we get the description of $\Reduced_{\mathrm{mult}}^{\Top+1}=\mathfrak{X}_{\mathrm{mult}}^{\Top+1}(\Gpd )$:
\begin{theorem}\label{Theorem:classifyk=top+1}
	Multiplicative $(\Top+1)$-vectors  $\Pi$ on  the Lie groupoid $\Gpd $ are in one-to-one correspondence to elements $\pi\in \Gamma(TM\otimes (\wedge^{\Top} A))$ that satisfy the same condition as in Theorem  \ref{Thm:top+1differentialpi}.
\end{theorem}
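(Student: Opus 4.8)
The plan is to realize the asserted bijection as a pair of mutually inverse maps that are already essentially supplied by the two propositions immediately preceding the theorem, with the real work lying only in checking that these maps invert one another. In the forward direction I would send a multiplicative $(\Top+1)$-vector $\Pi\in\mathfrak{X}_{\mathrm{mult}}^{\Top+1}(\Gpd)$ to the $\Gamma(TM\otimes(\wedge^{\Top}A))$-component $\pi$ of $\Pi|_M$. By Proposition \ref{CSXproppi} this $\pi$ is a $\rho$-compatible tensor with $\Pi|_M=B\pi$, and the infinitesimal $\delta_\Pi=(\delta_\Pi^{(0)}=0,\delta_\Pi^{(1)})$ is a $(\Top+1)$-differential with symbol $\delta_\Pi^{(1)}(f)=(-1)^{\Top}\iota_{df}\pi$; hence Theorem \ref{Thm:top+1differentialpi} forces $\pi$ to vanish on $B^c$ and on the locus of $B$ where the adjoint character is nontrivial. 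Thus $\Phi:\Pi\mapsto\pi$ lands in exactly the set of tensors described in the statement. For the reverse map $\Psi$ I would use the construction of the second preceding proposition, which from such a $\pi$ produces, via Eq. \eqref{Eqt:Pigandpix}, a genuinely multiplicative $(\Top+1)$-vector (its multiplicativity being verified there through Lemma \ref{multi vector}).

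Next I would check $\Phi\circ\Psi=\mathrm{id}$ by restricting the reconstructed $\Pi$ to $M$. At a unit $x\in B$ one has $\rho_x=0$, so any lift $\tilde X$ of $X$ in $T_x\Gpd=T_xM\oplus A_x$ has the form $(X,u)$ with $u\in A_x$ free, and $\overrightarrow{W}_x=W$; then $\Pi_x=\tilde X\wedge W=X\wedge W+u\wedge W=X\wedge W$ since $u\wedge W\in\wedge^{\Top+1}A_x=0$. As $\pi$ and $\Pi$ both vanish at units of $B^c$, we get $\Pi|_M=\pi$, whence $\Phi(\Psi(\pi))=\pi$. Conversely, for $\Psi\circ\Phi=\mathrm{id}$, the first preceding proposition states that an arbitrary multiplicative $\Pi$ is nonzero only on $\Gpd|_B$ with $\det(\Ad_g)=1$ and is there given precisely by Eq. \eqref{Eqt:Pigandpix} in terms of its own tensor $\pi$; since $\Psi$ rebuilds $\Pi$ from $\pi$ by the same formula, we obtain $\Psi(\Phi(\Pi))=\Pi$.

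The one point genuinely demanding care — and the place I expect the only real subtlety to sit — is the well-definedness of $\tilde X\wedge\overrightarrow{W}_g$ independently of the chosen lift $\tilde X$ of $X$. Two lifts differ by $Y\in\ker t_{*g}$, which is $\Top$-dimensional; because $x\in B$ makes the orbit through $x$ trivial, the isotropy group $G_x$ is itself $\Top$-dimensional, and $G_x\subseteq t^{-1}(x)$ together with the dimension count forces $\ker t_{*g}=T_gG_x$. Since $\overrightarrow{W}_g$ spans the one-dimensional $\wedge^{\Top}T_gG_x$ (for $W\neq0$), any such $Y$ satisfies $Y\wedge\overrightarrow{W}_g\in\wedge^{\Top+1}T_gG_x=0$, so the ambiguity is harmless; the equality $\overrightarrow{W}_g=\overleftarrow{W}_g$ used implicitly is exactly the $\det(\Ad_g)=1$ condition on the connected fibre $G_x$.

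Finally I would note that exactness collapses here: since $\Gamma(\wedge^{\Top+1}A)=0$, the only exact multiplicative $(\Top+1)$-vector is $0$, so $\Reduced_{\mathrm{mult}}^{\Top+1}=\mathfrak{X}_{\mathrm{mult}}^{\Top+1}(\Gpd)$ and the bijection $\Phi$ is the claimed classification. In short, the theorem is a bookkeeping synthesis of Proposition \ref{CSXproppi}, Theorem \ref{Thm:top+1differentialpi}, and the two preceding propositions, the only nontrivial verification being the invertibility of $\Phi$ and $\Psi$ together with the lift-independence just described.
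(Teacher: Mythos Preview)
Your proposal is correct and takes essentially the same approach as the paper: the paper presents this theorem simply as a summary of the two preceding propositions (writing ``In summary, we get the description of $\Reduced_{\mathrm{mult}}^{\Top+1}=\mathfrak{X}_{\mathrm{mult}}^{\Top+1}(\Gpd)$'') with no further argument, and your write-up is exactly the natural way to assemble those propositions into the claimed bijection. Your added verifications --- that $\Phi$ and $\Psi$ are mutual inverses, the lift-independence via $\ker t_{*g}=T_gG_x$, and the observation $B\pi=\pi$ because $\rho$ vanishes on $B$ --- fill in the details the paper leaves implicit; just note that, like the paper, you are tacitly using the standing $s$-connectedness assumption for the $\det(\Ad_g)=1$ argument on $G_x$.
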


\appendix

\section{Lie algebroid and groupoid modules}\label{Sec:LALGPbasic}

We recall some  basic knowledge   of Lie groupoids and Lie algebroids. For more details on this subject, see \cite{Mackenzie}.	A  {Lie  algebroid} is a $\mathbb{R}$-vector bundle $A \rightarrow M$, whose space of sections is endowed with a Lie bracket $[\cdot,\cdot]$, together with a bundle map $\rho:  ~ A \rightarrow T M $ called {\bf anchor} such that
	$\rho:~~ \Gamma(A) \rightarrow \XX(M) $ is a morphism of Lie algebras and
	\[
	[u, \, fv]=f[u, \,v]+(\rho(u)f)v,
	\]
	holds for all $u, v \in \Gamma(A)$ and $f\in  C^\infty({M})$.

By  an {\bf $A$-module}, we mean a vector bundle $E\to M$ which is endowed with an $A$-connection:
$$
\nabla:~\Gamma(A)\times  \Gamma(E)\to \Gamma(E)
$$
which is flat:
$$
\nabla_{[u,v]}e=\nabla_u\nabla_v e-\nabla_v\nabla_u e,\qquad\forall~ u,v\in \Gamma(A), e\in \Gamma(E).
$$

Given an $A$-module $E$, we have the standard Chevalley-Eilenberg complex $(C^\bullet(A,E),d_A)$, where
$$
C^\bullet (A,E):=\Gamma(\Hom(\wedge^\bullet  A , E )), \quad 0\leqslant  \bullet \leqslant  \Top
$$
and the coboundary operator  $d_A: ~C^{n} (A,E)\to C^{n+1} (A,E)$ is given by
\begin{eqnarray*}
	&&(d_A\lambda)(u_0,u_1,\cdots, u_{n})\\
	&=& \sum_{i}\minuspower{i}\nabla_{u_i} \lambda(\cdots, \hat{u}_i, \cdots)
	+\sum_{i<j}\minuspower{i+j}\lambda([u_i,u_j],\cdots, \hat{u}_i, \cdots, \hat{u}_j, \cdots),
\end{eqnarray*}
for all $u_0,\cdots, u_n\in\Gamma(A)$.  

By
a {\bf Lie algebroid $1$-cocycle} valued in the $A$-module $E$, we mean a $d_A$-closed element $\lambda\in C^1(A,E)$, namely a vector bundle map $\lambda:~ A\to E$  satisfying the following condition:
\begin{equation*}\label{Eqt:A1cocyclecondition}
\lambda[u,v]=\nabla_{u} \lambda(v)-\nabla_{v} \lambda(u),\quad \forall~ u,v\in \Gamma(A).
\end{equation*}
The collection of such Lie algebroid $1$-cocycles is denoted by $Z^1(A,E)$. The subspace of coboundaries $ B^1(A,E)\subset  Z^1(A,E) $ consists of elements of the form $d_A\nu$ where $\nu\in C^0(A,E)=\Gamma(E)$, i.e.,
\begin{equation}\label{Eqt:dnuforalgebroid}
(d_A\nu)(u)= \nabla_u \nu,\qquad\forall~ u\in \Gamma(A).
\end{equation}
We denote by $\coHg ^1(A,E)=Z^1(A,E)/B^1(A,E)$ the first cohomology of $A$ valued in the $A$-module $E$.

   Let $\Gpd $  be a Lie groupoid over a smooth manifold $M$ whose   source and target maps
are denoted by $s$ and  $t$, respectively. We treat the set of identities $M\hookrightarrow \Gpd $   as a submanifold of $\Gpd $. The groupoid multiplication of two elements $g$ and ${r}$ is denoted by $g{r}$, if $s(g)=t({r})$. The collection of such pairs   $(g,{r})$,  called composable pairs, is denoted by $\Gpd ^{(2)}$.
The groupoid inverse map of $\Gpd $ is denoted by $\mathrm{inv}: ~\Gpd \rightarrow \Gpd $. For $g\in \Gpd $, its inverse $\mathrm{inv}(g)$ is also denoted by $g^{-1}$.

A {\bf bisection} of $\Gpd $ is a smooth splitting $b: M\to \Gpd $ of the source map $s$ (i.e., $s{\littlecirc} b=\mathrm{id}_M$) such  that $$\phi_{b}:=t{\littlecirc} b:\quad M\to M$$ is a diffeomorphism. The set of bisections of $\Gpd $ forms a group which we denote by $\mathrm{Bis}(\Gpd )$ whose  identity element is   $\mathrm{id}_M$.  The multiplication $bb'$ of $b$ and $b'\in  \mathrm{Bis}(\Gpd )$ is given by
$$ (b  b')(x):=b(\phi_{b'}(x))b'(x),\qquad \forall~ x\in M.$$ The inverse $b^{-1}$ of $b$ is given by
$$ b^{-1}(x):=b(\phi_b^{-1}(x))^{-1},\qquad \forall~ x\in M.$$

A bisection $b$ defines a diffeomorphism of $\Gpd $ by left multiplication:
$$
L_b(g):=b(t(g))g,\qquad  \forall~ g\in \Gpd .
$$  We call $L_b$ the left translation by $b$.
Similarly, $b$ defines the right translation:
$$
R_b(g):=g b(\phi_b^{-1}s(g)),\qquad  \forall~ g\in \Gpd .
$$

Bisections that are exponentials of sections of $A$ are particularly useful. Given $u\in \Gamma({A})$, the corresponding right-invariant vector $\overrightarrow{u}\in \mathfrak{X}(\Gpd )$ generates a flow $\lambda_\epsilon:~\Gpd \to \Gpd $.  The exponential $\exp{\epsilon u}\in \mathrm{Bis}(\Gpd )$ (for $|\epsilon|$ sufficiently small) denotes  a bisection of $\Gpd$ such that
$$
\lambda_\epsilon (g)= L_{\exp{\epsilon u}} g,\qquad  \forall~ g\in \Gpd .
$$

We also need the notion of local bisections. A local bisection is a smooth splitting $b: U\to s^{-1}(U)$ of $s$, where $U\subset M$ is open, such that $$\phi_b=t{\littlecirc} b:\quad U\to \phi_b(U)\subset M$$ is a local diffeomorphism.

A local bisection $b$ passing through the point $b(x)=g$ on $\Gpd $, where $x=s(g)\in U$, is also denoted by $b_g$, to emphasise the particular point $g$.\emph{ In the sequel, by saying a   bisection through $g$, we mean a local bisection that passes through $g$.
}

A (left) {\bf $\Gpd $-module}   is a vector bundle $E\rightarrow M$  together with a smooth assignment: $g\mapsto \Phi_g$, where $g\in \Gpd $ and $\Phi_g $ is an isomorphism of vector spaces from $E_{s(g)}$ to $E_{t(g)}$
satisfying
\begin{enumerate}
	\item $\Phi_x=\mathrm{id}_{E_x}$, for all $x\in M$;
	\item $\Phi_{gr}= { \Phi_g{\littlecirc} \Phi_r}$, for all composable pairs $(g,r)$.
\end{enumerate}

Given a bisection $b\in \mathrm{Bis}(\Gpd )$, it induces an isomorphism of vector bundles $\Phi_b:~E\to E$ (over $\phi_b: ~M\to M$) defined by
$$
\Phi_b(e):=\Phi_{b(x)}e,\qquad\forall~ x\in M, e\in E_x.
$$

 We recall the tangent Lie algebroid $(A, [~,~],\rho) $ of the Lie groupoid $\Gpd $. The vector bundle $A$ consists of  tangent vectors on $\Gpd $ that are attached to the base manifold $M$ and tangent to $s$-fibres, i.e.,
$A=\ker s_*|_M$.   The anchor map $\rho :~A\rightarrow TM$ is simply $t_*$. For  $u,v\in \Gamma({A})$,
the Lie bracket $[u,v]$ is determined by
$$\overrightarrow{[u,v]}=[\overrightarrow{u},\overrightarrow{v}].$$
Here $\overrightarrow{u}$ denotes the right-invariant vector on $\Gpd $ corresponding to $u$. In the mean time, the left-invariant vector corresponding to $u$, denoted by $\overleftarrow{u}$, is related to $\overrightarrow{u}$ via
$$
\overleftarrow{u}= - \invstar  (\overrightarrow{u})=-\overleftarrow{\invstar  u}.
$$

A $\Gpd $-module $E$ is also an $A$-module, i.e., there is an $A$-action on $E$,
$$
\nabla:~\Gamma(A)\times  \Gamma(E)\to \Gamma(E)
$$
defined by
$$
\nabla_{u}e=  \frac{d}{d\epsilon }|_{\epsilon =0} \Phi_{\exp{(-\epsilon u)}}e,\qquad \forall~ u\in \Gamma(A), e\in \Gamma(E).
$$

An {\bf $n$-cochain} on $\Gpd $ valued in the $\Gpd $-module $E$ is a smooth map $c: \Gpd ^{(n)}\to E$ such that $c(g_1,\cdots,g_n)\in E_{t(g_1)}$, where $\Gpd ^{(n)}$ is the space of $n$-composable elements, i.e., $n$-arrays $(g_1,\cdots, g_n)$ satisfying $$s(g_1)=t(g_2),~s(g_2)=t(g_3),\cdots, ~s(g_{n-1})=t(g_n).$$

Denote by $C^n(\Gpd , E)$  the space of $n$-cochains.
The coboundary operator
\[ d_\Gpd: C^n(\Gpd , E)\to C^{n+1}(\Gpd , E),\]
is  standard:
\begin{itemize}
	\item[(1)] For $n=0$, $\nu\in C^0(\Gpd,E)=\Gamma(E)$, define
	$$
	(d_\Gpd \nu)(g)=\Phi_g \nu_{s(g)} -\nu_{t(g)},\qquad\forall~ g\in \Gpd;
	$$ 	\item[(2)] For  $c\in C^n(\Gpd , E)$ $(n\geqslant  1)$ on $\Gpd$, define
	\begin{eqnarray*}
		(d_\Gpd c)(g_0,g_1,\cdots,g_{n })&=&\Phi_{g_0} c(g_1,\cdots,g_{n })\\ &&+\sum_{i=0}^{n-1} (-1)^{i-1} c(g_0,\cdots,g_ig_{i+1},\cdots,g_{n})+(-1)^{n+1} c(g_0,\cdots,g_{n-1}).
	\end{eqnarray*}
\end{itemize}
In particular, a {\bf $1$-cocycle} is a map $c: \Gpd \to E$ such that $c(g)\in E_{t(g)}$ and \begin{equation}\label{Eqt:1cocyclecondition}
c(g{r})=c(g)+\Phi_{g} c({r}),\end{equation} for any composable pair $(g,{r})$.  The collection of such groupoid $1$-cocycles is denoted by $Z^1(\Gpd,E)$.  The subspace of coboundaries $ B^1(\Gpd,E)\subset  Z^1(\Gpd,E) $ consists of those of the form $d_\Gpd \nu$. We denote by $\coHg ^1(\Gpd,E)=Z^1(\Gpd,E)/B^1(\Gpd,E)$ the first cohomology of $\Gpd$ with coefficients in the $\Gpd$-module $E$.

A groupoid $1$-cocycle $c$ induces a Lie algebroid $1$-cocycle $\hat{c}: A\to E$   by the following formula.  For each $u\in A_x$, choose a smooth curve $\gamma(\epsilon)$  in   the $s$-fibre $s^{-1}(x)$ such that $\gamma'(0)=u$. Then   $\hat{c}(u)\in E_x$ is defined by
\begin{equation}\label{Eqt:hatcdefinition}
\hat{c}(u):=  -\frac{d}{d\epsilon }|_{\epsilon =0}~ \Phi^{-1}_{\gamma(\epsilon)} c(\gamma(\epsilon))= \frac{d}{d\epsilon }|_{\epsilon =0}~ c(\gamma(\epsilon)^{-1}).
\end{equation}
We call $\hat{c}$ the infinitesimal of $c$.
Note that our convention is slightly different from that in \cite{VE} (up to a minus sign).
Also, it is easily verified that, for $\nu\in \Gamma(E)$, we have
$$
( {d_\Gpd\nu}){\hat{}}=d_A\nu.
$$
Here on the left hand side, $d_\Gpd\nu$ belongs to $ B^1(\Gpd, E)$, while on the right hand side, $d_A\nu$ belongs to $B^1(A,E)$ (see Equation  ~\eqref{Eqt:dnuforalgebroid}).
 So, the map $c\mapsto \hat{c}$ induces a morphism $\coHg ^1(\Gpd,E)\stackrel{\mathrm{VE}}{\longrightarrow} \coHg ^1(A,E)$
 known as the Van Est map \cite{WX, Crainic2003}.



\section{The jet Lie algebroid}\label{Appendix:jetLiealgebroid} In this part and the next one, we recall the notions of jet Lie algebroid and jet Lie groupoid. More systematic knowledge can be found in    \cite{C}.
A (first) jet of the vector bundle $A\to M$ at $u_x\in A_x$ ($x\in M$), is a subspace in $T_{u_x} A$ which is isomorphic to $T_xM$ under the bundle projection.
The jet space of $A$ is denoted by $\jet A$. It is also a vector bundle over $M$ whose fibre at $x\in M$ consists of such jets.

Another approach  is to introduce an equivalence relation for sections $u_1$, $u_2\in \Gamma(A)$:
$$
u_1 \sim_x u_2  \iff u_1(x)=u_2(x) \mbox{ and } d_x\langle u_{1 },\zeta\rangle=d_x\langle u_{2 },\zeta\rangle,\quad \forall~ \zeta\in \Gamma(A^*).
$$
The jet of $u\in \Gamma(A)$ at $x$ is denoted by $[u_x]$, the equivalence class of the $\sim_x$ relation.
We note that $\jet A$ is also a Lie algebroid over $M$. 
Indeed, one has an exact sequence of Lie algebroids:
\begin{equation}\label{Eqt:jetalgebroidexactseq}
0\to\core  \stackrel{i}{\longrightarrow}\jet A\stackrel{p}{\longrightarrow} A\to 0.
\end{equation}
Here $\core=T^*M\otimes A=\mathrm{Hom}(TM,A)$ is a bundle of Lie algebras. At $x\in M$, the Lie bracket of $\lambda$ and $\mu\in \core_x=\mathrm{Hom}(T_xM,A_x)$, is given by
$$
[\lambda,\mu]=\mu{\littlecirc} \rho{\littlecirc}\lambda-\lambda{\littlecirc} \rho{\littlecirc} \mu.
$$
We will call $\core$ the bundle of \textbf{{isotropy} jet Lie algebras}.

We have a natural lifting map $\liftingd:\Gamma(A)\to \Gamma({\jet A})$ that sends $u\in \Gamma(A)$ to its jet $\liftingd u\in \Gamma({\jet A})$:
\begin{equation}\label{Eqt:liftingd}
(\liftingd u)_x=[u]_x,\qquad\forall~ x\in M.
\end{equation}
It can be easily seen that
$$
\liftingd(fu)=df\otimes u+ f \liftingd u,\quad\forall~ f\in \CIM.
$$
There is a decomposition of $\mathbb{R}$-vector spaces
$$
\Gamma({\jet A})=\liftingd\Gamma(A)~\oplus~\Gamma({T^*M\otimes A}).
$$

Moreover, the Lie bracket  of $\Gamma(\jet A)$ is determined by the following relations:
\begin{eqnarray}\label{Eqt:jetbracket1}
[\liftingd u_1,\liftingd u_2]&=&\liftingd [u_1,u_2],\\
\label{Eqt:jetbracket2}
[\liftingd u_1,df_2\otimes u_2]&=&d(\rho(u_1)f_2)\otimes u_2+df_2\otimes [u_1,u_2],\\
\label{Eqt:jetbracket3}
[df_1\otimes u_1,df_2\otimes u_2]&=&\rho(u_1)(f_2) df_1\otimes u_2-\rho(u_2)(f_1) df_2\otimes u_1\,.
\end{eqnarray}

It is simply to see that, Equation  ~\eqref{Eqt:jetbracket1} implies Equations  ~\eqref{Eqt:jetbracket2} and \eqref{Eqt:jetbracket3}. Note also that Equation  ~\eqref{Eqt:jetbracket3} coincides with the Lie bracket in $\core$.

The anchor of $\jet A$ is simply: \[\rho_{\jet A}(\liftingd u_1+df_2\otimes u_2)=\rho(u_1).\]

The vector bundle $A$  is a module of the jet algebroid $\jet A$ via adjoint actions:
\begin{eqnarray*}
	\ad_{\liftingd u}v&=&[u,v];\\
	\ad_{df\otimes u}v&=&-\rho(v)(f)u,
\end{eqnarray*}
where $u,v\in\Gamma(A)$, $f\in {\CinfM}$.
By standard derivation, $\jet A$ also acts on $\wedge^k A$, for $1\leqslant  k\leqslant  \Top $. Indeed, it can be verified that the adjoint action of $\jet A$ on $\wedge^k A$ is given by
\begin{equation}
\label{Eqn:jetadjoint2}
\ad_{\liftingd u}w=[u,w]\quad\mbox{ and }\quad
\ad_{df\otimes u}w = -[w,f]\wedge u,
\end{equation}
for all $w\in \Gamma(\wedge^k A)$.


The embedding $i: ~\core \to \jet A$ (in Sequence \eqref{Eqt:jetalgebroidexactseq}) of Lie algebroids induces   obvious  maps
$$i^*: ~C^1(\jet A,\wedge^k A)\to C^1(\core,\wedge^k A)$$
and
$$i^*: ~\coHg ^1(\jet A,\wedge^k A)\to \coHg ^1(\core,\wedge^k A).$$
The second $i^*$ is not necessarily surjective, and we will find its kernel. To this aim, consider $\ker\rho\subset A$, a bundle of Lie algebras (which might be singular). We have the adjoint action of $A$  on $\wedge^k \ker\rho$:~
$$
\ad_u \nu=[u,\nu],\qquad \forall~ u\in\Gamma(A),\nu\in\Gamma(\wedge^k \ker\rho).
$$
The projection map  $p: ~\jet A\to A$    (in Sequence
\eqref{Eqt:jetalgebroidexactseq}) also induces a map
\begin{eqnarray*}
	 C^1(A,\wedge^k \ker\rho)&\stackrel{p^*}{\longrightarrow}& C^1(\jet A,\wedge^k A) ,\\ \Omega&\mapsto& p^*\Omega \quad \mbox{ s.t. }\
p^*\Omega (U)=\Omega(p(U)),\qquad\forall~ U\in \Gamma(\jet A).
\end{eqnarray*}
It gives rise to $p^*:~\coHg ^1(  A,\wedge^k \ker\rho )\rightarrow \coHg ^1(\jet A,\wedge^k A)$. Moreover, the kernel of $i^*$ matches with the image of $p^*$:
\begin{lemma}\label{lleftHjet}For all $1\leqslant  k\leqslant  \Top $,
	the following sequence is   exact:~
	\begin{equation*}
	0\to\coHg ^1(  A,\wedge^k \ker\rho )\stackrel{p^*}{\longrightarrow} \coHg ^1(\jet A,\wedge^k A) \stackrel{i^*}{\longrightarrow} \coHg ^1(\core,\wedge^k A).
	\end{equation*}
\end{lemma}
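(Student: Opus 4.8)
The plan is to prove exactness at both nontrivial spots, after first observing that $i^*\circ p^*=0$ already at the cochain level: for $\Omega\in Z^1(A,\wedge^k\ker\rho)$ and any $U\in\core=\ker p$ we have $(p^*\Omega)(U)=\Omega(p(U))=0$. So the genuine content is the injectivity of $p^*$ on $\coHg^1$ and the inclusion $\ker i^*\subseteq\Img p^*$. The one recurring elementary fact I will use is this: for $w\in\wedge^{k-1}A_x$ with $k-1<\Top=\rank A$, the relations $w\wedge u=0$ for all $u\in A_x$ force $w=0$; this is precisely where the hypothesis $1\le k\le\Top$ enters (the cancellation fails at $k=\Top+1$). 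Combined with $\Img\rho^*=(\ker\rho)^{\circ}$, it yields the characterization that $\sigma\in\Gamma(\wedge^k A)$ lies in $\Gamma(\wedge^k\ker\rho)$ iff $\iota_{\rho^*\xi}\sigma=0$ for all $\xi$, equivalently iff $[\sigma,f]=0$ for all $f\in\CinfM$.

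For injectivity I would suppose $p^*\Omega=d_{\jet A}\sigma$ with $\sigma\in\Gamma(\wedge^kA)$ and show $\Omega$ is a coboundary of $A$. Restricting to $\core$ gives $d_\core\sigma=(p^*\Omega)|_\core=0$, and Lemma \ref{lspecailsubspaceinZ1core}(2) identifies $d_\core\sigma=-\mu_{\rhopush\sigma}$; evaluating $\mu$ on $\xi\otimes u$ and applying the cancellation fact forces $\rhopush\sigma=0$, hence $\sigma\in\Gamma(\wedge^k\ker\rho)$. Evaluating $p^*\Omega=d_{\jet A}\sigma$ on the lifts $\liftingd u$ and using $\ad_{\liftingd u}\sigma=[u,\sigma]$ then gives $\Omega(u)=[u,\sigma]=(d_A\sigma)(u)$, so $[\Omega]=0$ in $\coHg^1(A,\wedge^k\ker\rho)$.

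The heart of the matter is the inclusion $\ker i^*\subseteq\Img p^*$. Given $\chi\in Z^1(\jet A,\wedge^kA)$ with $i^*[\chi]=0$, I would write $\chi|_\core=d_\core\tau'$ and pass to the cohomologous cocycle $\chi'=\chi-d_{\jet A}\tau'$, which vanishes identically on $\core$. Being a $\CinfM$-linear bundle map that kills $\ker p$, $\chi'$ factors through $\jet A/\core\cong A$, giving a bundle map $\Omega\colon A\to\wedge^kA$ with $\Omega\circ p=\chi'$. The remaining task is to show $\Omega$ takes values in $\wedge^k\ker\rho$ and is an $A$-cocycle; granting this, $p^*\Omega$ and $\chi'$ agree on $\liftingd\Gamma(A)$ and on $\Gamma(\core)$, which generate $\Gamma(\jet A)$ over $\CinfM$, so $\chi'=p^*\Omega$ and $[\chi]=p^*[\Omega]$.

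The main obstacle is the $\wedge^k\ker\rho$-valuedness of $\Omega$. I would substitute $U=df\otimes u\in\core$ and $V=\liftingd v$ into the cocycle identity for $\chi'$; since $\chi'|_\core=0$ and the bracket $[\liftingd v,df\otimes u]$ from \eqref{Eqt:jetbracket2} again lies in $\core$, the identity collapses, via $\ad_{df\otimes u}w=-[w,f]\wedge u$ of \eqref{Eqn:jetadjoint2}, to $[\Omega(v),f]\wedge u=0$ for all $f\in\CinfM$ and $u\in\Gamma(A)$. The cancellation fact then yields $[\Omega(v),f]=0$ for all $f$, i.e. $\Omega(v)\in\wedge^k\ker\rho$. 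Finally, feeding $U=\liftingd u$, $V=\liftingd v$ into the same identity and using $[\liftingd u,\liftingd v]=\liftingd[u,v]$ from \eqref{Eqt:jetbracket1} together with $\ad_{\liftingd u}=[u,\cdot]$ produces $\Omega([u,v])=[u,\Omega(v)]-[v,\Omega(u)]$, which is exactly the $1$-cocycle condition for $\Omega\in Z^1(A,\wedge^k\ker\rho)$. This completes exactness in the middle and hence the proof.
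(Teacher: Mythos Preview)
Your proof is correct and follows essentially the same approach as the paper's: replace the given cocycle by a cohomologous one vanishing on $\core$, factor it through $A$ via the exactness of \eqref{Eqt:jetalgebroidexactseq}, then test the $\jet A$-cocycle identity on pairs $(\liftingd v,\,df\otimes u)$ and $(\liftingd u,\,\liftingd v)$ to obtain the $\wedge^k\ker\rho$-valuedness and the $A$-cocycle condition. You additionally supply the injectivity argument (which the paper omits as ``easy'') and make explicit where the hypothesis $1\le k\le\Top$ enters via the cancellation fact $w\wedge u=0\ \forall u\Rightarrow w=0$.
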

\begin{proof}We first show that $\ker i^*=\Img p^*$.
	Assume that $c\in Z^1(\jet A,\wedge^k A)$ satisfies $i^*[c]= 0 $, i.e., $i^*c=d_\core \tau$ for $\tau\in \Gamma(\wedge^k A)$. We claim that there exists $\Omega\in Z^1(A,\wedge^k \ker \rho)$ such that $p^* \Omega=c$. In fact, by   $(c-d_{\jet A} \tau)|_\core=0$, we are able to write  $c-d_{\jet A} \tau=p^*\Omega$,
	for some $\Omega:~ A\to \wedge^k A$, as the sequence    \eqref{Eqt:jetalgebroidexactseq} is exact. We now show that $\Omega$ takes values in  $\wedge^k \ker \rho$, and it is a $1$-cocycle.
	
	In fact, by applying $d_{\jet A}(p^*\Omega)=0$ to elements $\liftingd u_1$ and $df_2\otimes u_2\in \Gamma(\jet A)$, we get
	\begin{eqnarray*}
		p^*\Omega([\liftingd u_1,df_2\otimes u_2])&=&\ad_{\liftingd u_1}(p^*\Omega(df_2\otimes u_2))-\ad_{df_2\otimes u_2} (p^*\Omega(\liftingd u_1))\\ &=&0-\ad_{df_2\otimes u_2}\Omega(u_1)\\ &=&[\Omega(u_1),f_2]\wedge u_2.
	\end{eqnarray*}
	As $[\liftingd u_1,df_2\otimes u_2]\in \Gamma(\core)$, we have $p^*\Omega([\liftingd u_1,df_2\otimes u_2])=0$. Therefore, the last term in the above equation must vanish, and it follows that  $[\Omega(u_1),f_2]=0$, i.e., $\Omega(u_1)\in \Gamma(\wedge^k \ker \rho)$.
	
	Applying $d_{\jet A}(p^*\Omega)=0$ to $\liftingd u_1$ and $\liftingd u_2\in \Gamma(\jet A)$, we get
	\[p^*\Omega([\liftingd u_1,\liftingd u_2])=\ad_{\liftingd u_1} p^*\Omega(\liftingd u_2)-\ad_{\liftingd u_2} p^*\Omega(\liftingd u_1)
	,\]or
	\[\Omega([u_1,u_2])=[u_1,\Omega(u_2)]-[u_2,\Omega(u_1)].\]
	This proves that  $\Omega\in Z^1(A,\wedge^k \ker \rho)$, as desired. We get that $\ker i^*\subset\Img p^*$. While $\Img p^*\subset \ker i^*$ is obvious, this proves the claim $\ker i^*=\Img p^*$.

	Second, we need to  prove that the arrow $\coHg ^1(  A,\wedge^k \ker\rho )\stackrel{p^*}{\longrightarrow} \coHg ^1(\jet A,\wedge^k A)$ is an injection.  This is easy and omitted.
\end{proof}

\section{The jet Lie groupoid}\label{Appendix:jetLiegroupoid}

Let $b_g,b'_{g}$ be two  bisections through $g\in \Gpd $. They are said to be equivalent at $g$ if $b_{*x}=b'_{*x}:~T_xM\to T_g\Gpd $, where $x=s(g)$. The equivalence class, denoted by $[b]_x$, or $[b_g]$, is called the first jet of   $b_g$.
The  {\bf (first) jet groupoid} $\jet \Gpd $ of a Lie groupoid $\Gpd $, consisting of such  jets  $[b_g]$, is a Lie groupoid over $M$.
The source and target  maps of $\jet \Gpd $ are given by \[s([b_g])=s(g), \qquad t([b_g] )=t(g).\] The multiplication is given by
\[
[b_{g}][b'_{r}] =[(b   b')_{g{r}}],\]
for two   bisections $b_g$ and $b'_r$ through $g$ and $r$ respectively. The inverse map is
\[ [b_g]^{-1}=[(b^{-1})_{g^{-1}}].\]

Given a   bisection $b_g$ through $x=s(g)$, its first jet induces a left translation $$L_{[b_g]}:~ T \Gpd |_{t^{-1}(x)} \to T \Gpd |_{t^{-1}(y)},$$ where $y=\phi_b(x)=t(g)$. Similarly we have a right translation $$R_{[b_g]}: T \Gpd |_{s^{-1}(y)} \to T \Gpd |_{s^{-1}(x)}.$$ Note that when  $L_{[b_g]}$ is restricted on $(\ker t_*)|_{t^{-1}(x)}$, it is exactly the left translation $L_{g*}$. Similarly, when  $R_{[b_g]}$ is restricted on $(\ker s_*)|_{s^{-1}(y)}$, it is exactly the right translation $R_{g*}$.
Moreover, the restriction of $L_{[b_g]}$ to $T_x M$  is exactly the tangent map $b_{*x}: T_xM\to T_g\Gpd$, i.e., $L_{[b_g]}(X)=b_{g*}(X)$ for $X\in T_x M$.

A bisection $b$ of $\Gpd $ acts on $\Gpd $ by conjugation
\[\mathrm{AD}_b (g)=R_{b^{-1}}{\littlecirc} L_b(g)=b(t(g)) g \bigl(b(s(g))\bigr)^{-1},\]
which maps units to units and $s$-fibres to $s$-fibres ($t$-fibres to $t$-fibres as well). There is an action of the jet groupoid $\jet\Gpd $ on $A$ and $TM$.
\begin{definition}\label{Def:jetGpdadjoint} 
	The {\bf adjoint action} of $\jet \Gpd$    on $A$:
	$$\Ad_{[b_g]}:\quad A_x\to A_{\phi_b(x)}, \quad\mbox{ where } ~b_g\in \mathrm{Bis}(\Gpd), ~x=s(g)$$
	is defined by$$
	\Ad_{[b_g]} u=R_{g^{-1}*}{\littlecirc} L_{[b_g]} (u)=R_{g^{-1}*}\big(L_{g*}(u-\rho(u))+b_{*}(\rho(u))\big),$$
	for all $u\in A_x$.
	
	Similarly, the adjoint action of $\jet \Gpd$    on
	$TM$:
	\[\Ad_{[b_g]}:\quad T_x M \to T_{\phi_b(x)} M,\]
	is given by \[ \Ad_{[b_g]} X=R_{[b_g]^{-1}}{\littlecirc} L_{[b_g]}(X)=\phi_{b*} (X),\quad  X\in T_x M.\]
\end{definition}
We note that  the anchor map $\rho: A\to TM$ is  equivariant with respect to the adjoint action:
\begin{eqnarray*}\label{compatible}
	\Ad_{[b_g]} {\littlecirc} \rho =\rho{\littlecirc} \Ad_{[b_g]},\quad\mbox{ as a map }A_x\to T_{\phi_b(x)}M.
\end{eqnarray*}


The following exact sequence of groupoids can be easily established:
\begin{equation}\label{Eqt:jetgroupoidexactseq}
1\to\heat  \stackrel{I}{\longrightarrow}\jet\Gpd \stackrel{P}{\longrightarrow} \Gpd \to 1.
\end{equation}
In fact, the map $[b_g]\mapsto g$ gives the projection $P:~\jet \Gpd \to \Gpd $. The space  $\heat$ consists of isotropic  jets, i.e., those of the form $[h_x]$, where   $x\in M$
and $h$ is a bisection through $x$. Let us call $\heat$ the bundle of \textbf{{isotropy} jet groups}.

We now describe $\heat$ from an alternative point of view.
For $x\in M$, with the standard decomposition
$T_x\Gpd \cong T_xM\oplus A_x$,
an  $[h_x]\in \heat_x$ can be identified  with its tangent map  $T_xM\to T_x\Gpd $ which yields identity map of $T_xM$ after composing with $s_*$, and an isomorphism of $T_xM$ after composing with $t_*$. Thus there exists $H: T_x M\rightarrow A_x$ such that
$$
h_{*}(X)=(X,H(X)),\qquad X\in T_x M,
$$and \[\phi_{h*}=t_*{\littlecirc} h_{*}=1+\rho{\littlecirc} H
\in \mathrm{GL}(T_x M).\]

Let us introduce  \[{\underline{\mathrm{Hom}}(TM,A)}:=\{H\in \mathrm{Hom}(TM,A);1+\rho{\littlecirc} H\in \mathrm{GL}(T M)\}.\]
As explained above, we can identify $\heat$ with the space ${\underline{\mathrm{Hom}}(TM,A)}$. In the sequel, for $H\in {\underline{\mathrm{Hom}}(T_xM,A_x)}$, we  write $[h_x]=1+H$ for the corresponding element in $\heat_x$.

The bundle of Lie groups structure on $\heat\cong{\underline{\mathrm{Hom}}(TM,A)}$ is thus induced:
\begin{itemize}\item[1)]The identity element in $\heat_x$ is of the form $1+0_x$, where $0_x\in {\underline{\Hom}(T_x M,A_x)}$ is the zero map; \item[2)]The multiplication of $1+H_1$ and $ 1+H_2$, where  $H_1,H_2\in  {\underline{\Hom}(T_x M,A_x)}$  is given by
	\[(1+H_1)\cdot (1+H_2)=1+H_1+H_2+H_1{\littlecirc}\rho{\littlecirc} H_2;\]
	\item[3)] The inverse is given by
	\[(1+H)^{-1}=1-H{\littlecirc} (1+\rho{\littlecirc} H)^{-1}.\]
\end{itemize}
More details can be found in \cite{C,LL}. The following lemma is about   the translation  of $\heat$ on $T\Gpd |_M=TM\oplus A$.
\begin{lemma}\label{ad for h}
	For all $[h_x]=1+H \in \heat_x$, where $H\in {\underline{\mathrm{Hom}}(T_xM,A_x)}$,   the left and right translation maps
	\[~L_{[h_x] },\quad R_{[h_x] }: T_x M\oplus A_x\to T_x M\oplus A_x\] are given by
	\begin{eqnarray}\label{Eqn:Lhx}
	L_{[h_x]}(X,u)&=&\bigl( X,H(X)+u+H \rho(u)\bigr),\\\label{Eqn:Rhx}
	R_{[h_x]}(X,u)&=&\bigl((1+\rho H)^{-1} X,H(1+\rho H)^{-1} (X)+u\bigr),
	\end{eqnarray}
	where   $X\in T_x M$ and $u\in A_x$.
\end{lemma}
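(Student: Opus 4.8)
The plan is to verify each formula by reducing it to the two elementary restriction properties of $L_{[b_g]}$ recorded just before the lemma, using the standard splitting $T_x\Gpd=T_xM\oplus A_x$ (with $A_x=\ker s_*|_x$, so that $s_*$ is the projection onto the first factor and $t_*(X,u)=X+\rho(u)$). Since $[h_x]$ is isotropic, the underlying arrow is the unit $g=x$, so the two restrictions simplify: $L_{[h_x]}$ acts as the identity on $\ker t_*|_x$ (because $L_{x*}=\mathrm{id}$) and as the tangent map $h_{*x}\colon X\mapsto(X,H(X))$ on the identity-section directions $T_xM$.

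First I would prove \eqref{Eqn:Lhx}. Given $(X,u)\in T_xM\oplus A_x$, I split it as
\[
(X,u)=(X+\rho(u),0)+(-\rho(u),u),
\]
where the first summand lies in $T_xM$ and the second in $\ker t_*|_x$ (indeed $t_*(-\rho(u),u)=-\rho(u)+\rho(u)=0$). Applying $L_{[h_x]}$ termwise through the two restrictions gives $h_{*x}(X+\rho(u))=(X+\rho(u),H(X+\rho(u)))$ on the first piece and $(-\rho(u),u)$ on the second; summing yields $(X,H(X)+u+H\rho(u))$, which is exactly \eqref{Eqn:Lhx}.

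For \eqref{Eqn:Rhx} I would avoid computing a right-translation restriction directly and instead pass through inversion. A short check on bisections shows $\mathrm{inv}\circ L_b=R_{b^{-1}}\circ\mathrm{inv}$, hence at the level of jets $R_{[h_x]}=\invstar\circ L_{[h_x]^{-1}}\circ\invstar$, where $\invstar(X,u)=(X+\rho(u),-u)$ is the involution \eqref{Eqt:firstinversestar} and $[h_x]^{-1}=1+H'$ with $H'=-H(1+\rho H)^{-1}$ by the inversion formula for $\heat$. Substituting the already-established formula \eqref{Eqn:Lhx} for $L_{[h_x]^{-1}}$ and composing, the $H'\rho(u)$ terms cancel and one is left with $\bigl((1+\rho H')X,\ u-H'X\bigr)$. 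The computation then closes on the algebraic identity $1+\rho H'=(1+\rho H)^{-1}$, which follows from $\rho H'=-\rho H(1+\rho H)^{-1}$ and the telescoping $1-\rho H(1+\rho H)^{-1}=(1+\rho H)^{-1}$; together with $-H'X=H(1+\rho H)^{-1}X$ this is precisely \eqref{Eqn:Rhx}.

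The only genuinely delicate point is this last bookkeeping for the right translation: one must keep careful track of where $\rho$ is inserted so that the factor $(1+\rho H)^{-1}$ (an endomorphism of $TM$) and the map $H\colon TM\to A$ are composed in the correct order, and confirm the telescoping that turns $1+\rho H'$ into $(1+\rho H)^{-1}$. Everything else is a routine application of the splitting and of the known restrictions of $L_{[h_x]}$, so I do not expect additional obstacles.
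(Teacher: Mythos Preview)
Your proof is correct and follows essentially the same approach as the paper: you decompose along $T_xM\oplus\ker t_*$ to compute $L_{[h_x]}$ from its two known restrictions, and then obtain $R_{[h_x]}$ via the conjugation identity $R_{[h_x]}=\invstar\circ L_{[h_x]^{-1}}\circ\invstar$ together with the inversion formula in $\heat$. The only cosmetic difference is that the paper computes $L_{[h_x]}(X,0)$ and $L_{[h_x]}(0,u)$ separately rather than splitting $(X,u)$ all at once, and it leaves the final composition for \eqref{Eqn:Rhx} to the reader whereas you spell out the telescoping $1+\rho H'=(1+\rho H)^{-1}$.
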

\begin{proof}
	It is by definition that
	\[L_{[h_x]}(X,0)=  h_*(X)=(X,H(X)).\]
	It follows that
	\begin{eqnarray*}
		L_{[h_x]}(0,u)&=&L_{[h_x]}\bigl((-\rho(u),u)+(\rho(u),0)\bigr)=L_{[h_x]}(-\rho(u),u)
		+L_{[h_x]}(\rho(u),0)\\ &=&(-\rho(u),u)+(\rho(u),H\rho(u))=(0,u+H\rho(u)).
	\end{eqnarray*}
	This proves Equation  ~\eqref{Eqn:Lhx}.	
	Using the expression \eqref{Eqt:firstinversestar}
	of $\invstar $,  $[h_x]^{-1}=1-H{\littlecirc} (1+\rho{\littlecirc} H)^{-1}$, and Equation  ~\eqref{Eqn:Lhx} which we already proved, one can compute
	\begin{eqnarray*}
		R_{[h_x]}(X,u)&=& \invstar {\littlecirc} L_{[h_x]^{-1}}{\littlecirc} \invstar  (X,u),
	\end{eqnarray*} yielding exactly the right hand side of Equation  ~\eqref{Eqn:Rhx}.
\end{proof}

\begin{corollary}The adjoint action $\Ad_{[h_x]}:~ T_x M\oplus A_x\to T_x M\oplus A_x $ is given by
	\begin{equation*}\label{Eqt:adhXu}
	\Ad_{[h_x]}(X, u)=((1+\rho H)(X),(1+H\rho)(u)),
	\end{equation*}
	where $[h_x]=1+H \in \heat_x $.
\end{corollary}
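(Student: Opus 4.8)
The plan is to compute $\Ad_{[h_x]}$ directly from its definition as the conjugation $\Ad_{[h_x]}=R_{[h_x]^{-1}}\circ L_{[h_x]}$ (Definition \ref{Def:jetGpdadjoint}), substituting the explicit translation formulas already obtained in Lemma \ref{ad for h}. Writing $[h_x]=1+H$ with $H\in\underline{\mathrm{Hom}}(T_xM,A_x)$, I would first record the group inverse $[h_x]^{-1}=1+H'$, where $H'=-H\circ(1+\rho H)^{-1}$ (item 3) of the bundle-of-Lie-groups structure on $\heat$), and then read off $L_{[h_x]}$ and $R_{[h_x]^{-1}}$ from Eqs.~\eqref{Eqn:Lhx} and \eqref{Eqn:Rhx}.

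The key algebraic step --- and the only place needing care --- is the identity $1+\rho H'=(1+\rho H)^{-1}$, which follows at once from $1+\rho H'=1-\rho H(1+\rho H)^{-1}=(1+\rho H)^{-1}$. This immediately simplifies the right-translation formula \eqref{Eqn:Rhx}: since $(1+\rho H')^{-1}=1+\rho H$ and $H'(1+\rho H')^{-1}=H'(1+\rho H)=-H$, one obtains the clean expression $R_{[h_x]^{-1}}(X,u)=\bigl((1+\rho H)X,\,u-HX\bigr)$.

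It then remains only to compose the two maps. Applying \eqref{Eqn:Lhx} first gives $L_{[h_x]}(X,u)=\bigl(X,\,H(X)+u+H\rho(u)\bigr)$, and feeding this into the simplified $R_{[h_x]^{-1}}$ produces $\bigl((1+\rho H)X,\,-HX+H(X)+u+H\rho(u)\bigr)$. The cross-terms $-HX$ and $+H(X)$ cancel, leaving exactly $\bigl((1+\rho H)X,\,(1+H\rho)u\bigr)$, the asserted block-diagonal map. I anticipate no genuine obstacle here: the entire content is the inverse identity $1+\rho H'=(1+\rho H)^{-1}$, which is precisely what forces the off-diagonal contributions to annihilate and renders $\Ad_{[h_x]}$ diagonal with respect to the splitting $T_xM\oplus A_x$.
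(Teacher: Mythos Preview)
Your proof is correct and is precisely the computation the paper has in mind: the corollary is stated without proof immediately after Lemma~\ref{ad for h}, and the intended argument is exactly the composition $R_{[h_x]^{-1}}\circ L_{[h_x]}$ using Eqs.~\eqref{Eqn:Lhx} and \eqref{Eqn:Rhx} together with the inverse formula $[h_x]^{-1}=1-H(1+\rho H)^{-1}$. Your identification $1+\rho H'=(1+\rho H)^{-1}$ and the ensuing cancellation of the off-diagonal terms are the whole content.
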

In particular, for $1\leqslant  k\leqslant  \Top $, the induced adjoint action of $\heat$ on $\wedge^k A$ is given by
\begin{equation}\label{Eqt:adjointofheatonwedgekA}
\Ad_{[h_x]} w=L_{[h_x]}w= (1+H\rho)^{\otimes k} (w),\qquad\forall~ w\in  \wedge^k A_x.
\end{equation}

The following fact is also standard.
\begin{proposition}Let $\Gpd \toto M$ be a Lie groupoid and $A$ the associated tangent Lie algebroid. Then the  jet Lie algebroid $\jet A$ is the tangent Lie algebroid of the jet groupoid $\jet \Gpd $: $
	\jet A=\Lie (\jet \Gpd )$.
	In particular,  the bundle of {isotropy} jet Lie algebras is the tangent Lie algebroid of the bundle of {isotropy} jet groups, i.e.  $\core=\Lie\heat$.
\end{proposition}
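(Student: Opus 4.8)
The plan is to construct an explicit isomorphism of Lie algebroids $\Psi\colon \jet A \to \Lie(\jet\Gpd)$ and then deduce the isotropy statement by restriction. Recall that $\Lie(\jet\Gpd)$ is, by definition, the vector bundle $\ker(s_*)|_M$ for the source map $s$ of $\jet\Gpd$, equipped with anchor $t_*$ and with the bracket induced by right-invariant vector fields on $\jet\Gpd$. A tangent vector in $\Lie(\jet\Gpd)_x$ is the velocity $\gamma'(0)$ of a curve $\gamma(\epsilon) = [b^\epsilon_{g(\epsilon)}]$ lying in the $s$-fibre of $\jet\Gpd$ through $x$, so that $g(\epsilon) = P(\gamma(\epsilon))$ is a curve in $s^{-1}(x)\subset \Gpd$ with $g(0)=x$. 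Since $\Gamma(\jet A) = \liftingd\Gamma(A)\oplus\Gamma(T^*M\otimes A)$, it suffices to define $\Psi$ on each summand: I set
$$\Psi\big((\liftingd u)_x\big) = \frac{d}{d\epsilon}\Big|_{\epsilon=0}\big[(\exp\epsilon u)_{(\exp\epsilon u)(x)}\big], \qquad \Psi(H) = \frac{d}{d\epsilon}\Big|_{\epsilon=0}(1+\epsilon H)\in (\Lie\heat)_x$$
for $u\in\Gamma(A)$ and $H\in\core_x=\mathrm{Hom}(T_xM,A_x)$, using the identification $\heat\cong\underline{\Hom}(TM,A)$ of Appendix \ref{Appendix:jetLiegroupoid}. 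The first formula is exactly the computation already carried out in the proof of Proposition \ref{Thm:PitodeltaPi}. That $\Psi$ is a well-defined bundle isomorphism follows by comparing it with the two exact sequences \eqref{Eqt:jetalgebroidexactseq} and \eqref{Eqt:jetgroupoidexactseq}: by construction $\Psi$ carries $\core$ into $\Lie\heat$, and modulo these subbundles it descends to the identity on $A=\Lie\Gpd$, whence a five-lemma argument forces $\Psi$ to be an isomorphism.

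Next I would check that $\Psi$ intertwines the anchors. On the image of $\liftingd u$ one has $t(\gamma(\epsilon)) = t\big((\exp\epsilon u)(x)\big)$, whose derivative is $\rho(u)$ because $t_*=\rho$ on $A$; this matches $\rho_{\jet A}(\liftingd u)=\rho(u)$. On $\core$ the target is constant equal to $x$, so the anchor vanishes, in agreement with $\rho_{\jet A}|_\core=0$.

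The substantive point is the compatibility of brackets, and this is where I expect the main work to lie. Because both brackets satisfy the Leibniz rule over the (now matching) anchor, and because $\Gamma(T^*M\otimes A)$ is generated over $\CinfM$ by elements $df\otimes u = \liftingd(fu) - f\,\liftingd u$, it is enough to reproduce relation \eqref{Eqt:jetbracket1}, i.e. $[\Psi(\liftingd u_1),\Psi(\liftingd u_2)] = \Psi(\liftingd[u_1,u_2])$; relations \eqref{Eqt:jetbracket2} and \eqref{Eqt:jetbracket3} then follow automatically, as noted after \eqref{Eqt:jetbracket3}. The key observation is that jet prolongation is a functor on the category of Lie groupoids, so the one-parameter group of bisections $\exp\epsilon u$ of $\Gpd$ prolongs to a one-parameter group of bisections of $\jet\Gpd$ whose infinitesimal generator is the right-invariant vector field $\overrightarrow{\Psi(\liftingd u)}$; functoriality applied to $\overrightarrow{[u_1,u_2]}=[\overrightarrow{u_1},\overrightarrow{u_2}]$ on $\Gpd$ then yields \eqref{Eqt:jetbracket1}. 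The delicate part is to verify carefully that the generator of the prolonged flow is indeed $\overrightarrow{\Psi(\liftingd u)}$ and not some other lift, i.e. that the jet prolongation of the left translations $L_{\exp\epsilon u}$ agrees with left translation by the jets $[\exp\epsilon u]$ in the sense of Appendix \ref{Appendix:jetLiegroupoid}; this is a routine but attention-demanding unwinding of the definitions of $L_{[b_g]}$ and of the prolonged multiplication.

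Finally, for the isotropy statement, I would apply the Lie functor to the exact sequence of Lie groupoids \eqref{Eqt:jetgroupoidexactseq}, obtaining a short exact sequence of Lie algebroids $0\to\Lie\heat\to\Lie(\jet\Gpd)\to A\to 0$. Under $\Psi$ this matches \eqref{Eqt:jetalgebroidexactseq}, and since $P\colon\jet\Gpd\to\Gpd$ differentiates to $p\colon\jet A\to A$, the kernels correspond: $\Lie\heat=\ker(\Lie P)\cong\ker p=\core$. As a cross-check one can differentiate the group multiplication $(1+H_1)(1+H_2)=1+H_1+H_2+H_1\circ\rho\circ H_2$ of $\heat$ from Appendix \ref{Appendix:jetLiegroupoid}; its antisymmetrized quadratic part recovers the bracket $[\lambda,\mu]=\mu\circ\rho\circ\lambda-\lambda\circ\rho\circ\mu$ of $\core$ recorded in \eqref{Eqt:jetbracket3}, once the right-invariance convention of the Lie functor (the same one fixing $\overrightarrow{[u,v]}=[\overrightarrow{u},\overrightarrow{v}]$) is taken into account, thereby confirming $\core=\Lie\heat$ directly.
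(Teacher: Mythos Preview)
The paper does not actually prove this proposition: it is introduced with the sentence ``The following fact is also standard'' and stated without proof, so there is nothing in the paper to compare your argument against.

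Your approach is sound and is essentially the standard one. A few remarks. First, to see that $\Psi$ is well defined as a bundle map you should check that $\Psi\big((\liftingd u)_x\big)$ depends only on the $1$-jet of $u$ at $x$, not on the full section $u$; this is true because the derivative at $\epsilon=0$ of $\epsilon\mapsto [(\exp\epsilon u)_{(\exp\epsilon u)(x)}]$ involves only $u_x$ and the first derivatives of $u$ at $x$, but it deserves a sentence. Second, your reduction of the bracket compatibility to \eqref{Eqt:jetbracket1} is legitimate precisely because, as the paper notes after \eqref{Eqt:jetbracket3}, relations \eqref{Eqt:jetbracket2} and \eqref{Eqt:jetbracket3} are formal consequences of \eqref{Eqt:jetbracket1} together with the Leibniz rule and the identity $\liftingd(fu)=df\otimes u+f\,\liftingd u$; but for this reduction to transfer to $\Lie(\jet\Gpd)$ you need $\Psi$ to be $\CinfM$-linear (a bundle map) and to intertwine the anchors, both of which you have addressed. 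Third, your functoriality argument for \eqref{Eqt:jetbracket1} is correct in spirit; the point you flag as ``delicate'' --- that jet-prolonging the one-parameter family $L_{\exp\epsilon u}$ of diffeomorphisms of $\Gpd$ yields left translation by $[\exp\epsilon u]$ on $\jet\Gpd$ --- is immediate from the definition $L_b(g)=b(t(g))g$, since this formula is natural in the bisection $b$. The isotropy part of your argument is fine.
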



Finally, we   have a lemma parallel to Lemma \ref{lleftHjet}.

\begin{lemma}\label{lleftHjet2}For all $1\leqslant  k\leqslant  \Top $,    the exact sequence \eqref{Eqt:jetgroupoidexactseq} of Lie groupoids gives rise to an   exact sequence of vector spaces:~
	\begin{equation*}
	0\to\coHg ^1(  \Gpd,\wedge^k \ker\rho )\stackrel{P^*}{\longrightarrow} \coHg ^1(\jet \Gpd,\wedge^k A) \stackrel{I^*}{\longrightarrow} \coHg ^1(\heat,\wedge^k A).
	\end{equation*}
	
	Moreover, we have a commutative diagram
	\begin{equation}\label{Diag:fullpicturejetH1vanEast}
	\xymatrix{
		0   \ar[r]^{ } & \coHg ^1(  \Gpd,\wedge^k \ker\rho ) \ar[d]_{\mathrm{VE}_1} \ar[r]^{P^*} & \coHg ^1(\jet \Gpd,\wedge^k A) \ar[d]_{\mathrm{VE}_2} \ar[r]^{I^*} & \coHg ^1(\heat,\wedge^k A) \ar[d]_{\mathrm{VE}_3}^{\cong} \\
		0 \ar[r]^{} & \coHg ^1(  A,\wedge^k \ker\rho ) \ar[r]^{p^*} & \coHg ^1(\jet A,\wedge^k A) \ar[r]^{i^*} & \coHg ^1(\core,\wedge^k A)  }
	\end{equation}
	where the three vertical arrows are Van Est maps.
\end{lemma}

Here, as  $\heat$ is connected and simply connected, the rightmost Van Est map $\mathrm{VE}_3\colon \coHg ^1(\heat,\wedge^k A)\to \coHg ^1(\core,\wedge^k A)$ is an isomorphism (see \cite{WX, Crainic2003}). So we can see that $\ker\mathrm{VE}_2=P^*(\ker\mathrm{VE}_1)\cong \ker\mathrm{VE}_1$. In particular, if source fibers of $\Gpd$ are connected, then $\mathrm{VE}_1$ is   injective and hence $\mathrm{VE}_2$ is also injective.


\begin{thebibliography}{10}
	
	
	
	\bibitem{VE} C. Arias Abad and M. Crainic, The Weil algebra and the Van Est isomorphism, {\it Ann. Inst. Fourier. (Grenoble)} 61 (2011), no. 3, 927-970.
	\bibitem{AC2} C. Arias Abad and M. Crainic, Representations up to homotopy of Lie algebroids, {\it J. Reine Angew. Math.} 663 (2012), 91-126.
	\bibitem{AC} C. Arias Abad and M. Crainic, Representations up to homotopy and Bott's spectral sequence for Lie groupoids, {\it Adv. Math.} 248 (2013), 416-452.
	
	\bibitem{BLlie2algebra} D. Berwick-Evans and E. Lerman, Lie 2-algebras of vector fields,   {\it Pacific J. Math.}   309 (2020), 1-34.
	
	
	\bibitem{BCLX} F. Bonechi, N. Ciccoli, C. Laurent-Gengoux and P. Xu, Shifted Poisson structures on differential stacks,
	 {\it Int. Math. Res. Not. IMRN} (2020), rnaa293, https://doi.org/10.1093/imrn/rnaa293.
	
	
	
	\bibitem{BC} H. Bursztyn and A. Cabrera, Multiplicative forms at the infinitesimal level, {\it Math. Ann.}  353 (2012), 663-705.
	\bibitem{BCO} H. Bursztyn, A. Cabrera and C. Ortiz, Linear and multiplicative 2-forms, {\it Lett. Math. Phys.} 90 (2009), no. 1-3, 59-83.
	\bibitem{BCWZ} H. Bursztyn, M. Crainic, A. Weinstein and C. Zhu, Integration of twisted Dirac brackets, {\it Duke Math. J.} 123 (2004), no. 3, 549-607.	
	\bibitem{BD} H. Bursztyn and T. Drummond, Lie theory of multiplicative tensors, {\it Math. Ann.} (2019), no. 3-4, 1489-1554.	
	
	\bibitem{CMS1} A. Cabrera, I. Marcut and M. A. Salazar, Local formulas for multiplicative forms, {\it Transformation Groups} (2020), DOI: 10.1007/S00031-020-09607-y.
	
	\bibitem{CLL2022}  Z. Chen, H. Lang and Z. Liu, Multiplicative forms on Poisson groupoids, arXiv: 2201.06242.
	\bibitem{CL}  Z. Chen and Z. Liu, On transitive Lie bialgebroids and Poisson groupoids, {\it Differential Geom. Appl.} 22 (2005), 253-274.
	\bibitem{CSX} Z. Chen, M. Sti\'enon and P. Xu, Poisson 2-groups, {\it J. Differential Geom.}  94 (2013), no. 2, 209-240.
	
	\bibitem{Crainic2003} M. Crainic,  Differentiable and algebroid cohomology, Van Est isomorphisms, and characteristic classes, {\it Comment. Math. Helv.} 78 (2003),  no. 4, 681-721.
	
	
	
	\bibitem{CMS} M. Crainic, J. Mestre and I. Struchiner, Deformations of Lie groupoids, {\it Int. Math. Res. Not. IMRN} (2020), no. 21, 7662-7746. 
	
	\bibitem{Cdef} M. Crainic and I. Moerdijk, Deformations of Lie brackets: cohomological aspects, {\it J. Eur. Math. Soc. (JEMS)} 10 (2008), no. 4, 1037-1059.
	
	\bibitem{C} M. Crainic, M. A. Salazar and I. Struchiner, Multiplicative forms and Spencer operators, {\it Math. Z.}  279 (2015), no. 3-4, 939-979.
	\bibitem{CZ} M. Crainic and C. Zhu, Integrability of Jacobi and Poisson structures,  {\it Ann. Inst. Fourier (Grenoble)} 57 (2007), 1181-1216.
	\bibitem{Drinfeld} V. Drinfeld, Hamiltonian structures on Lie groups, Lie bialgebras and the geometric meaning
of the classical Yang-Baxter equations, {\it Soviet Math. Dokl.} 27 (1983), 68-71.	

\bibitem{DJO} T. Drummond, M. Jotz Lean and C.  Ortiz,  VB-algebroid morphisms and representations up to
homotopy, {\it Differential Geom. Appl.} 40 (2015), 332-357.

\bibitem{EML} S. Eilenberg and S. Mac Lane, On the groups $H(\Pi;n)$. I, {\it Ann. of Math.} (2) 58 (1953), 55-106.
	
	\bibitem{ETV} C. Esposito, A. Tortorella and L. Vitagliano,  Infinitesimal automorphisms of VB-groupoids and
algebroids, {\it Q. J. Math.} 70 (2019), no. 3, 1039-1089.
	\bibitem{EL} S. Evens, J.-H. Lu and A. Weinstein, Transverse measures, the modular class, and a cohomology pairing for Lie algebroids, {\it Quart. J. Math. Oxford Ser. (2)} 50 (1999), no. 200, 417-436.

 \bibitem{GM} A. Gracia-Saz and R. A. Mehta, VB-groupoids and representation theory of Lie groupoids, {\it J. Symplectic Geom.} 15 (2017), no. 3, 741-783.



	\bibitem{H} E. Hawkins,  A groupoid approach to quantization, {\it J. Symplectic Geom.} 6 (2008), no. 1, 61-125.	
	
	
	\bibitem{ILX} D. Iglesias-Ponte, C. Laurent-Gengoux and P. Xu,  Universal lifting theorem and quasi-Poisson groupoids,
	{\it J. Eur. Math. Soc. (JEMS)}  14 (2012), no. 3, 681-731.
	\bibitem{IM} D. Iglesias-Ponte and J. C. Marrero,  Jacobi groupoids and generalized Lie bialgebroids, {\it J. Geom.
Phys.} 48 (2003), 385-425.	
\bibitem{JL} M. Jotz Lean, Dirac groupoids and Dirac bialgebroids, {\it J. Symplectic Geom.} 17 (2019), no. 1, 179-238.
\bibitem{JO} M. Jotz Lean and C. Ortiz, Foliated groupoids and infinitesimal ideal systems, {\it Indag. Math.
(N.S.)} 25 (2014),  no. 5, 1019-1053.
\bibitem{JSX} M. Jotz Lean, M. Sti\'enon and P. Xu, Glanon groupoids, {\it Math. Ann.} 364 (2016), no. 1-2, 485-518.


		\bibitem{Karasev} M.  Karasev, Analogues of objects of the theory of Lie groups for nonlinear Poisson brackets,
{\it Math. USSR-Izv.} 28 (1987), no. 3, 497-527.		
	\bibitem{Kosmann} Y. Kosmann-Schwarzbach, Multiplicativity, from Lie groups to generalized geometry, Geometry of jets and fields, 131-166, {\it Banach Center Publ.}, 110, Polish Acad. Sci. Inst. Math., Warsaw, 2016.
	

	\bibitem{LL} H. Lang and Z. Liu, Coadjoint orbits of Lie groupoids, {\it J. Geom. Phys.} 129 (2018), 217-232.
\bibitem{LSX1} C. Laurent-Gengoux, M. Sti\'enon and P. Xu, Holomorphic Poisson manifolds and Lie algebroids, {\it Int. Math. Res. Not. IMRN} 2008, Art. ID rnn 088, 46 pp.
	\bibitem{XuCamile} C. Laurent-Gengoux, M. Sti\'enon and P. Xu, Integration of holomorphic Lie algebroids, {\it Math. Ann.} 345 (2009), 895-923.
	\bibitem{LS} D. Li-Bland and P. Severa, Quasi-Hamiltonian groupoids and multiplicative Manin pairs, {\it Int. Math. Res. Not. IMRN} 2011, no. 10, 2295-2350.	
	\bibitem{LuThesis} J.-H. Lu, Multiplicative and affine Poisson structures on Lie groups, Ph. D thesis, UC Berkeley, 1990.	
	\bibitem{LuW} J.-H. Lu and A. Weinstein, Poisson Lie groups, dressing transformations, and Bruhat decompositions, {\it J. Differential Geom.} 31 (1990), 501-526.
	
	
	
		
	
		
	\bibitem{Mackenzie}K. Mackenzie, General Theory of Lie Groupoids and Lie Algebroids, London Mathematical Society Lecture Note Series, vol. 213, Cambridge University Press.
	\bibitem{MX1} K. Mackenzie and P. Xu,  Lie bialgebroids and Poisson groupoids, {\it Duke Math. J.} 73 (1994), 415-452.
       \bibitem{MX2} K. Mackenzie and P. Xu, Classical lifting processes and multiplicative vector fields, {\it Quart. J. Math. Oxford Ser. (2)} 49 (1998), 59-85.	
	\bibitem{Manetti}M. Manetti, {A relative version of the ordinary perturbation lemma},
		 {\it Rend. Mat. Appl. }(7) 30 (2010) no.2, 221-238.
	\bibitem{O} C. Ortiz,  Multiplicative Dirac structures, {\it Pacific J. Math.} 266 (2013), 329-365.		
	
	\bibitem{OrtizWaldron17} C. Ortiz and J. Waldron, On the Lie 2-algebra of sections of an $LA$-groupoid, {\it J. Geom. Phys.} 145 (2019), 103474, 34pp.
	
	
	

	
	
	
\bibitem{W2} A. Weinstein, Symplectic groupoids and Poisson manifolds, {\it Bull. Amer. Math. Soc. (N.S.)} 16
(1987), 101-104.

\bibitem{W1} A. Weinstein,  Coisotropic calculus and Poisson groupoids, {\it J. Math. Soc. Japan} 40 (1988),
705-727.









\bibitem{Waffine} A. Weinstein, Affine Poisson structures, {\it Internat. J. Math.} 1 (1990), 343-360.	
	
 \bibitem{WX} A. Weinstein and P. Xu, Extensions of symplectic groupoids and quantization, {\it J.
Reine Angew. Math.} 417 (1991), 159-189.	
	
	\bibitem{XiangSolo2021}  M. Xiang,    Atiyah and Todd classes of regular Lie algebroids,   arXiv: 2110.04720.
	
	
\end{thebibliography}
\end{document}